\DeclareMathOperator{\mdeg}{mdeg}
\DeclareMathOperator{\mmoddeg}{\tilde{m}deg}
\DeclareMathOperator{\Aut}{Aut}
\DeclareMathOperator{\content}{content}
\DeclareMathOperator{\Hom}{Hom}
\DeclareMathOperator{\Ann}{Ann}
\DeclareMathOperator{\Der}{Der}
\DeclareMathOperator{\Sym}{Sym}
\DeclareMathOperator{\ur}{ur}
\DeclareMathOperator{\leadingcoeff}{leadcoeff}
\DeclareMathOperator{\Rad}{Rad}
\newcommand{\intvars}{\integers[x_1,\ldots,x_{\ell}]}
\newcommand{\intDvars}{\integers_D[x_1,\ldots,x_{\ell}]}
\newcommand{\qvars}{\rationals[x_1,\ldots,x_{\ell}]}
\newcommand{\jrad}{\mathcal{J}}
\newcommand{\integers}{\ensuremath{\mathbb{Z}}}
\newcommand{\reals}{\ensuremath{\mathbb{R}}}
\newcommand{\C}{\ensuremath{\mathbb{C}}}
\newcommand{\rationals}{\ensuremath{\mathbb{Q}}}
\newcommand{\finitefield}{\ensuremath{\mathbb{F}_{p}}} 
\newcommand{\rd}{\ensuremath{\integers_D[x]}}
\newcommand{\md}{\ensuremath{M_D}}
\newcommand{\nd}{\ensuremath{N_D}}
\newcommand{\scrnd}{\ensuremath{N_D}}
\newcommand{\mq}{\ensuremath{\rationals M}}
\newcommand{\nq}{\ensuremath{\rationals N}}
\newcommand{\qr}{\ensuremath{\rationals[x]}}
\newcommand{\qm}{\ensuremath{\rationals M}}
\newcommand{\zdx}{\ensuremath{\integers_D[x]}}
\newcommand{\scrnp}{\ensuremath{N/pN}}
\newcommand{\Legendre}[2]{\ensuremath{ (#1/#2) }}
\newcommand{\freebycyclic}[2][d]{\ensuremath{\integers^{#1} \rtimes_{#2} \integers}}
\newcommand{\maxsubgr}[1][n]{\ensuremath{m_{#1}}}
\newcommand{\maxsubmod}[1][n]{\ensuremath{\tilde{m}_{#1}}}
\newcommand{\subgr}[1][n]{\ensuremath{a_{#1}}}
\newcommand{\mtriv}[1][n]{\ensuremath{\tilde{m}_{#1}^{\text{triv}}}}
\newcommand{\mnontriv}[1][n]{\ensuremath{\tilde{m}_{#1}^{\text{nontr}}}}
\newcommand{\submodisoto}[1]{\ensuremath{\tilde{m}_{#1}}}
\newcommand{\normal}{\ensuremath{\trianglelefteq}}
\newcommand{\ideal}{\ensuremath{\lhd}}
\newcommand{\maxideal}{\ideal_{\max}}
\newcommand{\tensor}{\ensuremath{\otimes}}
\newtheorem{proposition}{Proposition}
\newtheorem{theorem}[proposition]{Theorem}
\newtheorem*{nntheorem}{Theorem}
\newtheorem{definition}[proposition]{Definition}
\newtheorem{corollary}[proposition]{Corollary}
\newtheorem{lemma}[proposition]{Lemma}
\newenvironment{case}[1]
{\vspace{5pt} \textbf{#1}}
{\vspace{5pt} \noindent \\}
\title{Maximal subgroup growth of some metabelian groups}
\author{Andrew James Kelley}
\begin{document}

\maketitle

\begin{abstract}
Let $\maxsubgr(G)$ denote the number of maximal subgroups of $G$ of index $n$. An upper bound is given for the degree of maximal subgroup growth
of all polycyclic metabelian groups $G$ (i.e., for $\limsup \frac{\log \maxsubgr(G)}{\log n}$, 
the degree of polynomial growth of $\maxsubgr(G)$). A condition is given for when this upper bound is attained.

For $G = \integers^k \rtimes \integers$, where $A \in GL(k,\integers)$, it is shown that 
$\maxsubgr(G)$ grows like a polynomial of degree equal to the number of blocks in the rational 
canonical form of $A$. The leading term of this polynomial is the number of distinct roots (in $\C$) of the characteristic polynomial of the smallest block.
\end{abstract}


\section{Introduction}

Let $G$ be a f.g.\ (finitely generated) group, and let $\subgr(G)$ denote the number of subgroups of $G$ of index $n$. 
A highlight in subgroup growth is the theorem that gives an algebraic characterization of what it means for
the function $\subgr(G)$ to be bounded above by a polynomial in $n$, the so-called ``PSG Theorem'' (\textbf{p}olynomial
\textbf{s}ubgroup \textbf{g}rowth), which was proved by Lubotzky, Mann, and Segal. 
See \cite{Lubotzky2003} and the references there at the end of Chapter 5.

Much progress has been made in the area of subgroup growth, but there is no known general formula for 
calculating $\deg(G)$, the degree of polynomial growth of a given PSG (polynomial subgroup growth) group. 
In \cite{Shalev_On_the_degree} however, Shalev gives formulas for certain metabelian
groups and also for
all f.g.\ virtually abelian groups. Here,
\[
\deg(G) = \inf \{ \alpha | \subgr(G) \leq n^\alpha \text{ for all large $n$} \} = \limsup \frac{\log \subgr(G)}{\log n}.
\]

When it comes to \emph{maximal} subgroup growth, much progress also has been made. See for example
\cite{Mann1996}, where Mann relates polynomial maximal subgroup growth in profinite groups
to having a positive probability of topologically generating the group 
by picking a finite subset at random. See also the more recent \cite{Jaikin-Zapirain2011}, where
Jaikin-Zapirain and Pyber give a ``semi-structural characterization'' of polynomial maximal subgroup growth. 
However, just as in subgroup growth, there are only a few groups for which we know the exact degree of maximal subgroup growth.
It \emph{is} known for
free prosolvable groups of finite rank; this was determined by Lucchini, Menegazzo and Morigi
in \cite{Lucchini2006} together with Morigi's work in \cite{Morigi2006}.

Inspired by the progress Shalev made for calculating $\deg(G)$ in \cite{Shalev_On_the_degree}, I have worked on calculating the degree
of maximal subgroup growth. Notation:
\[
\maxsubgr(G) = \text{the number of maximal subgroups of $G$ of index $n$}
\]
\[
\mdeg(G) = \inf \{ \alpha | \maxsubgr(G) \leq n^\alpha \text{ for all large $n$} \} = \limsup \frac{\log \maxsubgr(G)}{\log n}
\]
How can we determine $\mdeg(G)$, for given $G$ in some nice class of groups?
How is $\mdeg(G)$ determined by the algebraic structure of $G$? This paper answers these question for
certain metabelian groups. 

One of the two main results in this paper is the following theorem, which
gives an upper bound for $\mdeg(G)$ for all polycyclic metabelian groups. This theorem also gives a condition for when the upper bound is attained:
\begin{nntheorem}
	Let $G$ be a group with f.g.\ abelian normal subgroup $N$. Suppose $G/N$ is an abelian, $\ell_0$-generated group
	of torsion-free rank $\ell$. After choosing a generating set for $G/N$, $N$ becomes a $\integers[x_1,\ldots,x_{\ell_0}]$-module.
	Let $R = \integers[x_1,\ldots,x_{\ell_0}]$.
	Let $I = (x_1 - 1, x_2 - 1, \ldots, x_{\ell_0} - 1)_{R}$. Let $t$ be
	the torsion-free rank of (the abelian group) $N/IN$, and let $d = d_{\rationals \tensor_\integers R}(\rationals \tensor_\integers N)$
	(the minimal number of generators of $\rationals \tensor_\integers N$ as a $\rationals \tensor_\integers R$-module).
	Then
	\[
	  \mdeg(G) \leq \max \{\ell + t - 1, d \},
	\]
	with equality if both $G \cong N \rtimes G/N$ and $\ell \geq 1$.
\end{nntheorem}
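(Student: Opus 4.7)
The plan is to classify the maximal subgroups of $G$ according to their intersection with $N$ and bound each class separately; the two arguments of the maximum will correspond to two distinct families of maximal subgroups. Write $H = G/N$, an $\ell_0$-generated abelian group of torsion-free rank $\ell$; the chosen generators make $N$ into the $R$-module described in the statement.

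Maximal subgroups of $G$ containing $N$ correspond bijectively to maximal subgroups of $H$, and the standard count for finitely generated abelian groups gives at most $\sim n^{\ell-1}$ such subgroups of index $n$ -- this is absorbed into the bound. For a maximal $M$ with $N \not\subseteq M$, maximality forces $MN = G$; since $N$ is abelian and normal, $K := M \cap N$ is in fact a $G$-submodule of $N$, and $V := N/K$ is a simple $R$-module. Conversely each such $K$, paired with a complement of $V$ in the extension $1 \to V \to G/K \to H \to 1$, produces such an $M$. The number of complements over a fixed $K$ is either $0$ or $|Z^1(H, V)|$.

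Split this second family according to whether $H$ acts trivially on $V$. If trivially, then $K \supseteq IN$, so $V$ is a simple quotient of $N/IN$. Since $N/IN$ has torsion-free rank $t$, the number of such $V \cong \mathbb{F}_p$ of order $p$ grows like $p^{t-1}$ for primes $p$ coprime to the torsion of $N/IN$; and for such $p$ the complement count is $|\Hom(H, \mathbb{F}_p)| = p^\ell$. Their product $\sim p^{\ell + t - 1}$ matches the first argument of the maximum. For nontrivial action, $V \cong R/\mathfrak{m}$ for a maximal ideal $\mathfrak{m}$ of $R$ not containing $I$, and the number of surjections $N \twoheadrightarrow R/\mathfrak{m}$ up to the action of $(R/\mathfrak{m})^\times$ is controlled by the minimum number of generators of $N/\mathfrak{m}N$, which generically equals $d$ after tensoring with $\rationals$. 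The complement count $|Z^1(H, V)|$ for such a nontrivial simple $V$ is handled by standard cohomological bounds, with $V^H = 0$ simplifying matters; the overall contribution should be capped at $\sim n^d$.

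The main obstacle is the Type-2 nontrivial-action estimate: extracting a uniform bound, across all maximal ideals $\mathfrak{m}$ of $R$ with bounded residue field, from the single rational invariant $d$. This is where the techniques of Shalev in \cite{Shalev_On_the_degree} must be adapted -- his analysis of submodules of a given index has to be recast as a count of surjective $R$-module quotients, then composed with the cohomological count of complements. For the equality claim, the hypothesis $G \cong N \rtimes H$ guarantees that every extension $1 \to V \to G/K \to H \to 1$ splits, so every complement count is exactly $|Z^1(H, V)|$ rather than possibly zero; and $\ell \geq 1$ supplies infinitely many primes $p$ at which either the $p^{\ell + t - 1}$ or the $p^d$ estimate is realized, achieving the upper bound.
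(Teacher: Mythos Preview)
Your outline matches the paper's structure exactly: split maximal subgroups into those containing $N$ and those not; for the latter, pair maximal $R$-submodules $K$ of $N$ with complements counted by $|Z^1(H,N/K)|$; then separate by trivial versus nontrivial action. The paper packages this as $\maxsubgr(G) \leq \maxsubgr(\integers^\ell) + n^\ell\cdot\mtriv(N) + n\cdot\mnontriv(N)$ for large $n$, with equality in the split case.

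Two points where your sketch is looser than what the argument actually requires:

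\emph{Derivation count.} You say $|Z^1(H,V)|$ is ``handled by standard cohomological bounds''. The paper needs the exact value: $|Z^1(H,V)| = |V|$ whenever the f.g.\ abelian group $H$ acts nontrivially on the simple module $V$ (Lemma~\ref{lem:counting derivations from f.g. abelian groups to simple modules}). This equality, not just a bound, is what makes the nontrivial-action contribution precisely $n\cdot\mnontriv(N)$, and hence what lets the equality clause go through.

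\emph{The main obstacle.} You correctly flag that controlling $\dim_{R/\mathfrak{m}}(N/\mathfrak{m}N)$ uniformly in $\mathfrak{m}$ by the single rational invariant $d$ is the crux, but the suggestion to adapt Shalev is off-target. Shalev's paper does not supply this step. The paper's actual mechanism is: the $\rationals R$-action on $\rationals N$ factors through a finite-dimensional commutative $\rationals$-algebra $A$; modulo its Jacobson radical, $A/\jrad$ is a product of number fields and hence a quotient of $\rationals[x]$ in \emph{one} variable (Lemma~\ref{lem:a surjection from rationals[x] to A/Jac(A)}). After localizing away from finitely many primes and using that maximal submodules automatically contain $\jrad' N_D$, the multivariable problem collapses to a $\integers_D[x]$-module finitely generated over $\integers_D$ (Lemmas~\ref{lem:Z_DN/JZ_DN - many variables to one}--\ref{lem:size of QR generating set for QN is d_0}), and \emph{then} the structure theorem over the PID $\rationals[x]$ yields $\deg(\maxsubmod(N)) = d-1$. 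Without this reduction-to-one-variable step, you have no way to pass from ``generically equals $d$'' to a uniform statement over all $\mathfrak{m}$, which is exactly what the upper bound needs.
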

\noindent This is Theorem~\ref{thm:(f.g. abelian) by (f.g. abelian)} below.
%

Of course, $\mdeg(G)$ is just an approximation of how fast $\maxsubgr(G)$ grows as $n \to \infty$. Sometimes, we can be more precise
than just giving $\mdeg(G)$. For f.g.\ groups of the form
\[
 G = \text{(arbitrary abelian)} \rtimes \integers,
\]
the growth type (see Definition \ref{def:growth type}) of $\maxsubgr(G)$ is given in Proposition \ref{prop:growth type of (arbitrary abelian) by Z}.

When we specialize to groups of the form
\[
G = \text{(f.g.\ abelian)} \rtimes \integers = N \rtimes \integers,
\]
we can be even more precise than giving the growth type of $\maxsubgr(G)$. Note that as $N$ is a normal subgroup of $G$,
$N$ becomes a $\integers[x]$-module. So $\rationals \tensor_\integers N$ is a f.g.\ module over the PID $\rationals[x]$. In this case,
we have the following theorem, the other main result of this paper: 
\begin{nntheorem}
	Let $G = N \rtimes \integers$, with $N$ f.g.\ as an abelian group. Let 
	\[
	\rationals \tensor_\integers N =  \bigoplus_{j=1}^d \rationals[x]/(a_j),
	\]
	where $a_1 | a_2 | \cdots | a_d$ as provided by the structure theorem of f.g.\ modules over PIDs (so with $a_1$ not a unit). 
	So $d = d_{\rationals[x]}(\rationals \tensor_\integers N)$.
	Also, let $\rho_1$ be the number of 
	(distinct) roots of $a_1$ in $\C$. Then
	\[\begin{aligned}
	\maxsubgr(G) & \leq \rho_1 n^{d} + O(n^{d-1}) & \text{for all large $n$, and}\\
	\maxsubgr(G) & \geq \rho_1n^{d} & \text{for infinitely many $n$.} 
	\end{aligned} 
	\]
\end{nntheorem}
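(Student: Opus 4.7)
The plan is to classify the maximal subgroups $M$ of $G=N\rtimes\integers$ by whether they contain $N$, and to reduce the count of the ``nontrivial'' ones (with $N\not\subseteq M$) to a count of maximal $\integers[x]$-submodules of $N$; the asymptotics then follow from the invariant factor decomposition of $\rationals\tensor_\integers N$.

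If $N\subseteq M$, then $M/N$ is a maximal subgroup of $G/N\cong\integers$, contributing one subgroup of index $p$ for each prime $p$ (an $O(1)$ term). If $N\not\subseteq M$, then $NM=G$, and one checks (using that $N$ is abelian and normal in $G$) that $L:=M\cap N$ is a maximal $\integers[x]$-submodule of $N$, so $V:=N/L$ is a simple $\integers[x]$-module, necessarily of the form $\finitefield[x]/(f)$ for some prime $p$ and monic irreducible $f$, with $n=|V|=p^{\deg f}$. Passing to $G/L\cong V\rtimes\integers$, the subgroups $M$ with $M\cap N=L$ correspond to complements of $V$; since $V$ is simple every such complement is maximal, and since $\integers$ is free they are classified by $Z^1(\integers,V)\cong V$, giving exactly $n$ of them. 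Thus
\[
\maxsubgr(G)=O(1)+n\cdot\#\{\text{maximal }\integers[x]\text{-submodules of }N\text{ of index }n\}.
\]

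To count maximal submodules with quotient $\finitefield[x]/(f)$, note that a surjection $N\twoheadrightarrow\finitefield[x]/(f)$ factors through $N/pN$, and with $N/pN\cong\bigoplus_i\finitefield[x]/(b_i^{(p)})$ (invariant factors satisfying $b_1^{(p)}\mid b_2^{(p)}\mid\cdots$), a direct $\Hom$ calculation gives $p^{r(p,f)\deg f}$ such homomorphisms, where $r(p,f):=\#\{i:f\mid b_i^{(p)}\}$. Subtracting the zero map and dividing by $|\Aut(\finitefield[x]/(f))|=p^{\deg f}-1$ yields $(n^{r(p,f)}-1)/(n-1)$ maximal submodules of this type. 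For all but finitely many ``good'' primes $p$ (those not dividing the order of the torsion subgroup of $N$, nor a suitable resultant/discriminant attached to the $a_j$), one has $b_i^{(p)}\equiv a_i\pmod p$, and in particular $r(p,f)=d$ iff $f\mid a_1\pmod p$, else $r(p,f)<d$.

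For the upper bound, each $(p,f)$ with $r=d$ contributes $n(n^d-1)/(n-1)=n^d+O(n^{d-1})$ subgroups, and for good $p$ with $n=p^k$ the number of monic irreducible $f\in\finitefield[x]$ of degree $k$ dividing $a_1\pmod p$ is at most $\rho_1/k$: $a_1\pmod p$ has at most $\rho_1$ distinct roots in $\overline{\finitefield}$ (with equality for good $p$), and each degree-$k$ irreducible factor accounts for $k$ of them. Hence the $r=d$ contribution is $\leq(\rho_1/k)n^d+O(n^{d-1})\leq\rho_1 n^d+O(n^{d-1})$; the $r<d$ terms give $O(n^{d-1})$ in total; and bad primes contribute only for finitely many $n$. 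For the lower bound, Chebotarev's density theorem applied to the splitting field of $a_1$ over $\rationals$ furnishes infinitely many good primes $p$ at which $a_1\pmod p$ has $\rho_1$ distinct roots in $\finitefield$; taking $n=p$, each of the $\rho_1$ linear factors $f=x-\alpha$ satisfies $r(p,f)=d$, so $\maxsubgr(G)\geq\rho_1\cdot n(n^d-1)/(n-1)>\rho_1 n^d$. The main obstacle will be the bad-prime bookkeeping: at such $p$, $N/pN$ can have more than $d$ invariant factors due to torsion, and $r(p,f)$ might exceed $d$; the key observation is that the extra factors have bounded degree in $\finitefield[x]$, so they inflate $\maxsubgr(G)$ only for finitely many exceptional $n$ (small prime-power indices for the fixed finite set of bad primes), which is absorbed into ``for all sufficiently large $n$.''
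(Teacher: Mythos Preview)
Your proposal is correct and follows essentially the same route as the paper. The paper packages the argument as $\maxsubgr(G)=\maxsubgr(\integers)+n\cdot\maxsubmod(N)$ (the derivation/complement count you describe) and then proves the asymptotics for $\maxsubmod(N)$ separately via the same mechanism: for all but finitely many primes the invariant factors of $N/pN$ are the reductions $\overline{a_j}$, the count of maximal submodules with a given simple quotient is $(n^r-1)/(n-1)$, the number of roots of $\overline{a_1}$ in $\overline{\finitefield}$ is at most $\rho_1$, and a density argument (the paper cites a MathOverflow answer in place of your explicit Chebotarev) supplies infinitely many primes where $\overline{a_1}$ has exactly $\rho_1$ roots in $\finitefield$; the bad primes are disposed of exactly as you do, since $N/pN$ is finite and hence contributes only to finitely many $n$.
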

This is
Theorem~\ref{thm:maximal subgroup growth of (f.g. abelian) by Z}.
The result stated in the second paragraph of the abstract is Corollary~\ref{cor:max subgr growth of Z^k semidir_A Z}.

The general method used here for finding the maximal subgroup growth of metabelian groups $N \rtimes A$
naturally falls into two parts:
\begin{itemize}
	\item find the maximal $\integers[A]$-submodules of $N$
	\item count derivations (1-cocycles) from $A$ to simple quotients of $N$
\end{itemize}
See Lemma~\ref{lem:abelian by something - counting maximal subgroups by derivations}.

The idea of reducing subgroup growth questions of metabelian groups to commutative algebra is not new. See Chapter 9
in \cite{Lubotzky2003}. Also, submodule growth has been considered
by Segal before in \cite{Segal_growth_of_ideals_and_submodules} and \cite{Segal_polynomial_ring}. Further, the use of 
derivations in counting subgroups is well established in subgroup growth; see the first page of Chapter 1 in 
\cite{Lubotzky2003} as well as Section 1.3.

Section \ref{sec:notation} gives notation (most but not all standard) which is used throughout the paper. 
Section~\ref{sec:preliminary results} shows how derivations can be counted and used for counting maximal subgroups in metabelian groups. It also
contains several miscellaneous results (mostly known) that will be needed later.
The goals of Section~\ref{sec:finitely generated Z[x]-modules} are to describe the maximal submodule growth of 
 (a) all $\integers_D[x]$-modules (with $D$ finite) which are finitely generated as $\integers_D$-modules
 and (b) all finitely generated  $\integers[x]$-modules. Section~\ref{sec:fin gen modules over polynomial ring with several variables} shows how to 
 count the maximal submodules of $\integers[x_1, x_2, \ldots, x_\ell]$-modules, which are finitely generated as abelian groups.
 Section~\ref{sec:certain metabelian groups} contains the main results of the paper, on the maximal subgroup growth of certain metabelian groups.
 It also works out the exact maximal subgroup growth of an example.

Finally, note that most of the work presented in this paper was done while I was a graduate student at Binghamton University and
is from \cite{kelley}, my dissertation. 

\subsection{Notation and Terminology}
\label{sec:notation}

\noindent $\subgr(G)$: the number of subgroups of $G$ of index $n$\\
$\maxsubgr(G)$:  the number of maximal subgroups of $G$ of index $n$\\
$\maxsubmod(N)$: the number of maximal submodules of $N$ of index $n$\\
$\submodisoto{S}(N)$: See Definition \ref{def:submodisoto - number of submodules with quotient iso to etc.}\\
$\mtriv(N)$, $\mnontriv(N)$: See Definition \ref{def:mtriv and mnontriv}\\

\noindent $\Der(G,A)$: the set of derivations (see below) from $G$ to $A$ \\

\noindent $H \leq_n G$: $H$ is a subgroup of $G$ of index $n$\\
$H \leq_f G$ ($H \normal_f G$): $H$ is a subgroup of $G$ of finite index (resp.\ and is normal)\\
$I \maxideal R$: $I$ is a maximal ideal of $R$\\
$M \leq_{\max} N$: $M$ is a maximal submodule\footnote{Occasionally, the symbols `$\leq_{\max}$' will mean `maximal subgroup of'. Hopefully, the
	usage will be clear from context.} of $N$\\
$(a_1,\ldots,a_k)_R:$ the ideal of $R$ generated by $a_1,\ldots,a_k \in R$\\

\noindent $\mdeg(G)$: the degree of maximal subgroup growth\footnote{This is exactly what Mann
	denotes by $s^*(G)$ on page 449 of \cite{Mann1996}. Assuming
	$\maxsubgr(G) \geq 1$ for infinitely many $n$, then this also
	equals what Mann denotes by $s(G)$ on page 448 of that paper:
	$\limsup ((\log \maxsubgr(G))/\log n) =$ \newline $ \inf \{s | \maxsubgr(G) \leq Cn^s, \text{ for some C} \}$.
	
	Note that this differs from what Lubotzky defines on page 2 of \cite{Lubotzky2002}
	as the ``\thinspace`polynomial degree' of the
	rate of growth of $\maxsubgr(G)$'':
	$\mathcal{M}(G) := \sup_{n \geq 2} ((\log \maxsubgr(G))/ \log n)$.} of a group G: 
\[
\mdeg(G) = \inf \{ \alpha | \maxsubgr(G) \leq n^\alpha \text{ for all large $n$} \}.
\]
$\mmoddeg(N)$: the degree of maximal submodule growth\footnote{Of course,
	this depends on the ring $R$ (which is implicit, given $N$). Hence, though the notation $\mmoddeg_R(N)$ would be appropriate,
	we will not use the subscript $R$, especially since it is understood from the context.} of an $R$-module $N$:
\[
\mmoddeg(N) = \inf \{ \alpha | \maxsubmod(N) \leq n^\alpha \text{ for all large $n$} \}.
\]

\noindent $N \cong_R M$: $N$ and $M$ are isomorphic as $R$-modules\\
$d_R(N)$: the minimal size of an $R$-module generating set for $N$\\
$\rationals N$: $\rationals \tensor_\integers N$\\
$\integers_D$: the localization of $\integers$ at the (finite) set of primes $D$\\

Suppose $G$ acts on the abelian group $A$ on the left. Recall that a \emph{derivation} (also called a 1-cocycle, or crossed homomorphism)
is a function $\delta:G \to A$ that satisfies\footnote{If $A$ is not assumed to be abelian, and if $G$ instead acts on the right,
	the condition changes to $\delta(gh) = \delta(g)^h \cdot \delta(h)$ for all $g, h \in G$.} 
$\delta(gh) = \delta(g) + g\cdot \delta(h)$ for all $g, h \in G$.

Almost all groups that appear in this document as groups (except $\rationals$ which is a field$\ldots$) 
will be finitely generated (f.g.). 





In the following definition, the (increasing and eventually positive) function $g$ has domain a subset of 
the positive integers of the form $\{k, k+1, k+2, \ldots\}$.
\vspace{-.05in} 
\begin{definition}
	\label{def:growth type}
	Let $f \colon \{1, 2, 3,\ldots \} \longrightarrow \reals$. We say that $f$ has growth type\ldots 
	\begin{itemize}
		\item[] \ldots at most $g$, if $f(n) = O(g(n))$ (using ``Big O'' notation\footnote{This means that
			for some constant $C$, we have $|f(n)| \leq C g(n)$ for all large $n$.}).
		\item[] \ldots at least $g$, if $f(n) = \Omega(g(n))$: that is, there exists some constant $C > 0$ such that
		$Cg(n) \leq f(n)$ for \textbf{infinitely many} $n$.
		\item[] And if $f$ has growth type at most $g$ and at least $g$, we say it has growth type $g$.
	\end{itemize}
\end{definition}

Note that just like an analogous definition in \cite{Lubotzky2003} (Section 0.1), this notion of ``has growth type'' is not symmetric.

\section{Preliminary results}
\label{sec:preliminary results}

We begin with an easy observation:

\begin{lemma}
	\label{lem:quotient+the_rest} 
	Let $G$ be a finitely generated group with $N \normal G$. Then 
	\[
	\maxsubgr(G) = \maxsubgr(G/N) + \text{``the complement type''}
	\]
	where ``the complement type'' is the number of index $n$ maximal subgroups $M$ of $G$ with $MN = G$.
\end{lemma}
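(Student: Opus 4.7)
The plan is to partition the set of index-$n$ maximal subgroups of $G$ according to their relationship with $N$. The key observation is that for any maximal subgroup $M \leq G$, the product $MN$ is a subgroup of $G$ (using $N \normal G$, so $MN = NM$) that contains $M$. Maximality of $M$ then forces the dichotomy $MN = M$, i.e.\ $N \subseteq M$, or $MN = G$. These two alternatives are mutually exclusive: if both held simultaneously, we would have $M = MN = G$, contradicting that $M$ is a proper subgroup.

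The first alternative, $N \subseteq M$, captures exactly the maximal subgroups of $G$ containing $N$. By the lattice (``fourth'') isomorphism theorem, the map $M \mapsto M/N$ is a bijection between subgroups of $G$ containing $N$ and subgroups of $G/N$; this bijection preserves both maximality and index. Hence the number of index-$n$ maximal subgroups $M$ of $G$ with $N \subseteq M$ is exactly $\maxsubgr(G/N)$. The second alternative, $MN = G$, is by definition the ``complement type'' count. Adding the two disjoint contributions yields the claimed identity. There is no real obstacle here; the only thing to verify carefully is the disjointness and exhaustiveness of the dichotomy, handled above.
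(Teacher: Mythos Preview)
Your proof is correct and follows the same approach as the paper: both argue that for a maximal subgroup $M$, the group $MN$ must equal either $M$ (i.e.\ $N \subseteq M$) or $G$, giving the desired dichotomy. Your version is simply more explicit, spelling out the use of the lattice isomorphism theorem and the disjointness, whereas the paper's proof is a terse two-sentence remark.
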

\begin{proof}
	Either $M$ contains $N$ or it does not. The former case is equivalent to 
	$MN = M$, and the latter is equivalent to $MN = G$.
\end{proof}

So how do we count ``the complement type''? It turns out that if $N$ is abelian and itself has a complement in $G$ (a subgroup
$K \leq G$ such that $N \cap K = \{1\}$ and $KN = G$) then the answer to this question 
(Lemma~\ref{lem:obtaining maximal submodules}) is particularly nice. 

We now recall that a
group $B$ acting on an abelian group $A$ gives us a $\integers[B]$-module structure for $A$. As such, $A$ is called
a $B$ module. And so for a group $G$ with
an abelian normal subgroup $N$, for any $N_0 \leq N$ with $N_0 \normal G$ we have that $G$ acts on $N/N_0$ by 
conjugation. But since $N$ is abelian, $G/N$ acts on $N/N_0$ by conjugation, and so $N/N_0$ is 
a $\integers[G/N]$-module. 

\begin{lemma}
	\label{lem:obtaining maximal submodules}
	Let $N$ be an abelian normal subgroup of $G$. Suppose $M$ is a proper subgroup of $G$ with $MN = G$. Then $M \leq_{\max} G$ iff $M \cap N$
	 is a maximal $\integers[G/N]$-submodule of $N$. Also, $[G: M] = [N : M\cap N]$.
\end{lemma}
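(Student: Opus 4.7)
The plan is to set $L = M \cap N$ and establish a bijective correspondence between subgroups $H$ with $M \leq H \leq G$ and $\mathbb{Z}[G/N]$-submodules $L'$ of $N$ containing $L$, sending $H \mapsto H \cap N$ and $L' \mapsto ML'$. Once this is in place, $M$ being maximal in $G$ (i.e.\ no intermediate $H$) is immediately equivalent to $L$ being maximal in $N$ as a $\mathbb{Z}[G/N]$-submodule (i.e.\ no intermediate $L'$; note that $L < N$ strictly because otherwise $M \supseteq N$ would force $M = MN = G$, contradicting $M$ proper).

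First I would dispense with the index claim: since $N \trianglelefteq G$, the second isomorphism theorem gives $G/M = MN/M \cong N/(M \cap N)$, so $[G:M] = [N : M \cap N]$. Next I would verify that the two assignments are well defined and inverse. For any $H$ with $M \leq H \leq G$, Dedekind's modular law applied to $M \leq H$ and $H \leq G = MN$ gives $H = H \cap MN = M(H \cap N)$, so $H$ is recovered from $H \cap N$. Conversely, for any $L \leq L' \leq N$, one needs $ML'$ to be a subgroup, and $ML' \cap N = L'$. The subgroup property holds once $L'$ is normalized by $M$ (then $M$ normalizes $L'$, $L' \leq N$ is abelian so normalizes itself, and $ML' = L'M$ as sets); the identity $ML' \cap N = (M\cap N)L' = LL' = L'$ is another application of the modular law (using $L' \leq N$).

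Then I would observe that the subgroups $L'$ of $N$ appearing in this correspondence are precisely the $M$-invariant ones containing $L$, and that $M$-invariance coincides with $\mathbb{Z}[G/N]$-submodule structure: any subgroup of the abelian group $N$ is automatically $N$-invariant (conjugation by $N$ is trivial on $N$), so $M$-invariance combined with $G = MN$ gives $G$-invariance, and since $N$ acts trivially this is the same as being a $\mathbb{Z}[G/N]$-submodule. Under the correspondence, strict inclusions of subgroups match strict inclusions of submodules, so proper intermediate subgroups $M < H < G$ biject with proper intermediate submodules $L < L' < N$, giving the desired equivalence.

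I do not anticipate a real obstacle here; the only care needed is the careful bookkeeping in the two applications of Dedekind's modular law to confirm the correspondence is a bijection, and the verification that ``$M$-invariant'' and ``$\mathbb{Z}[G/N]$-submodule'' are the same property in this setting.
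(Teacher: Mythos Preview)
Your proposal is correct and is essentially a self-contained unpacking of what the paper does: the paper simply cites Result~5.4.2 in Robinson and notes that ``$N_0$ is a maximal $\integers[G/N]$-submodule'' is the same as ``$N/N_0$ is minimal normal in $G/N_0$'', whereas you prove the underlying lattice correspondence directly via Dedekind's modular law. The content is the same; your version has the advantage of being self-contained, while the paper's has the advantage of brevity.
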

\begin{proof}
	This is just Result 5.4.2 from \cite{Robinson} reworded. Let $N_0 = M \cap N$. Indeed, $N_0$ being a maximal $\integers[G/N]$-submodule of $N$
	precisely means that $N_0$ is maximal among all the proper subgroups of $N$ which are normal in $G$, and this means that
	$N/ N_0$ is a minimal normal subgroup of $G/ N_0$.
\end{proof}

Recall that of course a submodule $N_0$ of $N$ is maximal iff $N/N_0$ is a simple module.

Before continuing, we make another comment about group rings. If $A$ is a free abelian group of rank $\ell$, then the group ring
$\integers[A]$ is just the Laurent polynomials in $\ell$ variables
with integer coefficients: $\integers[x_1,x_1^{-1},x_2,x_2^{-1},\ldots,x_\ell,x_\ell^{-1}]$. 

\subsection{Using derivations}
\label{sec:using derivations}

The following is well known:
\begin{lemma}
	\label{lem:complements correspond to derivations}
	Suppose 
	\[
	N \hookrightarrow G \overset{\pi}{\twoheadrightarrow} G/N
	\]
	is exact and that $\sigma$ is a splitting of $\pi$. Then there is a one-to-one correspondence between
	the set of complements to $N$ and $\Der(G/N, N)$ where the action of $G/N$ on $N$ is defined
	by ${}^{\bar{g}}n := \sigma(\bar{g})n\sigma(\bar{g})^{-1}$.
\end{lemma}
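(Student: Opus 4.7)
The plan is to write down the bijection explicitly in both directions and then verify the three routine things: that the forward map really lands in $\Der(G/N,N)$, that the backward map really produces complements, and that the two are mutual inverses. For the forward direction, start with a complement $K$ to $N$. Then $\pi|_K \colon K \to G/N$ is an isomorphism, so let $\phi_K \colon G/N \to K$ denote its inverse. Because $\phi_K(\bar{g})$ and $\sigma(\bar{g})$ both project to $\bar{g}$, the element
\[
\delta_K(\bar{g}) := \phi_K(\bar{g})\sigma(\bar{g})^{-1}
\]
lies in $N$. I would then check the derivation identity by a direct calculation, exploiting that $\sigma$ and $\phi_K$ are homomorphisms:
\[
\delta_K(\bar{g}\bar{h}) = \phi_K(\bar{g})\phi_K(\bar{h})\sigma(\bar{h})^{-1}\sigma(\bar{g})^{-1} = \delta_K(\bar{g}) \cdot \bigl(\sigma(\bar{g})\delta_K(\bar{h})\sigma(\bar{g})^{-1}\bigr) = \delta_K(\bar{g}) \cdot {}^{\bar{g}}\delta_K(\bar{h}),
\]
which is exactly the derivation identity for the prescribed action (written multiplicatively, or additively when $N$ is abelian).

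In the reverse direction, given $\delta \in \Der(G/N,N)$, set $K_\delta := \{\delta(\bar{g})\sigma(\bar{g}) : \bar{g} \in G/N\}$. Closure under multiplication is precisely where the derivation identity is needed:
\[
\bigl(\delta(\bar{g})\sigma(\bar{g})\bigr)\bigl(\delta(\bar{h})\sigma(\bar{h})\bigr) = \delta(\bar{g}) \cdot {}^{\bar{g}}\delta(\bar{h}) \cdot \sigma(\bar{g}\bar{h}) = \delta(\bar{g}\bar{h})\sigma(\bar{g}\bar{h}) \in K_\delta.
\]
Using $\delta(1)=1$ (from plugging $\bar{g}=\bar{h}=1$ into the derivation identity), I would then see that $K_\delta$ contains the identity and is closed under inverses, so it is a subgroup. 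That $K_\delta$ is a complement follows from two observations: any element $\delta(\bar{g})\sigma(\bar{g}) \in N$ must project trivially, forcing $\bar{g}=1$ and hence the element equals $1$, so $K_\delta \cap N = \{1\}$; and any $g \in G$ with $\pi(g)=\bar{g}$ admits the decomposition $g = \bigl(g\sigma(\bar{g})^{-1}\delta(\bar{g})^{-1}\bigr) \cdot \bigl(\delta(\bar{g})\sigma(\bar{g})\bigr)$ with the first factor in $N$, so $NK_\delta = G$.

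Finally I would verify the two constructions are mutually inverse. Starting from $\delta$ and building $K_\delta$, the inverse of $\pi|_{K_\delta}$ sends $\bar{g}$ to $\delta(\bar{g})\sigma(\bar{g})$, so $\delta_{K_\delta}(\bar{g}) = \delta(\bar{g})\sigma(\bar{g})\cdot \sigma(\bar{g})^{-1} = \delta(\bar{g})$. Starting from $K$, one gets $K_{\delta_K} = \{\phi_K(\bar{g})\sigma(\bar{g})^{-1}\sigma(\bar{g}) : \bar{g} \in G/N\} = \phi_K(G/N) = K$. The main obstacle is purely notational: keeping the conjugation convention consistent between the multiplicative group law on $N$ and the additive derivation identity in the abelian setting. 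Since the only substantive calculation is the single identity $\sigma(\bar{g})\delta(\bar{h})\sigma(\bar{g})^{-1} = {}^{\bar{g}}\delta(\bar{h})$, which is built into the definition of the action, I expect no conceptual difficulty beyond this bookkeeping.
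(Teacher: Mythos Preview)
Your proof is correct and is the standard explicit verification of this well-known bijection; the computations and the mutual-inverse checks are all right. The paper itself does not supply a proof but merely cites Corollary~2.13 of \cite{kelley}, so there is nothing further to compare.
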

For a proof, see for example Corollary 2.13 in \cite{kelley}.  


The idea of using derivations to count subgroups is well established. See \cite{Lubotzky2003}, pages 11, 15. 
Another reference is \cite{Shalev_On_the_degree}. In fact, the origin of this section was wondering what Lemma 2.1 (iii) in 
\cite{Shalev_On_the_degree} reduced to when counting
maximal subgroups; the analogous result here is 
Lemma \ref{lem:abelian by something - counting maximal subgroups by derivations}.

\begin{lemma} 
	\label{lem:abelian by something - counting maximal subgroups by derivations}
	Let $G$ be a f.g.\ group with $N \normal G$ and $N$ abelian. Then
	\[ \tag{*}
	\maxsubgr(G) \leq \maxsubgr(G/N) + \sum_{N_0} \lvert\Der(G/N, N/N_0)\rvert
	\]
	where the sum is taken over all $N_0$ such that $N_0 \normal G$, $N_0 \leq N$ and such that $N/N_0$ is a simple $\integers[G/N]$-module with $|N/N_0| = n$. 
	When we have $G \cong N \rtimes G/N$, then the inequality in (*) is an equality.
\end{lemma}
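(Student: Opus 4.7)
The plan is to apply Lemma~\ref{lem:quotient+the_rest} to reduce the computation to counting the ``complement type'' maximal subgroups, then split that count according to the intersection with $N$, and finally apply Lemma~\ref{lem:complements correspond to derivations} to count complements via derivations.

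First I would recall that Lemma~\ref{lem:quotient+the_rest} gives
\[
\maxsubgr(G) = \maxsubgr(G/N) + \#\{M \leq_{\max} G : MN = G,\ [G:M] = n\}.
\]
So it suffices to bound (or compute, in the split case) the second summand. Given such an $M$, set $N_0 := M \cap N$. By Lemma~\ref{lem:obtaining maximal submodules}, $N_0$ is a maximal $\integers[G/N]$-submodule of $N$ with $[N:N_0] = [G:M] = n$, and since $N_0$ is a $G/N$-submodule of $N$, it is normal in $G$. Grouping the complement-type maximal subgroups of index $n$ by their intersection $N_0$ with $N$ then gives
\[
\#\{M \leq_{\max} G : MN=G,\ [G:M]=n\} = \sum_{N_0} \#\{M : MN=G,\ M \cap N = N_0\},
\]
with $N_0$ ranging over the set described in the statement.

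For a fixed such $N_0$, passing to $G/N_0$ yields a short exact sequence
\[
N/N_0 \hookrightarrow G/N_0 \twoheadrightarrow G/N,
\]
and a maximal subgroup $M$ of $G$ with $MN=G$ and $M \cap N = N_0$ is exactly the preimage in $G$ of a complement of $N/N_0$ in $G/N_0$. If this extension splits, Lemma~\ref{lem:complements correspond to derivations} identifies the set of complements with $\Der(G/N, N/N_0)$, giving exactly $|\Der(G/N, N/N_0)|$ such $M$. If the extension does not split, there are no complements, and the term contributes $0 \leq |\Der(G/N, N/N_0)|$. Summing over $N_0$ yields the claimed inequality $(*)$.

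For equality when $G \cong N \rtimes G/N$, I would observe that a splitting $\sigma : G/N \to G$ of $G \twoheadrightarrow G/N$ composed with the quotient $G \twoheadrightarrow G/N_0$ gives a splitting of $G/N_0 \twoheadrightarrow G/N$ for every admissible $N_0$, so every sub-extension splits and Lemma~\ref{lem:complements correspond to derivations} applies to each summand. The only point that requires a little care is the bijectivity of the map from complements (in $G/N_0$) to maximal subgroups $M$ of $G$ (with $M \cap N = N_0$), but this is immediate since preimage under $G \twoheadrightarrow G/N_0$ is inclusion-preserving and bijective on subgroups containing $N_0$. No step looks to be a serious obstacle.
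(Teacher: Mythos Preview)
Your proposal is correct and follows essentially the same approach as the paper: reduce via Lemma~\ref{lem:quotient+the_rest}, group the complement-type maximal subgroups by $N_0 = M \cap N$, identify them with complements of $N/N_0$ in $G/N_0$, and invoke Lemma~\ref{lem:complements correspond to derivations}. Your treatment is in fact slightly more explicit than the paper's in one respect: for the inequality you separate the cases where the sub-extension $N/N_0 \hookrightarrow G/N_0 \twoheadrightarrow G/N$ splits (contributing exactly $|\Der|$) or does not (contributing $0$), whereas the paper implicitly uses that the existence of $M$ itself forces the extension to split.
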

\begin{proof}
	 For the inequality,  by Lemma~\ref{lem:quotient+the_rest},
	 we only need to show that the number of maximal subgroups $M$ of $G$ such that $MN = G$ is bounded above by
	 $\sum_{N_0} \lvert\Der(G/N, N/N_0)\rvert$.
	 
	 Let $M \leq_{\max} G$ with $MN = G$ and $[G:M] = n$. Let $N_0 = M\cap N$. Then by Lemma~\ref{lem:obtaining maximal submodules},
	 $N/N_0$ is a simple $\integers[G/N]$-module with $|N/N_0| = n$. We have the exact sequence
	 \[\tag{*1}
	   N/N_0 \hookrightarrow G/N_0 \twoheadrightarrow G/N.
	 \]
	 By Lemma \ref{lem:complements correspond to derivations}, $M$ is counted by the term $\lvert\Der(G/N, N/N_0)\rvert$, and we have that
	 distinct $M_1, M_2 \leq_{\max} G$ with $M_i \cap G = N_0$ for $i = 1,2$, correspond to different derivations. This proves (*).
	 
	 Next, suppose $G = N \rtimes G/N$. Let $N_0$ be a maximal $\integers[G/N]$-submodule of $N$ with $|N/N_0| = n$. 
	Let $\mathcal{M}$ be the set of maximal subgroups $M$ of $G$ (of index $n$) that
	have $MN = G$ and $M \cap N = N_0$. By Lemma~\ref{lem:obtaining maximal submodules}, $\mathcal{M}$ 
	``is'' (or rather, corresponds to) the set of complements to $N/N_0$ in $G/N_0$. Because $G/N_0$ is just $N/N_0 \rtimes G/N$,
	the short exact sequence (*1) splits. Therefore, by Lemma~\ref{lem:complements correspond to derivations}, $\mathcal{M}$ has
	cardinality $\lvert\Der(G/N, N/N_0)\rvert$.
\end{proof}

\subsection{Counting derivations}
\label{sec:counting derivations}
In order to actually use derivations to count maximal subgroups, we need to be able to count derivations.

We begin by stating a slightly weaker version of Lemma 2.5 from \cite{Shalev_On_the_degree}. 
In the lemma here, notice that $A$ is a module over the group ring $\integers[\langle x \rangle]$. 
\begin{lemma}
	\label{lem:Shalev's counting derivations out of cyclic groups}
	Suppose a cyclic group $\langle x \rangle$ acts on a finite abelian group $A$. Also, suppose
	\begin{enumerate}[(i)]
		\item $\langle x \rangle$ is the infinite cyclic group, or
		\item $x$ has order $k$ and $(1 + x + x^2 + \cdots + x^{k-1}) \cdot a = 0$ for all $a \in A$.
	\end{enumerate}
	Then $\lvert\Der(\langle x \rangle, A)\rvert = |A|$. 
\end{lemma}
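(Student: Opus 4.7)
The plan is to show that a derivation $\delta \colon \langle x \rangle \to A$ is completely determined by its value $a := \delta(x) \in A$, and then that in both hypothesized cases every $a \in A$ arises this way, giving a bijection $\Der(\langle x\rangle, A) \leftrightarrow A$.

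First I would derive the basic formulas. From $\delta(1) = \delta(1 \cdot 1) = \delta(1) + 1 \cdot \delta(1)$ one gets $\delta(1) = 0$. Then a short induction on $n \geq 1$ using the cocycle identity
\[
\delta(x^{n+1}) = \delta(x^n \cdot x) = \delta(x^n) + x^n \cdot \delta(x)
\]
yields the closed form $\delta(x^n) = (1 + x + x^2 + \cdots + x^{n-1}) \cdot a$. If $x$ has infinite order this determines $\delta$ on all of $\langle x \rangle$ (extending to negative exponents via $\delta(x^{-n}) = -(x^{-1} + \cdots + x^{-n})\cdot a$, which is forced by the same cocycle identity applied to $\delta(x^{-n} \cdot x^n) = \delta(1) = 0$). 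Thus the map $\Der(\langle x\rangle, A) \to A$ sending $\delta \mapsto \delta(x)$ is injective.

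For surjectivity I need to check that the formula above actually defines a derivation for every choice of $a \in A$. In case (i), $\langle x \rangle$ is free on $x$, so there are no relations to verify; one only needs to confirm that the prescription $\delta(x^n) := (1 + x + \cdots + x^{n-1}) a$ satisfies $\delta(x^n x^m) = \delta(x^n) + x^n \delta(x^m)$, which is the immediate identity
\[
(1 + x + \cdots + x^{n+m-1}) a = (1 + \cdots + x^{n-1}) a + x^n (1 + \cdots + x^{m-1}) a.
\]
In case (ii), the only additional thing to check is that the formula respects the relation $x^k = 1$, i.e.\ that $\delta(x^k) = \delta(1) = 0$. But $\delta(x^k) = (1 + x + \cdots + x^{k-1}) \cdot a$, which vanishes by hypothesis. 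A small computation then shows $\delta(x^{n+k}) = \delta(x^n) + x^n \cdot \delta(x^k) = \delta(x^n)$, so $\delta$ descends to a well-defined function on $\Z/k\Z$.

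The main (minor) obstacle is precisely the well-definedness in case (ii): without the assumption $(1 + x + \cdots + x^{k-1}) a = 0$ the candidate derivation would be inconsistent on $x^k = 1$. That hypothesis is exactly what makes the construction work, and with it in hand both directions of the bijection are straightforward, yielding $|\Der(\langle x \rangle, A)| = |A|$.
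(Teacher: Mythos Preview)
Your argument is correct. The paper does not actually prove this lemma; it is stated as a slightly weakened form of Lemma~2.5 in Shalev's paper \cite{Shalev_On_the_degree} and is simply cited rather than re-proved. Your direct verification---showing that $\delta\mapsto\delta(x)$ is injective via the closed formula $\delta(x^n)=(1+x+\cdots+x^{n-1})a$, and then checking that every $a\in A$ yields a well-defined derivation (trivially in case~(i), and in case~(ii) precisely because the hypothesis kills $\delta(x^k)$)---is the standard proof and is exactly what one would expect.

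One small point worth tightening: in case~(i) you explicitly check the cocycle identity only for non-negative exponents, while noting that the values on negative powers are forced. For full rigor you should either verify $\delta(x^mx^n)=\delta(x^m)+x^m\delta(x^n)$ for arbitrary $m,n\in\integers$, or simply invoke the universal property of derivations out of a free group (this is Lemma~\ref{lem:universal property -- derivations} in the paper, applied with $d=1$), which gives case~(i) in one line. Likewise, in case~(ii) your well-definedness check is correct, but the paper's Lemma~\ref{lem:derivations factor through quotients} packages exactly this step: once $\delta(x^k)=0$ and $x^k$ acts trivially, the derivation on $\integers$ descends to $\integers/k\integers$.
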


Note: In Shalev's paper, instead of $A$,
he has an arbitrary finite group $F$. 
The main reason why the lemma is not stated in that generality here
is to use additive notation for $A$. Also, instead of the second point, the lemma could instead say
\[
\lvert\Der(\langle x \rangle, A)\rvert = \lvert\{ a \in A : (1 + x + x^2 + \cdots + x^{k-1}) \cdot a = 0 \}\rvert.
\]

At this point, we could state Lemma \ref{lem:abelian by infinite cyclic - exact counting} (and prove it in one line). Readers may 
want to read that before continuing this section.

While Lemma \ref{lem:Shalev's counting derivations out of cyclic groups} tells us how to count derivations
if the domain is a cyclic group, we will have need to count derivations when the domain is not cyclic. To do so, 
we prove that derivations factor through quotients, just 
as homomorphisms factor through quotients. 

Let $G$ be a group acting on the (abelian) group $A$. 
Suppose $N \normal G$ and that $N$ acts trivially\footnote{By this we mean that $g\cdot a = a$ for all $g \in N$ and $a \in A$.}
on $A$.
Recall that this gives us an action of $G/N$ on $A$.
Further, suppose $N$ is normally generated by 
$\{a_1, \ldots, a_k\}$.
\begin{lemma}
	\label{lem:derivations factor through quotients}
	With the above notation, suppose $\delta \colon G \longrightarrow A$ is a derivation and 
	that $\delta(a_i) = 0$ for all $i$. Then
	\begin{enumerate}[(i)]
		\item $\delta(g) = 0$ for all $g \in N$, and therefore
		\item $\delta$ factors through $G/N$.
	\end{enumerate}
\end{lemma}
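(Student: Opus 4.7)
The plan is to prove (i) directly and then derive (ii) as a short formal consequence. For (ii), once we know $\delta$ vanishes on $N$, then for any $g \in G$ and $n \in N$ we have $\delta(gn) = \delta(g) + g\cdot\delta(n) = \delta(g)$, so $\delta$ is constant on left cosets of $N$ and thus descends to a well-defined function $\bar\delta \colon G/N \to A$; the derivation identity for $\bar\delta$ with respect to the induced $G/N$-action on $A$ is immediate from the one for $\delta$.

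So the real content is (i). First I would record the standard formal consequences of the derivation identity: $\delta(1) = 0$ (from $\delta(1\cdot 1) = 2\delta(1)$) and $\delta(g^{-1}) = -g^{-1}\cdot \delta(g)$ (from $0 = \delta(gg^{-1})$). Next comes the key computation, which exploits the fact that $N$ acts trivially on $A$: for $g \in G$ and $a \in N$,
\[
\delta(gag^{-1}) = \delta(g) + g\cdot\delta(a) + ga\cdot\delta(g^{-1}) = \delta(g) + g\cdot\delta(a) + g\cdot\delta(g^{-1}) = g\cdot\delta(a),
\]
where the middle equality uses that $a \in N$ acts trivially, and the last uses $\delta(g^{-1}) = -g^{-1}\cdot\delta(g)$. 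In particular, if $\delta(a) = 0$ then $\delta(gag^{-1}) = 0$, so $\delta$ vanishes on every conjugate of each $a_i$.

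To assemble the conclusion, I would observe that the restriction $\delta|_N$ is a genuine group homomorphism $N \to A$: since $N$ acts trivially on $A$, $\delta(ab) = \delta(a) + a\cdot\delta(b) = \delta(a) + \delta(b)$ for all $a,b \in N$, and in particular $\delta(a^{-1}) = -\delta(a)$. Because $N$ is generated (as an ordinary group) by the set $\{g a_i^{\pm 1} g^{-1} : g \in G,\ 1 \leq i \leq k\}$ of conjugates, and $\delta$ vanishes on every element of this set by the key computation, the homomorphism $\delta|_N$ vanishes on a generating set and hence on all of $N$. This establishes (i), and then (ii) follows as indicated above.

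I do not expect a serious obstacle; the only subtle step is the middle computation, whose crux is recognizing that the hypothesis ``$N$ acts trivially on $A$'' lets us replace $ga$ by $g$ in the third summand, which is exactly what collapses the expression down to $g\cdot\delta(a)$.
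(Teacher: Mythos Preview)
Your proof is correct and follows essentially the same approach as the paper: both compute $\delta(ga g^{-1})$ using the derivation identity, the formula $\delta(g^{-1}) = -g^{-1}\cdot\delta(g)$, and the triviality of the $N$-action, then use that $\delta|_N$ is a homomorphism to pass from conjugates of the $a_i$ to all of $N$. Your version is marginally cleaner in that you first establish the general identity $\delta(gag^{-1}) = g\cdot\delta(a)$ for $a\in N$ before specializing to $a=a_i$, whereas the paper invokes $\delta(a_i)=0$ mid-computation, but this is only a cosmetic difference.
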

Notes: (1) This basically is exercise 4(a) in \cite{Brown1982} (pg.\ 90). 
(2) The hypothesis that $A$ is abelian is not needed, but it simplifies the notation slightly; further, 
in what follows, the lemma is applied only in the case that $A$ is abelian.
\begin{proof}
	We have that $N$ is generated (as a subgroup) by the set of all $ga_i g^{-1}$ such that $g \in G$ and 
	$1 \leq i \leq k$. It is immediate from the definition of derivations that to prove (i), we only need to 
	show $\delta(ga_i g^{-1}) = 0$ for all $g$ and $a_i$. So pick $g$ and $a_i$.
	
	We have (explanations following the equations)
	\begin{align}
	\delta(ga_1 g^{-1}) &= \delta(g) + g\delta(a_i g^{-1}) \\
	&= \delta(g) + g(\delta(a_i) + a_i\delta(g^{-1})) \\
	&= \delta(g) - ga_i g^{-1} \delta(g) \\
	&= \delta(g) - \delta(g) = 0
	\end{align}
	Equations (1) and (2) follow from the definition of derivation; for (1), we associated
	$ga_i g^{-1}$ as $g(a_i g^{-1})$. For equation (3), besides distributing $g$, 
	we are using the hypothesis that $\delta(a_i) = 0$ for all $i$, 
	and we are also using the general fact\footnote{This fact can be easily checked by applying the definition
		of derivation to $\delta(x x^{-1}) = 0$.} that $\delta(x^{-1}) = -x^{-1}\delta(x)$ (where here $x = g$).
	Equation (4) follows from (3) by using the fact that $ga_i g^{-1} \in N$, and recalling that 
	$N$ acts trivially on $A$. And so combining (1) - (4) gives $\delta(ga_1 g^{-1}) = 0$, 
	which proves part (i) of this lemma.
	
	For part (ii), we first claim that since $\delta(N) = \{0\}$, we get a well-defined function 
	$\bar{\delta} \colon G/N \longrightarrow A$ via $\bar{\delta}(gN) = \delta(g)$. Indeed, 
	take $g \in G$ and $n \in N$. Then $\delta(gn) = \delta(g) + g\delta(n)$, but this equals
	$\delta(g)$, since $\delta(n) = 0$ by part (i). What remains to be shown is that the function $\bar{\delta}$
	is a derivation.
	
	Take $g, h \in G$. Then $\bar{\delta}(gNhN) = \bar{\delta}(ghN) = \delta(gh)$, but since $\delta$ is a derivation, 
	$\delta(gh)$ equals $\delta(g) + g\delta(h)$, which equals $\bar{\delta}(gN) + gN\delta(hN)$, since the coset $gN$ acts
	on $A$ the way $g$ acts on $A$. 
\end{proof}

We prove the universal property (of free groups) for derivations. This is analogous to homomorphisms. 
Let $F_d$ be the free
group on $X = \{x_1, x_2, \ldots, x_d\}$. ($F_d$ is abelian if and only if $d = 1$.) Suppose $F_d$ acts on $A$. Note 
that $A$ is assumed\footnote{The reasons for this are (a) to simplify notation slightly and (b)
	because the author intends to use it only in the case that $A$ is abelian.} to be an abelian group.

\begin{lemma}
	\label{lem:universal property -- derivations}
	With the above notation, any map $\delta \colon \{x_1, x_2, \ldots, x_d\} \longrightarrow A$ gives a unique derivation 
	$\delta \colon F_d \longrightarrow A$.
\end{lemma}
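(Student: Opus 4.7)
The plan is to mirror the standard universal property of the free group for group homomorphisms, using the semidirect product $A \rtimes F_d$ as a bridge. For uniqueness, observe first that $\delta(1) = \delta(1 \cdot 1) = \delta(1) + 1 \cdot \delta(1)$ forces $\delta(1) = 0$, and then $0 = \delta(x_i x_i^{-1}) = \delta(x_i) + x_i \delta(x_i^{-1})$ forces $\delta(x_i^{-1}) = -x_i^{-1} \delta(x_i)$. Applying the cocycle identity $\delta(gh) = \delta(g) + g\delta(h)$ inductively to a reduced word $w = x_{i_1}^{\epsilon_1} \cdots x_{i_r}^{\epsilon_r}$ writes $\delta(w)$ as an explicit $F_d$-linear combination of the $\delta(x_{i_j})$, so any derivation is determined by its values on $X$.

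For existence, I would form $A \rtimes F_d$ using the given action of $F_d$ on $A$, so that multiplication reads $(a,g)(b,h) = (a + g \cdot b,\, gh)$. Define a set map $X \to A \rtimes F_d$ by $x_i \mapsto (\delta(x_i), x_i)$. By the ordinary universal property of the free group, this extends uniquely to a group homomorphism $\Phi \colon F_d \to A \rtimes F_d$. Composing with the projection $\pi \colon A \rtimes F_d \to F_d$ yields an endomorphism of $F_d$ sending each $x_i$ to itself, hence (again by uniqueness of extensions) equal to the identity. Therefore $\Phi(g) = (D(g), g)$ for some function $D \colon F_d \to A$ with $D(x_i) = \delta(x_i)$. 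Reading off the first coordinate of $\Phi(gh) = \Phi(g)\Phi(h)$ gives $D(gh) = D(g) + g \cdot D(h)$, so $D$ is the sought derivation.

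There is no substantive obstacle; the only point requiring care is that the $F_d$-action used to form the semidirect product is exactly the action featured in the definition of derivation, so that the first-coordinate computation genuinely reproduces the cocycle identity. As a side benefit, the uniqueness clause of the universal property applied to $\Phi$ gives an independent proof of the uniqueness of $\delta$, consistent with the word-level argument above.
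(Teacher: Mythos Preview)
Your argument is correct and is genuinely different from the paper's. The paper proceeds by brute force: it defines $\delta$ explicitly on reduced words via the formula
\[
\delta(y_1\cdots y_k) := \delta(y_1) + y_1\delta(y_2) + \cdots + y_1y_2\cdots y_{k-1}\delta(y_k),
\]
after first setting $\delta(x^{-1}) := -x^{-1}\delta(x)$ and $\delta(\epsilon) := 0$, and then verifies the cocycle identity $\delta(yz) = \delta(y) + y\delta(z)$ by a two-case argument (Case~1: $yz$ already reduced; Case~2: cancellation, handled by induction on the number of cancelled letters). Your route instead packages a derivation as a section of the projection $A \rtimes F_d \to F_d$ and then invokes the ordinary universal property of $F_d$ for group homomorphisms, so the cocycle identity falls out of the semidirect-product multiplication with no case analysis at all. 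This is slicker and more conceptual; it is essentially the inverse of the correspondence the paper records earlier in Lemma~\ref{lem:complements correspond to derivations}. What the paper's explicit construction buys is a concrete formula for $\delta(w)$ on any reduced word $w$, which is occasionally handy for computation, whereas your approach yields existence and uniqueness simultaneously with almost no bookkeeping.
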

Note: This is exercise 3(a) in \cite{Brown1982} (pg.\ 90).
\begin{proof}
	Let $x \in X$. Define $\delta(x^{-1}) := -x^{-1}\delta(x)$. Next,  
	for $y_1, y_2,\ldots,y_k \in X^{\pm1}$, let $y = y_1y_2\cdots y_k$, and assume $y$ is a reduced word. We will 
	then define
	$\delta(y)$ to be $\delta(y_1) + \sum_{j=1}^{k-1} y_1\cdots y_j\delta(y_{j+1})$; written out, this says
	\[
	\delta(y_1\cdots y_k) := \delta(y_1) + y_1\delta(y_2) + \cdots + y_1y_2\cdots y_{k-1}\delta(y_k).
	\]
	Let $\epsilon$ denote the identity of $F_d$; so $\epsilon$ is the empty word. So far, we have defined $\delta(y)$
	for any $y$ except $\epsilon$. We define $\delta(\epsilon) := 0$. We now have a well-defined function 
	$\delta \colon F_d \longrightarrow A$, and it is straightforward to check that $\delta$ is indeed a derivation:
	
	Let $y, z \in F_d$. If $y$ or $z$ (or both) are the identity, then $\delta(yz) = \delta(y) + y\delta(z)$. So
	suppose that neither $y$ nor $z$ is $\epsilon$, the identity.

	\begin{case}{Case 1.}
		Suppose that $yz$ is a reduced word.
	\end{case}
	It is easy to see that $\delta(yz) = \delta(y) + y\delta(z)$; indeed, let 
	$y = y_1y_2\cdots y_k$ and $z = z_1z_2\cdots z_\ell$, where $y_1, \ldots, y_k, z_1, \ldots, z_\ell \in X^{\pm1}$.
	To simplify notation, for $j \in \{1, 2, \ldots, k\}$, let $\hat{y}_j$ denote $y_1y_2\cdots y_j$ and similarly
	for $\hat{z}_j$ if $j \in \{1, 2, \ldots, \ell \}$. (So $y = \hat{y}_k$ and $z = \hat{z}_\ell$.) Then 
	\begin{align*}
	\delta(yz) &= \delta(y_1\ldots y_kz_1\ldots z_\ell) \\
	&= \delta(y_1) + \cdots +\hat{y}_{k-1}\delta(y_k) + y\delta(z_1) + yz_1\delta(z_2) + 
	\cdots + y\hat{z}_{\ell - 1}\delta(z_\ell) \\
	&= \delta(y) + y(\delta(z_1) + z_1\delta(z_2) + \cdots + \hat{z}_{\ell-1}\delta(z_\ell)) \\
	&= \delta(y) + y\delta(z),
	\end{align*}
	and this is what we wanted to show, finishing this case.

	\begin{case}{Case 2.}
		There is cancellation in the product $yz$.  
	\end{case}
	To show this case, we use induction on the amount of cancellation. Our base case is the previous case, that
	there is no cancellation. Note that $y$ and $z$ are each, individually, assumed still to be reduced words.
	Suppose $y = ux$ and $z = x^{-1}w$ for some $x \in X^{\pm1}$ and $u, w \in F_d$. Assume that
	$\delta(uw) = \delta(u) + u\delta(w)$. So since $yz = uxx^{-1}w = uw$, by our inductive hypothesis, we need
	only show that $\delta(ux) + ux\delta(x^{-1}w) =  \delta(u) + u\delta(w)$. We have (explanations following)
	\begin{align*}
	\delta(ux) + ux\delta(x^{-1}w) &= \delta(u) + u\delta(x) + ux(\delta(x^{-1}) +x^{-1}\delta(w)) \\
	&= \delta(u) + u\delta(x) + ux(-x^{-1}\delta(x) + x^{-1}\delta(w)) \\
	&= \delta(u) + u\delta(x) -uxx^{-1}\delta(x) + uxx^{-1}\delta(w) \\
	&= \delta(u) + u\delta(x) - u\delta(x) + u\delta(w) \\
	&= \delta(u) + u\delta(w)
	\end{align*}
	The first equality is by Case 1 applied to the reduced words $ux$ and $x^{-1}w$. The second equality
	just uses our definition of $\delta(x^{-1})$. Besides distributing $ux$, the third
	equality follows since the action of $F_d$ on $A$ is, of course, by automorphisms, 
	and hence we may pull the -1 in front. This finishes Case 2 and the lemma.
\end{proof}

For the rest of this section, we write $\integers^\ell = \langle x_1,\ldots, x_\ell | [x_i,x_j] \text{for all } i, j \rangle$ for the free 
abelian group of rank $\ell$ (written multiplicatively).

\begin{lemma}
	\label{lem:characterization of derivations from free abelian groups}
	Let $S$ be a simple $\integers^\ell$-module. There is a one-to-one correspondence between the set $\Der(\integers^\ell,S)$ and the 
	set of functions $\delta\colon \{x_1,\ldots,x_\ell \} \longrightarrow S$ satisfying
	\[ \tag{*}
	(1-x_i)\delta(x_j) = (1-x_j)\delta(x_i) \text{\hspace{.1in} for all $i,j$.}
	\]
\end{lemma}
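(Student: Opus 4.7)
The plan is to obtain $\integers^\ell$ as a quotient of the free group $F_\ell$ and invoke the two preceding lemmas (universal property of free groups for derivations, and derivations factoring through quotients) to reduce the problem to a computation on commutators.

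First I would write $\integers^\ell = F_\ell/N$, where $F_\ell$ is free on $\{x_1,\ldots,x_\ell\}$ and $N$ is the normal subgroup of $F_\ell$ generated by the commutators $\{[x_i,x_j] : 1 \le i < j \le \ell\}$. Pulling back the $\integers^\ell$-action on $S$ along the quotient map gives an action of $F_\ell$ on $S$ in which $N$ acts trivially, so Lemma~\ref{lem:derivations factor through quotients} applies. In particular, derivations $\integers^\ell \to S$ correspond exactly to derivations $\tilde\delta\colon F_\ell \to S$ that vanish on $N$, and by Lemma~\ref{lem:derivations factor through quotients}(i) the latter is equivalent to $\tilde\delta$ vanishing on each normal generator $[x_i,x_j]$.

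Next, by Lemma~\ref{lem:universal property -- derivations}, derivations $\tilde\delta \colon F_\ell \to S$ are in bijection with arbitrary set maps $\delta\colon\{x_1,\ldots,x_\ell\}\to S$. Combining this with the previous step, we get a bijection between $\Der(\integers^\ell,S)$ and the set of maps $\delta$ on generators for which $\tilde\delta([x_i,x_j]) = 0$ for all $i,j$. So it remains only to show that this last condition is exactly condition (*).

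The computation is short and uses only the derivation identity, the formula $\tilde\delta(x^{-1})=-x^{-1}\delta(x)$, and the fact that on $S$ the action factors through the abelian group $\integers^\ell$ (so, e.g., $x_ix_jx_i^{-1}$ acts as $x_j$). Expanding
\[
\tilde\delta(x_ix_jx_i^{-1}x_j^{-1})
= \delta(x_i) + x_i\delta(x_j) - x_j\delta(x_i) - \delta(x_j)
= (x_i-1)\delta(x_j) - (x_j-1)\delta(x_i),
\]
which vanishes precisely when $(1-x_i)\delta(x_j) = (1-x_j)\delta(x_i)$. The main (minor) obstacle is just being careful about the two $F_\ell$-module structures in play—the action on $S$ is pulled back from $\integers^\ell$, so commutators act trivially even though they are not trivial in $F_\ell$—but once this is observed the commutator computation is routine. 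Note that simplicity of $S$ plays no role in the argument; only its abelian group structure and its $\integers^\ell$-module structure are used.
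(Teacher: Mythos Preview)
Your proposal is correct and follows essentially the same approach as the paper: both arguments lift to the free group $F_\ell$, invoke the universal property for derivations and the factoring-through-quotients lemma, and reduce the question to the commutator computation $\tilde\delta([x_i,x_j]) = \delta(x_i) + x_i\delta(x_j) - x_j\delta(x_i) - \delta(x_j)$. The only cosmetic difference is that the paper handles the restriction direction directly in $\integers^\ell$ (expanding $\delta(x_ix_j)=\delta(x_jx_i)$) rather than via $F_\ell$, and your observation that simplicity of $S$ is never used is correct.
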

\begin{proof}
	Step 1. Let $\delta\colon \integers^\ell \longrightarrow S$ be a derivation. Fix $i,j$. Because $x_ix_j = x_jx_i$, we have $\delta(x_ix_j) = \delta(x_jx_i)$ Therefore,
	$\delta(x_i) + x_i\delta(x_j) = \delta(x_j) + x_j\delta(x_i)$. Rearranging and factoring yields (*).
	
	Step 2. Let $\delta\colon \{x_1,\ldots,x_\ell \} \longrightarrow S$ satisfy (*). By Lemma \ref{lem:universal property -- derivations}, we
	get a unique derivation $\delta\colon F_\ell \longrightarrow S$, where the action of $F_\ell$ on $S$ is the 
	induced action.
	Fix $i,j$. We claim that $\delta([x_i,x_j]) = 0$. Indeed,
	\[\begin{aligned}
	\delta(x_ix_jx_i^{-1}x_j^{-1}) &= \delta(x_i) + x_i\delta(x_j) - x_ix_jx_i^{-1}\delta(x_i) - x_ix_jx_i^{-1}x_j^{-1}\delta(x_j)\\
	&= \delta(x_i) + x_i\delta(x_j) - x_j\delta(x_i) - \delta(x_j),
	\end{aligned}\] 
	where last equality is by the induced action.\footnote{Indeed, $x_ix_jx_i^{-1} = x_ix_jx_i^{-1}x_j^{-1}x_j = [x_i,x_j]x_j$. We then twice use 
		the fact that $[x_i,x_j]$ acts trivially on $S$.} Notice that this last expression is 0 precisely because (*) holds. Therefore, 
	Lemma \ref{lem:derivations factor through quotients} gives us a derivation from $\integers^\ell$ to $S$.
	
	Because Steps 1 and 2 are inverses of each other, we are finished. 
\end{proof}

\begin{lemma}
	\label{lem:counting derivations from free abelian groups to simple modules}
	Let $S$ be a simple $\integers^\ell$-module. Then
	\[
	\lvert \Der(\integers^\ell,S) \rvert = 
	\begin{cases}
	|S|^\ell & \text{if the action is trivial} \\
	|S| & \text{otherwise.}
	\end{cases}
	\]
	%
\end{lemma}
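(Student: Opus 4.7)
The plan is to apply Lemma~\ref{lem:characterization of derivations from free abelian groups} and reduce the problem to counting tuples $(\delta(x_1),\ldots,\delta(x_\ell)) \in S^\ell$ satisfying the commutation relations $(1-x_i)\delta(x_j) = (1-x_j)\delta(x_i)$ for all $i,j$.

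If the action is trivial, then every $(1-x_i)$ acts as $0$ on $S$, so the relations are vacuous and any tuple works, giving $|S|^\ell$ derivations. Suppose instead that the action is nontrivial; pick $i_0$ with $x_{i_0}$ acting nontrivially on $S$. The key observation is that $(1-x_{i_0})$, viewed as a map $S \to S$, commutes with every element of $\integers^\ell$ (since $\integers^\ell$ is abelian), and therefore is a $\integers[\integers^\ell]$-module endomorphism of $S$. Its kernel and image are submodules; the kernel is proper because $x_{i_0}$ acts nontrivially, and the image is nonzero for the same reason. Simplicity of $S$ then forces $\ker(1-x_{i_0}) = 0$ and $\mathrm{image}(1-x_{i_0}) = S$, so $(1-x_{i_0})$ is an automorphism of $S$.

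Now I use the relations with $j = i_0$: for each $i$, $(1-x_i)\delta(x_{i_0}) = (1-x_{i_0})\delta(x_i)$, so
\[
\delta(x_i) = (1-x_{i_0})^{-1}(1-x_i)\delta(x_{i_0}).
\]
Thus the whole tuple is determined by the single element $\delta(x_{i_0}) \in S$, giving at most $|S|$ derivations. Conversely, for any choice of $s := \delta(x_{i_0}) \in S$, defining $\delta(x_i) = (1-x_{i_0})^{-1}(1-x_i)s$ satisfies all the relations: for arbitrary $i,j$,
\[
(1-x_i)\delta(x_j) = (1-x_i)(1-x_{i_0})^{-1}(1-x_j)s = (1-x_j)(1-x_{i_0})^{-1}(1-x_i)s = (1-x_j)\delta(x_i),
\]
where the middle equality is just the commutativity of these operators on $S$ (again because $\integers^\ell$ is abelian). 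Hence there are exactly $|S|$ derivations, completing the count.

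The only real subtlety is making sure $(1-x_{i_0})$ is genuinely invertible on $S$; that step uses simplicity of $S$ in both directions (killing the kernel and forcing the image to be all of $S$), and would fail without it. Everything else is bookkeeping from Lemma~\ref{lem:characterization of derivations from free abelian groups} and the commutativity of the acting group.
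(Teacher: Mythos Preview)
Your proof is correct and follows essentially the same route as the paper: both invoke Lemma~\ref{lem:characterization of derivations from free abelian groups}, dispose of the trivial case immediately, and in the nontrivial case pick a generator $x_{i_0}$ acting nontrivially, argue that $(1-x_{i_0})$ is invertible on $S$, and conclude that $\delta(x_{i_0})$ determines the whole derivation. The only minor differences are that you justify invertibility via a Schur-type argument (kernel and image are submodules of the simple module $S$) whereas the paper observes that $S$ is a one-dimensional vector space over the field $R/\Ann_R(S)$, and that you explicitly verify that an arbitrary choice of $\delta(x_{i_0})$ satisfies \emph{all} relations $(i,j)$, a check the paper leaves implicit.
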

\begin{proof}
	If the action is trivial, then 
	$\Der(\integers^\ell,S) = \Hom(\integers^\ell,S)$. 
	
	Assume the action is not trivial, and let $x_i \in \{x_1,\ldots,x_\ell\}$ be a generator\footnote{The free abelian 
		group is still written multiplicatively.} of $\integers^\ell$ that acts non-trivially
	on $S$. Then the action of $(1-x_i)$ on $S$ is invertible.\footnote{Of course, $S$ is a module over the ring $R = \integers[x_1,\ldots,x_\ell]$. Since
		$S$ is a simple $R$ module, then $S$ really is a 1-dimensional vector space. In this case, the function $x_i\cdot$ is just multiplication by some
		(non-identity) element of the field.} 
	Though there is no element $(1-x_i)^{-1}$ in $\integers[x_1,\ldots,x_\ell]$, for $s_0 \in S$, we denote by 
	$(1-x_i)^{-1}s_0$ the image of $s_0$ under the image of the inverse automorphism of $(1-x_i)\cdot \in \Aut(S)$.
	
	Fix $j \neq i$. The equation $(1-x_i)\delta(x_j) = (1-x_j)\delta(x_i)$ is equivalent to the equation
	$\delta(x_j) = (1-x_i)^{-1}(1-x_j)\delta(x_i)$. Hence, by Lemma \ref{lem:characterization of derivations from free abelian groups}
	we may pick a derivation simply by picking $\delta(x_i)$ to be any
	element of $S$ and then defining $\delta(x_j)$ to be $(1-x_i)^{-1}(1-x_j)\delta(x_i)$.
\end{proof}

Our next goal is Lemma \ref{lem:counting derivations from f.g. abelian groups to simple modules}, which extends 
Lemma~\ref{lem:counting derivations from free abelian groups to simple modules} to the case that
the domain is any f.g.\ abelian group.

\begin{lemma}
	\label{lem:derivation vanishes on x^n}
	Let $S$ be a simple $\integers^\ell$-module. Assume that the action is non-trivial. Let $x \in \integers^\ell$ be such that the 
	automorphism $x \cdot \in \Aut(S)$ has finite order dividing some integer $n$. Let $\delta\colon \integers^\ell \longrightarrow S$ be a 
	derivation. Then
	\[
	\delta(x^n) = 0.
	\]
	%
\end{lemma}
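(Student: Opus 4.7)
The plan is to first establish, by induction on $n$ using the derivation identity $\delta(yz) = \delta(y) + y\delta(z)$, the standard formula
\[
\delta(x^n) = (1 + x + x^2 + \cdots + x^{n-1})\delta(x)
\]
in $S$. It therefore suffices to show the right-hand side is zero in $S$, and I would split into two cases depending on whether $x$ itself acts trivially on $S$.

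If $x$ acts non-trivially on $S$, then since $S$ is simple, the kernel of the action of $x-1$ on $S$ is a submodule of $S$, hence is either $0$ or all of $S$; it cannot be all of $S$ by hypothesis, so $x - 1$ acts invertibly on $S$. From the identity $(x-1)(1 + x + \cdots + x^{n-1}) = x^n - 1$, the right-hand side acts as zero on $S$ because $x^n$ acts as the identity (the order of $x\cdot$ divides $n$), and cancelling the invertible factor $x - 1$ yields $(1 + x + \cdots + x^{n-1}) = 0$ on $S$. Hence $\delta(x^n) = 0$.

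If instead $x$ acts trivially on $S$, then since the overall $\integers^\ell$-action on $S$ is non-trivial, some generator $x_i$ acts non-trivially. Applying $\delta$ to both sides of the commutation $xx_i = x_ix$ and using the derivation property gives
\[
\delta(x) + x\delta(x_i) = \delta(x_i) + x_i\delta(x),
\]
which, using the triviality of the $x$-action on $S$, reduces to $(1 - x_i)\delta(x) = 0$ in $S$. As in the previous case, simplicity of $S$ together with $x_i$ acting non-trivially forces $1 - x_i$ to be invertible on $S$, so $\delta(x) = 0$, and thus $\delta(x^n) = (1 + x + \cdots + x^{n-1})\delta(x) = 0$.

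The only slightly subtle step is the second case, where the key observation is that commutativity in $\integers^\ell$ combined with the presence of a non-trivially acting generator forces $\delta(x)$ itself to vanish on $S$; once this trick is spotted, the lemma follows immediately from the derivation identity and Schur-type simplicity arguments.
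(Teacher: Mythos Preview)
Your proof is correct, but it takes a longer route than the paper's. The paper bypasses both the induction formula $\delta(x^n) = (1 + x + \cdots + x^{n-1})\delta(x)$ and your case split entirely: it simply picks any $y \in \integers^\ell$ acting non-trivially (which exists by hypothesis), uses the commutation relation $y x^n = x^n y$ to obtain
\[
(1 - y)\,\delta(x^n) = (1 - x^n)\,\delta(y),
\]
and then observes that the right-hand side vanishes because $x^n$ acts as the identity on $S$, while $(1-y)$ is invertible on $S$ by simplicity. This is precisely your ``Case~2 trick,'' but applied directly to the element $x^n$ (which \emph{always} acts trivially, by the hypothesis that the order of $x\cdot$ divides $n$) rather than to $x$. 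In effect, your Case~2 already contains the whole proof once you replace $x$ by $x^n$; the expansion into a geometric sum and the separate treatment of Case~1 are unnecessary detours.
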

\begin{proof}
	Let $y \in \integers^\ell$ be such that $y \cdot \in \Aut(S)$ is non-trivial. 
	We know (similarly to Step 1 of Lemma \ref{lem:characterization of derivations from free abelian groups}) that 
	\[
	(1 - y)\delta(x^n) = (1 - x^n)\delta(y).
	\]
	But because $S$ is a simple module and $y\cdot$ is non-trivial, we get\footnote{Just like $(1 - x_i)$ in the proof of 
		Lemma~\ref{lem:counting derivations from free abelian groups to simple modules}\ldots} that the endomorphism $(1-y)\cdot$ is invertible. Therefore,
	$\delta(x^n) = (1-y)^{-1}(1 - x^n)\delta(y)$, but since the automorphism $x\cdot$ has order dividing $n$, we have that $x^n \cdot$ is the identity
	function on $S$. Therefore, $\delta(x^n) = (1-y)^{-1}(1 - x^n)\delta(y) = 0$.
\end{proof}

The following is a generalization of 
Lemma~\ref{lem:counting derivations from free abelian groups to simple modules}.
\begin{lemma}
	\label{lem:counting derivations from f.g. abelian groups to simple modules}
	Let $H$ be a f.g.\ abelian group. Let $S$ be a simple $H$-module. Then
	\[
	\lvert \Der(H,S) \rvert =
	\begin{cases}
	\lvert \Hom(H,S)\rvert & \text{if the action is trivial} \\
	|S| & \text{otherwise.}
	\end{cases}
	\]
\end{lemma}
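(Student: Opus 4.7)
The trivial-action case is immediate: when $H$ acts trivially on $S$, the derivation condition $\delta(gh) = \delta(g) + g\cdot\delta(h)$ collapses to $\delta(gh) = \delta(g) + \delta(h)$, so $\Der(H,S) = \Hom(H,S)$.

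For the nontrivial case, my plan is to reduce to the free abelian case already handled by Lemma~\ref{lem:counting derivations from free abelian groups to simple modules}. Using the structure theorem, present $H$ (multiplicatively) as $H = \langle y_1,\ldots,y_d \mid y_{\ell+1}^{n_1},\ldots,y_d^{n_k}\rangle$ with $d = \ell + k$, so that $H = F/K$ where $F = \integers^d$ is free abelian on $y_1,\ldots,y_d$ and $K$ is normally (= ordinarily) generated by the torsion relators $r_j := y_{\ell+j}^{n_j}$. The action of $H$ on $S$ pulls back to an action of $F$ on $S$; because $H$ acts nontrivially on $S$, so does $F$.

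Next I would set up the bijection
\[
\Der(H,S)\;\longleftrightarrow\;\{\tilde\delta\in\Der(F,S)\,:\,\tilde\delta(r_j)=0\text{ for all }j\}.
\]
The map $\delta\mapsto\delta\circ\pi$ (where $\pi\colon F\twoheadrightarrow H$ is the quotient) clearly lands in the right-hand set; conversely, given such a $\tilde\delta$, Lemma~\ref{lem:derivations factor through quotients} (applied with $F$ in place of $G$ and $K$ in place of $N$, noting $K$ acts trivially on $S$) produces a well-defined derivation on $H = F/K$. These constructions are mutually inverse, so the two sets have the same cardinality.

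The key point is then that the vanishing condition is automatic. For each $j$, in $H$ we have $r_j = 1$, so $y_{\ell+j}^{n_j}\cdot$ is the identity on $S$; equivalently, the order of $y_{\ell+j}\cdot\in\Aut(S)$ divides $n_j$. Lemma~\ref{lem:derivation vanishes on x^n} (whose statement and proof apply verbatim to $F=\integers^d$, since the only thing used is that \emph{some} generator of $F$ acts nontrivially, which holds here) then forces $\tilde\delta(r_j) = \tilde\delta(y_{\ell+j}^{n_j}) = 0$ for every $\tilde\delta \in \Der(F,S)$. Hence the bijection above restricts to a bijection between $\Der(H,S)$ and all of $\Der(F,S)$, and Lemma~\ref{lem:counting derivations from free abelian groups to simple modules} gives $|\Der(F,S)| = |S|$, as desired.

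The only mild subtlety — and the step I would check most carefully — is verifying that Lemma~\ref{lem:derivation vanishes on x^n} really applies when $y_{\ell+j}$ itself acts trivially on $S$: inspecting its proof, the argument uses only the existence of \emph{some} $y\in F$ acting nontrivially to invert $1-y$, so the conclusion $\tilde\delta(y_{\ell+j}^{n_j})=0$ holds regardless of whether $y_{\ell+j}$ acts trivially or not. Everything else is bookkeeping.
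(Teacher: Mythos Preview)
Your proof is correct and follows essentially the same route as the paper: reduce to the free abelian cover $F\twoheadrightarrow H$, invoke Lemma~\ref{lem:derivation vanishes on x^n} to show every derivation $F\to S$ vanishes on the kernel, and then use Lemma~\ref{lem:derivations factor through quotients} together with Lemma~\ref{lem:counting derivations from free abelian groups to simple modules} to conclude. The only cosmetic difference is that you fix a structure-theorem presentation of $H$ (so the kernel has explicit generators $y_{\ell+j}^{n_j}$), whereas the paper works with an arbitrary surjection $\integers^\ell\twoheadrightarrow H$ and observes that any kernel element has the form $x^n$ with $\pi(x)$ of order $n$; your careful check that Lemma~\ref{lem:derivation vanishes on x^n} applies even when $y_{\ell+j}$ itself acts trivially is exactly the point the paper is using implicitly.
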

\begin{proof}
	If the action is trivial, then $\Der(H,S) = \Hom(H,S)$. So suppose the action is non-trivial.
	
	Let $H$ be $\ell$-generated, and let $G = \integers^\ell$, the free abelian group of rank $\ell$. Let the action of $G$ on 
	$S$ be the induced action. By Lemma~\ref{lem:counting derivations from free abelian groups to simple modules}, we know that
	$\lvert \Der(G,S)\rvert = |S|$. 
	To prove this lemma, it is sufficient to show that each derivation from $G$ to $S$ gives a derivation (via
	Lemma \ref{lem:derivations factor through quotients}) from $H$ to $S$.\footnote{The following is clear: Let $\delta_1$
		and $\delta_2$ be different derivations from $G$ to $A$ that satisfy the hypotheses of 
		Lemma~\ref{lem:derivations factor through quotients} (for some given $N \normal G$). Then the lemma produces different
		derivations from $G/N$ to $A$.}
	
	Let $\delta \in \Der(G,S)$. Let $\pi\colon G \longrightarrow H$ be a surjection with kernel $N$. Let $x \in G$ 
	be such that $\pi(x)$ has order $n$. 
	(So $x^n$ is an arbitrary element of $N$.) In order to apply 
	Lemma \ref{lem:derivations factor through quotients}, it is sufficient to show that $\delta(x^n) = 0$ (for any such $x^n$). 
	We have $x\cdot \in \Aut(S)$ has order dividing $n$, since $N$ acts trivially on $S$. Thus $\delta(x^n) = 0$ by
	Lemma \ref{lem:derivation vanishes on x^n}.
\end{proof}

%

\subsection{Submodules counted by isomorphism type of quotient}
\label{sec:Submodules counted by isomorphism type of quotient}

Let $R$ be ring, and let $N$ be an $R$-module. It is well known that for every maximal submodule $M$ of $N$, we have 
$N/M \cong_R R/I$ for some maximal left ideal $I \ideal R$.\footnote{If $R$ is commutative, then $I$ is the annihilator of $N/M$. If $R$ is
	not necessarily commutative, then we may take any element $a \in N/M$, with $a \neq 0$. Let $I$ be the kernel of the map $r \mapsto ra$. Since $N/M$ is
	simple, the map is surjective (because it is nonzero). We conclude $N/M \cong_R R/I$.}

In order to organize all the maximal submodules
of $N$ of a given index by the $R$-module isomorphism type of the quotient, we give the following definition:
\begin{definition}
	\label{def:submodisoto - number of submodules with quotient iso to etc.} 
	Let $S$ be a (finite) simple $R$-module. Then $$\submodisoto{S}(N)$$ denotes the number of submodules $M$ of $N$ such that
	$N/M \cong_R S$.
\end{definition}
We now state the following lemma:
\begin{lemma}
	\label{lem:maxsubmod N = sum_S submodisoto_S(N)}
	Let $N$ be a f.g.\ $R$-module. Then
	\[
	\maxsubmod(N) = \sum_S \submodisoto{S}(N),
	\]
	where the sum is taken over all simple $R$-modules of cardinality $n$. If $R$ is commutative, then also
	\[
	\maxsubmod(N) = \sum_I \submodisoto{R/I}(N)
	\]
	where the sum is taken over all maximal ideals $I$ of $R$ that have $|R/I| = n$.
\end{lemma}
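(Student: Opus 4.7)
The plan is to prove both equalities by partitioning the set of index-$n$ maximal submodules according to the $R$-isomorphism type of the quotient.

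For the first equality, I would argue that a submodule $M \leq N$ is maximal of index $n$ if and only if $N/M$ is a simple $R$-module of cardinality $n$. (That maximal submodules have simple quotients is the standard correspondence; conversely, if $N/M$ is simple then by the correspondence theorem $M$ is maximal.) So the set counted by $\maxsubmod(N)$ partitions as a disjoint union indexed by the $R$-isomorphism types of simple $R$-modules $S$ with $|S|=n$: the class containing $M$ is determined by the isomorphism class of $N/M$. Each class has cardinality $\submodisoto{S}(N)$ by Definition~\ref{def:submodisoto - number of submodules with quotient iso to etc.}, and summing gives the first identity. One should note that the sum is finite: since $N$ is finitely generated, any simple quotient $S$ of $N$ is cyclic as an $R$-module, and for fixed $n$ there are only finitely many isomorphism classes of simple $R$-modules of cardinality $n$ that occur as quotients of $N$ (those $S$ for which $\submodisoto{S}(N)\neq 0$).

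For the second equality, under the assumption that $R$ is commutative, I would establish a bijection between isomorphism classes of finite simple $R$-modules $S$ of cardinality $n$ and maximal ideals $I\ideal R$ with $|R/I|=n$. Given a simple $S$, the recollection stated just before the lemma shows $S\cong_R R/I$ for some maximal ideal $I$, and in the commutative case $I$ is forced to be $\Ann_R(S)$, so it is uniquely determined by $S$. Conversely, for each maximal ideal $I$, the module $R/I$ is simple with annihilator $I$. Substituting this bijection into the first identity yields the second.

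The argument is essentially formal bookkeeping; the only point requiring mild care is confirming the well-definedness of the bijection in the commutative case, i.e., that non-isomorphic maximal ideals give non-isomorphic simple modules (via annihilators) and that every simple quotient of $N$ of size $n$ is accounted for on the maximal-ideal side. Since $\Ann_R$ is an isomorphism invariant and $R/I \cong_R R/J$ forces $I = \Ann_R(R/I) = \Ann_R(R/J) = J$ in the commutative case, this is immediate, and I do not anticipate a genuine obstacle beyond writing the two-line verification cleanly.
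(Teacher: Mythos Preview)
Your argument is correct and matches the paper's proof: both equalities are obtained by partitioning the maximal submodules of index $n$ according to the $R$-isomorphism type of the quotient, and in the commutative case using that $R/I \cong_R R/J$ forces $I = J$ via annihilators. The only addition you make is the remark about finiteness of the sum, which the paper leaves implicit.
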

\begin{proof}
	The first equality holds because we can partition the set of maximal submodules by the $R$-module isomorphism type of their quotient. 
	
	The second equality then follows by the well-known fact mentioned in the first paragraph of this section together with one other
	well-known fact: 
	Because $R$ is now assumed to be commutative, if we have two maximal ideals 
	$I_1, I_2 \ideal R$ with $I_1 \neq I_2$ but $|R/I_1| = |R/I_2|$ finite,  then $R/I_1$ and $R/I_2$ are not isomorphic 
	$R$-modules,\footnote{This is because their annihilators (namely $I_1$ and $I_2$ respectively) are different.}
	(even though they are 
	isomorphic fields). 
\end{proof}

Note: Recall that if $R$ is not commutative, then it is possible for $R/I_1 \cong_R R/I_2$ as $R$-modules, even if $I_1 \neq I_2$.\footnote{For example, let
	$R = M_2(\finitefield)$. Let $I_1 = \left( \begin{smallmatrix} 0& * \\ 0& *\end{smallmatrix} \right)$ 
	and $I_2 = \left( \begin{smallmatrix} *& 0 \\ *&0 \end{smallmatrix} \right)$. Then $R/I_1$ and $R/I_2$ are both isomorphic to the unique (up to iso.) simple 
	$R$-module.}

\subsection{Codimension 1 subspaces}
\label{sec:Codimension 1 subspaces}
Let $R$ be a commutative (unital) ring, and 
let $I \ideal R$ be maximal with $|R/I| = n$.

\begin{lemma}
	\label{lem:R/I of N to R/I of N/IN}
	With the notation from Definition \ref{def:submodisoto - number of submodules with quotient iso to etc.},
	 $$\maxsubmod[R/I](N) =  \maxsubmod[R/I](N/IN).$$
\end{lemma}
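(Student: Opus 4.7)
The plan is to show that any submodule $M \leq N$ with $N/M \cong_R R/I$ automatically contains $IN$, and then invoke the correspondence theorem to transfer the counting problem to $N/IN$.

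First I would observe that since $R/I$ is annihilated by $I$ as an $R$-module, any $R$-module isomorphic to $R/I$ is also annihilated by $I$. Hence if $M \leq N$ with $N/M \cong_R R/I$, then $I \cdot (N/M) = 0$, which is equivalent to $IN \subseteq M$. So every submodule counted by $\maxsubmod[R/I](N)$ contains $IN$.

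Next I would apply the standard correspondence theorem (lattice isomorphism) between submodules of $N$ containing $IN$ and submodules of $N/IN$. Writing $\pi \colon N \twoheadrightarrow N/IN$ for the quotient map, the assignment $M \mapsto M/IN = \pi(M)$ is a bijection between the set of submodules of $N$ containing $IN$ and the set of submodules of $N/IN$, with inverse $M' \mapsto \pi^{-1}(M')$. Moreover this bijection preserves quotients: $N/M \cong_R (N/IN)/(M/IN)$. Therefore $N/M \cong_R R/I$ if and only if $(N/IN)/(M/IN) \cong_R R/I$. Restricting the bijection to submodules with quotient isomorphic to $R/I$ gives precisely the desired equality $\maxsubmod[R/I](N) = \maxsubmod[R/I](N/IN)$.

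There is no real obstacle here; the lemma is essentially a reformulation of the annihilator observation. The only mild point worth stating explicitly is why $IN$ is itself an $R$-submodule (it is, since $I$ is an ideal and $N$ is an $R$-module, so $IN$ is closed under $R$-multiplication), which legitimizes forming $N/IN$ and applying the correspondence theorem. The whole argument should fit comfortably into a few lines.
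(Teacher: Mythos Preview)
Your proof is correct and follows essentially the same approach as the paper: both arguments observe that $I$ annihilates $N/M$ whenever $N/M \cong_R R/I$, whence $IN \subseteq M$, and then pass to $N/IN$ via the correspondence theorem. The paper's version is slightly terser (it states one inequality as ``immediate'' and proves the other), whereas you spell out the bijection explicitly, but the content is the same.
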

\begin{proof}
	It is immediate that $\maxsubmod[R/I](N) \geq  \maxsubmod[R/I](N/IN)$. Let $M$ be a maximal submodule of $N$ with $N/M \cong_R R/I$.
	We have $\Ann_R(N/M) = \Ann_R(R/I) = I$. Thus $IN$ is 0 mod $M$, i.e., $IN \subseteq M$. Therefore $\maxsubmod[R/I](N) \leq  \maxsubmod[R/I](N/IN)$.
\end{proof}

The following is very well known. 

\begin{lemma}
	\label{lem:N/IN is iso to  R/I tensor N}
	With the above notation,
	\[
	R/I \tensor_R N \cong_R N/IN.
	\]
\end{lemma}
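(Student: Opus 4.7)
The plan is to realize $R/I \otimes_R N$ as a quotient of $N$ by computing it from the canonical presentation of $R/I$. Concretely, I would start with the short exact sequence of $R$-modules
\[
0 \longrightarrow I \longrightarrow R \longrightarrow R/I \longrightarrow 0,
\]
and apply $- \tensor_R N$, which is right exact. This yields an exact sequence
\[
I \tensor_R N \longrightarrow R \tensor_R N \longrightarrow R/I \tensor_R N \longrightarrow 0.
\]
Using the canonical isomorphism $R \tensor_R N \cong_R N$ (given by $r \tensor n \mapsto rn$), the first map becomes the multiplication map $I \tensor_R N \to N$ sending $a \tensor n$ to $an$, whose image is exactly $IN$ (since $IN$ is by definition the set of finite sums $\sum a_i n_i$ with $a_i \in I$, $n_i \in N$).

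From exactness, $R/I \tensor_R N$ is the cokernel of this map, so
\[
R/I \tensor_R N \;\cong_R\; N / IN,
\]
as desired. I would also note that the isomorphism is explicit: it sends $(r + I) \tensor n$ to $rn + IN$, and its inverse sends $n + IN$ to $(1 + I) \tensor n$; one can check well-definedness of the inverse directly (if $n \in IN$, write $n = \sum a_i n_i$ and observe $(1+I)\tensor \sum a_i n_i = \sum (a_i + I) \tensor n_i = 0$).

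There is no real obstacle here; the only thing to be careful about is invoking right-exactness of $- \tensor_R N$ and correctly identifying the image of $I \tensor_R N$ in $N$ as the submodule $IN$. Since $R$ is commutative and unital, all these identifications are standard, and no flatness or finiteness hypothesis on $N$ is needed.
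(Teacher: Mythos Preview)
Your proof is correct and is the standard argument via right-exactness of $-\tensor_R N$. The paper itself simply records this lemma as ``very well known'' and provides no proof at all, so there is nothing to compare; your write-up would serve perfectly well as the omitted justification.
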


\begin{lemma}
	\label{lem:maxsubmod iso to R/I and dimension of tensor product}
	Recall that $n = |R/I|$. We have
	\[
	\maxsubmod[R/I](N) = 1 + n + n^2 + \cdots + n^{s-1},
	\]
	where $s = \dim_{R/I}(R/I \tensor_R N)$.
\end{lemma}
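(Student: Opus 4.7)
The plan is to chain together the two previous lemmas and reduce everything to a standard linear algebra count of hyperplanes.

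First I would apply Lemma~\ref{lem:R/I of N to R/I of N/IN} to replace $N$ by the quotient $N/IN$, so that counting becomes $\maxsubmod[R/I](N) = \maxsubmod[R/I](N/IN)$. By Lemma~\ref{lem:N/IN is iso to  R/I tensor N}, $N/IN \cong_R R/I \otimes_R N$, which by the definition of $s$ is an $R/I$-vector space of dimension $s$. Since $R$ is commutative and $I$ is maximal, $R/I$ is a field of size $n$, so $N/IN$ is just the $s$-dimensional vector space $(R/I)^s$.

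Next I would observe that $I$ annihilates $N/IN$, so the $R$-submodule structure on $N/IN$ coincides with its $R/I$-subspace structure. Consequently, the maximal $R$-submodules $M$ of $N/IN$ with $(N/IN)/M \cong_R R/I$ are precisely the $R/I$-subspaces of codimension one (the quotient being a 1-dimensional $R/I$-vector space, which is automatically isomorphic to $R/I$ as an $R$-module).

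Finally I would invoke the standard count: the number of hyperplanes in an $s$-dimensional vector space over a field of size $n$ is
\[
\frac{n^s - 1}{n - 1} = 1 + n + n^2 + \cdots + n^{s-1},
\]
for instance by observing that hyperplanes are kernels of nonzero linear functionals on $(R/I)^s$, and two functionals have the same kernel iff they differ by a nonzero scalar, giving $(n^s-1)/(n-1)$ such kernels. This yields the claimed formula. There is no real obstacle here; the only small point to verify is that the $R$-module and $R/I$-module notions of ``simple quotient isomorphic to $R/I$'' coincide on $N/IN$, which follows immediately from the fact that $I$ acts as zero.
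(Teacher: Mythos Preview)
Your proposal is correct and follows essentially the same route as the paper: both chain Lemmas~\ref{lem:R/I of N to R/I of N/IN} and~\ref{lem:N/IN is iso to  R/I tensor N} to reduce to counting codimension~1 subspaces of the $s$-dimensional $R/I$-vector space $R/I \otimes_R N$, arriving at $(n^s-1)/(n-1)$. The only cosmetic difference is that the paper counts hyperplanes by identifying their number with that of $1$-dimensional subspaces, whereas you count them as kernels of nonzero functionals up to scalar; these are of course the same computation.
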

\begin{proof}
	Lemma \ref{lem:R/I of N to R/I of N/IN} gives $\maxsubmod[R/I](N) =  \maxsubmod[R/I](N/IN)$, which itself is equal 
	to  \newline $\maxsubmod[R/I](R/I \tensor N)$ by 
	Lemma \ref{lem:N/IN is iso to  R/I tensor N}. Note that $R/I \tensor N$ is an $R/I$-vector space, and that its maximal submodules
	are codimension 1 subspaces, the number of which is the number of dimension 1 subspaces. Thus
	\[
	\maxsubmod[R/I](R/I \tensor_R N) = \frac{n^s - 1}{n - 1},
	\]
	where $s = \dim_{R/I}(R/I \tensor_R N)$ as desired.
\end{proof}

We get the following consequence of Lemma \ref{lem:maxsubmod iso to R/I and dimension of tensor product}:

\begin{corollary}
	\label{cor:submodisoto R/I for direct sum of cyclic modules}
	Recall $I \maxideal R$, with $|R/I| = n$. Suppose $N_1,\ldots, N_r$ are cyclic $R$-modules, and let 
	$s = |\{N_i: \submodisoto{R/I}(N_i) = 1 \}|$. Then
	\[
	\submodisoto{R/I}(N_1 \oplus N_2 \oplus \cdots \oplus N_r) = 1 + n + n^2 + \cdots + n^{s-1}.
	\]
\end{corollary}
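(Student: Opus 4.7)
The plan is to reduce the corollary directly to Lemma~\ref{lem:maxsubmod iso to R/I and dimension of tensor product} by computing the dimension of the $R/I$-vector space $R/I \tensor_R (N_1 \oplus \cdots \oplus N_r)$. Since tensor product commutes with direct sum,
\[
R/I \tensor_R (N_1 \oplus \cdots \oplus N_r) \cong (R/I \tensor_R N_1) \oplus \cdots \oplus (R/I \tensor_R N_r),
\]
so it suffices to show that $\dim_{R/I}(R/I \tensor_R N_i) = 1$ precisely when $\submodisoto{R/I}(N_i) = 1$, and is $0$ otherwise. Summing gives $s$ as the total dimension, and Lemma~\ref{lem:maxsubmod iso to R/I and dimension of tensor product} finishes.

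To carry out the key step, I would write each cyclic module as $N_i \cong_R R/J_i$ for some ideal $J_i \ideal R$. Standard commutative algebra then yields $R/I \tensor_R R/J_i \cong R/(I + J_i)$. Because $I$ is maximal, the sum $I + J_i$ is either $I$ (when $J_i \subseteq I$) or all of $R$ (otherwise). Thus $R/I \tensor_R N_i$ is either $R/I$ itself, contributing dimension $1$, or $0$, contributing dimension $0$.

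Finally, I would tie the two conditions together. By Lemma~\ref{lem:maxsubmod iso to R/I and dimension of tensor product} applied to the cyclic module $N_i$, we have $\submodisoto{R/I}(N_i) = 1 + n + \cdots + n^{s_i - 1}$ where $s_i = \dim_{R/I}(R/I \tensor_R N_i) \in \{0,1\}$. Hence $\submodisoto{R/I}(N_i) = 1$ iff $s_i = 1$, and $\submodisoto{R/I}(N_i) = 0$ iff $s_i = 0$. So the quantity $s$ in the corollary exactly equals $\sum_i s_i = \dim_{R/I}(R/I \tensor_R (N_1 \oplus \cdots \oplus N_r))$, and a second application of Lemma~\ref{lem:maxsubmod iso to R/I and dimension of tensor product} to the full direct sum delivers $\submodisoto{R/I}(N_1 \oplus \cdots \oplus N_r) = 1 + n + \cdots + n^{s-1}$.

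The proof is essentially a bookkeeping argument, and the only small pitfall is making sure the dichotomy $\submodisoto{R/I}(N_i) \in \{0,1\}$ for cyclic $N_i$ is recorded correctly — that is, that maximality of $I$ forces the tensor factor to be either $R/I$ or zero, with no higher-dimensional possibilities. Once that observation is made, the rest follows by invoking the preceding lemma twice.
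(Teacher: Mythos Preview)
Your proof is correct and is precisely the argument the paper intends: the paper states the corollary as an immediate consequence of Lemma~\ref{lem:maxsubmod iso to R/I and dimension of tensor product}, and your reduction via $R/I \tensor_R (\bigoplus N_i) \cong \bigoplus R/(I+J_i)$ together with the dichotomy $I+J_i \in \{I,R\}$ is exactly how that consequence is realized.
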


\subsection{Miscellaneous}
\label{sec:miscellaneous}
We collect here a few more results (almost all well known) that we will use later.

How does passing to quotients affect the maximal subgroup growth? The following lemma shows that if we mod out by a finite subgroup,
then the maximal subgroup growth remains unchanged. (The question was inspired by Lemma 2.3 from \cite{Shalev_On_the_degree}.)
\begin{lemma}
	\label{lem:mod out by finite subgroup -- maximal subgroup growth is unchanged}
	Let $G$ be a f.g.\ group and $F \normal G$ finite. Let $n \in \integers_{\geq 1}$. If $n > |F|$, then
	\[
	\maxsubgr(G) = \maxsubgr(G/F).
	\]
\end{lemma}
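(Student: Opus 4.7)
The plan is to decompose $\maxsubgr(G)$ using the already-established Lemma~\ref{lem:quotient+the_rest} with the normal subgroup $F$. That lemma gives
\[
\maxsubgr(G) = \maxsubgr(G/F) + (\text{complement type}),
\]
where the complement type counts maximal subgroups $M$ of $G$ with $[G:M] = n$ and $MF = G$. So everything reduces to showing that the complement type vanishes once $n > |F|$.

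For this, I would take any maximal $M \leq G$ with $MF = G$ and compute its index using the second isomorphism theorem: since $F$ is normal and $MF = G$, we have $G/M = MF/M \cong F/(M \cap F)$. Hence
\[
[G:M] = [F : M \cap F] \leq |F|.
\]
Therefore no maximal subgroup of index $n > |F|$ can satisfy $MF = G$, so the complement-type term is $0$ and the claim $\maxsubgr(G) = \maxsubgr(G/F)$ follows. (Note that the reverse inclusion $\maxsubgr(G/F) \leq \maxsubgr(G)$ already follows from the correspondence theorem: maximal subgroups of $G/F$ of index $n$ pull back bijectively to maximal subgroups of $G$ of index $n$ that contain $F$, and this is precisely the $\maxsubgr(G/F)$ summand in Lemma~\ref{lem:quotient+the_rest}.)

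There is no real obstacle here; the only content is the index bound coming from the second isomorphism theorem, which immediately forces $[G:M] \leq |F|$ whenever $M$ complements $F$ in the sense that $MF=G$. The hypothesis $n > |F|$ is exactly what is needed to rule out all such $M$.
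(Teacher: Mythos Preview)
Your argument is correct and matches the paper's proof almost verbatim: both establish that any maximal subgroup $M$ with $MF=G$ (equivalently, $F \nsubseteq M$) satisfies $[G:M]=[FM:M]=[F:F\cap M]\le |F|$, so no such $M$ can have index $n>|F|$. The only cosmetic difference is that you explicitly invoke Lemma~\ref{lem:quotient+the_rest} to frame the dichotomy, whereas the paper states the same dichotomy directly.
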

\begin{proof}
	We will show that if a maximal subgroup does not contain $F$, then it has
	index at most $|F|$. Let $M \leq_n G$ be maximal and suppose
	that $F \nsubseteq M$. Since $F \normal G$, we get that $FM$ is a subgroup of $G$.
	Since $FM$ properly contains $M$, we conclude that $FM = G$. Therefore,
	\[
	[G : M] = [FM: M] = [F : F \cap M] \leq |F|.
	\]
\end{proof}

A similar statement works for maximal submodule growth. Let $R$ be a (unital) ring.
\begin{lemma}
	\label{lem:mod out by finite submodule -- maximal submodule growth is unchanged}
	Let $N$ be an $R$-module and $F \leq N$ a finite submodule. Let $n \in \integers_{\geq 1}$. If $n > |F|$, then
	\[
	\maxsubmod(N) = \maxsubmod(N/F).
	\]
\end{lemma}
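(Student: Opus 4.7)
The plan is to mimic the proof of the previous lemma (Lemma~\ref{lem:mod out by finite subgroup -- maximal subgroup growth is unchanged}) almost verbatim, replacing ``subgroup'' and ``normal subgroup'' everywhere with ``submodule.'' The key observation is that every maximal submodule $M \leq N$ either contains $F$ or does not, and in the latter case $[N:M]$ is forced to be small.

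First I would establish the dichotomy. Suppose $M$ is a maximal submodule of $N$ of index $n$. If $F \not\subseteq M$, then $M + F$ is a submodule of $N$ strictly containing $M$, so maximality gives $M + F = N$. The second isomorphism theorem for modules then yields $N/M = (M+F)/M \cong_R F/(F \cap M)$, so $[N:M] = |F/(F \cap M)| \leq |F|$. Contrapositively, if $n > |F|$ then every index-$n$ maximal submodule of $N$ must contain $F$.

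Next I would invoke the lattice/correspondence theorem for modules: submodules of $N$ containing $F$ correspond bijectively with submodules of $N/F$, under $M \mapsto M/F$, with this correspondence preserving both maximality and index. Therefore, for $n > |F|$, the set of index-$n$ maximal submodules of $N$ is in bijection with the set of index-$n$ maximal submodules of $N/F$, giving $\maxsubmod(N) = \maxsubmod(N/F)$.

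I do not anticipate a main obstacle; this is a routine translation of the group-theoretic argument. The only small care needed is to verify that $M + F$ is actually a submodule (it is, since $F$ is a submodule — no normality hypothesis is needed in the module setting, which is slightly cleaner than the group case), and to cite the correct correspondence theorem for modules rather than groups.
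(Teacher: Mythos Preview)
Your proposal is correct and follows essentially the same approach as the paper: the paper's proof also shows that if a maximal submodule $M$ of index $n$ fails to contain $F$ then $N/M = (M+F)/M \cong_R F/(M\cap F)$, forcing $n \leq |F|$. You spell out the correspondence step more explicitly than the paper does, but the argument is the same.
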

\begin{proof}
	This is similar to our proof of Lemma \ref{lem:mod out by finite subgroup -- maximal subgroup growth is unchanged}. Let $M \leq_n N$ be a maximal.
	Suppose $F \nsubseteq M$. Then $n \leq |F|$ because
	\[
	N/M = (M + F)/M \cong_R F/M\cap F.
	\]
\end{proof}

---------------------------------------\\
The following will be used without comment throughout this document. For a proof, see for example Result 5.4.3 (iii) in \cite{Robinson}.
\begin{lemma}
	Let $G$ be a solvable group, and let $M$ be a maximal subgroup of $G$ of finite index. Then
	$[G:M]$ is a power of a prime.
\end{lemma}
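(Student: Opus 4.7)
The plan is to reduce to the case where $G$ is finite solvable, and then argue by induction on $|G|$ using a minimal normal subgroup.

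First I would take the core $N = \bigcap_{g \in G} gMg^{-1}$ of $M$ in $G$. Since $M$ has finite index in $G$, standard arguments (intersection of finitely many conjugates) show $N \trianglelefteq_f G$ and $N \subseteq M$. Then $\bar{G} := G/N$ is a finite solvable group, and $\bar{M} := M/N$ is a maximal subgroup of $\bar{G}$ with $[\bar{G} : \bar{M}] = [G:M]$. So it suffices to prove the claim for finite solvable groups.

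For the finite case, I would induct on $|G|$. Let $A$ be a minimal normal subgroup of $G$. Since $G$ is finite solvable, $A$ is an elementary abelian $p$-group for some prime $p$. Now split into two cases. In Case 1, $A \subseteq M$; then $M/A$ is a maximal subgroup of $G/A$, which has smaller order, so the inductive hypothesis gives that $[G/A : M/A] = [G:M]$ is a prime power. In Case 2, $A \nsubseteq M$; then $AM$ is a subgroup of $G$ properly containing $M$, hence $AM = G$ by maximality, and therefore
\[
[G : M] = [AM : M] = [A : A \cap M],
\]
which is a power of $p$ because $A$ is a $p$-group.

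There is essentially no serious obstacle here; the one technical point worth double-checking is that the core of a finite-index subgroup really is finite-index and normal, but this is standard (it equals the kernel of the action of $G$ on the finite coset space $G/M$, so $G/N$ embeds in $S_{[G:M]}$). Everything else is routine induction plus the fact that minimal normal subgroups of finite solvable groups are elementary abelian $p$-groups.
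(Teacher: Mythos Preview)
Your proof is correct and is the standard argument. The paper itself does not give a proof of this lemma at all; it simply cites Result~5.4.3~(iii) in Robinson's textbook. Your reduction via the core to the finite case, followed by induction using a minimal normal subgroup, is precisely the classical route (and is essentially what one finds in Robinson), so there is nothing to compare.
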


---------------------------------------\\

Let $S$ be a $G$ module. Following \cite{Dummit2004} (page 798), we will denote by $S^G$ the set of all elements of $S$ that are fixed
by $G$: $S^G = \{s \in S: gs = s \text{ for all } g \in G \}$. If $S^G \neq \emptyset$, we say that $S$ has a fixed point. We now make an easy observation:

\begin{lemma}
	\label{lem:simple module - one fixed point implies trivial and has order a prime}
	Let $S$ be a simple (finite) $G$ module that has a fixed point. Then $S = S^G$ and $|S|$ is prime.
\end{lemma}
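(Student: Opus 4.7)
The plan is to show that $S^G$ is a $G$-submodule of $S$ and then invoke simplicity.

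First I would verify that $S^G$ is a $\integers[G]$-submodule of $S$. Closure under the $G$-action is automatic since fixed points are fixed. For closure under addition, if $s_1, s_2 \in S^G$, then for any $g \in G$ we have $g(s_1 + s_2) = gs_1 + gs_2 = s_1 + s_2$, so $s_1 + s_2 \in S^G$. (Closure under negation is similar.) Thus $S^G$ is a $G$-submodule of $S$.

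Next, by hypothesis $S$ has a (nonzero) fixed point, so $S^G$ is a nonzero submodule of the simple module $S$. Simplicity forces $S^G = S$, i.e., the $G$-action on $S$ is trivial.

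With the action trivial, a $\integers[G]$-submodule of $S$ is exactly a $\integers$-submodule, i.e., a subgroup of the abelian group $S$. Since $S$ is simple as a $G$-module, $S$ must then be simple as an abelian group. A finite simple abelian group is cyclic of prime order, so $|S|$ is prime. The main (mild) obstacle is just the observation that $S^G$ is an additive subgroup, which is immediate; there is no deeper difficulty.
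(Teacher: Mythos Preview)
Your proof is correct and follows essentially the same approach as the paper's: show that $S^G$ is a nonzero submodule, invoke simplicity to get $S^G = S$, and then observe that a simple module with trivial action is a simple abelian group, hence of prime order. You supply a bit more detail on why $S^G$ is closed under addition, but the argument is the same.
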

\begin{proof}
	The set $S^G$ is a submodule of $S$. Since it is non-empty and $S$ is simple, we get $S = S^G$. Since the action is trivial, a simple
	$G$ module is the same thing as a simple abelian group.
\end{proof}

---------------------------------------\\

Our next goal is the well-known Lemma \ref{lem:two full lattices - each scales into the other}. 
We first prove the main part of that lemma.

\begin{lemma}
	\label{lem:vector v gets scaled into a full lattice}
	Let $D$ be an integral domain and $F$ its field of fractions. Fix $d \geq 1$. Suppose $A$
	is a $D$-submodule of $F^d$ that is isomorphic (as a $D$-module) to $D^d$. Let $v \in F^d$.
	Then there exists $c_0 \in D$ with $c_0 \neq 0$ such that $c_0 v \in A$.
\end{lemma}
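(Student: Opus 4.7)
The plan is to exploit the fact that any $D$-basis of $A$ is, after passage to $F$, also an $F$-basis of the ambient vector space $F^d$, and then clear denominators.

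First I would choose a $D$-module isomorphism $A \cong D^d$ and let $a_1, \ldots, a_d \in A \subseteq F^d$ denote the images of the standard generators. By construction, $\{a_1, \ldots, a_d\}$ is a $D$-basis of $A$. I next claim that $\{a_1, \ldots, a_d\}$ is in fact an $F$-basis of $F^d$. Since $F^d$ has dimension $d$ over $F$, it suffices to show $F$-linear independence. Suppose $\sum_{i=1}^d f_i a_i = 0$ with $f_i \in F$, and write $f_i = p_i / q_i$ with $p_i, q_i \in D$ and $q_i \neq 0$. Multiplying through by $q := \prod_j q_j$ gives a $D$-linear relation $\sum_i \bigl(p_i \prod_{j \neq i} q_j\bigr) a_i = 0$ among the $a_i$. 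Since these are $D$-linearly independent, each $p_i \prod_{j \neq i} q_j = 0$, and since $D$ is a domain with each $q_j \neq 0$, we conclude $p_i = 0$ and hence $f_i = 0$.

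With this in hand, the conclusion is immediate. Given $v \in F^d$, write $v = \sum_{i=1}^d f_i a_i$ with $f_i \in F$, and again put $f_i = p_i / q_i$ with $p_i, q_i \in D$, $q_i \neq 0$. Set $c_0 := \prod_{i=1}^d q_i \in D$, which is nonzero since $D$ is a domain. Then $c_0 f_i = p_i \prod_{j \neq i} q_j \in D$ for every $i$, so
\[
c_0 v \;=\; \sum_{i=1}^d (c_0 f_i)\, a_i \;\in\; A,
\]
as required.

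The only step with any real content is the verification that $D$-linear independence in $F^d$ promotes to $F$-linear independence; the rest is just bookkeeping. This step is not really an obstacle — it is the standard ``clear denominators'' trick, which works precisely because $D$ is assumed to be an integral domain (so that a nonzero product of nonzero $q_j$'s is again nonzero and cancellation is valid).
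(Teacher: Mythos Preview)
Your proof is correct and follows essentially the same approach as the paper: take a $D$-basis of $A$, use the clear-denominators trick to show it is also an $F$-basis of $F^d$, express $v$ in this basis, and clear denominators again. Your write-up is a bit more explicit about the mechanics of clearing denominators, but the argument is the same.
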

\begin{proof}
	The case when $d = 1$ is clear.
	
	Let $X = \{x_1, \ldots, x_d\}$ be a $D$-module generating set for $A$. We claim that the $F$-span of $X$
	is $F^d$. By contradiction, suppose that $X$ is linearly dependent over $F$. So there exist
	$a_1, \ldots, a_d \in F$ (not all zero) such that 
	\[
	a_1 x_1 + \cdots + a_d x_d = 0.
	\]
	By clearing the denominators we get
	\[
	\tilde{a}_1 x_1 + \cdots + \tilde{a}_d x_d = 0
	\]
	for some $\tilde{a}_1, \ldots, \tilde{a}_d \in D$ (not all zero) a contradiction; this proves our claim.
	The claim tells us that there exist $\alpha_1, \ldots, \alpha_d \in F$ such that 
	\[
	\alpha_1 x_1 + \cdots + \alpha_d x_d = v.
	\]
	Again, clearing the denominators finishes the proof.	
\end{proof}

\begin{lemma}
	\label{lem:two full lattices - each scales into the other}
	Fix $d \geq 1$. Suppose $A$ and $B$
	are $\integers$-submodules of $\rationals^d$ both isomorphic (as $\integers$-modules) to $\integers^d$. Then there exists 
	$c \in \integers$ such that $cB \leq_f A$.
\end{lemma}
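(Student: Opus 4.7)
The plan is to apply Lemma \ref{lem:vector v gets scaled into a full lattice} to each generator of $B$, then use a rank argument to conclude finite index.

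First I would fix a $\integers$-module generating set $\{y_1,\ldots,y_d\}$ for $B$ (which exists since $B \cong \integers^d$). Applying Lemma~\ref{lem:vector v gets scaled into a full lattice} (with $D = \integers$ and $F = \rationals$) to each $y_i \in \rationals^d$, I obtain nonzero integers $c_1,\ldots,c_d$ such that $c_i y_i \in A$ for every $i$. Setting $c = c_1 c_2 \cdots c_d \in \integers \setminus \{0\}$, we have $c y_i = (c/c_i)(c_i y_i) \in A$ for all $i$, so $cB \leq A$ as a subgroup.

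The remaining task is to show $[A : cB] < \infty$. Since $c \neq 0$ and $B$ is torsion-free, multiplication by $c$ is an injective $\integers$-module map, so $cB \cong_{\integers} B \cong \integers^d$. To see that $cB$ has full rank inside $A \cong \integers^d$, I would invoke the claim established inside the proof of Lemma~\ref{lem:vector v gets scaled into a full lattice}: the $\rationals$-span of the generators of $B$ is all of $\rationals^d$, hence the $\rationals$-span of $cB$ is also $\rationals^d$. Thus $cB$ is a rank-$d$ subgroup of the rank-$d$ free abelian group $A$, and the structure theorem for finitely generated abelian groups then gives $[A : cB] < \infty$.

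I do not expect any serious obstacle here; the only subtlety is not conflating ``$cB \cong \integers^d$ as an abstract group'' with ``$cB$ has full rank in $A$,'' which is why the rank argument is phrased through $\rationals$-spans rather than through abstract isomorphism type. Everything else is a direct, essentially one-step application of the preceding lemma.
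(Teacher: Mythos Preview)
Your argument is correct and follows exactly the same route as the paper: pick a $\integers$-basis $\{y_1,\ldots,y_d\}$ of $B$, apply Lemma~\ref{lem:vector v gets scaled into a full lattice} to each $y_i$ to get nonzero $c_i$ with $c_i y_i \in A$, and set $c = \prod_i c_i$. The paper's proof simply asserts that this $c$ ``works'' without spelling out the finite-index step, whereas you supply the rank/$\rationals$-span argument explicitly; that extra care is fine and does not diverge from the intended approach.
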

\begin{proof}
	Let $X = \{y_1, \ldots, y_d\}$ be a $\integers$-module generating set for $B$. We then apply 
	Lemma~\ref{lem:vector v gets scaled into a full lattice} to each $y_i$ to get nonzero constants
	$c_1, \ldots, c_d \in \integers$ such that $c_i y_i \in A$. Then $c = \Pi_1^d c_i$ works.
\end{proof}

\begin{corollary}
	\label{cor:two full lattices in Q^d - one contained in the other => the index is finite}
	Let $A$, $B$ be f.g.\ subgroups of $\rationals^d$ such that $B \leq A$ and that $\rationals B = \rationals^d$. Then
	$[A : B]$ is finite.
\end{corollary}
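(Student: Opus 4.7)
The plan is to reduce this corollary directly to Lemma~\ref{lem:two full lattices - each scales into the other}, using multiplicativity of subgroup indices to pass from ``one lattice scales into the other'' to ``the index is finite.''

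The first step is to verify that both $A$ and $B$ are full lattices in $\rationals^d$, i.e., each is isomorphic as an abelian group to $\integers^d$. For $B$: it is finitely generated and torsion-free (as a subgroup of the torsion-free group $\rationals^d$), so $B \cong \integers^r$ for some $r$; the hypothesis $\rationals B = \rationals^d$ forces $r = d$. For $A$: the same reasoning gives $A \cong \integers^s$ for some $s$; since $\rationals^d = \rationals B \leq \rationals A \leq \rationals^d$, we have $\rationals A = \rationals^d$, hence $s = d$.

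The second step is to apply Lemma~\ref{lem:two full lattices - each scales into the other} with the roles of ``$A$'' and ``$B$'' swapped (the lemma is symmetric in its two lattices): there exists a nonzero $c \in \integers$ such that $cA \leq_f B$, that is, $[B : cA]$ is finite. Fixing an isomorphism $A \cong \integers^d$, we also have $[A : cA] = |c|^d < \infty$. Since $cA \leq B \leq A$, multiplicativity of indices gives
\[
[A : cA] = [A : B] \cdot [B : cA],
\]
and because both $[A:cA]$ and $[B:cA]$ are finite (and the latter is positive), we conclude that $[A:B]$ is finite.

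There is no real obstacle here given Lemma~\ref{lem:two full lattices - each scales into the other}: the only point requiring care is checking that $A$, not merely $B$, has full $\rationals$-rank $d$, which is immediate because $A$ contains $B$. The rest is just index arithmetic.
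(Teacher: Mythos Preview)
Your proof is correct and follows essentially the same route as the paper's: both verify that $A$ and $B$ are full-rank lattices, invoke Lemma~\ref{lem:two full lattices - each scales into the other}, and finish with a short index-multiplicativity argument. The only cosmetic difference is that the paper applies the lemma directly to get $cB \leq_f A$ (so $[A:B]$ divides $[A:cB]$), whereas you swap the roles to get $cA \leq_f B$ and instead use $[A:cA] = |c|^d < \infty$; either way the conclusion is immediate.
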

\begin{proof}
	This follows from Lemma~\ref{lem:two full lattices - each scales into the other}. Notice that as $\integers$-modules, 
	$A$ and $B$ are both isomorphic to $\integers^d$. We are done because $cB \leq_f A$ for some $c$, implies that $B \leq_f A$ too.
\end{proof}

---------------------------------------\\
If we 
start with a non-constant polynomial $f \in \integers[x]$, does $f$ split mod $p$ for infinitely many primes $p$? It turns out
that slightly more is true, as the following lemma states.

\begin{lemma}
	\label{lem:roots of non-constant polynomials in F_p and in C}
	Let $f \in \mathbb{Z}[x]$ be a non-constant polynomial. Consider $\bar{f} \in \mathbb{F}_p[x].$ 
	Let $\rho_p$ be the number of distinct roots of $\bar{f}$ in $\mathbb{F}_p$, and let $\rho$ be the number of 
	distinct roots of $f$ in $\mathbb{C}$. Then $\rho_p = \rho$ for infinitely many primes $p$.
\end{lemma}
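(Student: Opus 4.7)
The plan is to pass to the squarefree part of $f$ and then appeal to the Chebotarev density theorem applied to the splitting field of $f$ over $\rationals$.  Concretely, I would let $g = p_1 p_2 \cdots p_k \in \integers[x]$, where $p_1,\ldots,p_k$ are the distinct irreducible factors of $f$ in $\integers[x]$; then $g$ is squarefree of degree $\rho$ and its roots in $\C$ are exactly the $\rho$ distinct roots of $f$.  For any prime $p$ not dividing the leading coefficient of $f$, the polynomials $\bar f$ and $\bar g$ have the same zero set in $\overline{\finitefield}$, so $\rho_p$ equals the number of $\finitefield$-roots of $\bar g$.

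Next I would establish the upper bound $\rho_p \leq \rho$ for all but finitely many $p$.  If $p$ does not divide the discriminant of $g$, then $\bar g \in \finitefield[x]$ is still squarefree of degree $\rho$, and so has exactly $\rho$ distinct roots in $\overline{\finitefield}$; at most $\rho$ of these lie in $\finitefield$.  Combined with the previous paragraph this forces $\rho_p \le \rho$ outside a finite set of bad primes.

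For the matching lower bound, let $L \subseteq \C$ be the splitting field of $g$ over $\rationals$.  By the Chebotarev density theorem (or already the Frobenius density theorem for the Galois extension $L/\rationals$), the set of rational primes that split completely in $L$ has positive density $1/[L:\rationals]$, hence is infinite.  For any such prime $p$ that additionally avoids the finitely many primes dividing the leading coefficient of $f$ or $\mathrm{disc}(g)$, every root $\alpha_i$ of $g$ reduces, via a chosen prime of the ring of integers of $L$ above $p$, to an element of $\finitefield$; the reductions remain distinct because $p \nmid \mathrm{disc}(g)$.  Hence $\bar g$ splits into $\rho$ distinct linear factors over $\finitefield$, and the upper bound forces $\rho_p = \rho$ for infinitely many primes.

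The hard part is the input from algebraic number theory: producing infinitely many primes that split completely in $L$.  Once that is granted, the remainder is the routine ``good primes'' bookkeeping outlined above, and the passage from $f$ to its squarefree part is what lets us convert ``roots of $\bar f$'' into the cleaner ``roots of a squarefree polynomial of degree $\rho$.''
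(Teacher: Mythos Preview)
Your argument is correct and follows the standard route via the splitting field and the Frobenius (or Chebotarev) density theorem.  The paper itself does not supply a proof of this lemma: it simply refers the reader to two MathOverflow answers (by Igor Rivin and by Keith Conrad), so there is nothing in the paper to compare your argument against beyond noting that your approach is exactly the kind of proof those references would contain.

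Two small technical points worth tightening.  First, when you set $g = p_1\cdots p_k$, you want the distinct \emph{non-constant} irreducible factors of $f$ in $\integers[x]$ (primes of $\integers$ are also irreducible in $\integers[x]$); with that convention $g$ is indeed squarefree of degree $\rho$.  Second, if $g$ is not monic then the roots $\alpha_i$ need not lie in $\mathcal{O}_L$, so ``reduce $\alpha_i$ modulo a prime $\mathfrak{P}$ above $p$'' requires a word of justification---e.g., replace $g(x)$ by the monic integral polynomial $a^{\rho-1}g(x/a)$ where $a$ is the leading coefficient of $g$, or observe that $a\alpha_i \in \mathcal{O}_L$ and that $\bar a$ is a unit in $\finitefield$ for the primes under consideration.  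These are routine bookkeeping fixes and do not affect the soundness of your strategy.
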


For a proof, see \cite{MathOverflow_Rivin},  the answer Igor Rivin gave at MathOverflow to the author's question. (Or see Keith Conrad's answer
to the same question.)

\begin{lemma}
	\label{lem:num_roots in overline(F_p) of polynomial bounded above by num_roots in C}
	Let $f \in \mathbb{Z}[x]$ be a non-constant polynomial. Consider $\bar{f} \in \mathbb{F}_p[x].$ 
	Let $\bar{\rho_p}$ be the number of distinct roots of $\bar{f}$ in $\overline{\mathbb{F}_p}$, and let $\rho$ be the number of 
	distinct roots of $f$ in $\mathbb{C}$. Then $\bar{\rho_p} \leq \rho$ for all large primes $p$.	
\end{lemma}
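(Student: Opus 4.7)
The plan is to reduce the statement to a bound on the irreducible factors of $f$ over $\mathbb{Q}$. Using Gauss's lemma, write $f = c \prod_{i=1}^{r} f_i^{e_i}$ where $c \in \mathbb{Z}$, each $f_i \in \mathbb{Z}[x]$ is non-constant, irreducible, and primitive, and the $f_i$ are pairwise non-associate. Since each $f_i$ is irreducible over $\mathbb{Q}$, it is separable (characteristic zero), so it contributes exactly $\deg f_i$ distinct roots in $\mathbb{C}$; since distinct irreducible polynomials in $\mathbb{Q}[x]$ are coprime, their root sets in $\mathbb{C}$ are disjoint. Therefore $\rho = \sum_{i=1}^r \deg f_i$.

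Next I would identify a finite set of ``bad'' primes to exclude. Let $\ell_i$ be the leading coefficient of $f_i$, and let $p$ be any prime not dividing $c \prod_i \ell_i$. For such $p$ the reduction $\bar{f}_i \in \mathbb{F}_p[x]$ is nonzero and has the same degree as $f_i$, and moreover $\bar{f} = \bar{c} \prod_i \bar{f}_i^{e_i}$. Consequently every root of $\bar{f}$ in $\overline{\mathbb{F}_p}$ is a root of some $\bar{f}_i$, and each $\bar{f}_i$ contributes at most $\deg \bar{f}_i = \deg f_i$ distinct roots in $\overline{\mathbb{F}_p}$. Summing over $i$ (noting that overlaps among roots of different $\bar{f}_i$ can only reduce the count) yields
\[
\bar{\rho}_p \;\leq\; \sum_{i=1}^r \deg \bar{f}_i \;=\; \sum_{i=1}^r \deg f_i \;=\; \rho,
\]
valid for all primes $p$ outside the finite exceptional set, which is exactly the claim.

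There is no real obstacle here: the argument is a bookkeeping exercise once one observes that distinct complex roots of $f$ correspond exactly to distinct irreducible factors times their degree, and that reduction mod $p$ can only identify roots (never create new ones) for primes where the leading coefficients survive. The only subtlety worth stating carefully in the writeup is that separability of $\bar{f}_i$ mod $p$ is \emph{not} needed for the upper bound, in contrast to Lemma~\ref{lem:roots of non-constant polynomials in F_p and in C} where separability was essential.
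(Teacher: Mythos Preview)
Your argument is correct. The key observation---that over $\mathbb{Q}$ the distinct-root count of $f$ equals $\sum_i \deg f_i$ where the $f_i$ are the distinct irreducible factors, and that for primes avoiding the content and leading coefficients the reductions $\bar{f}_i$ retain their degrees---gives the bound immediately, since a degree-$d$ polynomial over any field has at most $d$ roots in any extension.

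The paper does not give its own proof of this lemma; it simply refers the reader to an answer by Eric Wofsey on \texttt{math.stackexchange}. Your write-up is therefore a genuine addition: a short, self-contained argument using only Gauss's lemma and separability in characteristic zero. One very minor point worth making explicit in a final version is that $c \neq 0$ (since $f$ is non-constant, hence nonzero), so that ``primes not dividing $c$'' is a finite condition; you implicitly use this when forming the exceptional set. Your closing remark, that separability of the $\bar{f}_i$ modulo $p$ is \emph{not} needed for the upper bound (in contrast to Lemma~\ref{lem:roots of non-constant polynomials in F_p and in C}), is a good observation and worth keeping.
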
 

For a proof, see the answer Eric Wofsey gave to the author's question at \url{https://math.stackexchange.com/q/2753743}.

---------------------------------------\\
\begin{definition}
	\label{def:degree of a function}
	Let $k \in \integers_{\geq 1}$. For a function $$f \colon \{ k, k+1, k+ 2, \ldots \} \to \reals_{\geq 0}$$
	which is bounded above by a polynomial, define
	\[
	\deg(f) := \inf \{\, \alpha \mid f(n) \leq n^\alpha \text{ for all large } n \}.
	\]
\end{definition}
Notes: (1) If $f$ itself is a polynomial, then this agrees with the normal use of the term ``degree''.
(2) We have that $\mdeg(G) = \deg(\maxsubgr(G))$.

\begin{lemma}
	\label{lem:degree of a sum is the max of the degrees}
	Let $k \in \integers_{\geq 1}$, and let
	\[
	f, g, h \colon \{ k, k+1, k+ 2, \ldots \} \to \reals_{\geq 0}
	\]
	each be bounded above by a polynomial. Then
	\[
	\deg(f + g) = \max\{\deg(f), \deg(g) \}, \text{\quad and}
	\]
	\[
	\deg(f + g + h) = \max\{\deg(f), \deg(g), \deg(h) \}.
	\]
\end{lemma}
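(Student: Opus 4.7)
The plan is to prove the two-function identity first and then derive the three-function version by a single application of the two-function case. Throughout, I will use the fact that $f,g,h$ are nonnegative, bounded above by polynomials, and have well-defined finite degrees.

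For the upper bound $\deg(f+g) \leq \max\{\deg(f),\deg(g)\}$, I will fix $\alpha > \max\{\deg(f),\deg(g)\}$ arbitrarily. By the infimum definition of $\deg$, for all large $n$ both $f(n) \leq n^{\alpha}$ and $g(n) \leq n^{\alpha}$ hold, so $f(n)+g(n) \leq 2n^{\alpha}$. To absorb the constant $2$, I will observe that for any $\varepsilon > 0$ we have $2 \leq n^{\varepsilon}$ once $n$ is large enough, so $f(n)+g(n) \leq n^{\alpha+\varepsilon}$ for all sufficiently large $n$. This shows $\deg(f+g) \leq \alpha + \varepsilon$; letting $\varepsilon \to 0$ and then $\alpha \to \max\{\deg(f),\deg(g)\}$ gives the desired inequality.

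For the lower bound $\deg(f+g) \geq \max\{\deg(f),\deg(g)\}$, I will use nonnegativity: since $g(n) \geq 0$, we have $f(n) \leq f(n)+g(n)$ for all $n$, and similarly for $g$. Hence if $f(n)+g(n) \leq n^{\alpha}$ for all large $n$, then $f(n) \leq n^{\alpha}$ for all large $n$, so $\deg(f) \leq \alpha$; taking infimum over such $\alpha$ yields $\deg(f) \leq \deg(f+g)$. The same argument with the roles of $f$ and $g$ exchanged gives $\deg(g) \leq \deg(f+g)$, and combining these yields the claimed inequality.

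Finally, the three-function version follows by regrouping: $\deg(f+g+h) = \deg((f+g)+h) = \max\{\deg(f+g),\deg(h)\} = \max\{\deg(f),\deg(g),\deg(h)\}$, applying the two-function case twice. There is no substantive obstacle here; the only subtle point is the handling of the constant factor $2$ in the upper bound, which is why the argument goes through an auxiliary $\varepsilon$ rather than directly producing the desired exponent.
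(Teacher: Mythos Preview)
Your proof is correct and follows essentially the same approach as the paper: both establish the lower bound via the pointwise inequality $f(n)\leq f(n)+g(n)$ coming from nonnegativity, handle the upper bound by absorbing the constant $2$ into an extra $n^{\varepsilon}$ factor, and deduce the three-function case by applying the two-function case twice. The only cosmetic difference is that the paper fixes $\alpha=\deg(f)$, $\beta=\deg(g)$ and works with $\varepsilon/2$, whereas you first pick $\alpha>\max\{\deg(f),\deg(g)\}$ and let $\alpha$ descend at the end; both routes are equivalent.
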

\begin{proof}
	We prove the first equality, and then the second follows by applying the first equality twice.\footnote{We could use induction to prove a more general
		lemma about the sum of $n$ functions.}
	
	Certainly $\deg(f+g) \geq \deg(f)$ because $f(n) + g(n) \geq f(n)$ for all $n$. Similarly, $\deg(f+g) \geq \deg(g)$. Hence,
	$\deg(f+g) \geq  \max\{\deg(f), \deg(g) \}$.
	
	Let $\alpha := \deg(f)$ and $\beta := \deg(g)$. Let $\varepsilon > 0$. Then
	\[
	f(n) \leq n^{\alpha + \varepsilon/2} \text{\quad and \quad} g(n) \leq n^{\beta + \varepsilon/2} \text{ for all large } n.
	\]
	Thus for all large $n$,
	\[
	\begin{aligned}
	f(n) + g(n) &\leq n^{\alpha + \varepsilon/2} + n^{\beta + \varepsilon/2}\\
	&\leq 2n^{\max\{\alpha, \beta \} + \varepsilon/2 }\\
	&\leq n^{\max\{\alpha, \beta \} + \varepsilon},
	\end{aligned}
	\]
	where in the last inequality, $n$ is large enough such that $2 \leq n^{\varepsilon/2}$. The inequalities give us
	that $\deg(f+g) \leq \max\{\deg(f), \deg(g) \}$. Hence $\deg(f+g) = \max\{\deg(f), \deg(g) \}$.
\end{proof}

\section{Finitely generated $\integers[x]$-modules}
\label{sec:finitely generated Z[x]-modules}

The goals of this section are to describe the maximal submodule growth of 
\begin{itemize}
	\item all $\integers_D[x]$-modules (with $D$ finite) which are finitely generated as $\integers_D$-modules
	\item all finitely generated  $\integers[x]$-modules.
\end{itemize} 
For the latter, the cyclic case is about finding maximal ideals in $R = \integers[x]$ and in quotients $R/I$ of $R$. The general
case is handled by looking at f.g.\ modules over $\finitefield[x]$ (or over $\rationals[x])$ and applying the
well known structure theorem for f.g.\ modules over principal ideal domains. At that point, we need only appeal to
\S \ref{sec:Submodules counted by isomorphism type of quotient}.
\subsection{Cyclic $\integers[x]$-modules}

Let $R = \integers[x]$. As is well-known, a cyclic $R$ module is just (isomorphic to) $R/I$ where $I$ is an ideal of $R$; $I$ would be the annihilator of
a chosen generator. 

We first review what the maximal ideals of $R$ are:

\begin{lemma}
	\label{lem:maximal ideals of integral polynomial ring}
	The maximal ideals of $R = \integers[x]$ are precisely the ideals of the form $(p, f)$ where $p$ is a prime number and $f \in R$ is a polynomial that is
	irreducible mod $p$.
\end{lemma}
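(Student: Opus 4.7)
The plan is to prove both inclusions. For the forward direction, given $I = (p, f)$ with $p$ a prime and $f$ irreducible modulo $p$, I would observe directly that $R/(p, f) \cong \finitefield[x]/(\bar{f})$, which is a field because $\finitefield[x]$ is a PID and $\bar{f}$ is irreducible; hence $(p, f)$ is maximal.

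For the reverse direction, let $I \ideal R$ be maximal, set $F := R/I$, and write $\alpha \in F$ for the image of $x$. Everything reduces to showing $I \cap \integers \neq 0$: granted that, since $F$ is a domain, $I \cap \integers$ is a nonzero prime $p\integers$, and passing to $R/(p) \cong \finitefield[x]$ identifies $I$ with a maximal ideal of that PID, which is principal and generated by some irreducible $\bar{f}$. Lifting $\bar{f}$ to any $f \in \integers[x]$ then gives $I = (p, f)$.

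The hard part, and the main obstacle, is showing $I \cap \integers \neq 0$. I would argue by contradiction: if $I \cap \integers = 0$, then $\integers \hookrightarrow F$, so $F$ has characteristic zero and contains $\rationals$. Because $F$ is generated over $\integers$ by $\alpha$ and already contains $\rationals$, we get $F = \rationals[\alpha]$; a polynomial ring in one transcendental is not a field, so $\alpha$ is algebraic over $\rationals$. Picking any nonzero $q \in \integers[x]$ with $q(\alpha) = 0$ and letting $a$ be its leading coefficient, $a\alpha$ satisfies a monic polynomial over $\integers$, so the ring $\integers[1/a][\alpha] = \integers[1/a][a\alpha]$ coincides with $\integers[1/a]\cdot 1 + \integers[1/a]\cdot\alpha + \cdots + \integers[1/a]\cdot\alpha^{n-1}$, where $n = [F:\rationals]$ and $\{1, \alpha, \ldots, \alpha^{n-1}\}$ is a $\rationals$-basis of $F$. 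Then $\integers[\alpha] \subseteq \integers[1/a][\alpha]$, so each element of $\integers[\alpha]$ has its coordinates in this basis lying in $\integers[1/a]$. But the standing assumption $F = \integers[\alpha]$ forces $\rationals \subseteq \integers[\alpha]$, which forces $\rationals \subseteq \integers[1/a]$ — impossible, since $1/p \notin \integers[1/a]$ for any prime $p$ not dividing $a$. This contradiction establishes $I \cap \integers \neq 0$ and completes the argument.
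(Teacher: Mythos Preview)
Your argument is correct and follows the same overall route as the paper: both reduce to showing that a maximal ideal $I$ meets $\integers$ nontrivially, after which one passes to $\finitefield[x]$ and reads off the irreducible generator. The paper's version of the hard step is much terser than yours: it simply asserts that if $I$ contains a non-constant polynomial then $R/I$ is finitely generated as an abelian group, whence $\rationals$ cannot embed and the characteristic is positive. That assertion is exactly the crux, and the paper does not justify it; your integrality argument via $a\alpha$ and the localization $\integers[1/a]$ is precisely the kind of work that fills this in.

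One small wrinkle in your write-up: from ``$a\alpha$ satisfies a monic polynomial over $\integers$'' you conclude that $\integers[1/a][\alpha]$ equals the $\integers[1/a]$-span of $1,\alpha,\ldots,\alpha^{n-1}$ with $n=[F:\rationals]$, but the monic relation you actually produced has degree $\deg q$, which may exceed $n$. There are two easy fixes. Either take $q$ specifically to be the primitive integer multiple of the minimal polynomial of $\alpha$ over $\rationals$, so that $\deg q = n$ and your span description is immediate; or drop the basis claim altogether and just use that $\integers[1/a][\alpha]$ is a finitely generated module over the Noetherian ring $\integers[1/a]$, so its submodule $\rationals$ would itself have to be finitely generated over $\integers[1/a]$, which is impossible since $1/p \notin \integers[1/a]$ for any prime $p \nmid a$.
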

\begin{proof}
	Though this is very well known, an argument is given here. (A reference is 
	example 3(d) in \cite{Dummit2004} in the section titled ``The prime spectrum of a ring''.) 
	
	Let $I \ideal R$ be maximal. Since $R$ itself is not a field, $I$ is not the zero ideal. So there is an $a \in I$ with $a \neq 0$. 
	We claim that $I$ contains a prime number. Indeed, if $a \in \integers$, then, then the characteristic of the field $R/I$ is finite and hence prime. 
	On the other hand, if $a$ is a non-constant polynomial, then $R/I$ is finitely generated as an abelian group, and in this case, $\rationals$ cannot be
	a subgroup of $R/I$. So in this case, we also know that the characteristic of $R/I$ is finite. Hence $I$ does contain a prime number.
	
	So since $R/I$ is a quotient of  $\finitefield[x]$, the lemma follows. 
\end{proof}

We next note that maximal ideals of $\integers[x,x^{-1}]$ correspond exactly with the maximal ideals of $\integers[x]$ 
except for $(p, x)$, which are not maximal in $\integers[x,x^{-1}]$ because $x$ is a unit there.
See for example, Proposition 38 in \cite{Dummit2004} in the section titled ``Localization''. We easily get the following observation:

\begin{lemma}
	\label{lem:laurant polynomial ring - maximal ideals like polynomial ring}
	We have 
	\[
	\maxsubmod(\integers[x,x^{-1}]) = \begin{cases}
	\maxsubmod(\integers[x]) - 1 & \text{ when $n$ is prime}\\
	\maxsubmod(\integers[x]) & \text{ when $n$ is not prime.}
	\end{cases} 
	\]
\end{lemma}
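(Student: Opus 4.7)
The plan is to combine the correspondence noted in the paragraph preceding the lemma with Lemma~\ref{lem:maximal ideals of integral polynomial ring}. That correspondence identifies the maximal ideals of $\integers[x,x^{-1}]$ with the maximal ideals of $\integers[x]$ not containing $x$, and under this correspondence the quotient ring (hence its cardinality) is preserved, since if $I \ideal \integers[x]$ is maximal with $x \notin I$, then $\bar{x}$ is already a unit in the field $\integers[x]/I$ and inverting it does nothing. Thus the difference $\maxsubmod(\integers[x]) - \maxsubmod(\integers[x,x^{-1}])$ equals the number of maximal ideals of $\integers[x]$ of index $n$ that \emph{do} contain $x$.

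First I would characterize these ``lost'' ideals. By Lemma~\ref{lem:maximal ideals of integral polynomial ring}, every maximal ideal of $\integers[x]$ has the form $(p,f)$ with $p$ prime and $f$ irreducible modulo $p$. Such an ideal contains $x$ iff the image $\bar{f} \in \finitefield[x]$ divides $x$, and since $\bar{f}$ is irreducible, this forces $\bar{f}$ to be a unit multiple of $x$; equivalently, $(p,f) = (p,x)$ in $\integers[x]$. So the lost maximal ideals are exactly those of the form $(p,x)$ for $p$ prime.

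Next I would compute the index of such an ideal: $\integers[x]/(p,x) \cong \finitefield$ has order $p$, a prime. Therefore every lost ideal has prime index, and for each prime $p$ there is exactly one (namely $(p,x)$). This directly yields the case split in the lemma: when $n$ is not prime, no maximal ideal of $\integers[x]$ of index $n$ contains $x$, so the two counts agree; when $n = p$ is prime, exactly one ideal is lost, accounting for the ``$-1$''.

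I do not anticipate any genuine obstacle here: the only nontrivial input is the standard localization fact on maximal ideals of $R$ versus $S^{-1}R$, which the paragraph above the lemma already cites; the remainder is a direct bookkeeping application of Lemma~\ref{lem:maximal ideals of integral polynomial ring}.
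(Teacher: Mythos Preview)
Your proposal is correct and follows exactly the approach the paper intends: it simply fills in the details of the observation sketched in the paragraph preceding the lemma (the localization correspondence from Dummit--Foote plus the fact that $(p,x)$ has index $p$), which is why the paper offers no separate proof. There is nothing to add.
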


In the following well-known result, $\mu$ is the m\"obius function.
\begin{lemma}
	\label{lem:mobious function - exact number of irreducibles}
	We have 
	\[
	\maxsubmod[p^k](\finitefield[x]) = \frac{1}{k}\sum_{a|k} \mu\left(\frac{k}{a}\right)p^a.
	\]
\end{lemma}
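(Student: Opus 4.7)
The plan is to identify maximal submodules of $\finitefield[x]$ of index $p^k$ with monic irreducible polynomials in $\finitefield[x]$ of degree $k$, and then to invoke the classical Gauss formula counting such polynomials.

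First I would note that $\finitefield[x]$ is a PID, so every (maximal) ideal is principal. A nonzero ideal $(f)$ of $\finitefield[x]$ with $f$ monic is maximal if and only if $f$ is irreducible; and in that case the quotient $\finitefield[x]/(f)$ is a finite field with $p^{\deg f}$ elements. Hence the maximal ideals of index exactly $p^k$ are in bijection with the monic irreducible polynomials of degree $k$ in $\finitefield[x]$. So the goal reduces to proving
\[
N_k := \lvert \{ f \in \finitefield[x] : f \text{ monic, irreducible, } \deg f = k \} \rvert = \frac{1}{k}\sum_{a \mid k} \mu\!\left(\frac{k}{a}\right) p^a.
\]

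Next I would establish the key identity
\[
x^{p^k} - x \;=\; \prod_{d \mid k} \prod_{\substack{f \text{ monic irred.} \\ \deg f = d}} f(x) \qquad \text{in } \finitefield[x].
\]
The standard argument is: every element of $\mathbb{F}_{p^k}$ is a root of $x^{p^k}-x$, and an irreducible polynomial $f$ of degree $d$ divides $x^{p^k}-x$ iff $\mathbb{F}_{p^d} \subseteq \mathbb{F}_{p^k}$ iff $d \mid k$. Moreover $x^{p^k}-x$ is separable (its derivative is $-1$), so each such irreducible factor appears exactly once. Comparing degrees on both sides yields the recursion
\[
p^k = \sum_{d \mid k} d\, N_d.
\]
Finally, classical Möbius inversion applied to this identity gives
\[
k\, N_k = \sum_{a \mid k} \mu\!\left(\frac{k}{a}\right) p^a,
\]
which is the desired formula after dividing by $k$.

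The main (and only slightly subtle) step is the factorization of $x^{p^k}-x$; everything else is bookkeeping. Since this identity is standard finite-field theory, I would likely just cite it rather than reprove it in detail, and then the Möbius inversion is routine. No issues with the base case since for $k=1$ one recovers $N_1 = p$, matching the expected count of linear monic polynomials.
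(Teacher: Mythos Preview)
Your argument is correct: the identification of maximal ideals of index $p^k$ with monic irreducibles of degree $k$, the factorization of $x^{p^k}-x$, and the M\"obius inversion are exactly the standard proof. The paper does not give its own argument but simply cites the corresponding section of Dummit and Foote, which contains precisely this derivation, so your approach matches what the paper intends.
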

For a proof, see for example the last two pages of the section titled ``Finite Fields'' in \cite{Dummit2004}.

\begin{corollary}
	\label{cor:mmoddeg of finitefield[x]}
	The growth type of $\maxsubmod(\finitefield[x])$ is $n/\log(n)$.
\end{corollary}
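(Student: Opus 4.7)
The plan is to split the claim into its two halves: an upper bound $\maxsubmod_n(\finitefield[x]) = O(n/\log n)$ valid for all large $n$, and a lower bound $\maxsubmod_n(\finitefield[x]) = \Omega(n/\log n)$ valid along some infinite subsequence. The starting observation is that every maximal ideal of $\finitefield[x]$ is generated by a monic irreducible polynomial, whose residue field has size $p^{\deg}$; hence $\maxsubmod_n(\finitefield[x]) = 0$ unless $n$ is a power of $p$, and when $n = p^k$ the value is given by the Möbius-inversion formula $\frac{1}{k}\sum_{a\mid k}\mu(k/a)p^a$ of Lemma~\ref{lem:mobious function - exact number of irreducibles}.

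For the upper bound, I would first dispose of the case that $n$ is not a prime power of $p$ (where the count is $0$, hence trivially $O(n/\log n)$). For $n = p^k$, I would bound the Möbius sum crudely: $|\sum_{a\mid k}\mu(k/a)p^a| \leq \sum_{a=0}^{k} p^a \leq 2p^k$, so $\maxsubmod_{p^k}(\finitefield[x]) \leq 2p^k/k$. Since $\log n = k \log p$, this rewrites as $\maxsubmod_n(\finitefield[x]) \leq (2\log p)\cdot \tfrac{n}{\log n}$, which is the required $O(n/\log n)$.

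For the lower bound, I would specialize to the infinite sequence $n = p^k$ and isolate the dominant term $a = k$ in the Möbius sum: the remaining divisors $a\mid k$ with $a < k$ all satisfy $a \leq k/2$, and there are at most $k$ of them, so
\[
\sum_{\substack{a\mid k\\ a<k}} |\mu(k/a)|\, p^a \;\leq\; k\, p^{k/2}.
\]
Thus $\maxsubmod_{p^k}(\finitefield[x]) \geq (p^k - k p^{k/2})/k$. Once $k$ is large enough that $k p^{k/2} \leq p^k/2$ (which holds for all sufficiently large $k$ since $p \geq 2$), this gives $\maxsubmod_{p^k}(\finitefield[x]) \geq p^k/(2k) = (\log p /2)\cdot \tfrac{n}{\log n}$. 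Combined with Definition~\ref{def:growth type}, this delivers the $\Omega(n/\log n)$ bound along infinitely many $n$.

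The routine step that carries the argument is the elementary estimate $kp^{k/2} \ll p^k$; there is no real obstacle, since the corollary is essentially just translating the classical prime-polynomial-theorem asymptotic $\pi_p(k) \sim p^k/k$ into the ``growth type'' language of Definition~\ref{def:growth type} after accounting for the fact that $\maxsubmod_n(\finitefield[x])$ vanishes at non-powers of $p$.
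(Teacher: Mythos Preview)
Your proof is correct and is exactly the kind of argument the paper has in mind: the corollary is stated without proof, directly after the M\"obius formula of Lemma~\ref{lem:mobious function - exact number of irreducibles}, so the intended proof is precisely to read off the asymptotic $\maxsubmod[p^k](\finitefield[x]) \sim p^k/k$ from that formula and translate it into the growth-type language. Your explicit upper and lower bounds via the crude estimate $\sum_{a\mid k,\,a<k} p^a \leq k p^{k/2}$ are the standard way to do this, and nothing more is needed.
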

Note that $\mmoddeg(\finitefield[x]) = 1$ even though $\maxsubmod(\finitefield[x])$ grows sublinearly.

\begin{lemma}
	\label{lem:maximal ideal growth of Z x}
	We have $\maxsubmod(\integers[x])$ has growth type $n$. In fact, $\maxsubmod(\integers[x]) \leq n$ for all $n$
	and $\maxsubmod(\integers[x]) = n$ when $n$ is prime.
\end{lemma}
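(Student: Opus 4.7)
The plan is to reduce the counting of maximal ideals of $R = \integers[x]$ of index $n$ directly to counting monic irreducible polynomials over finite fields, using the characterization already established in Lemma~\ref{lem:maximal ideals of integral polynomial ring}.

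First I would observe that, by Lemma~\ref{lem:maximal ideals of integral polynomial ring}, every maximal ideal of $R$ has the form $(p,f)$ with $p$ a rational prime and $f$ irreducible mod $p$; in particular the quotient is $\finitefield[x]/(\bar f)$, which has order $p^{\deg \bar f}$. Consequently, the only candidates for $n$ with $\maxsubmod(\integers[x]) \neq 0$ are prime powers $n = p^k$, and the prime $p$ is then uniquely determined as the characteristic of any quotient of index $n$. So for $n$ not a prime power, $\maxsubmod(\integers[x]) = 0 \leq n$ is immediate.

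Next, for $n = p^k$, I would invoke the lattice isomorphism theorem: maximal ideals of $\integers[x]$ containing $p$ correspond bijectively to maximal ideals of $\integers[x]/(p) \cong \finitefield[x]$, which are precisely the ideals generated by monic irreducible polynomials. Matching index, we get
\[
\maxsubmod[p^k](\integers[x]) \;=\; \maxsubmod[p^k](\finitefield[x]).
\]
The upper bound is then easy: the number of monic polynomials of degree exactly $k$ in $\finitefield[x]$ is $p^k$, and the irreducible ones form a subset, so $\maxsubmod[p^k](\integers[x]) \leq p^k = n$.

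For the equality in the prime case, take $n = p$, hence $k = 1$. Every monic polynomial of degree $1$ in $\finitefield[x]$ has the form $x - a$ with $a \in \finitefield$, and all $p$ of these are irreducible. Thus $\maxsubmod[p](\integers[x]) = p = n$, giving the matching lower bound along the subsequence of primes. I do not foresee any real obstacle here; the content of the lemma is essentially a bookkeeping consequence of Lemma~\ref{lem:maximal ideals of integral polynomial ring} together with the trivial count of monic linear polynomials over a prime field, and the bound $\maxsubmod[p^k](\finitefield[x]) \leq p^k$ is so loose that the finer formula of Lemma~\ref{lem:mobious function - exact number of irreducibles} is not needed for the upper bound.
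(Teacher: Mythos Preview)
Your proof is correct and follows essentially the same approach as the paper: reduce via Lemma~\ref{lem:maximal ideals of integral polynomial ring} to counting monic irreducibles in $\finitefield[x]$ of degree $k$, bound these above by the $p^k$ monic polynomials of degree $k$, and observe that for $k=1$ all $p$ monic linear polynomials are irreducible. The paper's version is slightly terser (it takes $\maxsubmod[p](\integers[x]) = p$ as already known and does not separately discuss the non-prime-power case), but the content is the same.
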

\begin{proof}
	We know already that $\maxsubmod[p](\integers[x]) = p$, and therefore $\maxsubmod(\integers[x])$ has at least linear growth. To show that it has at most linear growth, 
	we may appeal to Lemma \ref{lem:mobious function - exact number of irreducibles} or make the following simpler observation:
	
	The number of monic polynomials in $\finitefield[x]$ of degree $k$ is exactly $p^k$.
	But since $\maxsubmod[p^k](\finitefield[x])$ is the number of \emph{irreducible}, monic polynomials of degree $k$, 
	we conclude that $\maxsubmod(R) \leq n$ for all $n$.
\end{proof}

Let $R = \integers[x]$, and let $I \ideal R$, so that $R/I$ is an ``arbitrary'' cyclic $R$ module. Recall that the content of a polynomial in $R$
is the greatest common divisor of its coefficients.

\begin{lemma}
	\label{lem:bound for primes not dividing content}
	Let $f \in I$ be a non-constant polynomial. Then for all primes $p$ which do not divide $\content(f)$
	we have that for all $k$,
	$$\maxsubmod[p^k](R/I) \leq \deg(f).$$ 
\end{lemma}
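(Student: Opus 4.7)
My plan is to identify the maximal submodules of $R/I$ of index $p^k$ with maximal ideals of $R$ of index $p^k$ containing $I$, then apply Lemma~\ref{lem:maximal ideals of integral polynomial ring} to parametrize them by monic irreducible polynomials over $\finitefield$ of degree $k$ dividing a certain generator.

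First, I would recall that submodules of $R/I$ correspond to ideals of $R$ containing $I$, so maximal submodules of $R/I$ of index $p^k$ correspond to maximal ideals $J \ideal R$ with $J \supseteq I$ and $|R/J| = p^k$. By Lemma~\ref{lem:maximal ideals of integral polynomial ring}, every such $J$ has the form $J = (p, g)$ for some polynomial $g$ whose reduction $\bar g \in \finitefield[x]$ is irreducible; and replacing $g$ by a monic lift of $\bar g$ does not change $J$, so I may as well parametrize by monic irreducible $\bar g \in \finitefield[x]$. Since $|R/(p,g)| = p^{\deg \bar g}$, the index condition forces $\deg \bar g = k$.

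Next, I would translate the containment $I \subseteq (p,g)$ into a divisibility statement mod $p$. Let $\bar I$ denote the image of $I$ under the reduction map $R \twoheadrightarrow \finitefield[x]$; since $\finitefield[x]$ is a PID, $\bar I = (h)$ for some $h \in \finitefield[x]$. Now $a \in (p,g)$ is equivalent to $\bar a \in (\bar g)$, so $I \subseteq (p,g)$ if and only if $\bar I \subseteq (\bar g)$, i.e., $\bar g \mid h$ in $\finitefield[x]$. Here is where the hypothesis on the content enters: because $p \nmid \content(f)$, the reduction $\bar f \in \finitefield[x]$ is nonzero and has the same degree as $f$; since $f \in I$ forces $\bar f \in \bar I = (h)$, we conclude $h \mid \bar f$ and hence $\deg h \leq \deg \bar f = \deg f$.

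Finally, I would conclude by counting: the monic irreducible divisors of $h$ of degree $k$ number at most $\deg(h)/k \leq \deg(f)/k \leq \deg(f)$, which gives the claimed bound. The only subtle point (and the one I expect to need the most care in writing up) is ensuring that the content hypothesis is deployed exactly when needed, namely to guarantee $\bar f \neq 0$ in $\finitefield[x]$ with $\deg \bar f = \deg f$; everything else is straightforward bookkeeping via the correspondence theorem and the PID structure of $\finitefield[x]$.
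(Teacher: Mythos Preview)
Your argument is correct and follows essentially the same route as the paper: reduce modulo $p$, use that $\finitefield[x]$ is a PID, and bound the number of relevant maximal ideals by the number of irreducible factors of $\bar f$. One small inaccuracy: from $p \nmid \content(f)$ you can only conclude $\bar f \neq 0$ and $\deg \bar f \leq \deg f$, not $\deg \bar f = \deg f$ (the leading coefficient of $f$ might still vanish mod $p$); fortunately only the inequality is needed, so the proof goes through unchanged.
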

\begin{proof}
	Let $p$ be a prime that does not divide $\content(f)$. Then $\bar{f}$ in $\finitefield[x]$ is not zero. 
	Note that $\maxsubmod[p^k](R/I) = \maxsubmod[p^k](R/(p,I))$. Since $R/(p,I)$ is
	a quotient of $\finitefield[x]/(\bar{f})$, we get
	that $\maxsubmod[p^k](R/(p,I)) \leq \maxsubmod[p^k](\finitefield[x]/(\bar{f}))$. Next, recall that $\finitefield[x]$ is a PID, and the maximal ideals
	of $\finitefield[x]/(\bar{f})$ are exactly the ideals of the form $(g)$, where $g$ is an irreducible factor of $\bar{f}$.
	Just note that $\bar{f}$ has at most  $\deg(\bar{f}) \leq \deg(f)$ irreducible factors. 
\end{proof}

\begin{lemma}
	\label{lem:upper bound for finite F_p x module}
	Fix a prime $p$. Let $J \ideal \finitefield[x]$, and let $g \in J$ be nonzero. Then for all $k \geq 1$, we have
	\begin{enumerate}[(a)]
		\item $\displaystyle{\maxsubmod[p^k](\finitefield[x]/J) \leq \left\lfloor \frac{\deg(g)}{k} \right\rfloor}$.
		\item $\displaystyle{\maxsubmod[p^k](\finitefield[x]/J) \leq r}$, where $r$ is the number of distinct roots of $g$ in $\overline{\finitefield}$.
	\end{enumerate}
\end{lemma}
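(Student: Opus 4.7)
The plan is to reduce the statement to counting distinct monic irreducible factors of $g$ of degree exactly $k$ in $\finitefield[x]$. Since $g \in J$, we have $(g) \subseteq J$, so the natural surjection $\finitefield[x]/(g) \twoheadrightarrow \finitefield[x]/J$ gives, via pullback of maximal submodules, the inequality $\maxsubmod[p^k](\finitefield[x]/J) \leq \maxsubmod[p^k](\finitefield[x]/(g))$. Because $\finitefield[x]$ is a PID, the maximal ideals of $\finitefield[x]/(g)$ correspond bijectively with the distinct monic irreducible factors of $g$, and the ideal coming from an irreducible $q$ has residue field of cardinality $p^{\deg(q)}$. Hence $\maxsubmod[p^k](\finitefield[x]/(g))$ equals the number $m_k$ of distinct monic irreducible factors of $g$ of degree exactly $k$, and it suffices to bound $m_k$.

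For part (a), write $g = c\prod_{q} q^{e_q}$ as a product over distinct monic irreducibles with multiplicities $e_q \geq 1$ and unit $c$. Each distinct irreducible factor of degree $k$ contributes at least $k$ to $\deg(g) = \sum_q e_q \deg(q)$, so $m_k \cdot k \leq \deg(g)$ and therefore $m_k \leq \lfloor \deg(g)/k \rfloor$.

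For part (b), I would use that $\finitefield$ is a perfect field, so every irreducible polynomial in $\finitefield[x]$ is separable. Consequently each distinct monic irreducible factor $q$ of $g$ contributes exactly $\deg(q)$ distinct roots to $\overline{\finitefield}$, and the root sets of distinct irreducible factors are disjoint (since the factors are pairwise coprime). Thus $r = \sum_{q} \deg(q)$, summed over distinct monic irreducible factors of $g$, which gives $m_k \cdot k \leq r$ and hence $m_k \leq r$ since $k \geq 1$. I do not anticipate any real obstacle: the proof is a short combination of the PID structure of $\finitefield[x]$, the correspondence between maximal ideals in $\finitefield[x]/(g)$ and irreducible factors of $g$, and the separability of irreducibles over the perfect field $\finitefield$.
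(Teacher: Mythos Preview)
Your proof is correct and follows essentially the same approach as the paper: reduce to $\finitefield[x]/(g)$ via the surjection, identify maximal submodules of index $p^k$ with distinct monic irreducible factors of $g$ of degree $k$, and then bound that count by degree considerations for (a) and by root counting for (b). The paper's version is terser (it simply notes that $g$ has at most $\lfloor \deg(g)/k \rfloor$ irreducible factors of degree $k$, and that the number of distinct irreducible factors is bounded by $r$), whereas you spell out the separability argument and obtain the slightly sharper $m_k \cdot k \leq r$ in part (b); but the underlying argument is the same.
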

\begin{proof}
	This is similar to the proof of Lemma \ref{lem:bound for primes not dividing content}. For (a), we simply note that $g$ has at most 
	$\left\lfloor \frac{\deg(g)}{k} \right\rfloor$ irreducible factors of degree $k$. (If $g$ is constant, then it has 0 irreducible factors.)
	
	For (b), notice that the number of distinct irreducible factors of $g$ is bounded above by $r$.
\end{proof}

Again, let $R = \integers[x]$, and let $I \ideal R$. 

\begin{lemma}
	\label{lem:cyclic modules over polynomial ring in 1-var with growth type n / log n}
	Let $I \neq \{0\}$. Suppose that for some prime $p$, we have that $I \subseteq (p)$. Then $\maxsubmod(R/I)$ has growth type $n/\log(n)$.
\end{lemma}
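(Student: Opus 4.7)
My plan is to establish both halves of the growth-type claim separately: $\maxsubmod(R/I) \geq C\, n/\log n$ infinitely often, and $\maxsubmod(R/I) \leq C' n/\log n$ for all large $n$.

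The lower bound is essentially free. Since $I \subseteq (p)$, there is a surjection of $R$-modules $R/I \twoheadrightarrow R/(p) = \finitefield[x]$; pulling back maximal submodules along this surjection gives $\maxsubmod(R/I) \geq \maxsubmod(\finitefield[x])$ for every $n$. By Corollary \ref{cor:mmoddeg of finitefield[x]} the right-hand side has growth type $n/\log n$; more concretely, Lemma \ref{lem:mobious function - exact number of irreducibles} yields $\maxsubmod[p^k](\finitefield[x]) = (p^k - p)/k$ when $k$ is prime, which is $\Omega(n/\log n)$ along $n = p^k$.

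For the upper bound I would use that $R$ is Noetherian, so $I = (g_1, \ldots, g_s)$ for some $g_i$, and set $m := \gcd(\content(g_1), \ldots, \content(g_s))$; note $p \mid m$ since $I \subseteq (p)$. By Lemma \ref{lem:maximal ideals of integral polynomial ring}, $\maxsubmod(R/I) = 0$ unless $n$ is a prime power $q^k$, so fix such $n$. The maximal submodules of $R/I$ of index $q^k$ correspond to irreducible $f \in \mathbb{F}_q[x]$ of degree $k$ with $\pi_q(I) \subseteq (f)$, where $\pi_q \colon R \twoheadrightarrow \mathbb{F}_q[x]$ is reduction mod $q$; since $\mathbb{F}_q[x]$ is a PID, write $\pi_q(I) = (\tilde{g}_q)$ with $\tilde{g}_q = \gcd(\bar{g}_1, \ldots, \bar{g}_s)$. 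Then I split on whether $q$ divides $m$. If $q \mid m$, then $\pi_q(I) = 0$, every irreducible $f$ of degree $k$ qualifies, and $\maxsubmod[q^k](R/I) = \maxsubmod[q^k](\mathbb{F}_q[x]) \leq q^k/k$ by Lemma \ref{lem:mobious function - exact number of irreducibles}; since $q \leq Q := \max D_m$ ranges over a finite set, $q^k/k \leq (\log Q)\, n/\log n$. If $q \nmid m$, then $\tilde{g}_q \neq 0$ and $\deg \tilde{g}_q \leq \max_i \deg g_i =: C_0$ (the gcd in a PID has degree bounded by that of any nonzero generator, and reduction mod $q$ cannot increase degree), so Lemma \ref{lem:upper bound for finite F_p x module}(a) gives $\maxsubmod[q^k](R/I) \leq C_0$, well below $n/\log n$ for large $n$.

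No step is really difficult. The only slightly delicate point is the uniform degree bound $\deg \tilde{g}_q \leq C_0$ for primes $q \nmid m$, which is what keeps the contribution from those primes bounded by a constant and thus negligible; combined with the finiteness of the ``bad'' set $D_m$, this produces the matching upper bound of order $n/\log n$.
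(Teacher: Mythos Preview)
Your proof is correct and follows essentially the same approach as the paper: the lower bound via the quotient $R/I \twoheadrightarrow \finitefield[x]$ is identical, and the upper bound likewise splits into finitely many ``bad'' primes (where one bounds by $\maxsubmod(\mathbb{F}_q[x])$) and the remaining primes (where one gets a constant bound). The only cosmetic difference is that the paper picks a single non-constant $f \in I$ and declares the bad primes to be those dividing $\content(f)$, invoking Lemma~\ref{lem:bound for primes not dividing content} directly, whereas you take a full generating set and use $m = \gcd$ of the contents; your version is marginally sharper in that your bad set is exactly $\{q : I \subseteq (q)\}$, but this extra precision is not needed for the growth-type conclusion.
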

\begin{proof}
	We get that $\finitefield[x]$ is a quotient of  $R/I$. Therefore, by Corollary \ref{cor:mmoddeg of finitefield[x]}, the growth
	type of $R/I$ is at least $n/\log(n)$. We next just need to prove that the maximal submodule growth can be no larger; this uses
	the fact that $I$ must contain a nonzero element.
	
	
	It is easy to see that $I$ contains a non-zero polynomial; indeed, 
	let $0 \neq a \in I$, and let $0 \neq g(x) \in R$. Hence $a g(x) \in I$, since $I$ is an ideal of $R$. So let $f$ be any
	non-constant polynomial in $I$. 
	Then by Lemma \ref{lem:bound for primes not dividing content},
	we get $\maxsubmod(R/I) \leq \deg(f)$ for all $n$ that are powers of some prime $p$ that does not divide $\content(f)$.
	And to finish, for the primes $q$ which do divide $\content(f)$, we may yet again appeal to Corollary \ref{cor:mmoddeg of finitefield[x]}.
\end{proof}

\begin{lemma}
	\label{lem:cycclic modules over polynomial ring in 1-var with no growth - ie. bounded by a constant}
	Suppose that for every prime $p$, we have that $I \nsubseteq (p)$. Then there is a constant $c$ such that $\maxsubmod(R/I) \leq c$ for all $n$.
\end{lemma}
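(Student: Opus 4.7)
The plan is to reduce the problem to applying Lemma~\ref{lem:bound for primes not dividing content} finitely many times, once for each ``bad'' prime.

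First I would handle trivial cases: if $I = R$, then $R/I = 0$ has no maximal submodules; and the hypothesis forces $I \neq 0$ (since the zero ideal is contained in every $(p)$). So assume $0 \neq I \subsetneq R$. Next I would produce a non-constant polynomial $f \in I$: pick any nonzero $a \in I$; then $a \cdot x \in I$ is non-constant. So $I$ always contains a non-constant polynomial.

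Now pick such an $f \in I$, non-constant. By Lemma~\ref{lem:bound for primes not dividing content}, for every prime $p$ that does not divide $\content(f)$ and every $k \geq 1$,
\[
\maxsubmod[p^k](R/I) \leq \deg(f).
\]
This bounds $\maxsubmod(R/I)$ at all prime-power orders $n = p^k$ except possibly at those $p$ in the finite set $P := \{\,p \text{ prime} : p \mid \content(f)\,\}$. (Recall that $\maxsubmod(R/I) = 0$ unless $n$ is a prime power, since $R/I$ is a quotient of the solvable-module... well, more directly, maximal ideals of $R$ have prime-power residue field by Lemma~\ref{lem:maximal ideals of integral polynomial ring}.)

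For each $p \in P$, the hypothesis $I \nsubseteq (p)$ gives some element $h \in I$ with $h \notin p\integers[x]$, i.e., $p \nmid \content(h)$. Replacing $h$ by $h$ itself if non-constant, or by $hx$ if $h$ is a nonzero constant, we obtain a non-constant polynomial $h_p \in I$ with $p \nmid \content(h_p)$. Applying Lemma~\ref{lem:bound for primes not dividing content} to $h_p$ at the prime $p$ gives
\[
\maxsubmod[p^k](R/I) \leq \deg(h_p) \quad \text{for all } k \geq 1.
\]
Setting
\[
c := \max\!\left\{\,\deg(f),\ \max_{p \in P} \deg(h_p)\,\right\},
\]
we conclude that $\maxsubmod(R/I) \leq c$ for all $n$, because every $n$ for which $\maxsubmod(R/I) > 0$ is a prime power $p^k$, and each such case is covered either by the bound from $f$ (when $p \notin P$) or by the bound from $h_p$ (when $p \in P$). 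There is no main obstacle here; everything is a straightforward bookkeeping application of Lemma~\ref{lem:bound for primes not dividing content}, with the one small subtlety being the need to separately produce, for each bad prime $p$, a non-constant element of $I$ whose content is coprime to $p$.
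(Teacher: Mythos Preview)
Your proof is correct and follows essentially the same approach as the paper: apply Lemma~\ref{lem:bound for primes not dividing content} with a single non-constant $f \in I$ to handle all but the finitely many primes dividing $\content(f)$, and then for each such exceptional prime $p$ use the hypothesis $I \nsubseteq (p)$ to find a suitable replacement polynomial. You are slightly more careful than the paper in explicitly ensuring (via multiplication by $x$) that the per-prime polynomials $h_p$ are non-constant, as required by the lemma's hypothesis.
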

\begin{proof}
	Just as in the proof of Lemma \ref{lem:cyclic modules over polynomial ring in 1-var with growth type n / log n}
	(first sentence of second paragraph), we have that $I$ must contain a non-constant polynomial $f$ (because $I \neq \{0\}$,
	for otherwise $I \subseteq (p)$ for all primes $p$). For primes not dividing
	content($f$), just apply Lemma \ref{lem:bound for primes not dividing content}. And for primes dividing content$(f)$, just use other polynomials:
	
	Let $X = \{p_1, p_2, \ldots, p_t\}$ be the primes dividing content$(f)$. Since 
	$p_iR \nsupseteq I$,
	we find
	that $I$ contains polynomials $f_1, f_2, \ldots, f_t$ such that $\bar{f_i}$ in $\mathbb{F}_{p_i}[x]$ is not zero. Hence we may apply 
	Lemma \ref{lem:bound for primes not dividing content} again (for each $f_i$) to get bounds for the finitely many primes not included
	in the first paragraph. Taking the maximum of all the bounds finishes the proof.
\end{proof}

\begin{corollary}
	\label{cor:free abelian cyclic module has no growth - weaker version}
	Let $M = \integers^d$ be a cyclic $\integers[x]$-module. Then $\maxsubmod(M) \leq d$  
	for all $n$.
\end{corollary}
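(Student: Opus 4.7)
The plan is to first pin down $M$ explicitly as $\integers[x]/(f)$ for some monic polynomial $f$ of degree $d$, and then to reduce modulo each prime and apply Lemma \ref{lem:upper bound for finite F_p x module}(a) to the resulting quotient. For the structural step, cyclicity gives $M \cong \integers[x]/I$ for some ideal $I \ideal \integers[x]$. Multiplication by $x$ acts on $M \cong \integers^d$ as some integer matrix $A \in M_d(\integers)$, and Cayley--Hamilton supplies a monic polynomial $\chi_A(x) \in \integers[x]$ of degree $d$ with $\chi_A \in I$. The induced surjection $\integers[x]/(\chi_A) \twoheadrightarrow M$ is then a surjection between two abelian groups both isomorphic to $\integers^d$ (the domain because $1, x, \ldots, x^{d-1}$ form a $\integers$-basis modulo the monic polynomial $\chi_A$), hence is an isomorphism; thus $I = (\chi_A)$, and I will write $f := \chi_A$.

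Next, since every maximal ideal of $\integers[x]$ has prime-power index by Lemma \ref{lem:maximal ideals of integral polynomial ring}, we have $\maxsubmod(M) = 0$ unless $n$ is a prime power. Fix a prime $p$ and $k \geq 1$: a maximal submodule $M' \leq M$ of index $p^k$ satisfies $M/M' \cong \integers[x]/(p,g)$ with $\bar g$ irreducible in $\finitefield[x]$ of degree $k$, so $p \in \Ann_{\integers[x]}(M/M')$ and therefore $pM \subseteq M'$. Consequently the maximal submodules of $M$ of index $p^k$ correspond bijectively with those of $M/pM \cong \finitefield[x]/(\bar f)$ of index $p^k$. Because $f$ is monic of degree $d$, the reduction $\bar f \in \finitefield[x]$ is nonzero of degree $d$; applying Lemma \ref{lem:upper bound for finite F_p x module}(a) to the nonzero polynomial $\bar f \in (\bar f)$ yields $\maxsubmod[p^k](\finitefield[x]/(\bar f)) \leq \lfloor d/k \rfloor \leq d$. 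This bound is uniform in $p$ and $k$, and together with the vanishing at non-prime-power $n$, it gives $\maxsubmod(M) \leq d$ for all $n$.

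The main obstacle is the structural identification that $I$ equals a principal ideal generated by a single monic polynomial of degree exactly $d$; once that is in place, the mod-$p$ reduction and the final bound follow from a direct application of Lemma \ref{lem:maximal ideals of integral polynomial ring} and Lemma \ref{lem:upper bound for finite F_p x module}(a).
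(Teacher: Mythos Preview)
Your proof is correct and follows essentially the same route as the paper: identify the characteristic polynomial $\chi_A$ of the $x$-action as a monic degree-$d$ element of $I$, reduce modulo each prime, and bound the number of irreducible factors of $\bar{f}$ in $\finitefield[x]$ by $d$.

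One remark: you work harder than necessary on the structural step. The paper observes only that $\chi_A \in I$ and then invokes Lemma~\ref{lem:bound for primes not dividing content} directly (since $\chi_A$ is monic, no prime divides its content, so the bound holds for every $p$). You instead prove the stronger fact that $I = (\chi_A)$ via a Hopfian-type argument on $\integers^d$, and then inline the mod-$p$ reduction and appeal to Lemma~\ref{lem:upper bound for finite F_p x module}(a). Your extra step is correct and interesting in its own right, but it is not needed for the bound: knowing $\chi_A \in I$ already gives a surjection $\integers[x]/(p,\chi_A) \twoheadrightarrow R/(p,I)$, which suffices.
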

\begin{proof}
	We have that $M \cong_R R/I$ for some $I\ideal R$. Then $I$ contains the characteristic polynomial of $x$ (considered as a $\rationals$-linear transformation). 
	The result follows by Lemma \ref{lem:bound for primes not dividing content} since the characteristic polynomial is monic. 
\end{proof}

We get the following: 

\begin{corollary}
	If $N$ is a cyclic $\integers[x]$-module and $G = N \rtimes \integers = \freebycyclic{}$, then $\maxsubgr(G)$ has growth
	type $n$ and hence $\mdeg(G) = 1$. 
\end{corollary}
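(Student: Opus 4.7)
The plan is to combine the derivation-counting machinery with the cyclic-case submodule bound to turn this into essentially a one-line consequence, then separately verify the matching lower bound. Since $G = N \rtimes \integers$, Lemma~\ref{lem:abelian by something - counting maximal subgroups by derivations} applies with equality:
\[
\maxsubgr(G) = \maxsubgr(\integers) + \sum_{N_0} \lvert \Der(\integers, N/N_0)\rvert,
\]
the sum taken over all maximal $\integers[x]$-submodules $N_0 \leq N$ with $|N/N_0| = n$.

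Next I would plug in the count of derivations. Since $\integers$ is cyclic and each quotient $N/N_0$ is a simple $\integers[x]$-module, Lemma~\ref{lem:counting derivations from f.g. abelian groups to simple modules} (or directly Lemma~\ref{lem:Shalev's counting derivations out of cyclic groups}) gives $\lvert\Der(\integers, N/N_0)\rvert = |N/N_0| = n$, regardless of whether the action is trivial. Therefore
\[
\maxsubgr(G) = \maxsubgr(\integers) + n \cdot \maxsubmod(N).
\]
For the upper bound, the context of the preceding corollary makes $N \cong \integers^d$ as an abelian group, so by Corollary~\ref{cor:free abelian cyclic module has no growth - weaker version} we have $\maxsubmod(N) \leq d$ for every $n$, and $\maxsubgr(\integers) \leq 1$. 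Hence $\maxsubgr(G) \leq 1 + dn = O(n)$, giving the ``at most $n$'' half of growth type $n$ and the bound $\mdeg(G) \leq 1$.

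For the lower bound (growth type at least $n$), the key observation is that $N$ must have a simple quotient of prime order infinitely often. Writing $N \cong \integers[x]/(f)$, where $f$ is the characteristic polynomial of the $\integers$-action on $N \cong \integers^d$, $f$ is a monic polynomial of degree $d \geq 1$. By Lemma~\ref{lem:roots of non-constant polynomials in F_p and in C}, $f$ has a root $\alpha \in \finitefield$ for infinitely many primes $p$. For each such $p$, the ideal $(p, x - \alpha)_R$ is maximal in $R = \integers[x]$ and contains $(f)$, so $N_0 := (p, x-\alpha)N$ is a maximal submodule with $N/N_0 \cong \finitefield$. The displayed equation then yields
\[
\maxsubgr[p](G) \geq \lvert\Der(\integers, N/N_0)\rvert = p
\]
for infinitely many primes $p$, establishing that $\maxsubgr(G)$ has growth type at least $n$, and in particular $\mdeg(G) \geq 1$. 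Combining with the upper bound finishes the proof.

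There is no real technical obstacle here; the only step that requires any thought is the lower bound, and even that amounts to noticing that a monic polynomial of positive degree splits off linear factors modulo infinitely many primes, which is exactly what Lemma~\ref{lem:roots of non-constant polynomials in F_p and in C} is set up to supply.
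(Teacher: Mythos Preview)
Your proof is correct. The upper bound is the same as the paper's, just with Lemma~\ref{lem:abelian by infinite cyclic - exact counting} unpacked rather than cited directly. (One small point: your claim that $N \cong \integers[x]/(f)$ with $f$ the characteristic polynomial is justified, since the surjection $\integers[x]/(f) \twoheadrightarrow \integers[x]/I$ is a surjection between free abelian groups of the same rank $d$, hence an isomorphism.)

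Where you diverge from the paper is the lower bound. You invoke Lemma~\ref{lem:roots of non-constant polynomials in F_p and in C} to produce, for infinitely many primes $p$, a maximal submodule of index exactly $p$. The paper instead uses the much more elementary observation that for every prime $p$ the subgroup $pN$ is automatically a proper $\integers[x]$-submodule of $N$ (it is characteristic in $N$), so $N/pN$ is a nonzero finite module and therefore has some maximal submodule; this gives $\maxsubmod[p^k](N) \geq 1$ for some $k \leq d$, and as $p$ varies one obtains infinitely many $n$ with $\maxsubgr(G) \geq n$. Your route yields the slightly sharper conclusion that the witnessing $n$ can always be taken prime, at the cost of importing a nontrivial number-theoretic lemma; the paper's route needs nothing beyond the fact that $pN \neq N$.
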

\begin{proof}
	Just apply Lemma \ref{lem:abelian by infinite cyclic - exact counting} (which could have been proved in \S \ref{sec:counting derivations}) 
	together with Corollary \ref{cor:free abelian cyclic module has no growth - weaker version}
	to get that $\maxsubgr(G)$ has at most linear growth. 
	
	For the lower bound, notice that characteristic subgroups of the normal subgroup $N$ of $G$ must necessarily be
	normal in $G$. Note that the subgroups $pN$ (if the group operation in $N$ is written additively) or $N^p$ (if the group
	operation in $N$ were written multiplicatively), where $p$ is prime, are characteristic in $G$. Therefore $\maxsubmod(N) \geq 1$ for infinitely
	many $n$, and hence $\maxsubgr(G) \geq n$ for infinitely many $n$ (again by Lemma~\ref{lem:abelian by infinite cyclic - exact counting}).
\end{proof}

\subsection{Finitely generated modules over PIDs}
\label{sec:Finitely generated modules over PIDs}

The PIDs considered in this section are all of the form $\mathbb{F}[x]$, where $\mathbb{F}$ is either $\finitefield$ or 
$\rationals$.

We first outline the main idea of this section. Let $N$ be a f.g.\ $ \integers[x]$ module.  Any maximal submodule of $N$
of index power of a prime $p$ will contain $pN$ and so corresponds to a maximal $\finitefield[x]$-submodule of $N/pN$. But since 
$\finitefield[x]$ is a PID, we can apply the structure theorem for f.g.\ modules over PIDs. If we only cared about the
prime $p$ (and no other primes), then we could immediately jump to \S\ref{sec:Direct sums with each term a quotient of the next}.
However, we do not care about only one specific prime. Rather, we want to know what happens for all (large) primes.

It would be computationally
advantageous if we did not need to apply the structure theorem infinitely many times---once for each prime. Indeed, 
one major goal of \S \ref{sec:global to local: from Q to F_p} is to prove
Lemma \ref{lem:version1_of_global to local - from QN to N/pN}, which
says that for all but finitely many primes, the decomposition of $N/pN$ afforded by the 
structure theorem (applied to the PID $\finitefield[x]$) ``comes from'' the decomposition
of $\rationals \tensor N$ as a $\rationals[x]$-module. The other major goal is to prove Lemma~\ref{lem:global to local - from QN to N/pN - improved},
a slight generalization of Lemma~\ref{lem:version1_of_global to local - from QN to N/pN}.

%

\subsection{Global to local: From $\rationals$ to $\finitefield$}
\label{sec:global to local: from Q to F_p}	
The goal of this section is to prove Lemma \ref{lem:version1_of_global to local - from QN to N/pN} (and its slight generalization).
It is possible that everything in this section is already known; certainly some of it is.

Until Lemma~\ref{lem:global to local - from QN to N/pN - improved}, let $N$ be a f.g.\ $\integers[x]$-module.
Denote $\rationals \tensor_\integers N$ by \nq. 
Since $\rationals[x]$ is a PID, we have by the fundamental theorem of f.g.\ modules over PIDs that 
\[\tag{*}
\nq \cong_{\rationals[x]} \left( \bigoplus_{j=1}^{s_1} \rationals[x]/(a_j) \right) \oplus \rationals[x]^{s_2}
\]
for some $a_j \in \rationals[x]$ that are not units and such that $a_1 \mid a_2 \mid \ldots \mid a_{s_1}$.

Let $a \in \rationals[x]$. Then it is easy to see that for all large primes, we may speak of
$a$ mod $p$ and state that $\bar{a} \in \finitefield[x]$. Indeed,
there exists a finite set of primes $D$ such that 
$a \in \rd$.  Of course, \rd\ is a subring of $\rationals[x]$, and for $p \not\in D$ we have the surjection
$\rd \twoheadrightarrow \finitefield[x]$. 

What we need is to prove the following.
\begin{lemma}
	\label{lem:version1_of_global to local - from QN to N/pN}
	Suppose $N$ and \nq\ are as above. Then for all large primes $p$,
	\[
	\scrnp \cong_{\finitefield[x]} \left( \bigoplus_{j=1}^{s_1} \finitefield[x]/(\overline{a_j}) \right) \oplus \finitefield[x]^{s_2}.
	\]
\end{lemma}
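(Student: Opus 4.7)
The plan is to interpolate between $\rationals$ and $\finitefield$ via a localization $\integers_D$, choosing a finite set $D$ of ``bad'' primes so that the decomposition of $\nq$ descends to an isomorphism of $\rd$-modules; then reducing mod $p$ for $p \notin D$ will yield the desired decomposition of $N/pN$ essentially for free.

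First I would dispose of $\integers$-torsion. Since $\integers[x]$ is Noetherian, the $\integers$-torsion submodule $T \leq N$ is f.g.\ over $\integers[x]$, say by $t_1,\ldots,t_k$ with $\integers$-orders $m_i$; then $m := \mathrm{lcm}(m_i)$ annihilates $T$, and for any prime $p \nmid m$, the short exact sequence $0 \to T \to N \to N/T \to 0$ yields $N/pN \cong (N/T)/p(N/T)$ since $T/pT = 0$. Because $\rationals \tensor T = 0$, we also have $\rationals \tensor (N/T) = \nq$. Throwing the primes dividing $m$ into $D$, we may replace $N$ by $N/T$ and assume $N$ is $\integers$-torsion-free, so that $N \hookrightarrow N_D := \integers_D \tensor_\integers N \hookrightarrow \nq$.

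Next, pick monic representatives $a_j \in \rationals[x]$ and fix a $\rationals[x]$-isomorphism
\[
\varphi \colon \nq \xrightarrow{\sim} M_\rationals := \Bigl( \bigoplus_{j=1}^{s_1} \rationals[x]/(a_j) \Bigr) \oplus \rationals[x]^{s_2},
\]
and set $M_D := \bigl( \bigoplus_{j=1}^{s_1} \rd/(a_j) \bigr) \oplus \rd^{s_2}$. Fix $\integers[x]$-generators $n_1,\ldots,n_r$ of $N$ and the standard $\rd$-generators $e_\ell$ of $M_D$. Enlarge $D$ (still finitely) to include every prime dividing a denominator appearing in (a) the coefficients of any $a_j$, (b) the coefficients of any $\varphi(n_i) \in M_\rationals$, or (c) any expression $\varphi^{-1}(e_\ell) = (1/m_\ell) \tensor n'_\ell \in \nq$ with $n'_\ell \in N$. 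Condition (b) ensures that $\varphi$ restricts to an $\rd$-homomorphism $\varphi_D \colon N_D \to M_D$; condition (c) ensures that each $e_\ell$ lies in the image of $\varphi_D$, hence $\varphi_D$ is surjective; and $\varphi_D$ is injective because it is a restriction of the injection $\varphi$. So $\varphi_D$ is an $\rd$-isomorphism.

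Finally, for any $p \notin D$, apply $- \tensor_{\integers_D} \finitefield$ to $\varphi_D$. Since $\integers_D/p\integers_D = \finitefield$ and localization commutes with tensor, the left side becomes $N_D/pN_D = N/pN$. On the right, $\rd \tensor_{\integers_D} \finitefield = \finitefield[x]$ and $\rd/(a_j) \tensor_{\integers_D} \finitefield = \finitefield[x]/(\overline{a_j})$, where $\overline{a_j}$ retains its degree because $a_j$ is monic. This yields the claimed decomposition. The hard part is the bookkeeping in the third paragraph --- packaging finitely many denominator conditions into a single finite set $D$ so that $\varphi_D$ is a genuine $\rd$-isomorphism; once that is in place, the mod-$p$ step is formal because $\finitefield$ is a quotient of $\rd$ for $p \notin D$.
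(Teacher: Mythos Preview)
Your proof is correct and follows the same high-level strategy as the paper --- interpolate between $\rationals$ and $\finitefield$ through a localization $\integers_D$, establish the decomposition over $\rd$, then reduce mod $p$ for $p \notin D$ --- but the execution is different and somewhat more direct. The paper descends a \emph{presentation}: it fixes a surjection $\pi_\rationals \colon \rationals[x]^n \twoheadrightarrow \nq$, takes a basis $y_1,\ldots,y_n$ of $\rationals[x]^n$ for which $\ker(\pi_\rationals)$ is generated by $a_1y_1,\ldots,a_my_m$, and then proves through a chain of auxiliary lemmas (about division by elements with unit leading coefficient, minimality of norms, and change-of-basis over $\rd$) that for $D$ large enough this same set is an $\rd$-basis of $\ker(\pi_D)$. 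You instead descend the \emph{isomorphism} $\varphi$ itself: by clearing the finitely many denominators in $\varphi(n_i)$ and $\varphi^{-1}(e_\ell)$ you get $\varphi_D \colon N_D \to M_D$ surjective and injective in one stroke, bypassing the Euclidean-flavored lemmas entirely. Your explicit treatment of the $\integers$-torsion of $N$ at the start is also a point the paper leaves implicit. The paper's route has the minor advantage that the kernel-basis computation is reusable elsewhere (it feeds into Corollary~\ref{cor:decomp_of_localization} as a standalone statement), but for the purpose of this lemma your argument is cleaner.
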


We first give a high-level sketch of the basic idea. Then we state a slight generalization which we will need later. 
We then show how to give a proof by using
Lemma \ref{lem:D large enough gives a basis for ker(pi_D)} via 
Corollary \ref{cor:decomp_of_localization} (whose proofs are deferred to 
the end of this section). 
\begin{proof}[Sketch of proof idea.]
	When doing the computation required in finding the decomposition
	of $\rationals \tensor N$, the only thing keeping us from doing this computation to $N$ itself (as a $\integers[x]$-module) is that 
	we may need to divide by finitely many integers. 
	
	So if $p$ is large enough, then in $\finitefield$ we can divide by all those integers (i.e.\ their residues mod $p$). For such $p$, 
	the steps of the algorithm would be the same for $N/pN$ as for $\rationals \tensor N$. The way we fill out the details
	is to first pass from $\rationals[x]$ to a localization $\integers_D[x]$ of $\integers[x]$, where $D$ is finite. We then
	mod out by $p$.
\end{proof}

\begin{lemma}
	\label{lem:global to local - from QN to N/pN - improved}
	Suppose $N$ is a f.g.\ $\integers_{D_0}[x]$-module, where $D_0$ is a finite set of primes. 
	Also, suppose that 
	\[
	\nq \cong_{\rationals[x]} \left( \bigoplus_{j=1}^{s_1} \rationals[x]/(a_j) \right) \oplus \rationals[x]^{s_2}
	\]
	for some $a_j \in \rationals[x]$ that are not units and such that $a_1 \mid a_2 \mid \ldots \mid a_{s_1}$.
	Then for all large primes $p$,
	\[
	\scrnp \cong_{\finitefield[x]} \left( \bigoplus_{j=1}^{s_1} \finitefield[x]/(\overline{a_j}) \right) \oplus \finitefield[x]^{s_2}.
	\]
\end{lemma}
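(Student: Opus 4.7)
The plan is to perform a ``spreading out'' argument: start from the given $\rationals[x]$-decomposition of $\nq$, descend it to an $\integers_D[x]$-decomposition for a sufficiently large finite set of primes $D \supseteq D_0$, and then reduce modulo $p$ for every $p \notin D$. This is exactly the path indicated in the sketch of proof idea preceding the statement, and it will invoke Corollary~\ref{cor:decomp_of_localization} / Lemma~\ref{lem:D large enough gives a basis for ker(pi_D)} to legitimize the descent.

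Concretely, I would begin by fixing the given $\rationals[x]$-isomorphism
\[
\varphi \colon \nq \xrightarrow{\;\sim\;} M_\rationals := \left(\bigoplus_{j=1}^{s_1} \rationals[x]/(a_j)\right) \oplus \rationals[x]^{s_2}.
\]
Choose a finite $\integers_{D_0}[x]$-generating set of $N$ and the tautological $\rationals[x]$-generators of $M_\rationals$. The images of the chosen $N$-generators under $\varphi$, the preimages under $\varphi^{-1}$ of the chosen generators of $M_\rationals$, and the coefficients of each $a_j$ together involve only finitely many rational numbers. Enlarge $D_0$ to a finite set $D$ of primes so that (i) each $a_j \in \rd$, (ii) all coefficients just mentioned lie in $\integers_D$, and (iii) the finitely many identities that certify $\varphi \circ \varphi^{-1} = \id$ and $\varphi^{-1} \circ \varphi = \id$ hold already in $\integers_D$. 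This produces an $\rd$-isomorphism
\[
\varphi_D \colon N_D := N \otimes_{\integers_{D_0}} \integers_D \xrightarrow{\;\sim\;} M_D := \left(\bigoplus_{j=1}^{s_1} \rd/(a_j)\right) \oplus \rd^{s_2}.
\]

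Next, for any prime $p \notin D$, one has $\integers_D / p\integers_D \cong \finitefield$ canonically, so reducing $\varphi_D$ modulo $p$ gives an $\finitefield[x]$-isomorphism between $N_D \otimes_{\integers_D} \finitefield$ and $M_D \otimes_{\integers_D} \finitefield$. The composition $\integers_{D_0} \to \integers_D \twoheadrightarrow \finitefield$ equals the quotient map $\integers_{D_0} \twoheadrightarrow \finitefield$, so by associativity of tensor product the left side is just $\scrnp$; on the right side, $\rd/(a_j) \otimes_{\integers_D} \finitefield = \finitefield[x]/(\overline{a_j})$ and $\rd^{s_2} \otimes_{\integers_D} \finitefield = \finitefield[x]^{s_2}$, yielding the desired decomposition of $\scrnp$.

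The main obstacle is step~(iii): arranging that \emph{both} $\varphi$ and $\varphi^{-1}$, together with the relations witnessing their mutual inverseness, descend to the \emph{same} finite localization $\integers_D$. The key point is that each map and each relation is determined by finitely many rational numbers, so only finitely many primes have to be inverted; this is precisely what Lemma~\ref{lem:D large enough gives a basis for ker(pi_D)} packages, and the remainder of the argument (reduction mod $p$) is essentially automatic from the right-exactness of tensor product.
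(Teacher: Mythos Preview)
Your approach is correct and follows the same overall architecture as the paper: descend the $\rationals[x]$-data to $\integers_D[x]$ for a suitable finite $D\supseteq D_0$, obtain the decomposition of $N_D$ (this is exactly Corollary~\ref{cor:decomp_of_localization}), and then tensor with $\finitefield$ for $p\notin D$. The difference lies only in \emph{how} the descent is performed. The paper fixes a surjection $\pi_\rationals\colon\rationals[x]^n\twoheadrightarrow\nq$, finds a compatible basis $y_1,\dots,y_n$ with $\ker(\pi_\rationals)$ spanned by $a_1y_1,\dots,a_my_m$, and then spends Lemmas~\ref{lem:D large enough gives a basis for ker(pi_D)} and \ref{lem:D exists for base case}--\ref{lem:lem:D large enough gives a basis for ker M_D} showing that this particular basis of the kernel already lives over $\integers_D[x]$ once $D$ is large enough. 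You instead descend the abstract isomorphism $\varphi$ together with its inverse by the standard ``spreading out'' argument (finitely many matrix entries and finitely many identities, hence finitely many denominators). Your route is shorter and more conceptual; the paper's is more explicit and yields the specific generators one might want for later computations.

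Two small points worth tightening. First, your step~(iii) should also include the \emph{well-definedness} identities (that the relations of $N_D$ map to zero in $M_D$, and vice versa), not only the mutual-inverse identities; this requires $M_D\hookrightarrow M_\rationals$ and $N_D\hookrightarrow\nq$ to be injective, which holds once the leading coefficients of the $a_j$ are units in $\integers_D$ and $N$ is $\integers$-torsion-free (the latter reduction is harmless for large $p$ since the $\integers$-torsion of a f.g.\ $\integers_{D_0}[x]$-module is killed by a fixed integer). Second, your closing remark that ``this is precisely what Lemma~\ref{lem:D large enough gives a basis for ker(pi_D)} packages'' is slightly off: that lemma packages the descent of a \emph{kernel basis}, not of an isomorphism, though the underlying finiteness principle is indeed the same.
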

We next show how to prove Lemma~\ref{lem:global to local - from QN to N/pN - improved}, quoting a couple results which will be proved later.

Denote  $d_{\rationals[x]}(\nq)$ by $n$. So there
exists a surjection $\pi_\rationals\colon \rationals[x]^n \twoheadrightarrow \nq$ (which is a $\rationals[x]$-homomorphism). 
So $\ker (\pi_\rationals)$ is a 
$\rationals[x]$-submodule of the free module $\rationals[x]^n$. Because $\rationals[x]$ is a PID we conclude
that $\ker(\pi_\rationals)$ is a free module, and in fact we know that $\rationals[x]^n$ has a basis $y_1, y_2, \ldots, y_n$
such that $\ker(\pi_\rationals)$ has basis $b_1y_1, b_2y_2, \ldots, b_my_m$ for some $m \leq n$ such that 
$b_1 \mid b_2 \mid \ldots \mid b_{m}$. 

Claim 1: No $b_j$ is a unit. The reason is that because $d_{\rationals[x]}(\nq) = n$, there is no surjective $\rationals[x]$-module 
homomorphism from
$\rationals[x]^{n-1}$ to $\nq$.

Claim 2: We therefore have $m = s_1$, $n-m = s_2$, and for all $j = 1,\ldots,m$, $b_j = u_ja_j$ for $u_j$ a unit. The reason is that we have
\[
\nq  \cong_{\rationals[x]} \rationals[x]^n/(b_1y_1,\ldots, b_my_m), \text{ \quad and}
\]
\[
\rationals[x]^n/(b_1y_1,\ldots, b_my_m) \cong_{\rationals[x]} \left( \bigoplus_{j=1}^{m} \rationals[x]/(b_j) \right) \oplus \rationals[x]^{n-m}.
\]
Claim 2 then follows by the uniqueness of the decomposition afforded by the structure theorem. So from now on, we will write
$a_j$ instead of $b_j$.

We can make $D \supseteq D_0$ large enough (and yet keep it finite) such that $\pi_\rationals(y_i) \in \nd$.
In this case, there is a map 
$\pi_D$ 
satisfying the following commutative diagram:

\centerline{\xymatrix{
		(\rd)^n \ar@{^{(}->}[d]^{\iota_1} \ar@{->>}[r]^{\pi_D} &\scrnd\ar@{^{(}->}[d]^{\iota_2}\\
		\rationals[x]^n \ar@{->>}[r]^{\pi_\rationals} 				&\nq}}

Note that if we have such $\pi_D$ and diagram for given $D$, then the same diagram holds (for a similar $\pi_D$)
if we make $D$ any larger.

Our main step in proving Lemma \ref{lem:global to local - from QN to N/pN - improved}
is Lemma \ref{lem:D large enough gives a basis for ker(pi_D)} 
which gives our main reduction, Corollary \ref{cor:decomp_of_localization}.
\begin{lemma}
	\label{lem:D large enough gives a basis for ker(pi_D)}
	With the above notation, we can make $D$ large enough yet finite such that
	$a_1y_1, a_2y_2, \ldots, a_my_m \in (\rd)^n$ and form a \rd-basis of $\ker(\pi_D)$. 
\end{lemma}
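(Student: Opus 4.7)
The plan is to enlarge $D$ in successive steps, each adding only finitely many primes, so that the final $D$ is still finite. First, replace each $a_j$ by a scalar multiple making it monic in $\rationals[x]$; this is harmless since the $a_j$'s are defined only up to units of $\rationals[x]^\times = \rationals^\times$ and the $\rationals[x]$-basis $\{a_jy_j\}$ of $\ker(\pi_\rationals)$ is unaffected. Now enlarge $D$ to include every prime appearing in the denominator of any coefficient of any (now monic) $a_j$. Simultaneously, letting $P\in GL_n(\rationals[x])$ be the matrix expressing $y_1,\ldots,y_n$ in the standard basis of $\rationals[x]^n$, enlarge $D$ further to include every prime appearing in the denominators of the entries of $P$ or of $P^{-1}$. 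After these enlargements, each $a_j\in\rd$ is monic, $P$ and $P^{-1}$ both lie in $GL_n(\rd)$, and hence $\{y_1,\ldots,y_n\}$ is a $\rd$-basis of $(\rd)^n$ with $a_jy_j\in(\rd)^n$ for all $j\leq m$.

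Next, I would verify that $a_1y_1,\ldots,a_my_m$ form a $\rd$-basis of $\ker(\pi_D)$. Linear independence over $\rd$ is immediate from linear independence over the larger ring $\rationals[x]$. For spanning, the injectivity of $\iota_2$ combined with the commutative diagram gives $\ker(\pi_D)=\ker(\pi_\rationals)\cap(\rd)^n$. An arbitrary $v\in\ker(\pi_D)$ then has two expansions: on the one hand $v=\sum_{i=1}^n\lambda_iy_i$ with $\lambda_i\in\rd$, using the $\rd$-basis $\{y_i\}$ of $(\rd)^n$; and on the other hand $v=\sum_{j=1}^m c_j\,a_jy_j$ for unique $c_j\in\rationals[x]$, since $\{a_jy_j\}$ is a $\rationals[x]$-basis of $\ker(\pi_\rationals)$. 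Matching coefficients in the $\rationals[x]$-basis $\{y_i\}$ of $\rationals[x]^n$ forces $\lambda_i=0$ for $i>m$ and $\lambda_j=c_ja_j$ for $j\leq m$, so $c_j=\lambda_j/a_j$ in $\rationals[x]$.

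The main obstacle is showing $c_j\in\rd$, and this is precisely where monicity of the $a_j$'s is essential: without it, naive division by $a_j$ would introduce new denominators, and one cannot control these uniformly in $v$ by a fixed finite enlargement of $D$. Since $a_j$ is monic with $a_j,\lambda_j\in\rd=\integers_D[x]$, polynomial long division of $\lambda_j$ by $a_j$ can be carried out entirely inside the commutative ring $\rd$, producing a quotient and a remainder both in $\rd$. The remainder must vanish because it vanishes in $\rationals[x]$ (as $a_j\mid\lambda_j$ there), so the quotient $c_j$ lies in $\rd$ as required. This exhibits $v$ as a $\rd$-linear combination of $a_1y_1,\ldots,a_my_m$ and completes the proof.
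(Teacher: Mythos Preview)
Your proof is correct and follows essentially the same route as the paper: enlarge $D$ so that $\{y_i\}$ is a $\rd$-basis of $(\rd)^n$ (you use the change-of-basis matrix $P$ and $P^{-1}$, the paper uses the equivalent determinant argument), identify $\ker(\pi_D)=\ker(\pi_\rationals)\cap(\rd)^n$, and then recover $c_j\in\rd$ by dividing $\lambda_j$ by $a_j$ once the leading coefficient is made a unit. The only cosmetic difference is that you normalize the $a_j$ to be monic and invoke polynomial long division directly, whereas the paper keeps the $a_j$ as they are, inverts their leading coefficients in $\integers_D$, and phrases the division step via an abstract ``minimal norm'' lemma.
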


Once we prove this lemma, we will then get the following corollary, 
which tells us that our decomposition
for $\nq$ given at the beginning of the section passes to a decomposition
of the $\rd$-module $\scrnd$.

\begin{corollary}
	\label{cor:decomp_of_localization}
	For the above $D$, we have 
	\[
	\scrnd \cong_{\rd} \left( \bigoplus_{j=1}^{s_1} \rd/(a_j) \right) \oplus (\rd)^{s_2}.
	\]
\end{corollary}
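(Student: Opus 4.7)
The plan is to combine the first isomorphism theorem with Lemma~\ref{lem:D large enough gives a basis for ker(pi_D)}, after further enlarging $D$ so that $y_1, \ldots, y_n$ descends to a genuine $\rd$-basis of $(\rd)^n$ (not merely a $\rationals[x]$-basis of $\rationals[x]^n$).

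First, I would apply the first isomorphism theorem to $\pi_D$ to obtain $\scrnd \cong_{\rd} (\rd)^n/\ker(\pi_D)$. By Lemma~\ref{lem:D large enough gives a basis for ker(pi_D)}, after enlarging $D$ if necessary (keeping it finite), $\ker(\pi_D)$ is the free $\rd$-module on $a_1 y_1, \ldots, a_m y_m$; and by Claim~2 of the preceding discussion, $m = s_1$ and $n - m = s_2$.

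Second---the main technical step---I would further enlarge $D$ (still finite) so that $y_1, \ldots, y_n$ itself becomes a $\rd$-basis of $(\rd)^n$. Let $C \in \GL(n, \rationals[x])$ be the matrix expressing $y_1, \ldots, y_n$ in terms of the standard basis of $\rationals[x]^n$. Only finitely many primes appear among the denominators of the rational coefficients occurring in the entries of $C$ and $C^{-1}$; adjoining those primes to $D$ forces both $C$ and $C^{-1}$ into $M_n(\rd)$, so $y_1, \ldots, y_n$ is indeed a $\rd$-basis of $(\rd)^n$. The conclusion of Lemma~\ref{lem:D large enough gives a basis for ker(pi_D)} is plainly preserved under further enlargement of $D$, so the $\rd$-basis statement for $\ker(\pi_D)$ still holds.

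With $(\rd)^n = \bigoplus_{i=1}^n \rd y_i$ and $\ker(\pi_D) = \bigoplus_{j=1}^m \rd(a_j y_j)$, the quotient splits term by term: each $\rd y_j / \rd(a_j y_j) \cong_{\rd} \rd/(a_j)$ for $1 \leq j \leq m$, while the free generators $y_{m+1}, \ldots, y_n$ contribute an extra free summand $(\rd)^{n-m}$. Substituting $m = s_1$ and $n - m = s_2$ yields
\[
\scrnd \cong_{\rd} \left( \bigoplus_{j=1}^{s_1} \rd/(a_j) \right) \oplus (\rd)^{s_2},
\]
as claimed. The main obstacle is the second step: promoting $y_1, \ldots, y_n$ from a $\rationals[x]$-basis of $\rationals[x]^n$ to a $\rd$-basis of $(\rd)^n$. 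Once the denominators appearing in both $C$ and $C^{-1}$ are tracked, only finitely many primes need to be inverted, so the final $D$ remains finite.
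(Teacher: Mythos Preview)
Your proof is correct and follows essentially the same route as the paper's: first isomorphism theorem, then Lemma~\ref{lem:D large enough gives a basis for ker(pi_D)} for $\ker(\pi_D)$, then the term-by-term splitting once $y_1,\ldots,y_n$ is known to be an $\rd$-basis, and finally Claim~2 for $m=s_1$, $n-m=s_2$. The only difference is that your ``main technical step'' (further enlarging $D$ via the change-of-basis matrix $C$ and $C^{-1}$) is unnecessary: the $D$ produced by Lemma~\ref{lem:D large enough gives a basis for ker(pi_D)} already has $y_1,\ldots,y_n$ as an $\rd$-basis, since its proof goes through Lemma~\ref{lem:lem:D large enough gives a basis for ker M_D}, which in turn invokes Lemma~\ref{lem:D exists for base case}---and that lemma is precisely your matrix argument.
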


Once we have this corollary, it will be straightforward to complete the proof of 
Lemma~\ref{lem:global to local - from QN to N/pN - improved}. Indeed, let $p \not\in D$. Then
\[
N/pN \cong_{\rd} D^{-1}(N/pN) \cong_{\rd} \scrnd/p\scrnd.
\]
Let $A$ denote the right-hand side of the isomorphism in Corollary \ref{cor:decomp_of_localization}. We have
\[
\scrnd/p\scrnd \cong_{\rd} A/pA \cong_{\rd} \left( \bigoplus_{j=1}^{s_1} \finitefield[x]/(\overline{a_j}) \right) \oplus \finitefield[x]^{s_2}.
\]
Combining the above two sequences of isomorphisms yields 
$$N/pN \cong_{\rd}  \left( \bigoplus_{j=1}^{s_1} \finitefield[x]/(\overline{a_j}) \right) \oplus \finitefield[x]^{s_2}$$
which passes to an isomorphism as $\finitefield[x]$-modules, giving
Lemma~\ref{lem:global to local - from QN to N/pN - improved} (and \ref{lem:version1_of_global to local - from QN to N/pN}).
The only thing that remains is to prove Lemma~\ref{lem:D large enough gives a basis for ker(pi_D)}
(and Corollary~\ref{cor:decomp_of_localization}).

\vspace{.1in}
\noindent \textbf{Proof of Lemma \ref{lem:D large enough gives a basis for ker(pi_D)}:}

\vspace{.05in}

\noindent To give a proof, we have to do some preliminaries first. 
Recall that a norm $\mathscr{N}$ on an integral domain $S$ is 
a function $\mathscr{N}: S \to \integers^{\geq 0}$ with $\mathscr{N}(0)  = 0$.
\begin{definition}
	Let $S$ be an integral domain with norm $\mathscr{N}$. Let $0 \neq b \in S$. We say that we can always divide by 
	$b$ in $S$ if for all $a \in S$, there exist $q, r \in S$ such that
	\[
	a = qb + r \text{\quad with $r = 0$ or $\mathscr{N}(r)  < \mathscr{N}(b)$}.
	\]
\end{definition}

\begin{lemma}
	\label{lem:leading coefficient a unit implies we can divide}
	Let $R$ be an integral domain and let $b(x) \in R[x]$. Then we can always divide by $b(x)$ in $R[x]$ if
	$\leadingcoeff(b)^{-1} \in R$.
\end{lemma}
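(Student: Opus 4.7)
The plan is to mimic the classical polynomial long division algorithm verbatim, using the degree function as the norm $\mathscr{N}$ on $R[x]$ (with the convention $\mathscr{N}(0)=0$). The only point where ordinary long division can fail over a general commutative ring is when we need to cancel the leading term of the dividend by multiplying the divisor by a scalar: this requires inverting the leading coefficient of $b(x)$, which is exactly the hypothesis.

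More precisely, I would proceed by strong induction on $\deg(a)$ for the statement ``$a$ can be divided by $b$ in $R[x]$''. Fix $b(x) \in R[x]$ with $u := \leadingcoeff(b)^{-1} \in R$, and set $m = \deg(b)$. For the base case, if $a = 0$ or $\deg(a) < m$, take $q = 0$ and $r = a$; the required inequality $\mathscr{N}(r) < \mathscr{N}(b)$ (or $r = 0$) is immediate.

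For the inductive step, suppose $\deg(a) = n \geq m$, and let $c = \leadingcoeff(a)$. Define
\[
a'(x) \;:=\; a(x) \;-\; c\,u\,x^{n-m}\,b(x).
\]
Because the leading term of $c u x^{n-m} b(x)$ is $c u \cdot u^{-1} x^n = c x^n$, it cancels the leading term of $a$, so $\deg(a') < n$ (or $a' = 0$). By the inductive hypothesis we may write $a'(x) = q'(x) b(x) + r(x)$ with $r = 0$ or $\deg(r) < m$. Rearranging gives
\[
a(x) \;=\; \bigl(q'(x) + c\,u\,x^{n-m}\bigr)\,b(x) \;+\; r(x),
\]
so $q(x) := q'(x) + c u x^{n-m}$ and this $r(x)$ provide the desired decomposition.

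There is essentially no obstacle: the entire argument turns on the single observation that we may form $c u \in R$ because $u = \leadingcoeff(b)^{-1}$ lies in $R$ by hypothesis. Note that uniqueness of $q$ and $r$ is not part of the statement (and would require $R$ to have no zero divisors beyond being an integral domain, which is assumed, but is unnecessary here), so I would not bother to prove it.
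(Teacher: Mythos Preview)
Your proof is correct and is essentially the same argument the paper has in mind: the paper's sketch simply says to look at the division algorithm in $\mathbb{F}[x]$ (with $\mathbb{F}$ the fraction field of $R$), and your induction is exactly that algorithm written out in $R[x]$, observing that the only division ever needed is by $\leadingcoeff(b)$. Your write-up is more detailed than the paper's one-line sketch, but the content is identical.
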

\begin{proof}[Sketch of proof]
	This is clear by looking at the division algorithm in $\mathbb{F}[x]$, where $\mathbb{F}$ is the field
	of fractions of $R$.
\end{proof}

We know that in a Euclidean domain, every ideal is principal. In the process of showing that, we can extract a 
little more, namely Lemma~\ref{lem:minimal norm and ability to divide implies principal ideal}. 
%

\begin{lemma}
	\label{lem:minimal norm and ability to divide implies principal ideal}
	Let $R$ be an integral domain with norm $\mathscr{N}$. Suppose $I \ideal R$ and that there exists $d \in I$ such that
	\begin{enumerate}
		\item $\mathscr{N}(d) = \min_{0 \neq \alpha \in I}\{\mathscr{N}(\alpha) \}$ \text{ and}
		
		\item We can always divide by $d$ in $R$.
	\end{enumerate}
	Then $I = (d)$.
\end{lemma}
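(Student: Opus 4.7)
The plan is to mimic the standard proof that in a Euclidean domain every ideal is principal, but pared down so that only divisibility by the specific element $d$ is needed (which is exactly what the hypothesis gives us).

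First I would establish the easy inclusion: since $d \in I$ and $I$ is an ideal of $R$, we immediately have $(d) \subseteq I$. The only content is the reverse inclusion.

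For the reverse inclusion, pick an arbitrary $a \in I$. By hypothesis (2), we can divide $a$ by $d$: there exist $q, r \in R$ with
\[
a = qd + r, \quad \text{where } r = 0 \text{ or } \mathscr{N}(r) < \mathscr{N}(d).
\]
Since $a \in I$ and $qd \in I$, we get $r = a - qd \in I$. Now suppose for contradiction that $r \neq 0$. Then $\mathscr{N}(r) < \mathscr{N}(d)$, contradicting the minimality of $\mathscr{N}(d)$ from hypothesis (1) (note $r$ is a nonzero element of $I$). Therefore $r = 0$, so $a = qd \in (d)$. Since $a \in I$ was arbitrary, $I \subseteq (d)$, and combining with the first paragraph yields $I = (d)$.

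There is essentially no obstacle here; the lemma is just the careful extraction of exactly which properties of a Euclidean domain are used in the standard ``every ideal is principal'' argument. The whole proof is a few lines and only exercises the two numbered hypotheses directly.
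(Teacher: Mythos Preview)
Your proof is correct and follows essentially the same approach as the paper's: both establish $(d)\subseteq I$ trivially, then for arbitrary $a\in I$ divide by $d$ to get $a=qd+r$, observe $r\in I$, and use minimality of $\mathscr{N}(d)$ to force $r=0$.
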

\begin{proof}
	We know $I \supseteq (d)$ since $d \in I$. 
	
	Showing $I \subseteq (d)$: Suppose that $a \in I$. Because we can divide by $d$, we know there exist
	$q, r \in R$ such that $a = qd + r$ with $r = 0$ or $\mathscr{N}(r) < \mathscr{N}(d)$. But $a, d \in I$ implies that $r \in I$ also.
	Therefore, by minimality of $\mathscr{N}(d)$, we conclude that $r = 0$. So $a \in (d)$.
\end{proof}

\begin{lemma}
	\label{lem:D exists for base case}
	Let $y_1, y_2, \ldots, y_n$ be a $\rationals[x]$-basis of $\rationals[x]^n$. Then there exists a finite $D$ (containing $D_0$) such that
	$y_1, y_2, \ldots, y_n$ form a $\integers_D[x]$-basis of $\integers_D[x]^n$.
\end{lemma}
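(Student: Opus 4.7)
The plan is to interpret the basis $\{y_1, \ldots, y_n\}$ as the columns of a change-of-basis matrix over $\rationals[x]$, then observe that only finitely many primes appear in the denominators of the finitely many polynomial entries of that matrix and its inverse; those primes, together with $D_0$, form the desired set $D$.

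More concretely, I would first write each $y_i = \sum_{j=1}^n y_{i,j}\, e_j$ with $y_{i,j} \in \rationals[x]$, where $e_1, \ldots, e_n$ is the standard basis of $\rationals[x]^n$, and assemble the matrix $M = (y_{i,j}) \in M_n(\rationals[x])$. Since $y_1, \ldots, y_n$ is a $\rationals[x]$-basis, $M$ lies in $GL_n(\rationals[x])$, so the inverse $M^{-1}$ also has entries in $\rationals[x]$. Each of the $2n^2$ polynomial entries of $M$ and $M^{-1}$ has only finitely many nonzero coefficients, each of which is a rational number; taken together, these coefficients involve only finitely many primes in their denominators. Let $D \supseteq D_0$ consist of $D_0$ together with all such primes.

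Then both $M$ and $M^{-1}$ have entries in $\rd$, i.e.\ $M \in GL_n(\rd)$. Consequently the $\rd$-linear map $\rd^n \to \rd^n$ sending $e_i$ to $y_i$ is an isomorphism, which is exactly the statement that $y_1, \ldots, y_n$ is a $\rd$-basis of $\rd^n$. (Linear independence over $\rd$ is automatic from linear independence over the larger ring $\rationals[x]$; spanning is what requires $M^{-1}$ to have entries in $\rd$.)

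I do not expect any real obstacle here: the whole lemma is essentially the observation that ``inverting a matrix in $\rationals[x]$ only requires dividing by finitely many integers.'' The only mild care needed is to be explicit that one must enlarge $D$ to include the primes appearing in the denominators of the entries of $M^{-1}$ and not just those of $M$, since it is spanning (not independence) that dictates the choice of $D$.
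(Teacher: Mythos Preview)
Your proposal is correct and takes essentially the same approach as the paper: both arguments write the change-of-basis matrix $M$ from the standard basis to $y_1,\ldots,y_n$ and enlarge $D$ so that $M$ becomes invertible over $\integers_D[x]$. The only cosmetic difference is that the paper invokes the determinant criterion (noting $\det M$ is a nonzero rational, so one only needs the primes in its denominator together with those in the entries of $M$), whereas you instead collect the primes appearing in the entries of $M^{-1}$ directly; these yield the same conclusion.
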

\begin{proof}
	Let $e_1, e_2, \ldots, e_n$ be a $\integers_D[x]$-basis of $\integers_D[x]^n$. Thus $e_1, e_2, \ldots, e_n$ is a
	$\rationals[x]$-basis of $\rationals[x]^n$. For $i \in [n]$, let $\pi_i\colon \rationals[x]^n \to \rationals[x]$ be the projection onto
	the $i$-th coordinate: $\pi_i(\sum_j r_j e_j) := r_i$. Fix $k \in [n]$. Then $y_k \in \integers_D[x]^n$ iff 
	for all $i \in [n]$, we have $\pi_i(y_k) \in  \integers_D[x]$.
	
	Therefore, there exists a finite $D$ (containing $D_0$) such that $y_k \in  \integers_D[x]^n$ for all $k$, but this is not sufficient.
	
	We have that $y_1, y_2, \ldots, y_n$ is a basis for $ \integers_D[x]^n$ iff the map $e_i \mapsto y_i$  $\forall i \in [n]$ is an isomorphism. 
	We note that this map ($e_i \mapsto y_i$) is given by a matrix; indeed, for given $j$, let
	$y_j = \sum_{i=1}^n a_{ij} e_i$, and form the $n \times n$ matrix $A := (a_{ij})$. Of course, the entries of $A$ are all
	in $\integers_D[x]$.  
	
	The matrix $A$ is invertible in the ring $M_n(\integers_D[x])$ (of all $n \times n$ matrices over $\integers_D[x]$)
	iff the map $A \cdot$ from $ \integers_D[x]^n$ to $ \integers_D[x]^n$ ``multiply on the left by $A$'' is an isomorphism. Also,
	the map $A \cdot$ is an isomorphism iff $y_1, y_2, \ldots, y_n$ is a $\integers_D[x]$-basis of $\integers_D[x]^n$.
	We have that $A$ is an invertible matrix iff $\det(A)$ is a unit in $\integers_D[x]$. Because $y_1, y_2, \ldots, y_n$
	is a $\rationals[x]$-basis of $\rationals[x]^n$, we have that $0 \neq \det(A) \in \rationals$. Therefore, we can make $D$
	large enough (while keeping it finite) so that $\det(A)^{-1} \in \integers_D[x]$.	
\end{proof}

\begin{lemma}
	\label{lem:lem:D large enough gives a basis for ker M_D}
	Let \mq\  denote some $\rationals[x]$-submodule of $\rationals[x]^n$, and denote $\qm \cap \zdx^n$ by \md.
	Let $y_i \in \rationals[x]^n$, $a_i \in \rationals[x]$ be such that $y_1, y_2, \ldots, y_n$ is a 
	$\rationals[x]$-basis of $\rationals[x]^n$ and $a_1y_1, \ldots, a_m y_m$ is a $\rationals[x]$-basis of
	\mq.
	Then there exists a finite $D \supseteq D_0$ such that 
	$a_1y_1, \ldots, a_my_m$ is a $\zdx$-basis of \md.
\end{lemma}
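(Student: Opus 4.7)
The plan is to build the required $D$ in two stages, leveraging Lemma~\ref{lem:D exists for base case} for the ambient basis and Lemma~\ref{lem:leading coefficient a unit implies we can divide} to pin down $\md$ exactly, rather than just the span of the $a_iy_i$.

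First, apply Lemma~\ref{lem:D exists for base case} to obtain a finite set $D_1 \supseteq D_0$ such that $y_1,\ldots,y_n$ is a $\integers_{D_1}[x]$-basis of $\integers_{D_1}[x]^n$. Next, I would enlarge $D_1$ to a finite set $D \supseteq D_1$ in two further ways: (a) adjoin the primes appearing in the denominators of the coefficients (in the $y_i$-basis) of each $a_iy_i$, so that every $a_iy_i$ lies in $\integers_D[x]^n$; and (b) adjoin the primes dividing $\leadingcoeff(a_i)$ for $i = 1,\ldots,m$, so that each $\leadingcoeff(a_i)$ is a unit in $\integers_D$. Both adjustments require only finitely many extra primes since we have only finitely many $a_i$.

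It remains to verify that $a_1y_1,\ldots,a_my_m$ is a $\zdx$-basis of $\md$. Linear independence over $\zdx$ is immediate from linear independence over $\rationals[x]$ (since $\zdx \hookrightarrow \rationals[x]$). The substantive step is spanning: let $v \in \md = \mq \cap \zdx^n$. Since $a_1y_1,\ldots,a_my_m$ is a $\rationals[x]$-basis of $\mq$, there exist unique $c_1,\ldots,c_m \in \rationals[x]$ with
\[
v \;=\; \sum_{i=1}^m c_i\,a_i\,y_i \;=\; \sum_{i=1}^m (c_ia_i)\,y_i + \sum_{i=m+1}^n 0\cdot y_i.
\]
Because $y_1,\ldots,y_n$ is a $\zdx$-basis of $\zdx^n$ and $v \in \zdx^n$, the coordinates $c_ia_i$ all lie in $\zdx$. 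I then apply the division statement: by choice of $D$, $\leadingcoeff(a_i)$ is a unit in $\integers_D$, so Lemma~\ref{lem:leading coefficient a unit implies we can divide} gives $q_i,r_i \in \zdx$ with $c_ia_i = q_ia_i + r_i$ and $r_i = 0$ or $\deg r_i < \deg a_i$. But $c_ia_i = c_i a_i + 0$ is also such a decomposition inside $\rationals[x]$, and uniqueness of the division algorithm in $\rationals[x]$ forces $q_i = c_i$ and $r_i = 0$. Hence $c_i = q_i \in \zdx$, so $v$ is a $\zdx$-combination of $a_1y_1,\ldots,a_my_m$.

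The main obstacle is exactly this last passage from $c_ia_i \in \zdx$ to $c_i \in \zdx$: naively, $\zdx$ is not a PID and ``cancellation'' by $a_i$ need not stay in the ring. Enlarging $D$ so that $\leadingcoeff(a_i)$ becomes a unit is the decisive move, because it reduces the question to the uniqueness of polynomial division, which transfers verbatim between $\zdx$ and $\rationals[x]$. Once this is secured, collecting the three enlargements of $D$ (basis-preservation, denominators of $a_iy_i$, and leading coefficients of $a_i$) produces a single finite $D$ that works.
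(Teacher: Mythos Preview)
Your argument is correct and follows essentially the same route as the paper: enlarge $D$ via Lemma~\ref{lem:D exists for base case} so the $y_i$ are a $\zdx$-basis, then further enlarge so each $a_i\in\zdx$ with $\leadingcoeff(a_i)$ a unit, and finally deduce $c_i\in\zdx$ from $c_ia_i\in\zdx$. The only cosmetic difference is in this last step: the paper invokes Lemma~\ref{lem:minimal norm and ability to divide implies principal ideal} to show $\pi_i(\md)=(a_i)_{\zdx}$ and then cancels $a_i$ in the integral domain, whereas you run the division algorithm in $\zdx$ and appeal to uniqueness of division in $\rationals[x]$; these are equivalent packagings of the same idea.
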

\begin{proof}
	We will show that there exists a finite $D$ such that for all $c \in \md$, there exist unique $r_1, \ldots, r_m \in \zdx$ such that
	$c = r_1 a_1 y_1 + \cdots + r_m a_m y_m$. 
	
	Suppose by Lemma \ref{lem:D exists for base case} that $D \supseteq D_0$ is large enough (yet finite) such that
	$y_1, \ldots, y_n$ is a \zdx-basis of $\zdx^n$. (So $\zdx^n = \bigoplus_{i = 1}^n \zdx y_i$.) For $i \in [n]$, 
	let $\pi_i\colon \rationals[x]^n \to \rationals[x]$ be given by $\pi_i(\sum_j r_j y_j) := r_i$.
	
	We know $\pi_i(\mq) \ideal \rationals[x]$ is a principal ideal (since $\rationals[x]$ is a PID), 
	but more, we have $\pi_i(\mq) = (a_i)_{\rationals[x]} :=$ the ideal of $\rationals[x]$ generated by $a_i$.
	
	Add if necessary, finitely many primes to $D$ such that for all $i$, $a_i \in \zdx$ and such that
	$\leadingcoeff(a_i)^{-1} \in \integers_D$. Consequently, Lemma \ref{lem:leading coefficient a unit implies we can divide}
	tells us we can always divide by $a_i$ in \zdx. Since $\pi_i(\mq) = (a_i)_{\rationals[x]}$ we have that $a_i$ has minimal 
	degree in $\pi_i(\mq)$, and hence $a_i$ also has minimal degree in $\pi_i(\md)$. Therefore,
	we conclude by Lemma \ref{lem:minimal norm and ability to divide implies principal ideal} that 
	$\pi_i(\md) = (a_i)_{\zdx} :=$ the ideal of $\zdx$ generated by $a_i$.  We now have $D$ picked.
	
	Let $c \in \md$. We know that since $c \in \mq$ and since $a_1 y_1, \ldots, a_m y_m$ is a $\rationals[x]$-basis of \mq, 
	there exist unique $c_1, \ldots, c_m \in \rationals[x]$ such that 
	\[\tag{*}
	c =  c_1 a_1 y_1 + \cdots + c_m a_m y_m.
	\]
	
	We have that $\pi_i(c) \in \pi_i(\md) = (a_i)_{\zdx}$. Therefore, for all $i$, there exist $d_i \in \zdx$ such that
	$\pi_i(c) = d_i a_i$. But we know from (*) that $\pi_i(c) = c_i a_i$. Thus $c_i a_i = d_i a_i$. Since
	\zdx\ is an integral domain, we conclude that $c_i = d_i$ for all $i$. Hence $c_i \in \zdx$, and they are unique.
\end{proof}

\begin{proof}[Proof of Lemma \ref{lem:D large enough gives a basis for ker(pi_D)}]
	Notice that in the notation of the commutative diagram preceding 
	Lemma \ref{lem:D large enough gives a basis for ker(pi_D)} that
	$$\ker(\pi_D) = \ker(\iota_2 \circ \pi_D) = \ker(\pi_\rationals \circ \iota_1) = \ker(\pi_\rationals) \cap (\rd)^n.$$
	Let $M_\rationals = \ker(\pi_\rationals)$ and $M_D = \ker(\pi_\rationals) \cap (\rd)^n$, and
	apply Lemma \ref{lem:lem:D large enough gives a basis for ker M_D}.
\end{proof}

\begin{proof}[Proof of Corollary \ref{cor:decomp_of_localization}]
	We have $\pi_D\colon \rd^n \twoheadrightarrow N_D$ from the commutative diagram preceding Lemma~\ref{lem:D large enough gives a basis for ker(pi_D)}.
	Therefore,
	\[\tag{*1}
	N_D \cong_{\rd }  \rd^n/ \ker(\pi_D).
	\]
	We have by Lemma~\ref{lem:D large enough gives a basis for ker(pi_D)} that
	\[\tag{*2}
	\rd^n/ \ker(\pi_D) \cong_{\rd} \rd^n/(a_1y_1, \ldots, a_my_m).
	\]
	Since $y_1, \ldots, y_n$ form a basis of $\rd^n$ (as Lemma~\ref{lem:D exists for base case} says), we get that
	\[\tag{*3}
	\rd^n/(a_1y_1, \ldots, a_my_m) \cong_{\rd } \left( \bigoplus_{j=1}^{m} \rd/(a_j) \right) \oplus (\rd)^{n-m}
	\]
	By Claim 2 (which follows the statement of Lemma~\ref{lem:global to local - from QN to N/pN - improved}) we have $m = s_1$ and $n - m = s_2$.
	Thus, combining (*1), (*2), and (*3) gives Corollary~\ref{cor:decomp_of_localization}.
\end{proof}
\noindent This completes our proof of Lemma~\ref{lem:global to local - from QN to N/pN - improved} and hence of
Lemma~\ref{lem:version1_of_global to local - from QN to N/pN} too.

Let $N$ be a f.g.\ $\integers_{D_0}[x]$-module (for some finite set of primes $D_0$). 
Denote the $\finitefield[x]$-torsion-free rank of $N/pN$ by $r(p)$ and write 
\[
\nq \cong_{\rationals[x]} \left( \bigoplus_{j=1}^{s(0)} \rationals[x]/(a_j) \right) \oplus \rationals[x]^{r(0)}.
\]
\begin{corollary}
	\label{cor:p^k approaches infinity in two ways - consequence on module}
	With the notation from the previous paragraph, there exists a constant $C$ (depending on $N$) such that $n = p^k > C$ implies
	\begin{enumerate}[(a)]
		\item $\maxsubmod(N) = \maxsubmod(\finitefield[x]^{r(p)})$ \hspace{.1in} or 
		\item $\displaystyle{\maxsubmod(N) =  \maxsubmod \left( \bigoplus_{j=1}^{s(0)} \finitefield[x]/(\overline{a_{j}}) \oplus \finitefield[x]^{r(0)} \right) }$.
	\end{enumerate}
\end{corollary}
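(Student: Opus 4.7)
The plan is to split into two cases depending on the size of $p$. First, reduce the problem to $N/pN$: any maximal submodule $M \leq N$ of index $p^k$ has simple quotient $N/M$ of order $p^k$, whose annihilator is a maximal ideal of $\integers_{D_0}[x]$ containing $p$; this forces $p \notin D_0$ and $pN \subseteq M$. Hence for $n = p^k$ larger than every element of $D_0$, maximal submodules of $N$ of index $p^k$ correspond bijectively to maximal $\finitefield[x]$-submodules of $N/pN$ of index $p^k$, giving $\maxsubmod[p^k](N) = \maxsubmod[p^k](N/pN)$.

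Next, by Lemma~\ref{lem:global to local - from QN to N/pN - improved}, there is a constant $C_1$ such that for all primes $p > C_1$,
\[
N/pN \cong_{\finitefield[x]} \bigoplus_{j=1}^{s(0)} \finitefield[x]/(\overline{a_j}) \oplus \finitefield[x]^{r(0)}.
\]
For any such $p$ and any $k \geq 1$, conclusion (b) of the corollary holds verbatim. It therefore suffices to handle the finitely many primes $p$ with $p \leq C_1$ and $p \notin D_0$, aiming for conclusion (a). For each such small $p$, apply the structure theorem for f.g.\ modules over the PID $\finitefield[x]$ to write
\[
N/pN \cong_{\finitefield[x]} \bigoplus_{j=1}^{t(p)} \finitefield[x]/(b_j^{(p)}) \oplus \finitefield[x]^{r(p)}
\]
for non-unit invariant factors $b_j^{(p)}$. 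By Lemma~\ref{lem:maxsubmod N = sum_S submodisoto_S(N)} and Corollary~\ref{cor:submodisoto R/I for direct sum of cyclic modules}, for each maximal ideal $(g) \ideal \finitefield[x]$ with $\deg(g) = k$, the number of maximal submodules of $N/pN$ with quotient $\finitefield[x]/(g)$ equals $1 + p^k + \cdots + (p^k)^{s-1}$, where $s$ counts the $r(p)$ free summands together with those cyclic summands $\finitefield[x]/(b_j^{(p)})$ satisfying $g \mid b_j^{(p)}$. Once $k > \deg(b_j^{(p)})$ for every $j$, no $b_j^{(p)}$ can be divisible by a $g$ of degree $k$, so $s = r(p)$; summing over all maximal ideals of norm $p^k$ yields $\maxsubmod[p^k](N/pN) = \maxsubmod[p^k](\finitefield[x]^{r(p)})$, i.e., conclusion (a).

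Finally, setting $K := \max\{\deg(b_j^{(p)}) : p \leq C_1,\ p \notin D_0,\ 1 \leq j \leq t(p)\}$ (a finite maximum), any $C \geq \max\{C_1^{K+1},\ \max D_0,\ C_1\}$ suffices: $n = p^k > C$ forces either $p > C_1$ (case b) or $p \leq C_1$ with $k > K$ (case a). The only step requiring genuine care is the tracking of $s$ in the middle paragraph---using that a non-unit $b_j^{(p)}$ of degree at most $K$ has no irreducible factors of degree exceeding $K$---while everything else amounts to careful bookkeeping over the finite list of exceptional small primes.
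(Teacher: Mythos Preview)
Your proof is correct and follows essentially the same two-case split as the paper: for large primes use Lemma~\ref{lem:global to local - from QN to N/pN - improved} to get (b), and for the finitely many exceptional small primes show that large $k$ forces (a). The only difference is cosmetic: for the small-prime case the paper observes that the $\finitefield[x]$-torsion part of $N/pN$ has finite cardinality $p^{c_p}$ and (implicitly via Lemma~\ref{lem:mod out by finite submodule -- maximal submodule growth is unchanged}) concludes that for $k>c_p$ the torsion contributes nothing, whereas you reach the same conclusion by explicitly tracking degrees of the invariant factors $b_j^{(p)}$ through Corollary~\ref{cor:submodisoto R/I for direct sum of cyclic modules}; your bound $K=\max_j\deg b_j^{(p)}$ is in fact sharper than the paper's $c_p=\sum_j\deg b_j^{(p)}$.
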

\begin{proof}
	By Lemma~\ref{lem:global to local - from QN to N/pN - improved}, for all large primes $p$ we have
	\[ 
	N/pN = \left(\bigoplus_{j=1}^{s(0)} \finitefield[x]/(\overline{a_{j}}) \right) \oplus \finitefield[x]^{r(0)}.
	\]
	Let the exceptions, if any, be $\{p_1, p_2, \ldots, p_s\}$. 
	Let $p$ be any such prime. We know that the $\finitefield[x]$-torsion part of $N/pN$ is finite; say its cardinality is $p^{c_p}$. 
	Thus for $k > c_p$, we have $\maxsubmod[p^k](N) = \maxsubmod[p^k](\finitefield[x]^{r(p)})$ 
	
	Let $c = \max \{c_p : p \in \{p_1, p_2, \ldots, p_s \} \}$, and let $C = p_s^{c}$ (assuming $p_s$ is the biggest among $p_1, p_2, \ldots, p_s$). Then this $C$ works. (If there were no exception primes $p_i$, then of course (b) always holds.)
\end{proof}

\subsection{Direct sums with each term a quotient of the next}
\label{sec:Direct sums with each term a quotient of the next}
This subsection is a continuation of Section \ref{sec:Codimension 1 subspaces}. However, 
it fits naturally here because a finitely generated module over a PID can be written in the form described in the next paragraph.

Let $R$ be a commutative (unital) ring. 
Let $A = A_1 \oplus A_2 \oplus \cdots \oplus A_t$, where each
$A_j$ is a cyclic $R$-module such that $A_j$ is a quotient of $A_{j+1}$ for $j = 1, 2, \ldots, t-1$.  Fix a positive integer $n$,
and let $\mathcal{S}\mathcal{Q}^n$ be \emph{any} set of \textbf{s}imple \textbf{q}uotients of $A_1$ of index $n$.

\begin{lemma}
	\label{lem:each term quotient of the next -- take some simple quotients of first term}
	Using the notation from the preceding paragraph, we have
	\[
	\sum_{S \in \mathcal{S}\mathcal{Q}^n} \maxsubmod[S](A) = |\mathcal{S}\mathcal{Q}^n|(1 + n + \cdots + n^{t-1}).
	\]
\end{lemma}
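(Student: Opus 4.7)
The plan is to reduce this directly to Corollary \ref{cor:submodisoto R/I for direct sum of cyclic modules}, exploiting the chain of quotient relations between the $A_j$. First, I would note that the quotient hypothesis is transitive: since $A_j$ is a quotient of $A_{j+1}$ for each $j \in \{1,\ldots,t-1\}$, composition of the quotient maps shows that $A_1$ is a quotient of $A_j$ for \emph{every} $j$.

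Next, fix $S \in \mathcal{S}\mathcal{Q}^n$. Since $S$ is a simple $R$-module of order $n$, we have $S \cong_R R/I$ for some maximal ideal $I \ideal R$ with $|R/I| = n$ (using the fact recalled at the start of \S\ref{sec:Submodules counted by isomorphism type of quotient}). By assumption, $S$ is a quotient of $A_1$, so $\submodisoto{S}(A_1) = 1$. I would then observe that because $A_1$ is a quotient of each $A_j$, composing the surjection $A_j \twoheadrightarrow A_1$ with $A_1 \twoheadrightarrow S$ exhibits $S$ as a quotient of $A_j$, giving $\submodisoto{S}(A_j) = 1$ for all $j$ (the value is $1$ and not larger because each $A_j$ is cyclic). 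Thus, in the notation of Corollary \ref{cor:submodisoto R/I for direct sum of cyclic modules}, the relevant count is $s = t$, and that corollary yields
\[
\submodisoto{S}(A) = 1 + n + n^2 + \cdots + n^{t-1}.
\]

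Finally, I would sum this identity over all $S \in \mathcal{S}\mathcal{Q}^n$: since the right-hand side does not depend on $S$, the total is exactly $|\mathcal{S}\mathcal{Q}^n|(1 + n + \cdots + n^{t-1})$, as claimed. There is no real obstacle here; the only point worth stating carefully is the transitivity of being a quotient (to propagate the hypothesis on $A_1$ up the chain) together with the cyclicity of the $A_j$ (to guarantee each $\submodisoto{S}(A_j)$ equals $1$ rather than being merely positive), so that Corollary \ref{cor:submodisoto R/I for direct sum of cyclic modules} applies with the full count $s = t$.
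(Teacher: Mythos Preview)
Your proof is correct and follows essentially the same approach as the paper: fix $S \in \mathcal{SQ}^n$, use the chain of quotient maps to see that $\submodisoto{S}(A_j) = 1$ for every $j$, and then invoke Corollary~\ref{cor:submodisoto R/I for direct sum of cyclic modules} with $s = t$. You have simply written out in more detail the points (transitivity of being a quotient, cyclicity forcing the count to be exactly $1$, and the final summation) that the paper leaves implicit.
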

\begin{proof}
	Let $S \in \mathcal{S}\mathcal{Q}^n$. Because $A_1$ is a quotient of $A_j$ for all $j \in \{2, 3, \ldots, t\}$, we conclude that
	$|\{A_j : \maxsubmod[S](A_j) = 1 \}| = t$. Therefore, Corollary \ref{cor:submodisoto R/I for direct sum of cyclic modules} says that
	$\maxsubmod[S](A) = 1 + n + \cdots + n^{t-1}$. 
\end{proof}

\begin{corollary}
	\label{cor:each term quotient of the next -- take certain simple quotients of j-th term}
	Let $A$ be as in Lemma \ref{lem:each term quotient of the next -- take some simple quotients of first term}. Fix $j \in \{1, 2, \ldots, t\}$.
	Let $\mathcal{S}\mathcal{Q}_j^n$ be a set of simple quotients of $A_j$ of index $n$ such that
	$\maxsubmod[S](A_i) = 0$ for $i < j$. Then
	\[
	\sum_{S \in \mathcal{S}\mathcal{Q}_j^n} \maxsubmod[S](A) = |\mathcal{S}\mathcal{Q}_j^n|(1 + n + \cdots + n^{t-j}).
	\]
\end{corollary}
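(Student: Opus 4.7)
The plan is to mimic the proof of Lemma~\ref{lem:each term quotient of the next -- take some simple quotients of first term}, with a minor bookkeeping change that accounts for the shift from $A_1$ to $A_j$. The key point is that the hypothesis ``$A_j$ is a quotient of $A_{j+1}$ for $j = 1,\ldots,t-1$'' chains: by composing the surjections $A_j \twoheadleftarrow A_{j+1} \twoheadleftarrow \cdots \twoheadleftarrow A_i$, we see that $A_j$ is a quotient of $A_i$ for all $i \geq j$.

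Fix $S \in \mathcal{S}\mathcal{Q}_j^n$. By hypothesis $\maxsubmod[S](A_i) = 0$ for $i < j$. For $i \geq j$, since $S$ is a simple quotient of $A_j$ and $A_j$ is a quotient of $A_i$, $S$ is also a simple quotient of $A_i$; and because $A_i$ is cyclic, there is a unique submodule of $A_i$ with quotient isomorphic to $S$ (namely the preimage in $A_i$ of the unique maximal submodule of $A_j$ with quotient $S$). Hence $\maxsubmod[S](A_i) = 1$ for exactly the indices $i \in \{j, j+1, \ldots, t\}$, so
\[
|\{\, A_i : \maxsubmod[S](A_i) = 1 \,\}| \;=\; t - j + 1.
\]

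Corollary~\ref{cor:submodisoto R/I for direct sum of cyclic modules} then gives
\[
\maxsubmod[S](A) \;=\; 1 + n + n^2 + \cdots + n^{(t-j+1) - 1} \;=\; 1 + n + \cdots + n^{t-j}.
\]
Summing this identity over $S \in \mathcal{S}\mathcal{Q}_j^n$ (the right-hand side does not depend on $S$) yields
\[
\sum_{S \in \mathcal{S}\mathcal{Q}_j^n} \maxsubmod[S](A) \;=\; |\mathcal{S}\mathcal{Q}_j^n|\,(1 + n + \cdots + n^{t-j}),
\]
which is the claimed formula.

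There is no real obstacle; the only thing to verify carefully is the chaining argument showing $A_j$ is a quotient of $A_i$ for every $i \geq j$, so that the count of $A_i$'s with $\maxsubmod[S](A_i) = 1$ is exactly $t - j + 1$ rather than $t - j$ or $t - j + 2$. Everything else is a direct application of the already-established Corollary~\ref{cor:submodisoto R/I for direct sum of cyclic modules}.
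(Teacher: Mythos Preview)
Your proof is correct and is essentially the paper's argument. The paper first observes that $\maxsubmod[S](A) = \maxsubmod[S](A_j \oplus \cdots \oplus A_t)$ (using $\maxsubmod[S](A_i)=0$ for $i<j$) and then applies Lemma~\ref{lem:each term quotient of the next -- take some simple quotients of first term} to $A_j \oplus \cdots \oplus A_t$ after reindexing; you instead inline that lemma's proof by invoking Corollary~\ref{cor:submodisoto R/I for direct sum of cyclic modules} directly, which amounts to the same computation.
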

\begin{proof}
	Let $S \in \mathcal{S}\mathcal{Q}_j^n$. Then $\maxsubmod[S](A) = \maxsubmod[S](A_j \oplus A_{j+1} \oplus \cdots \oplus A_t$). The result then
	follows from Lemma \ref{lem:each term quotient of the next -- take some simple quotients of first term} by reindexing (by subtracting
	$(j-1)$ from each index in $A_j, A_{j+1}, \ldots, A_t$).
\end{proof}

We fix a little more notation for the following lemma. 
Let $A_0$ be the zero $R$-module. For an $R$-module $B$, let $\mathcal{S}\mathcal{Q}(B,n)$ be the set of \emph{all} simple
quotients of $B$ of index $n$. 

\begin{corollary}
	\label{cor:each term quotient of the next -- exact number of maximal submodules}
	Using the notation from the paragraph preceding this corollary and the paragraph before Lemma \ref{lem:each term quotient of the next -- take some simple quotients of first term},
	we have
	\[
	\maxsubmod(A) = \sum_{j=1}^{t} (\maxsubmod(A_j) - \maxsubmod(A_{j-1}))(1 + n + \cdots + n^{t-j}).
	\]
\end{corollary}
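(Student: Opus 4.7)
The plan is to partition the set of isomorphism classes of simple quotients of $A$ of index $n$ according to the smallest index $j$ at which a given simple $S$ appears as a quotient of $A_j$, and then apply Corollary \ref{cor:each term quotient of the next -- take certain simple quotients of j-th term} layer by layer.

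First I would verify three preliminary facts. (a) Every simple quotient $S$ of $A$ of index $n$ arises as a simple quotient of some summand $A_j$: if $\varphi\colon A\twoheadrightarrow S$, then each $\varphi|_{A_i}$ has image $0$ or $S$ by simplicity, and at least one $\varphi|_{A_i}$ must surject since their images generate $S$. (b) Since $A_j$ is a quotient of $A_{j+1}$, every simple quotient of $A_j$ is a simple quotient of $A_{j+1}$; hence, up to isomorphism, $\mathcal{S}\mathcal{Q}(A_1,n)\subseteq\mathcal{S}\mathcal{Q}(A_2,n)\subseteq\cdots\subseteq\mathcal{S}\mathcal{Q}(A_t,n)$. (c) Because each $A_j$ is cyclic, say $A_j\cong_R R/I_j$, its maximal submodules of index $n$ correspond bijectively to maximal ideals $J\supseteq I_j$ with $|R/J|=n$, and distinct such $J$ give non-isomorphic simple quotients $R/J$ (their annihilators differ, cf.\ Lemma \ref{lem:maxsubmod N = sum_S submodisoto_S(N)}). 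Therefore $\maxsubmod(A_j)=|\mathcal{S}\mathcal{Q}(A_j,n)|$.

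Next I would set $T_j:=\mathcal{S}\mathcal{Q}(A_j,n)\setminus\mathcal{S}\mathcal{Q}(A_{j-1},n)$, with the convention $\mathcal{S}\mathcal{Q}(A_0,n)=\emptyset$. By (b) and (c), $|T_j|=\maxsubmod(A_j)-\maxsubmod(A_{j-1})$, and by (a) and (b) the $T_j$ partition the set of isomorphism classes of simple quotients of $A$ of index $n$. By Lemma \ref{lem:maxsubmod N = sum_S submodisoto_S(N)},
\[
\maxsubmod(A)=\sum_{S}\submodisoto{S}(A)=\sum_{j=1}^{t}\sum_{S\in T_j}\submodisoto{S}(A).
\]
For each $S\in T_j$ the defining property of $T_j$ gives $\submodisoto{S}(A_i)=0$ for $i<j$, so Corollary \ref{cor:each term quotient of the next -- take certain simple quotients of j-th term} applies with $\mathcal{S}\mathcal{Q}_j^n=T_j$ and yields $\sum_{S\in T_j}\submodisoto{S}(A)=|T_j|(1+n+\cdots+n^{t-j})$. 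Substituting $|T_j|=\maxsubmod(A_j)-\maxsubmod(A_{j-1})$ produces the claimed formula.

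The main delicate point is (c), which allows $|T_j|$ to be rewritten as a difference of $\maxsubmod$'s rather than merely as the size of an abstract set of isomorphism classes. This uses both the commutativity of $R$ (so that quotients of cyclic modules are pinned down by their annihilators) and the cyclicity of each $A_j$. Beyond this, the argument is essentially telescoping bookkeeping layered on top of Corollary \ref{cor:each term quotient of the next -- take certain simple quotients of j-th term}.
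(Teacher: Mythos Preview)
Your proposal is correct and follows essentially the same approach as the paper: partition $\mathcal{S}\mathcal{Q}(A,n)$ into the layers $\mathcal{S}\mathcal{Q}(A_j,n)\setminus\mathcal{S}\mathcal{Q}(A_{j-1},n)$, invoke Lemma~\ref{lem:maxsubmod N = sum_S submodisoto_S(N)}, apply Corollary~\ref{cor:each term quotient of the next -- take certain simple quotients of j-th term} to each layer, and use cyclicity of the $A_j$ to identify $|\mathcal{S}\mathcal{Q}(A_j,n)|$ with $\maxsubmod(A_j)$. The only difference is that you spell out the preliminary facts (a)--(c) more carefully than the paper does.
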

\begin{proof}
	The idea is just to write $\mathcal{S}\mathcal{Q}(A,n)$ as a disjoint union as follows (which we can do
	since it is assumed that $A_j$ is a quotient of $A_{j+1}$ for all $j$):
	\[ \tag{*}
	\mathcal{S}\mathcal{Q}(A,n) = \bigsqcup_{j=1}^{t} (\mathcal{S}\mathcal{Q}(A_j,n) \setminus \mathcal{S}\mathcal{Q}(A_{j-1},n)).
	\]
	Let $\mathcal{S}\mathcal{Q}_j^n := \mathcal{S}\mathcal{Q}(A_j,n) \setminus \mathcal{S}\mathcal{Q}(A_{j-1},n)$. We have (with explanations following)
	\[\begin{aligned}
	\maxsubmod(A) &= \sum_{S \in \mathcal{S}\mathcal{Q}(A,n)} \maxsubmod[S](A) \\
	&= \sum_{j=1}^{t} \sum_{S \in \mathcal{S}\mathcal{Q}_j^n} \maxsubmod[S](A) \\
	&= \sum_{j=1}^{t} (\maxsubmod(A_j) - \maxsubmod(A_{j-1}))(1 + n + \cdots + n^{t-j}).
	\end{aligned}
	\]
	The first equality is by Lemma \ref{lem:maxsubmod N = sum_S submodisoto_S(N)}. The second equality is by equation (*).
	For the third equality, recall that in a \emph{cyclic} module $B$, two maximal submodules $M_1$ and $M_2$ of $B$ are  equal iff  
	$B/M_1 \cong_R B/M_2$. In other words, for a cyclic module $B$, we have $|\mathcal{SQ}(B,n)| = \maxsubmod(B)$.
	Thus $|\mathcal{S}\mathcal{Q}_j^n| = \maxsubmod(A_j) - \maxsubmod(A_{j-1})$ because each $A_i$ is cyclic (and since
	$\mathcal{SQ}(A_{j-1},n) \subseteq \mathcal{SQ}(A_{j},n)$).
	Thus the third equality
	follows by Corollary \ref{cor:each term quotient of the next -- take certain simple quotients of j-th term}. 
\end{proof}

\begin{corollary}
	\label{cor:each term quotient of the next -- upper and lower bounds}
	Using the notation from the paragraph proceeding Lemma \ref{lem:each term quotient of the next -- take some simple quotients of first term} we
	have for all $n$,
	\[\begin{aligned}
	\maxsubmod(A) & \leq \maxsubmod(A_t)(1 + n + \cdots + n^{t-1}) \text{\hspace{.1in} and }\\
	\maxsubmod(A) & \geq \maxsubmod(A_1)(1 + n + \cdots + n^{t-1}).
	\end{aligned}
	\]
\end{corollary}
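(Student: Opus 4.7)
The plan is to derive both inequalities directly from the exact formula in Corollary~\ref{cor:each term quotient of the next -- exact number of maximal submodules}, namely
\[
\maxsubmod(A) \;=\; \sum_{j=1}^{t} \bigl(\maxsubmod(A_j) - \maxsubmod(A_{j-1})\bigr)\bigl(1 + n + \cdots + n^{t-j}\bigr),
\]
together with the observation that each coefficient $\maxsubmod(A_j) - \maxsubmod(A_{j-1})$ is non-negative. The non-negativity holds because $A_{j-1}$ is a quotient of $A_j$, so every simple quotient of $A_{j-1}$ is also a simple quotient of $A_j$; since each $A_i$ is cyclic, $|\mathcal{SQ}(A_i,n)| = \maxsubmod(A_i)$ (as noted in the proof of that corollary), and the inclusion $\mathcal{SQ}(A_{j-1},n) \subseteq \mathcal{SQ}(A_j,n)$ gives $\maxsubmod(A_{j-1}) \leq \maxsubmod(A_j)$.

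For the upper bound, I would replace each factor $(1 + n + \cdots + n^{t-j})$ by the larger quantity $(1 + n + \cdots + n^{t-1})$, which is legal since the coefficients are non-negative. This pulls that factor out of the sum, leaving a telescoping series
\[
\sum_{j=1}^{t} \bigl(\maxsubmod(A_j) - \maxsubmod(A_{j-1})\bigr) \;=\; \maxsubmod(A_t) - \maxsubmod(A_0) \;=\; \maxsubmod(A_t),
\]
which gives exactly the claimed upper bound.

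For the lower bound, I would simply isolate the $j=1$ term of the exact formula: it equals $\maxsubmod(A_1)(1 + n + \cdots + n^{t-1})$, and all remaining terms $(j \geq 2)$ are non-negative by the observation above. Alternatively (and equivalently), one can invoke Lemma~\ref{lem:each term quotient of the next -- take some simple quotients of first term} applied to $\mathcal{SQ}^n = \mathcal{SQ}(A_1,n)$, which is a subset of $\mathcal{SQ}(A,n)$ of size $\maxsubmod(A_1)$, so by Lemma~\ref{lem:maxsubmod N = sum_S submodisoto_S(N)}
\[
\maxsubmod(A) \;\geq\; \sum_{S \in \mathcal{SQ}(A_1,n)} \submodisoto{S}(A) \;=\; \maxsubmod(A_1)(1 + n + \cdots + n^{t-1}).
\]
There is no real obstacle here; the corollary is essentially an immediate consequence of the exact count, so the proof is quite short once one records the monotonicity $\maxsubmod(A_{j-1}) \leq \maxsubmod(A_j)$.
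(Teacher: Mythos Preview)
Your proof is correct and follows essentially the same approach as the paper: both bounds are obtained from the exact formula in Corollary~\ref{cor:each term quotient of the next -- exact number of maximal submodules}, using non-negativity of the coefficients to isolate the $j=1$ term for the lower bound and to enlarge each factor to $1+n+\cdots+n^{t-1}$ and telescope for the upper bound.
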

\begin{proof}
	For the second inequality, the lower bound for $\maxsubmod(A)$, just note that the first term in the sum in 
	Corollary \ref{cor:each term quotient of the next -- exact number of maximal submodules} is 
	$\maxsubmod(A_1)(1 + n + \cdots + n^{t-1})$; of course, all the other terms in the sum of that corollary are non-negative.
	
	For the first inequality, the upper bound for $\maxsubmod(A)$, we use Corollary 
	\ref{cor:each term quotient of the next -- exact number of maximal submodules} again to get
	\[\begin{aligned}
	\maxsubmod(A) & =    \sum_{j=1}^{t} (\maxsubmod(A_j) - \maxsubmod(A_{j-1}))(1 + n + \cdots + n^{t-j}) \\
	& \leq \sum_{j=1}^{t} (\maxsubmod(A_j) - \maxsubmod(A_{j-1}))(1 + n + \cdots + n^{t-1}) \\
	& = \maxsubmod(A_t)(1 + n + \cdots + n^{t-1}).   
	\end{aligned}
	\]
\end{proof}

Note that this corollary does not give us the maximal submodule growth of such a module $A$ because $A$ itself may be finite; in case $A$ is finite,
we would have
$\maxsubmod(A) = \maxsubmod(A_1) = 0$ for all large $n$.

\subsection{$\integers_D[x]$-modules which are f.g.\ as $\integers_D$-modules}
\label{sec:Z_D[x]-modules which are f.g. as Z_D-modules}

Let $R$ = $\integers_D[x]$ (for some finite $D$), and let $N$ be an $R$-module which is finitely generated as a $\integers_D$-module. 
Suppose 
\[
\rationals \tensor N =  \bigoplus_{j=1}^d \rationals[x]/(a_j),
\]
where $a_1 | a_2 | \cdots | a_d$ as provided by the structure theorem with $a_1$ (and hence each $a_i$) not a unit. 
We have then that $d = d_{\rationals \tensor R}(\rationals \tensor N)$ is the minimal size of a $\rationals \tensor R$ generating set for
the module $\rationals \tensor N$. The following lemma 
extends Lemma \ref{lem:version1_of_global to local - from QN to N/pN}:

We now state how Corollary \ref{cor:p^k approaches infinity in two ways - consequence on module} simplifies
since $N$ is assumed to be f.g.\ as a $\integers_D$-module.
\begin{corollary} 
	\label{cor:we can ignore finitely many primes - for f.g. abelian groups}
	Using the above notation, there exists a constant $C$ (depending on $N$) such that $n = p^k > C$ implies either
	\begin{enumerate}[(a)]
		\item $\maxsubmod(N) = 0$ \hspace{.1in} or 
		\item $\displaystyle{N/pN =  \bigoplus_{j=1}^d \finitefield[x]/(\overline{a_j})}$.
	\end{enumerate}
\end{corollary}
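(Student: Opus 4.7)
The plan is to reduce directly to Lemma \ref{lem:global to local - from QN to N/pN - improved}, and then handle the finitely many exceptional primes by a separate, simpler argument. First I would observe that because $N$ is finitely generated as a $\integers_D$-module, $\rationals N$ is a finite-dimensional $\rationals$-vector space, so the structure-theorem decomposition of $\rationals N$ as a $\rationals[x]$-module cannot have a free part. Hence the hypothesis matches Lemma \ref{lem:global to local - from QN to N/pN - improved} with $s_2 = 0$, producing a finite set $P_0$ of ``exceptional'' primes (into which I absorb the primes of $D$) such that for every prime $p \notin P_0$,
\[
N/pN \cong_{\finitefield[x]} \bigoplus_{j=1}^d \finitefield[x]/(\overline{a_j}).
\]
For any $n = p^k$ with $p \notin P_0$, this is conclusion (b), regardless of $k$.

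It remains to handle the exceptional primes $p \in P_0$. If $p \in D$ then $p$ is already a unit in $\integers_D$, so $pN = N$ and $\maxsubmod[p^k](N) = 0$ for every $k \geq 1$, giving (a). Otherwise, I would observe that any maximal submodule $M$ of $N$ of index $p^k$ has $N/M$ a simple $R$-module, hence a finite field of characteristic $p$ (since $R$ is commutative), so $pN \subseteq M$. Therefore $\maxsubmod[p^k](N) = \maxsubmod[p^k](N/pN)$. Since $N$ is f.g.\ as a $\integers_D$-module and $p \notin D$, the quotient $N/pN$ is a finite $\finitefield$-vector space; in particular, as a $\finitefield[x]$-module it has only finitely many maximal submodules. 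Thus $\maxsubmod[p^k](N/pN) = 0$ once $k$ exceeds some threshold $k_p$ (concretely, any $k$ larger than the maximum degree appearing in a structure-theorem decomposition of $N/pN$).

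Finally, I would take $C := \max\{p^{k_p} : p \in P_0\}$ (with the convention $p^{k_p} = 1$ for $p \in D$, or $C = 1$ if $P_0$ is empty). For $n = p^k > C$: either $p \notin P_0$, giving (b); or $p \in P_0$, in which case $p^k > p^{k_p}$ forces $k > k_p$, giving (a). The only nontrivial step is the appeal to Lemma \ref{lem:global to local - from QN to N/pN - improved}, which does essentially all of the real work; the rest is bookkeeping, made possible by the crucial input that $N$ being f.g.\ as a $\integers_D$-module kills the free $\rationals[x]$-summand of $\rationals N$.
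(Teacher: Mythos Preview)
Your argument is correct and is essentially the paper's approach unpacked one level: the paper simply cites Corollary~\ref{cor:p^k approaches infinity in two ways - consequence on module} and observes that, since $N$ is f.g.\ as a $\integers_D$-module, every $N/pN$ is finite and hence the ranks $r(p)$ and $r(0)$ vanish, collapsing the two cases of that corollary to the present (a) and (b). You instead bypass Corollary~\ref{cor:p^k approaches infinity in two ways - consequence on module} and reproduce its proof in this special case: invoke Lemma~\ref{lem:global to local - from QN to N/pN - improved} directly for the non-exceptional primes, and for each of the finitely many exceptional primes argue that $N/pN$ is finite, so $\maxsubmod[p^k](N)$ eventually vanishes. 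The content is identical; the only difference is whether the handling of the finitely many bad primes is quoted or reproved inline.
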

\begin{proof}
	This follows from Corollary \ref{cor:p^k approaches infinity in two ways - consequence on module}. Because
	$N$ is f.g.\ as a $\integers_D$-module, then for all primes $p$, $\finitefield[x]$ is \emph{not} a quotient of 
	$N/pN$. 
	%
	
	%
	%
\end{proof}

Although the following could be taken as a corollary to Theorem \ref{thm:exact growth -with coefficient- of Z x module which is f.g. abelian},
we include a proof of this simpler result because it is easier.
\begin{proposition}
	\label{prop:mdeg(N) -actually tilde{m}deg(N)}
	With the above notation, $\maxsubmod(N)$ has growth type $n^{d-1}$, where still
	$d = d_{\rationals \tensor R}(\rationals \tensor N)$.
\end{proposition}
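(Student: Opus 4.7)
The plan is to establish both the upper bound $\maxsubmod(N) = O(n^{d-1})$ and a matching lower bound $\maxsubmod(N) \geq n^{d-1}$ for infinitely many $n$. First I reduce to the case of prime-power $n$. Since the maximal ideals of $R = \zdx$ all have the form $(p,f)$ with $p \notin D$ (by the analog of Lemma~\ref{lem:maximal ideals of integral polynomial ring} for $\zdx$), every maximal submodule of $N$ has index a prime power; so $\maxsubmod(N) = 0$ for $n$ not of this form. For $n = p^k$, any simple quotient $N/M$ has characteristic $p$, so $pN \subseteq M$, and therefore $\maxsubmod(N) = \maxsubmod(N/pN)$. Applying Corollary~\ref{cor:we can ignore finitely many primes - for f.g. abelian groups}, outside a finite set of bad primes and for $n > C$, either $\maxsubmod(N) = 0$ or
\[
N/pN \;\cong_{\finitefield[x]}\; \bigoplus_{j=1}^d \finitefield[x]/(\overline{a_j}).
\]

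Next I exploit the chain $a_1 \mid a_2 \mid \cdots \mid a_d$, which forces $\overline{a_1} \mid \overline{a_2} \mid \cdots \mid \overline{a_d}$ in $\finitefield[x]$, so $\finitefield[x]/(\overline{a_j})$ is a quotient of $\finitefield[x]/(\overline{a_{j+1}})$ for each $j$. Thus the decomposition above is of the type handled by Corollary~\ref{cor:each term quotient of the next -- upper and lower bounds}, which sandwiches $\maxsubmod(N/pN)$ between
\[
\maxsubmod(\finitefield[x]/(\overline{a_1}))(1 + n + \cdots + n^{d-1}) \quad \text{and} \quad \maxsubmod(\finitefield[x]/(\overline{a_d}))(1 + n + \cdots + n^{d-1}).
\]

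For the upper bound I combine Lemma~\ref{lem:upper bound for finite F_p x module}(b) with Lemma~\ref{lem:num_roots in overline(F_p) of polynomial bounded above by num_roots in C} (applied to an integer rescaling of $a_d$, which does not affect roots) to obtain a uniform bound $\maxsubmod(\finitefield[x]/(\overline{a_d})) \leq \rho$ for all large $p$, where $\rho$ is the number of distinct complex roots of $a_d$. This gives $\maxsubmod(N) \leq \rho(1 + n + \cdots + n^{d-1}) = O(n^{d-1})$. For the lower bound, $a_1$ is nonconstant (a nonzero nonunit of $\rationals[x]$), so Lemma~\ref{lem:roots of non-constant polynomials in F_p and in C} supplies infinitely many primes $p$ for which $\overline{a_1}$ has at least one root in $\finitefield$. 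For such $p$ (excluding the finitely many exceptional ones), $\maxsubmod[p](\finitefield[x]/(\overline{a_1})) \geq 1$, so $\maxsubmod[p](N) \geq 1 + p + \cdots + p^{d-1} \geq p^{d-1}$, yielding the growth-type lower bound.

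The main difficulty is not conceptual but bookkeeping: several preliminary results each discard a finite set of bad primes (those in $D$, those where the decomposition of $N/pN$ fails to mirror that of $\rationals N$, those where leading coefficients vanish, those where $\bar\rho_p > \rho$). The argument hinges on confirming that the union of these finite sets remains finite, so the upper bound holds uniformly for all large $n$ and the lower bound can still be realized on infinitely many primes.
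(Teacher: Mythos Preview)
Your proof is correct and follows essentially the same approach as the paper: reduce via Corollary~\ref{cor:we can ignore finitely many primes - for f.g. abelian groups} to the decomposition $N/pN \cong \bigoplus_j \finitefield[x]/(\overline{a_j})$, then sandwich $\maxsubmod(N/pN)$ using Corollary~\ref{cor:each term quotient of the next -- upper and lower bounds}. The only differences are cosmetic: for the upper bound the paper uses the cruder constant $\deg(a_d)$ from Lemma~\ref{lem:upper bound for finite F_p x module}(a) rather than your root count $\rho$, and for the lower bound it simply observes that $\overline{a_1}$ nonconstant forces $\maxsubmod[p^k](\finitefield[x]/(\overline{a_1})) \geq 1$ for \emph{some} $k$, avoiding the appeal to Lemma~\ref{lem:roots of non-constant polynomials in F_p and in C}.
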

\begin{proof}
	Let $C$ be as in
	Corollary \ref{cor:we can ignore finitely many primes - for f.g. abelian groups}. Fix $n = p^k > C$,
	such that $\maxsubmod(N) \neq 0$. Then by Corollary  \ref{cor:we can ignore finitely many primes - for f.g. abelian groups}, we conclude 
	that $\displaystyle{ N/pN =  \bigoplus_{j=1}^d \finitefield[x]/(\overline{a_j})}$. 
	
	We show that $\maxsubmod(N) \leq \deg(a_d)(1 + n + \cdots + n^{d-1})$ for all large $n$: By 
	Corollary \ref{cor:each term quotient of the next -- upper and lower bounds}, we get 
	\[  \tag{*}
	\maxsubmod(N/pN) \leq \maxsubmod(\finitefield[x]/(a_d))(1 + n + \cdots + n^{d-1}).
	\]
	But by Lemma \ref{lem:upper bound for finite F_p x module},
	we get that $\maxsubmod(\finitefield[x]/(\overline{a_d})) \leq \deg(\overline{a_d}) \leq \deg(a_d)$. Notice that the constant $\deg(a_d)$ 
	does not depend on which (large) $p$ we pick. This gives us the 
	desired upper bound.
	
	For the lower bound, by Corollary \ref{cor:each term quotient of the next -- upper and lower bounds} we get:
	\[
	\maxsubmod(N/pN) \geq \maxsubmod(\finitefield[x]/(\overline{a_1}))(1 + n + \cdots + n^{d-1}).
	\]
	Conclude by noting that since $a_1$ is a non-constant polynomial, 
	we get that $\maxsubmod[p^k](\finitefield[x]/(\overline{a_1})) \geq 1$ for some $k$.
\end{proof}

We can be a bit more precise than simply stating the growth type of $\maxsubmod(N)$. 

\begin{theorem}
	\label{thm:exact growth -with coefficient- of Z x module which is f.g. abelian}
	Let $N$, $d$, and $a_j$ be as in the beginning of Section \ref{sec:Z_D[x]-modules which are f.g. as Z_D-modules}, and
	let $\rho_j$ be the 
	number of distinct roots of $a_j$ in $\C$. Then
	\[\begin{aligned}
	\maxsubmod(N) & \leq \rho_1 n^{d-1} + O(n^{d-2}) & \text{for all large $n$, and}\\
	\maxsubmod(N) & \geq \rho_1n^{d-1}                     & \text{for infinitely many $n$.} 
	\end{aligned} 
	\]
\end{theorem}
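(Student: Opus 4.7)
The plan is to reduce to $n = p^k$ with $p$ large and then apply Corollary~\ref{cor:we can ignore finitely many primes - for f.g. abelian groups}. Because simple $\integers[x]$-module quotients of $N$ are finite fields, $\maxsubmod(N) = 0$ whenever $n$ is not a prime power, so only prime-power $n$ contribute. For $n = p^k$, any maximal submodule of index $n$ contains $pN$, giving $\maxsubmod[p^k](N) = \maxsubmod[p^k](N/pN)$. By Corollary~\ref{cor:we can ignore finitely many primes - for f.g. abelian groups}, there is a constant $C$ depending only on $N$ so that for $n = p^k > C$ either $\maxsubmod(N) = 0$ (which trivially satisfies every upper bound in sight) or $N/pN \cong_{\finitefield[x]} \bigoplus_{j=1}^d \finitefield[x]/(\overline{a_j})$. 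I focus on the latter case.

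Writing $A_j := \finitefield[x]/(\overline{a_j})$, the divisibility $\overline{a_1}\mid\cdots\mid\overline{a_d}$ makes each $A_j$ a quotient of $A_{j+1}$, so Corollary~\ref{cor:each term quotient of the next -- exact number of maximal submodules} applies and, after regrouping by powers of $n$ (telescoping the differences $\maxsubmod(A_j)-\maxsubmod(A_{j-1})$ against $1+n+\cdots+n^{d-j}$), yields the exact identity
\[
\maxsubmod(N/pN) \;=\; \sum_{j=1}^{d} \maxsubmod(A_j)\, n^{\,d-j}.
\]
For the upper bound, Lemma~\ref{lem:upper bound for finite F_p x module}(b) bounds $\maxsubmod(A_j)$ by the number of distinct roots of $\overline{a_j}$ in $\overline{\finitefield}$, and Lemma~\ref{lem:num_roots in overline(F_p) of polynomial bounded above by num_roots in C} bounds that in turn by $\rho_j$ for all sufficiently large $p$. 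Hence for all large $n$,
\[
\maxsubmod(N) \;\leq\; \rho_1 n^{d-1} + \rho_2 n^{d-2} + \cdots + \rho_d \;=\; \rho_1 n^{d-1} + O(n^{d-2}).
\]

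For the lower bound, I first clear denominators to assume $a_1 \in \integers[x]$ (this does not change its complex roots). By Lemma~\ref{lem:roots of non-constant polynomials in F_p and in C} there are infinitely many primes $p$ for which $\overline{a_1}$ has exactly $\rho_1$ distinct roots in $\finitefield$; I restrict to those $p$ large enough that case (b) of Corollary~\ref{cor:we can ignore finitely many primes - for f.g. abelian groups} also holds. Each such root $r\in\finitefield$ produces a distinct maximal ideal $(x-r)$ of $A_1$ with residue field $\finitefield$, so $\maxsubmod[p](A_1) \geq \rho_1$. The lower-bound half of Corollary~\ref{cor:each term quotient of the next -- upper and lower bounds} then gives
\[
\maxsubmod[p](N) \;=\; \maxsubmod[p](N/pN) \;\geq\; \rho_1(1+p+\cdots+p^{d-1}) \;\geq\; \rho_1\, p^{\,d-1},
\]
establishing the lower bound at $n = p$ for infinitely many primes. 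The only delicate point is uniformity in $p$ of the coefficient bounds $\maxsubmod(A_j)\le\rho_j$ in the upper bound: the $j=1$ term must yield the exact coefficient $\rho_1$ of $n^{d-1}$ rather than something growing with $p$, and this uniformity is precisely what Lemma~\ref{lem:num_roots in overline(F_p) of polynomial bounded above by num_roots in C} supplies once $p$ is past an $N$-dependent threshold.
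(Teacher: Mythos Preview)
Your proof is correct and follows essentially the same approach as the paper's: reduce via Corollary~\ref{cor:we can ignore finitely many primes - for f.g. abelian groups}, apply Corollary~\ref{cor:each term quotient of the next -- exact number of maximal submodules} (respectively Corollary~\ref{cor:each term quotient of the next -- upper and lower bounds}) for the upper (respectively lower) bound, and invoke Lemmas~\ref{lem:upper bound for finite F_p x module}(b), \ref{lem:num_roots in overline(F_p) of polynomial bounded above by num_roots in C}, and~\ref{lem:roots of non-constant polynomials in F_p and in C}. The one cosmetic difference is that you telescope \emph{before} bounding to get the exact identity $\maxsubmod(N/pN)=\sum_{j=1}^d \maxsubmod(A_j)\,n^{d-j}$, whereas the paper bounds $\maxsubmod(A_j)-\maxsubmod(A_{j-1})\le \rho_j$ first and then rearranges; your version yields the slightly sharper estimate $\sum_j \rho_j n^{d-j}$ rather than $\sum_j \rho_j(1+n+\cdots+n^{d-j})$, but both give $\rho_1 n^{d-1}+O(n^{d-2})$.
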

\begin{proof}
	Upper bound: 
	
	Fix a large $n = p^k$,
	such that $\maxsubmod(N) \neq 0$; by Corollary~ \ref{cor:we can ignore finitely many primes - for f.g. abelian groups}, we conclude 
	that $$ N/pN =  \bigoplus_{j=1}^d \finitefield[x]/(\overline{a_j}).$$ 
	We first show
	\[ \tag{*}
	\maxsubmod(N) \leq \sum_{j=1}^d \rho_j(1 + n + \cdots + n^{d-j}).
	\]
	Indeed, we just use 
	Corollary \ref{cor:each term quotient of the next -- exact number of maximal submodules} together with 
	Lemma \ref{lem:upper bound for finite F_p x module} part (b): Let 
	$A_i = \finitefield[x]/(\overline{a_{i}})$. We have,
	\[
	\maxsubmod(A_j) - \maxsubmod(A_{j-1}) \leq \maxsubmod(A_j), 
	\]
	and by Lemma~\ref{lem:upper bound for finite F_p x module}, $\maxsubmod(A_j)$ is bounded above by
	the number of roots of $a_j$ in $\overline{\finitefield}$, 
	which by Lemma~\ref{lem:num_roots in overline(F_p) of polynomial bounded above by num_roots in C}, is bounded above by $\rho_j$. This
	shows (*). 
	Let `$RHS$' denote the right hand side of (*). Then
	\[
	\begin{aligned}
	RHS &= \sum_{k=0}^{d-1} \left(\sum_{j=1}^{d-k} \rho_j \right) n^k \\
	&= \rho_1 n^{d-1} + \sum_{k=0}^{d-2} \left(\sum_{j=1}^{d-k} \rho_j \right) n^k.
	\end{aligned}
	\]
	Combining these equalities with (*) completes the upper bound.
	
	Lower bound: 
	
	By Lemma~\ref{lem:roots of non-constant polynomials in F_p and in C},
	there are infinitely many primes $p$ such that 
	$\maxsubmod[p](A_1) = \rho_1$, where $A_1 = \finitefield[x]/(\overline{a_{1}})$ as above. We conclude by using
	Corollary \ref{cor:each term quotient of the next -- upper and lower bounds}.
\end{proof}

\subsection{General f.g.\ $\integers[x]$-modules}
Again, let $R = \integers[x]$. In this subsection, we do \emph{not} assume that our modules are f.g.\ as abelian groups. 
We begin by stating a result that could have been given in 
Section \ref{sec:Direct sums with each term a quotient of the next}:

\begin{corollary}
	\label{cor:maxsubmod A^d = maxsubmod A times 1 + n + cdots + n^ d-1}
	Let $A$ be a cyclic $R$-module, and let $d \in \integers_{\geq 1}$. Then
	\[
	\maxsubmod(A^d) = \maxsubmod(A)(1+ n + \cdots + n^{d-1}).
	\]
\end{corollary}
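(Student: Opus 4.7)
The plan is to invoke Corollary \ref{cor:each term quotient of the next -- exact number of maximal submodules} directly, since $A^d$ fits the framework of that subsection as a trivial special case. Set $t = d$ and $A_1 = A_2 = \cdots = A_d = A$. Each $A_j$ is cyclic by hypothesis on $A$, and the condition ``$A_j$ is a quotient of $A_{j+1}$'' holds vacuously via the identity map. Thus $A^d = A_1 \oplus A_2 \oplus \cdots \oplus A_t$ meets the hypotheses of Section~\ref{sec:Direct sums with each term a quotient of the next}.

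Applying Corollary \ref{cor:each term quotient of the next -- exact number of maximal submodules} then yields
\[
\maxsubmod(A^d) = \sum_{j=1}^{d} (\maxsubmod(A_j) - \maxsubmod(A_{j-1}))(1 + n + \cdots + n^{d-j}),
\]
where $A_0$ is the zero module by convention. For $j \geq 2$ we have $A_j = A_{j-1} = A$, so $\maxsubmod(A_j) - \maxsubmod(A_{j-1}) = 0$ and those terms drop out. Only the $j=1$ term survives, contributing $\maxsubmod(A)(1 + n + \cdots + n^{d-1})$, which is exactly the claimed equality.

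There is essentially no obstacle, since all the work has already been done in Corollary \ref{cor:each term quotient of the next -- exact number of maximal submodules}. The only thing worth noting (for peace of mind, though it is not needed in the proof) is the sanity check that every simple quotient of $A^d$ is in fact a simple quotient of $A$: any surjection $A^d \twoheadrightarrow S$ onto a simple module must be nonzero on some coordinate inclusion $A \hookrightarrow A^d$, and simplicity of $S$ forces that restriction to be surjective. Hence summing $\submodisoto{S}$ over simple quotients of $A$ is the same as summing over all simple $R$-modules of cardinality $n$, consistent with Lemma \ref{lem:maxsubmod N = sum_S submodisoto_S(N)}. Alternatively, one could bypass Corollary \ref{cor:each term quotient of the next -- exact number of maximal submodules} and apply Lemma \ref{lem:each term quotient of the next -- take some simple quotients of first term} directly with $\mathcal{S}\mathcal{Q}^n$ taken to be \emph{all} simple quotients of $A$ of index $n$, but the one-line derivation via Corollary \ref{cor:each term quotient of the next -- exact number of maximal submodules} seems cleanest.
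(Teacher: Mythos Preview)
Your proof is correct and matches the paper's approach exactly: the paper's proof simply says the result follows immediately from Corollary~\ref{cor:each term quotient of the next -- exact number of maximal submodules} (or alternatively from Corollary~\ref{cor:each term quotient of the next -- upper and lower bounds}, where the upper and lower bounds coincide since $A_1 = A_t = A$). Your additional remarks spelling out why only the $j=1$ term survives and the sanity check about simple quotients are correct but more detailed than the paper's one-line justification.
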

\begin{proof}
	This follows immediately from Corollary \ref{cor:each term quotient of the next -- exact number of maximal submodules}
	(or Corollary \ref{cor:each term quotient of the next -- upper and lower bounds}).
\end{proof}

\begin{corollary}
	\label{cor:maxsubmod 'Z x' ^ d  has growth type n^d}
	Fix $d \in \integers_{\geq 1}$. Still $R = \integers[x]$. Then $\maxsubmod(R^d)$ has growth type $n^d$. 
	In fact, $\maxsubmod(R^d) \leq n^d +n^{d-1} + \cdots + n$ for all $n$, with equality
	when $n$ is prime. 
\end{corollary}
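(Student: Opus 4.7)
The plan is to combine the two preceding results: Corollary~\ref{cor:maxsubmod A^d = maxsubmod A times 1 + n + cdots + n^ d-1} (which handles direct sums of copies of a cyclic module) with Lemma~\ref{lem:maximal ideal growth of Z x} (which pins down $\maxsubmod(R)$ for $R=\integers[x]$). Since $R$ is itself a cyclic $R$-module, applying the former with $A = R$ is immediate.

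Concretely, I would first write
\[
\maxsubmod(R^d) = \maxsubmod(R)\bigl(1 + n + n^2 + \cdots + n^{d-1}\bigr)
\]
by Corollary~\ref{cor:maxsubmod A^d = maxsubmod A times 1 + n + cdots + n^ d-1}. Then I would substitute the bound $\maxsubmod(R) \leq n$ coming from Lemma~\ref{lem:maximal ideal growth of Z x}, which yields
\[
\maxsubmod(R^d) \leq n\bigl(1 + n + \cdots + n^{d-1}\bigr) = n + n^2 + \cdots + n^d
\]
for all $n$. For the equality claim, Lemma~\ref{lem:maximal ideal growth of Z x} gives $\maxsubmod(R) = n$ whenever $n$ is prime, and substituting this in the same identity yields equality.

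Finally, since $\maxsubmod(R^d) = n + n^2 + \cdots + n^d$ for infinitely many $n$ (namely all primes), and since the upper bound $n + n^2 + \cdots + n^d = O(n^d)$ holds for all $n$, the growth type of $\maxsubmod(R^d)$ is exactly $n^d$ in the sense of Definition~\ref{def:growth type}. There is no genuine obstacle here: the whole argument is a two-line deduction from results already established, with the only subtlety being to notice that $R$ itself qualifies as the cyclic module $A$ in the preceding corollary.
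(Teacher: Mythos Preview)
Your proof is correct and follows exactly the same approach as the paper: the paper's proof is a one-liner citing Lemma~\ref{lem:maximal ideal growth of Z x} and Corollary~\ref{cor:maxsubmod A^d = maxsubmod A times 1 + n + cdots + n^ d-1}, and you have simply written out the substitution explicitly.
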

\begin{proof}
	This follows from Lemma \ref{lem:maximal ideal growth of Z x} and Corollary \ref{cor:maxsubmod A^d = maxsubmod A times 1 + n + cdots + n^ d-1}.
\end{proof}

We give another consequence of Corollary \ref{cor:maxsubmod A^d = maxsubmod A times 1 + n + cdots + n^ d-1}:
\begin{corollary}
	\label{cor:maxsubmod 'finitefield x' ^d has growth type n^d / log n}
	Fix $d \in \integers_{\geq 1}$. Then
	\[
	\maxsubmod[p^k](\finitefield[x]^d) = \frac{1}{k}\sum_{a|k} \mu\left(\frac{k}{a}\right)p^a (1 + p^k + p^{2k} + \cdots + p^{(d-1)k}).
	\]
	Thus $\maxsubmod[p^k](\finitefield[x]^d)$ has growth type $n^d/\log(n)$.
\end{corollary}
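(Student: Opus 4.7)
The plan is to read off the identity by combining Corollary~\ref{cor:maxsubmod A^d = maxsubmod A times 1 + n + cdots + n^ d-1} (with $A = \finitefield[x]$, which is cyclic as a $\integers[x]$-module) and Lemma~\ref{lem:mobious function - exact number of irreducibles}. Specifically, the corollary evaluated at $n = p^k$ gives
\[
\maxsubmod[p^k](\finitefield[x]^d) = \maxsubmod[p^k](\finitefield[x])\bigl(1 + p^k + p^{2k} + \cdots + p^{(d-1)k}\bigr),
\]
and substituting the Möbius-function formula from the lemma for $\maxsubmod[p^k](\finitefield[x])$ yields exactly the displayed identity. So the closed form follows immediately with no further work.

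For the growth-type claim, first observe that every simple $\finitefield[x]$-module is a finite $\finitefield$-vector space, so any simple quotient of $\finitefield[x]^d$ has cardinality a power of $p$. Hence $\maxsubmod(\finitefield[x]^d) = 0$ whenever $n$ is not of the form $p^k$, and it suffices to analyze the formula along $n = p^k$. The leading term of $\sum_{a \mid k} \mu(k/a) p^a$ is the $a = k$ contribution $p^k$, while every other summand is bounded by $p^a \leq p^{k/2}$ since every proper divisor of $k$ is at most $k/2$. A crude estimate gives
\[
\tfrac{1}{2}\,p^k \;\leq\; k \cdot \maxsubmod[p^k](\finitefield[x]) \;\leq\; 2p^k
\]
for all sufficiently large $k$. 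Combining this with $p^{(d-1)k} \leq 1 + p^k + \cdots + p^{(d-1)k} \leq d\,p^{(d-1)k}$ yields
\[
\frac{p^{dk}}{2k} \;\leq\; \maxsubmod[p^k](\finitefield[x]^d) \;\leq\; \frac{2d\,p^{dk}}{k}
\]
for all large $k$. Writing $n = p^k$ so that $k = \log_p n$, this is precisely growth type $n^d / \log n$ in the sense of Definition~\ref{def:growth type}: the upper bound holds for all $n$ (trivially when $n$ is not a power of $p$), and the lower bound holds along the infinite subsequence $n = p^k$.

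The only technical point is dominating the Möbius sum by its leading term, which reduces to the elementary observation that a proper divisor of $k$ is at most $k/2$. Beyond that, the proof is essentially a one-line substitution, and I do not foresee any real obstacle.
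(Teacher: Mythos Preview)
Your proposal is correct and follows essentially the same approach as the paper: combine Corollary~\ref{cor:maxsubmod A^d = maxsubmod A times 1 + n + cdots + n^ d-1} with Lemma~\ref{lem:mobious function - exact number of irreducibles} to get the displayed identity, then read off the growth type. The only difference is that for the growth-type claim the paper simply cites Corollary~\ref{cor:mmoddeg of finitefield[x]} (which already records that $\maxsubmod(\finitefield[x])$ has growth type $n/\log n$) and multiplies by the geometric factor, whereas you reprove that estimate in place by bounding the M\"obius sum directly; your version is slightly more self-contained but otherwise the same argument.
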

\begin{proof}
	This follows from Lemma \ref{lem:mobious function - exact number of irreducibles} 
	and Corollary \ref{cor:maxsubmod A^d = maxsubmod A times 1 + n + cdots + n^ d-1}. (For the ``growth type,''
	the reader may also want to recall Corollary \ref{cor:mmoddeg of finitefield[x]}.)
\end{proof}

Let $N$ be an $R$-module. For any prime $p$, we will use the notation from 
Corollary \ref{cor:p^k approaches infinity in two ways - consequence on module}
to denote the $\finitefield[x]$-torsion-free rank of $N/pN$ by $r(p)$, and recall
\[
\nq \cong_{\rationals[x]} \left( \bigoplus_{j=1}^{s(0)} \rationals[x]/(a_j) \right) \oplus \rationals[x]^{r(0)}.
\]


\begin{proposition}
	\label{prop:growth type of arbitry f.g. Z[x]-module}
	With the notation from the previous paragraph, let $d = s(0) + r(0)$, and $r_{\text{max}} = \max_{p} \{r(p) \}$. 
	\[
	\maxsubmod(N) \text{ has growth type }
	\begin{cases}
	n^{d-1} & \text{if } d > r_{\text{max}} \\
	n^d & \text{if } d = r_{\text{max}} = r(0) \\
	n^{r_{\text{max}}}/\log(n) & \text{otherwise}
	\end{cases}
	\]
\end{proposition}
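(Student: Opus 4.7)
The plan is to split prime powers $n = p^k$ into two regimes via Corollary~\ref{cor:p^k approaches infinity in two ways - consequence on module}: for all but a finite set of ``exceptional'' primes $\{p_1,\ldots,p_s\}$ we are in the generic case (b), with $N/pN \cong_{\finitefield[x]} \bigoplus_{j=1}^{s(0)} \finitefield[x]/(\overline{a_j}) \oplus \finitefield[x]^{r(0)}$; for each exceptional $p_i$, case (a) eventually gives $\maxsubmod[p_i^k](N) = \maxsubmod[p_i^k](\finitefield[x]^{r(p_i)})$. Since every prime power belongs to exactly one regime, the growth type of $\maxsubmod(N)$ on $\integers_{\geq 1}$ is the maximum of the growth types coming from the two regimes, which I analyze separately.

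In the generic regime, $N/pN$ is a direct sum of $d = s(0) + r(0)$ cyclic $\finitefield[x]$-modules ordered so each is a quotient of the next: the torsion blocks $A_j = \finitefield[x]/(\overline{a_j})$ first, then $r(0)$ copies of $\finitefield[x]$. Applying Corollary~\ref{cor:each term quotient of the next -- exact number of maximal submodules}, only two groups of terms can exceed a constant. The $j = 1$ term contributes $\maxsubmod(A_1)(1 + n + \cdots + n^{d-1})$ with $\maxsubmod(A_1) \leq \rho_1$ by combining Lemma~\ref{lem:upper bound for finite F_p x module}(b) and Lemma~\ref{lem:num_roots in overline(F_p) of polynomial bounded above by num_roots in C}; the transition at $j = s(0) + 1$ contributes at most $\maxsubmod(\finitefield[x])(1 + n + \cdots + n^{r(0) - 1}) = O(n^{r(0)}/\log n)$ by Corollary~\ref{cor:mmoddeg of finitefield[x]}. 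When $s(0) = 0$, the first group is absent and Corollary~\ref{cor:maxsubmod A^d = maxsubmod A times 1 + n + cdots + n^ d-1} collapses the total to $\maxsubmod(\finitefield[x])(1 + n + \cdots + n^{d-1})$, which is $O(n^d)$ for all $n$ and at least $n^d$ when $n$ is prime (since $\maxsubmod[p](\finitefield[x]) = p$). In the exceptional regime, Corollary~\ref{cor:maxsubmod 'finitefield x' ^d has growth type n^d / log n} gives growth type $n^{r(p_i)}/\log n$ for each $p_i$.

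Combining yields the three cases. If $d > r_{\text{max}}$, then $s(0) \geq 1$ and both $O(n^{r(0)}/\log n)$ and $O(n^{r(p_i)}/\log n)$ are $o(n^{d-1})$; the $O(n^{d-1})$ upper bound from $A_1$ is matched because $a_1$ is nonconstant (it is a nonunit in $\rationals[x]$), so Lemma~\ref{lem:roots of non-constant polynomials in F_p and in C} produces infinitely many primes $p$ with $\maxsubmod[p](A_1) = \rho_1 \geq 1$, forcing $\maxsubmod[p](N) \geq n^{d-1}$ infinitely often. If $d = r_{\text{max}} = r(0)$, then $s(0) = 0$ and the generic regime already has growth type $n^d$ as above, while each exceptional contribution is $O(n^{r(p_i)}/\log n) = O(n^d/\log n)$. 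Otherwise, the conditions force $r_{\text{max}} > r(0)$, so $r_{\text{max}}$ must be realized at some exceptional prime $p^*$, and the $n^{r_{\text{max}}}/\log n$ growth at $p^*$ dominates the generic contribution (which is $O(n^{d-1})$ if $s(0) \geq 1$ or $O(n^d/\log n)$ if $s(0) = 0$; both are smaller since $r_{\text{max}} \geq d$).

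The main obstacle is pure bookkeeping: carefully comparing the three orders $n^{d-1}$, $n^{r(0)}/\log n$, and $n^{r_{\text{max}}}/\log n$ under each of the three hypotheses, and verifying that the corresponding lower-bound witness (infinitely many primes splitting $a_1$, primes $n$ themselves, or the exceptional $p^*$ realizing $r_{\text{max}}$) is available in each regime. No genuinely new techniques beyond the section's lemmas are required.
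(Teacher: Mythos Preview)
Your approach is essentially the paper's: split prime powers into the generic regime (large primes $p$, where $N/pN$ has the standard decomposition) and the exceptional regime (finitely many small primes), bound each via the section's lemmas, and then compare the candidate dominant terms in the three cases.

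There is one bookkeeping slip to fix. In the generic regime the prime $p$ \emph{varies}, so Corollary~\ref{cor:mmoddeg of finitefield[x]} (which concerns a single fixed $p$) does not apply: at $n=p$ prime one has $\maxsubmod[p](\finitefield[x]) = p$, so the transition term at $j=s(0)+1$ is $O(n^{r(0)})$, not $O(n^{r(0)}/\log n)$, and likewise the generic contribution when $s(0)=0$ is $O(n^d)$, not $O(n^d/\log n)$ as you write in the final paragraph (you had it right earlier). This slip is harmless for the conclusion: when $s(0)\geq 1$ one has $r(0)\leq d-1$, so $O(n^{r(0)})\subseteq O(n^{d-1})$ anyway; and in the ``otherwise'' case with $s(0)=0$ your observation $r(0)<r_{\text{max}}$ gives $d=r(0)<r_{\text{max}}$, whence $n^d = o(n^{r_{\text{max}}}/\log n)$ and the exceptional regime still dominates. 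The paper sidesteps this by using the cruder bound $\maxsubmod(N/pN)\leq (\deg(a_1)+1)(1+n+\cdots+n^{d-1})$ directly.
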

\begin{proof}
	The basic idea is that for large $n = p^k$, either $k$ or $p$ is large (or both). We then apply 
	Corollary \ref{cor:p^k approaches infinity in two ways - consequence on module}. The growth type of
	$\maxsubmod(N)$ will be controlled by one of two things:
	\begin{enumerate}[(i)]
		\item Fix $p$ such that $r(p) = r_{\text{max}}$. Let $k$ approach infinity (in $n = p^k$). Or\ldots
		
		\item  Keep $k$ small and send $p$ to infinity.
	\end{enumerate} 
	%
	We define three auxiliary functions that will simplify our proof:
	\[\begin{aligned}
	f_1(n) & = n^{d-1} \\
	f_0(n) & = n^{r(0)} \\
	g_0(n) &= n^{r_{\text{max}}}/\log(n)
	\end{aligned}\]
	Also, we will decompose the function $\maxsubmod(N)$ into two parts. Let $C$ be the constant given by 
	Corollary \ref{cor:p^k approaches infinity in two ways - consequence on module}. Define $f$ and $g$ as follows.
	First, $f(n) = 0$ and $g(n) = 0$ if $n$ is not a power of a prime. For a prime power $p^k$, 
	\[
	f(p^k) :=
	\begin{cases}
	\maxsubmod[p^k](N) & \text{if } p > C \\
	0 								  & \text{otherwise}
	\end{cases}
	\]
	\[
	g(p^k) :=
	\begin{cases}
	\maxsubmod[p^k](N) & \text{if } p \leq C \\
	0 								  & \text{otherwise}
	\end{cases}
	\]
	We have then that $\maxsubmod(N) = f(n) + g(n)$ for all $n$. 
	
	\emph{Claim 1}: $g$ has growth type $g_0$.
	Indeed, there are only finitely many primes $p$ for which for some $k$,
	$g(p^k) \neq 0$. We then apply Corollary \ref{cor:p^k approaches infinity in two ways - consequence on module} (a)
	and then Corollary~\ref{cor:maxsubmod 'finitefield x' ^d has growth type n^d / log n} for each prime. This proves Claim 1.
	
	\emph{Claim 2}: $f$ has growth type $f_0$ if $s(0) = 0$ and growth type $f_1$ if $s(0) \geq 1$. 
	
	Case $s(0) = 0$: By our choice of $C$, for all primes $p > C$ we have
	$N/pN = \finitefield[x]^{r(0)}$. Hence $\maxsubmod[p](N/pN) = \sum_{i=1}^{r(0)} p^{i}$. So 
	$f$ has growth type at least $f_0$. Also, since  $\maxsubmod(\finitefield[x]^{r(0)}) \leq \maxsubmod(\integers[x]^{r(0)})$,
	Corollary \ref{cor:maxsubmod 'Z x' ^ d  has growth type n^d} implies that $f$ has growth type at most
	$f_0$. Hence $f$ has growth type $f_0$, finishing the case $s(0) = 0$.
	
	Case $s(0) \geq 1$: 
	Assume that $p > C$. 
	Then by Lemma \ref{lem:version1_of_global to local - from QN to N/pN}, we get
	\[
	N/pN = \left( \bigoplus_{j=1}^{s(0)} \finitefield[x]/(\overline{a_{j,0}}) \right) \oplus  (\finitefield[x])^{r(0)}.
	\]
	
	Each term in the $s(0) + r(0)$ terms of the direct sum decomposition of $N/pN$ is a quotient of the 
	next one (except for the last 
	$\finitefield[x]$, since there is no term after it). Letting $A_{1,p} = \finitefield[x]/(\overline{a_{1,0}})$,
	by Corollary \ref{cor:each term quotient of the next -- upper and lower bounds} we get
	\[
	\maxsubmod(N/pN)  \geq \maxsubmod(A_{1,p})(1 + n + \cdots + n^{d-1}).
	\]
	Because $a_{1,0}$ is not constant, we get that for some $k$, $\maxsubmod[p^k](A_{1,p}) \geq 1$. Therefore, $f$ has 
	growth type at least $f_1$.
	
	Also, we have (with explanations following)
	\[\begin{aligned}
	\maxsubmod(N/pN) & \leq \maxsubmod(A_{1,p} \oplus \finitefield[x]^{d-1}) \\
	& \leq \maxsubmod(A_{1,p})(1 + n + \cdots + n^{d-1}) + \maxsubmod(\finitefield[x])(1+ n + \cdots + n^{d-2} ) \\
	& \leq (\maxsubmod(A_{1,p})+1)(1 + n + \cdots + n^{d-1})\\
	& \leq (\deg(a_{1,0})+1)(1 + n + \cdots + n^{d-1}).
	\end{aligned}
	\]
	The first inequality is because $N/pN$ is a quotient of $A_{1,p} \oplus \finitefield[x]^{d-1}$. The second  is
	by Corollary \ref{cor:each term quotient of the next -- exact number of maximal submodules}. The third 
	is because $\maxsubmod(\finitefield[x]) \leq n$. The fourth
	is because $\maxsubmod(A_{1,p}) \leq \deg(\overline{a_{1,0}}) \leq \deg(a_{1,0})$. Notice that combining
	these inequalities gives a bound for $f(n)$ independent of which large prime $p$ we use. Therefore, $f$ has 
	growth type at most $f_1$ and therefore has growth type $f_1$. This finishes the case $s(0) \geq 1$ and proves
	Claim 2.
	
	Note that for all large $n$, one of $f_0, f_1, g_0$ will be asymptotically at least as big as the other two. Hence we just
	need to decide which is biggest given the different cases in this proposition.
	
	Suppose that $d > r_{\text{max}}$. Then $r_{\text{max}} \leq d - 1$. Hence, $g_0(n) \leq f_1(n)$ for $n \geq 2$. Further, we always
	have $r(0) \leq r_{\text{max}}$. Combining this with the previous inequality gives $r(0) \leq d-1$. Therefore,
	$f_0(n) \leq f_1(n)$ for all $n$. We just showed that $f_1$ is asymptotically largest among $\{f_0, f_1, g_0\}$. Note that 
	because $d > r_{\text{max}}$ implies that $s(0) \geq 1$, $f$
	has growth type $f_1$ by Claim 2 above. We conclude that
	$\maxsubmod(N)$ has growth type $f_1(n) = n^{d-1}$.
	
	Next, suppose that $d = r_{\text{max}} = r(0)$. Then $f_1(n) < f_0(n)$ and $g_0(n) \leq f_0(n)$ 
	for $n \geq 2$. So $f_0(n) = n^d$ is asymptotically largest among $\{f_0, f_1, g_0\}$. We observe that
	$d = r_{\text{max}}$ implies that $s(0) = 0$. Hence, Claim 2 above shows that $f$ has growth type $f_0$.
	Therefore, $\maxsubmod(N)$ has growth type $f_0(n) = n^d$.
	
	Finally, suppose that $d \leq r_{\text{max}}$ and that either $d \neq r_{\text{max}}$ or $d \neq r(0)$. Then
	$f_1(n) \leq g_0(n)$ for $n \geq 2$. We show next that $r(0) < r_{\text{max}}$. Indeed,
	notice that if $d \neq r_{\text{max}}$, then $d < r_{\text{max}}$ and
	hence $r(0) < r_{\text{max}}$. Also, if $d \neq r(0)$, then $s(0) \geq 1$, in which case
	$d = r(0) + s(0) \leq r_{\text{max}}$ implies $r(0) < r_{\text{max}}$. So whether, $d \neq r_{\text{max}}$
	or $d \neq r(0)$, we get $r(0) < r_{\text{max}}$. Hence, $f_0(n) < g_0(n)$ for $n \geq 2$. Combining this with the
	second sentence of this paragraph, we see that $g_0$ is largest among $\{f_0, f_1, g_0\}$. So $f$ has growth type
	at most $g_0$. Also, Claim 1 says that $g$ has growth type $g_0$. Therefore $\maxsubmod(N)$ has
	growth type $g_0(n) = n^{r_{\text{max}}}/\log(n)$. 
\end{proof}

\section{$\integers[x_1, x_2, \ldots, x_\ell]$-modules, f.g.\ as abelian groups}
\label{sec:fin gen modules over polynomial ring with several variables}


Fix a positive integer $\ell$. Let $R = \integers[x_1, x_2, \ldots, x_\ell]$. Note that $\rationals \tensor_\integers R$ (which we will often
denote as $\rationals R$)
is just $\rationals[x_1, x_2, \ldots, x_\ell]$. The difficulty dealing with f.g.\ $R$ modules is that we have more than one variable.
Consequently, $\rationals[x_1, x_2, \ldots, x_\ell]$ is not a
principal ideal domain. Thus, we do not have the nice structure theorem which was so useful to us when we had only one variable. We should
not lose heart, however, since if we restrict to $ \integers[x_1, x_2, \ldots, x_\ell]$-modules that are f.g.\ as abelian groups, then
we can basically summarize the action of all $\ell$ variables with a \emph{single} variable. We can then apply the structure theorem as before.

\subsection{Reducing to one variable}

Let $N = \integers^k$ be a $\intvars$-module.  Consider $\rationals N := \rationals \tensor_\integers N = \rationals^k$.
Then for each $i$, the map $\rationals^k \to \rationals^k$ given by $x_i \cdot$ (i.e.\ multiplication by $x_i$)
is a linear transformation. Let $f_i$ be the minimal polynomial of $x_i \cdot$, and let $A = \qvars/(f_1,\ldots,f_\ell)$. 
Let $\jrad$ be the Jacobson radical of $A$. Fix polynomials $g_1,\ldots, g_s \in \qvars$ such that 
$\jrad = (g_1,\ldots,g_s)_{A}$. 
We may assume that in fact $g_i \in \intvars$ for all $i$, because if they were not in $\intvars$, then we could scale each by an integer
and rename them. Lemma \ref{lem:a surjection from rationals[x] to A/Jac(A)} was shown to the author by Marcin Mazur.

\begin{lemma}
	\label{lem:a surjection from rationals[x] to A/Jac(A)}
	There exists a surjection $\pi_\rationals\colon \rationals[x] \twoheadrightarrow A/\jrad$.
\end{lemma}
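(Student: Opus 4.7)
The plan is to exploit the finiteness of $A$ as a $\rationals$-vector space and then invoke standard structure results for commutative Artinian algebras together with the primitive element theorem.

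First I would observe that $A = \rationals[x_1,\ldots,x_\ell]/(f_1,\ldots,f_\ell)$ is a finite-dimensional commutative $\rationals$-algebra. Indeed, because each $x_i$ acts on the finite-dimensional space $\rationals N = \rationals^k$, its minimal polynomial $f_i$ is a nonzero monic polynomial, so the monomials $x_1^{a_1}\cdots x_\ell^{a_\ell}$ with $0\le a_i<\deg f_i$ span $A$ over $\rationals$. Hence $A$ is finite-dimensional over $\rationals$, and in particular Artinian.

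Next, since $A$ is a commutative Artinian ring, $\jrad = \mathrm{Jac}(A)$ is nilpotent and $A/\jrad$ is a semisimple commutative $\rationals$-algebra; so by the usual Wedderburn/CRT decomposition,
\[
A/\jrad \;\cong_{\rationals}\; K_1 \times K_2 \times \cdots \times K_m,
\]
where each $K_i$ is a finite field extension of $\rationals$. Because $\rationals$ has characteristic zero, every $K_i$ is separable over $\rationals$, so $A/\jrad$ is a finite \'etale $\rationals$-algebra.

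The last step, where the only real content lies, is to produce a single $\alpha\in A/\jrad$ whose powers span $A/\jrad$ over $\rationals$; then sending $x\mapsto \alpha$ will give the desired surjection $\pi_\rationals\colon \rationals[x]\twoheadrightarrow A/\jrad$. By the primitive element theorem, write $K_i=\rationals(\beta_i)$ and consider elements of the form $\alpha=(\beta_1+c_1,\ldots,\beta_m+c_m)$ with $c_i\in\rationals$. For a generic choice of the $c_i$'s the translated elements $\beta_i+c_i$ have pairwise coprime minimal polynomials over $\rationals$ (i.e.\ no shared Galois conjugates), so the minimal polynomial of $\alpha$ is the product of those minimal polynomials and hence has degree $\sum_i[K_i:\rationals]=\dim_\rationals (A/\jrad)$. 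This forces $\rationals[\alpha]=A/\jrad$, completing the proof. The main (and only real) obstacle is this last density/primitive-element argument, but it is standard for finite \'etale algebras over a characteristic zero field.
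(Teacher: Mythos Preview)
Your proof is correct and follows essentially the same approach as the paper: both argue that $A$ is finite-dimensional over $\rationals$, decompose $A/\jrad$ as a product of number fields via Wedderburn/CRT, invoke the primitive element theorem for each factor, and then choose primitive elements with pairwise coprime minimal polynomials so that CRT yields a single-generator presentation $\rationals[x]/(m_1\cdots m_n)$. The only cosmetic difference is in how coprimality is arranged---the paper uses the fact that each number field has infinitely many primitive elements (so one can pick $\alpha_j$'s with distinct, hence coprime, irreducible minimal polynomials), while you translate each $\beta_i$ by a rational $c_i$ and appeal to genericity; both devices achieve the same end.
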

\begin{proof}
	Because $A$ is a finite dimensional algebra over $\rationals$, we have that $A/\jrad$ is semisimple.\footnote{See for example, 
		Lemma~6.3.1 part (2) in \cite{Webb}, which states that if a module $U$ satisfies the descending chain condition on submodules, then
		$U/\Rad(U)$ is semisimple. We are dealing with semisimple \emph{algebras}, but that is fine, by Proposition~0.10 in 
		\url{http://www.ucl.ac.uk/~ucahaya/SemisimpleModules.pdf}}
	Since $A$ is commutative, $A/\jrad$ is a product of fields (each a finite extension of $\rationals$):
	\[\tag{*1}
	A/\jrad \cong \prod_{j = 1}^n F_i.
	\]
	But not only is every number field a simple extension of $\rationals$ (by the primitive element theorem), 
	but for each finite extension $E$ of $\rationals$ we have that $\{\alpha : \rationals(\alpha)  = E \}$ is infinite. 
	So choose $\alpha_j$ with $F_j = \rationals(\alpha_j)$ such that different $\alpha_j$'s have different minimal polynomials.
	Let $m_j(x)$ be the minimal polynomial of $\alpha_j$. Thus we can restate (*1) as 
	\[\tag{*2}
	A/\jrad \cong \prod_{j = 1}^n \rationals[x]/(m_j(x)).
	\]
	We may then apply the Chinese remainder theorem and conclude that 
	\[\tag{*3}
	\prod_{j = 1}^n \rationals[x]/(m_i(x)) \cong \rationals[x]/(m_1(x)\cdots m_n(x)).
	\]
	Of course, there is a surjection from $\rationals[x]$ to $\rationals[x]/(m_1(x)\cdots m_n(x))$. (This surjection, combined with
	(*2) and (*3), finishes the proof.)
\end{proof}

For any finite set $D$ of primes (to be decided later), let
\[
B := \intDvars/(f_1,\ldots,f_\ell).
\]
Recall that $\jrad = (g_1,\ldots,g_s)_{\qvars}$ with each $g_i \in \intvars$. Let 
\[
\jrad' := (g_1,\ldots,g_s)_{B}.
\]

\vbox{
	\begin{lemma}
		\label{lem:D exists for a commutative diagram}
		With the above notation, there exists a finite $D$ with the following commutative diagram
	\end{lemma}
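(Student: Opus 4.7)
The plan is to construct $\pi_D \colon \integers_D[x] \to B/\jrad'$ via the universal property of $\integers_D[x]$ by prescribing the image of $x$, and then to enlarge $D$ (only finitely many times) so that $\pi_D$ is surjective and the natural vertical maps to $\rationals[x]$ and $A/\jrad$ make the square commute. First I would apply Lemma~\ref{lem:a surjection from rationals[x] to A/Jac(A)} to get $\pi_\rationals \colon \rationals[x] \twoheadrightarrow A/\jrad$, lift $\pi_\rationals(x)$ to some $\tilde{\alpha} \in A$, and choose a polynomial representative $p(x_1,\ldots,x_\ell) \in \qvars$ of $\tilde{\alpha}$. Enlarge $D$ to include every prime dividing a denominator of a coefficient of $p$ or of the $f_i$ (recall the $g_i$ are already integral by the assumption made in the text). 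Then $p \in \intDvars$, it determines an element $\alpha \in B$, and we define $\pi_D$ to be the unique ring homomorphism sending $x$ to $\alpha + \jrad'$.

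Commutativity of the square is then immediate by chasing $x$: both $\iota_2 \circ \pi_D$ and $\pi_\rationals \circ \iota_1$ send $x$ to the image $\pi_\rationals(x) \in A/\jrad$ by the choice of $\alpha$, and since both are ring homomorphisms out of $\integers_D[x]$ they must agree on all of $\integers_D[x]$.

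The substantive step is surjectivity of $\pi_D$. Because each $f_i$ is monic with integer coefficients (being the minimal polynomial of an integer matrix acting on $N$), the ring $B = \intDvars/(f_1,\ldots,f_\ell)$ is finitely generated as an $\integers_D$-module, and hence so is $B/\jrad'$. The image of $\pi_D$ is $\integers_D[\alpha + \jrad']$, so the cokernel $C := (B/\jrad')/\operatorname{im}(\pi_D)$ is a finitely generated $\integers_D$-module. Tensoring the diagram with $\rationals$ over $\integers_D$ converts $\pi_D$ into $\pi_\rationals$ (which is surjective) and converts $C$ into $0$; thus $C$ is torsion, and being finitely generated over $\integers_D$ it is finite. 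Adding to $D$ the finitely many primes in the annihilator of $C$ then forces $\pi_D$ to be surjective.

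I expect the main obstacle to be bookkeeping all the required enlargements of $D$ into one finite set: primes from denominators of $p$ and of the $f_i$, primes controlling the cokernel of $\pi_D$, and (should the intended diagram demand that the right-hand vertical arrow be an injection $B/\jrad' \hookrightarrow A/\jrad$) the finitely many primes of $\integers_D$-torsion in $B/\jrad'$, since these must be inverted to ensure $\jrad \cap B = \jrad'$. Each of these is a finite list of primes, so their union is finite; collecting them into a single $D$ gives the required diagram.
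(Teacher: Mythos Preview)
Your proof is correct and follows the same opening move as the paper: pick a polynomial representative of $\pi_\rationals(x)$, enlarge $D$ so that it lies in $\intDvars$, and let $\pi_D$ be the ring map determined by sending $x$ there. Commutativity is then automatic in both arguments.

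Where you diverge is in the surjectivity step. The paper argues directly: since $\bar f$ generates $A/\jrad$ as a $\rationals$-algebra, each $\bar x_i$ can be written as a $\rationals$-polynomial $\sum_j a_{i,j}\bar f^{\,j}$, and one simply throws the finitely many primes appearing in the denominators of the $a_{i,j}$ into $D$, whereupon the same expression shows $\bar x_i$ lies in the image of $\pi_D$. Your argument is more structural: you observe that $B/\jrad'$ is a finite $\integers_D$-module (using that the $f_i$ are monic integral), deduce that the cokernel of $\pi_D$ is a finite torsion $\integers_D$-module because it dies upon tensoring with $\rationals$, and then invert its support. Both are short; the paper's version is more concrete, while yours avoids writing down explicit coefficients and transparently handles the bookkeeping of ``which primes'' via the annihilator of a single finite module.

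One point in your favor: you explicitly flag that the injection $\iota_2\colon B/\jrad' \hookrightarrow A/\jrad$ requires $\jrad \cap B = \jrad'$, equivalently that $B/\jrad'$ be $\integers_D$-torsion-free, and you note this is another finite set of primes to add to $D$. The paper's proof draws the arrow $\iota_2$ as an injection in the diagram but does not comment on why, so your treatment is in fact slightly more careful here.
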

	\centerline{\xymatrix{
			\rd \ar@{^{(}->}[d]^{\iota_1} \ar@{->>}[r]^{\pi_D} &B/\jrad'\ar@{^{(}->}[d]^{\iota_2}\\ 
			\qr \ar@{->>}[r]^{\pi_\rationals} 				&A/\jrad}}
}
\begin{proof}
	Let $f \in \qvars$ be such that $\pi_\rationals(x) = \bar{f}$. 
	We can make $D$ large enough (and yet keep it finite) such that $f \in \intDvars$. Of course, we then define
	$\pi_D \colon \zdx \to B/\jrad'$ via $\pi_D(x) := \bar{f} \in B/\jrad'$. 
	
	What we need to do next is to make $D$ large enough to ensure that $\pi_D$ is surjective. 
	Because $\pi_\rationals$ is surjective, we know that $\bar{f}$ is a generator for $A/\jrad$. 
	So for each $\bar{x}_i \in A/\jrad$, 
	choose $a_{i,j} \in \rationals$ such that 
	\[
	\bar{x}_i = \sum_{j=0}^{n_j} a_{i,j} (\bar{f})^j.
	\]
	Choose $D$ such that $a_{i,j} \in \integers_D$ for all $i$,$j$. This ensures that $\pi_D$ is surjective.
\end{proof}

Let $N$ be the module defined at the beginning of this section, and let $D$ 
be as in Lemma~\ref{lem:D exists for a commutative diagram}. Let $A$ be from the paragraph before
Lemma~\ref{lem:a surjection from rationals[x] to A/Jac(A)}. Let $B$ and $\jrad'$ be from the paragraph 
before Lemma~\ref{lem:D exists for a commutative diagram}. (So $B = \intDvars/(f_1,\ldots,f_\ell)$.)
 We are given that $N$ is a $\integers[x_1,\ldots,x_\ell]$-module. 
So $N$ is a $\integers[x_1,\ldots,x_\ell]/(f_1,\ldots,f_\ell)$-module.
Thus $\integers_DN = \integers_D \tensor_\integers N$ is a $B$-module.

\begin{lemma}
	\label{lem:maximal submodules contain jrad' Z_D N}
  We use the notation from the previous paragraph. Also, let $S = \integers_D N$. 
  Let $M$ be a maximal submodule of $S$.
  Then $M$ contains $\jrad' S$.
\end{lemma}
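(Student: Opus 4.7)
The plan is to exploit the fact that the Jacobson radical of $A$ is nilpotent (because $A$ is a finite-dimensional $\rationals$-algebra, hence Artinian) and then use simplicity of $S/M$ together with a Nakayama-style argument.

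More concretely, first I would observe that $\jrad = \Rad(A)$ is nilpotent, so there exists $n$ with $\jrad^n = 0$ in $A$. Since $\jrad = (g_1,\ldots,g_s)_A$, this says that every length-$n$ product $g_{i_1}\cdots g_{i_n}$ lies in $(f_1,\ldots,f_\ell)_{\qvars}$. In particular, such a product maps to $0$ under the natural map $B \to A$ induced by the inclusion $\intDvars \hookrightarrow \qvars$. So the ideal $(\jrad')^n \subseteq B$ lies in the kernel of $B \to A$.

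Second, I would verify that $(\jrad')^n$ acts as zero on $S$. The point is that $N = \integers^k$ is torsion-free, so we have inclusions $S = \integers_D N \hookrightarrow \rationals N$, and the $B$-action on $S$ is the restriction of the $A$-action on $\rationals N$ along $B \to A$. Hence any $y \in B$ that maps to $0 \in A$ annihilates $\rationals N$, and therefore annihilates $S$. Applying this to elements of $(\jrad')^n$ gives $(\jrad')^n \cdot S = 0$.

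Third, I would finish with the standard Nakayama-type trick: since $S/M$ is a simple $B$-module, $\jrad'\cdot(S/M)$ is a submodule and is either $0$ or all of $S/M$. If it were all of $S/M$, then iterating would give $(\jrad')^n(S/M) = S/M$, which contradicts $(\jrad')^n S = 0$ (noting $S/M \ne 0$ since $M$ is proper). Therefore $\jrad'(S/M) = 0$, i.e., $\jrad' S \subseteq M$, as required.

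The main obstacle, such as it is, is the compatibility check in the second step: one has to be comfortable that although $(\jrad')^n$ need not vanish in $B$ itself (the map $B \to A$ is generally not injective without enlarging $D$), it still acts by zero on $S$ because $S$ sits inside $\rationals N$ and the actions agree. Once that is pinned down, the argument is short.
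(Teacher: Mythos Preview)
Your proof is correct and follows essentially the same line as the paper's: both use that $A$ is finite-dimensional over $\rationals$ (hence Artinian, so $\jrad$ is nilpotent) and then run a Nakayama-type argument to force $\jrad' S \subseteq M$. The paper phrases the final step as a direct induction on $M + (\jrad')^k S = S$ rather than via simplicity of $S/M$, but these are equivalent.

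One small point worth noting: the paper simply asserts that $\jrad'$ is a nilpotent ideal of $B$, whereas you more carefully argue only that $(\jrad')^n$ annihilates $S$ (by embedding $S$ into $\rationals N$ and using that the $B$-action factors through $A$ there). Your version sidesteps the question of whether $B \to A$ is injective for the given $D$, which is exactly the obstacle you flagged; this is arguably a cleaner justification than the paper's, though the paper's setup does allow $D$ to be enlarged as needed.
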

\begin{proof}
	$A$ is a $\rationals$-algebra of finite dimension over $\rationals$. Therefore $A$ is Artinian. Consequently, $\jrad$ is a nilpotent ideal of $A$.
	Therefore, $\jrad'$ is a nilpotent ideal of $B$. 
	
	By contradiction, suppose that $M$ does not contain $\jrad' S$. Then $M + \jrad' S = S$. By induction, suppose that for some $k \geq 1$
	that $M + (\jrad')^k S = S$. Then $S = M + \jrad' S = M + \jrad' (M + (\jrad')^k S) = M + \jrad' M + (\jrad')^{k+1} S = M + (\jrad')^{k+1} S.$ And so
	we have shown that $S = M + (\jrad')^{k+1} S$. Therefore, for all $n \geq 1$, we have that $M + (\jrad')^n S = S$. But since $\jrad'$ is a nilpotent
	ideal, by taking $n$ to be large enough, we have that $M = S$, a contradiction.
\end{proof}
A proof of Lemma~\ref{lem:maximal submodules contain jrad' Z_D N} was shown to the author by Marcin Mazur.

\begin{lemma} 
	\label{lem:Z_DN/JZ_DN - many variables to one}
	Let $N$ be the module defined at the beginning of this section, and let $D$ 
	be as in Lemma~\ref{lem:D exists for a commutative diagram}. Let $S = \integers_D N.$ Then $S /\jrad'S$ 
	is a $\integers_D[x]$-module such that if $p \notin D$ and $k \geq 1$, then
	\[
	\maxsubmod[p^k](S /\jrad'S) = \maxsubmod[p^k](N).
	\]
\end{lemma}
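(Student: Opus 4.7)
The plan is to establish the claim through the chain of equalities
\[
\maxsubmod[p^k](N) \;=\; \maxsubmod[p^k](S) \;=\; \maxsubmod[p^k](S/\jrad' S),
\]
after first verifying that $S/\jrad' S$ is a well-defined $\integers_D[x]$-module. The latter is immediate: $S/\jrad' S$ is killed by $\jrad'$, hence is a $B/\jrad'$-module, and Lemma \ref{lem:D exists for a commutative diagram} supplies the surjection $\pi_D \colon \integers_D[x] \twoheadrightarrow B/\jrad'$ through which the action factors. Because $\pi_D$ is surjective, the $\integers_D[x]$-submodules of $S/\jrad' S$ coincide with its $B/\jrad'$-submodules, and in turn with the $B$-submodules of $S$ containing $\jrad' S$, so the count on the right is unambiguous.

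The second equality is immediate from Lemma \ref{lem:maximal submodules contain jrad' Z_D N}: every maximal $B$-submodule of $S$ contains $\jrad' S$, so the correspondence theorem yields an index-preserving bijection between maximal $B$-submodules of $S$ and maximal $B$-submodules of $S/\jrad' S$.

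The heart of the proof is the first equality, which I would establish by constructing an index-preserving bijection $M \leftrightarrow M'$ between maximal $R$-submodules $M \leq N$ of index $p^k$ (with $R = \integers[x_1,\ldots,x_\ell]$) and maximal $\integers_D R$-submodules $M' \leq S$ of index $p^k$. The hypothesis $p \notin D$ is crucial: it makes every abelian group of order a power of $p$ canonically a $\integers_D$-module, since each $d \in D$ then acts as an automorphism. Forward, $M \mapsto \integers_D M$: flatness of $\integers_D$ gives $S/\integers_D M \cong \integers_D \otimes_\integers (N/M) \cong N/M$, simple of order $p^k$. Backward, $M' \mapsto M' \cap N$: because $N$ generates $S$ as a $\integers_D$-module and $D$ acts invertibly on the $p$-power-order quotient $S/M'$, the image of $N$ in $S/M'$ is already a $\integers_D$-submodule and so equals $S/M'$, yielding $N/(M'\cap N) \cong S/M'$. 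Both round trips reduce to the observation that for $p \notin D$, every element of $\langle D \rangle$ acts invertibly on the $p$-torsion quotient $N/M$ (respectively $S/M'$): if $n \in N$ has $dn \in M$ then $n \in M$, and any $s \in M'$ can be written $s = (1/d)n$ with $ds = n \in M'\cap N$. The main obstacle is the careful bookkeeping in this localization step, but each ingredient—flatness of $\integers_D$, invertibility of $\langle D\rangle$ on $p$-torsion modules, and the correspondence theorem—is standard.
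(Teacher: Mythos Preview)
Your proof is correct and follows essentially the same route as the paper's: both use Lemma~\ref{lem:D exists for a commutative diagram} to obtain the $\integers_D[x]$-module structure, Lemma~\ref{lem:maximal submodules contain jrad' Z_D N} to pass to the quotient by $\jrad'S$, and a localization correspondence to move between $N$ and $S = \integers_D N$. The only difference is that the paper asserts this last correspondence in one sentence (``$B$-submodules of $S$ of finite index are in one-to-one correspondence to $\integers[x_1,\ldots,x_\ell]/(f_1,\ldots,f_\ell)$-submodules of $N$ of index relatively prime to everything in $D$''), while you spell out the explicit bijection $M \leftrightarrow \integers_D M$ via flatness of $\integers_D$ and invertibility of $\langle D\rangle$ on $p$-torsion quotients.
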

\begin{proof}
	As stated in the paragraph before Lemma~\ref{lem:maximal submodules contain jrad' Z_D N}, $S$ is a $B$-module.
	Hence, $S /\jrad'S$ is a $B$-module too, and so $S /\jrad'S$ is a  $B/\jrad'$-module. 
	Therefore, $S /\jrad'S$ is a $\integers_D[x]$-module by 
	Lemma~\ref{lem:D exists for a commutative diagram}.
	
	We have that $\integers_D[x]$-submodules of $S /\jrad'S$ are the same as
	$B/\jrad'$-submodules of $S /\jrad'S$, and these are the same as $B$-submodules of $S /\jrad'S$.
	Also, $B$-submodules of $S /\jrad'S$ are in one-to-one correspondence with
	$B$-submodules of $S$ that contain $\jrad'S$, and by Lemma~\ref{lem:maximal submodules contain jrad' Z_D N}, 
	this includes all maximal submodules.
	Next, $B$-submodules of $S$ of finite index are in one-to-one correspondence to 
	\newline $\integers[x_1,\ldots,x_\ell]/(f_1,\ldots,f_\ell)$-submodules of $N$ of index relatively prime to everything in $D$.
	Finally, $\integers[x_1,\ldots,x_\ell]/(f_1,\ldots,f_\ell)$-submodules of $N$ are in one-to-one correspondence to
	$\integers[x_1,\ldots,x_\ell]$-submodules of $N$.
	Therefore, if $p \notin D$ and $k \geq 1$, then
	\[
	\maxsubmod[p^k](S /\jrad'S) = \maxsubmod[p^k](N).
	\]
\end{proof}

\begin{lemma}
	\label{lem:several variables to one, with a nice direct sum too}
	Let $N$ be a $\integers[x_1,\ldots,x_\ell]$-module which is f.g.\ as an abelian group. There exists a finite $D$ and a 
	module, denoted $\tilde{N}_{D}$, such that
	\[
	\tilde{N}_{D} = \bigoplus_{i = 1}^{d_0} \integers_{D}[x]/(a_i),
	\]
	for some $a_1 \mid a_2 \mid \cdots \mid a_{d_0}$ ($a_1$ not a unit) and such that for all large $n$,
	\[\tag{*}
	\maxsubmod(N) = \maxsubmod(\tilde{N}_{D}).
	\]
\end{lemma}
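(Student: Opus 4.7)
The plan is to chain together the machinery just developed. First, Lemma~\ref{lem:Z_DN/JZ_DN - many variables to one} collapses the several-variable structure on $N$ to a single-variable one on $M := S/\jrad' S$ (where $S = \integers_{D_1} N$), in the sense that $\maxsubmod[p^k](M) = \maxsubmod[p^k](N)$ for every prime $p \notin D_1$ and every $k \geq 1$. Since $N$ is finitely generated as an abelian group, $M$ is finitely generated as a $\integers_{D_1}$-module; consequently $\rationals M$ is finite-dimensional over $\rationals$, and the structure theorem for the PID $\rationals[x]$ gives
\[
\rationals M \cong_{\rationals[x]} \bigoplus_{i=1}^{d_0} \rationals[x]/(a_i)
\]
with $a_1 \mid \cdots \mid a_{d_0}$ non-units and no free summand.

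Next I would apply Corollary~\ref{cor:decomp_of_localization} to $M$ (taking $D_0 = D_1$) to enlarge $D_1$ to a finite $D \supseteq D_1$ satisfying
\[
\tilde{N}_D := \integers_D \tensor_{\integers_{D_1}} M \cong_{\integers_D[x]} \bigoplus_{i=1}^{d_0} \integers_D[x]/(a_i),
\]
which is the required direct-sum form. It remains to verify the counting equality $\maxsubmod(N) = \maxsubmod(\tilde{N}_D)$ for all large $n$.

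If $n$ is not a prime power, both sides vanish, since every finite simple $\integers[x_1,\ldots,x_\ell]$-module (respectively $\integers_D[x]$-module) is a finite field and hence of prime-power order. If $n = p^k$ with $p \notin D$, then tensoring with $\integers_D$ is invisible modulo $p$, so $\tilde{N}_D/p\tilde{N}_D \cong M/pM$ as $\finitefield[x]$-modules. Since any simple quotient of $p$-power order has additive exponent $p$, maximal submodules of index $p^k$ correspond bijectively to maximal submodules of the mod-$p$ reduction, yielding $\maxsubmod[p^k](\tilde{N}_D) = \maxsubmod[p^k](M) = \maxsubmod[p^k](N)$ by the first step.

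The main obstacle is the finitely many primes $p \in D$. For such $p$, $p$ is a unit in $\integers_D$, so there is no ring map $\integers_D \to \finitefield$ and hence no simple $\integers_D[x]$-module of $p$-power order; thus $\maxsubmod[p^k](\tilde{N}_D) = 0$ for every $k$. On the $N$-side, $N/pN$ is a \emph{finite} $\finitefield[x_1,\ldots,x_\ell]$-module, so $\maxsubmod[p^k](N) = 0$ once $p^k > |N/pN|$. As $D$ is finite, only finitely many prime powers $n$ can violate the desired equality, and these are absorbed into ``for all large $n$,'' completing the plan.
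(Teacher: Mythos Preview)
Your proof is correct and follows the same route as the paper: reduce to one variable via Lemma~\ref{lem:Z_DN/JZ_DN - many variables to one}, then localize further via Corollary~\ref{cor:decomp_of_localization} to get the invariant-factor form. The only step you omit is the preliminary reduction to torsion-free $N$ using Lemma~\ref{lem:mod out by finite submodule -- maximal submodule growth is unchanged}, which is needed because Lemma~\ref{lem:Z_DN/JZ_DN - many variables to one} is stated only for $N = \integers^k$; with that in place your argument matches the paper's (and your treatment of the primes $p \in D$ is actually more explicit than the paper's one-line remark).
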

\begin{proof}
	By Lemma~\ref{lem:mod out by finite submodule -- maximal submodule growth is unchanged}, we may mod out by the finite submodule of 
	$N$ consisting of its $\integers$-torsion. So assume $N = \integers^k$. 
	By Lemma~\ref{lem:Z_DN/JZ_DN - many variables to one}, there exists $N_{D_0}$ with
	\[
	N_{D_0} = \integers_{D_0} N /\jrad'\integers_{D_0}N
	\]
	such that $\maxsubmod(N_{D_0}) = \maxsubmod(N)$ for all $n$. 
	By Corollary~\ref{cor:decomp_of_localization}, there exists $D \supseteq D_0$ such that if we localize $N_{D_0}$ by $D$, then
	we can write the resulting module as a direct sum.
	
	The final statement in the present lemma follows from
	Lemma~\ref{lem:Z_DN/JZ_DN - many variables to one} together with a fact about localization. Since we localize by a larger $D$,
	in order for the equation (*) to hold, we want the index $n = p^j$ be 
	such that $p \notin D$. (If $j$ is large, $\maxsubmod(N) = 0 = \maxsubmod(\tilde{N}_{D})$.)
\end{proof}

\begin{lemma}
	\label{lem:size of QR generating set for QN is d_0}
  Let $N$, $\tilde{N}_{D}$, and $d_0$ be as in Lemma \ref{lem:several variables to one, with a nice direct sum too}. Let 
  $R = \integers[x_1,\ldots,x_\ell]$. Then
  \[
   d_{\rationals R}(\nq) = d_{\rationals R}(\rationals \tilde{N}_D) = d_0.
  \]
\end{lemma}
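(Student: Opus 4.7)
The plan is to reduce both $d_{\rationals R}(\nq)$ and $d_{\rationals R}(\rationals \tilde{N}_D)$ to computations over the PID $\rationals[x]$, and then read off $d_0$ using the structure theorem for f.g.\ modules over PIDs.

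First I would observe that each $f_i$ is, by construction, the minimal polynomial of the linear map $x_i\cdot$ on $\nq$, so the $f_i$ annihilate $\nq$ and hence also its quotient $\rationals \tilde{N}_D$. Therefore the $\rationals R$-action on each factors through $A = \qvars/(f_1,\ldots,f_\ell)$. Since a ring surjection does not change the minimal number of module generators, $d_{\rationals R}(\nq) = d_A(\nq)$ and $d_{\rationals R}(\rationals \tilde{N}_D) = d_A(\rationals \tilde{N}_D)$.

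Next I would identify $\nq/\jrad \nq$ with $\rationals \tilde{N}_D$. By construction, $\tilde{N}_D$ is the localization at $D$ of $\integers_{D_0} N/\jrad'\integers_{D_0}N$, so tensoring with $\rationals$ over $\integers$ absorbs both localizations and yields $\rationals \tilde{N}_D \cong \nq/(\rationals\jrad')\nq = \nq/\jrad \nq$; the last equality holds because $\jrad$ and $\jrad'$ are generated by the same elements $g_1,\ldots,g_s$ (in $A$ and $B$ respectively). In particular, $\jrad$ annihilates $\rationals \tilde{N}_D$, so the $A$-action on $\rationals \tilde{N}_D$ factors through $A/\jrad$, which by Lemma~\ref{lem:a surjection from rationals[x] to A/Jac(A)} is itself a quotient of $\rationals[x]$. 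Hence $d_A(\rationals \tilde{N}_D) = d_{\rationals[x]}(\rationals \tilde{N}_D)$, and the structure theorem applied to $\rationals \tilde{N}_D = \bigoplus_{i=1}^{d_0} \rationals[x]/(a_i)$ with $a_1 \mid \cdots \mid a_{d_0}$ and $a_1$ not a unit gives $d_{\rationals[x]}(\rationals \tilde{N}_D) = d_0$, establishing the second equality of the lemma.

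For $d_A(\nq)$, the key tool is Nakayama's lemma: since $A$ is a finite-dimensional $\rationals$-algebra it is Artinian with $\jrad$ as its Jacobson radical, and $\nq$ is a f.g.\ $A$-module (being finite-dimensional over $\rationals \subseteq A$). Nakayama yields $d_A(\nq) = d_{A/\jrad}(\nq/\jrad \nq) = d_{A/\jrad}(\rationals \tilde{N}_D) = d_0$, where the middle equality uses the identification above and the last is what was just shown. The main obstacle I anticipate is the identification $\rationals \tilde{N}_D \cong \nq/\jrad \nq$, which requires tracking the quotient by $\jrad'$ through the successive ring changes $\integers \hookrightarrow \integers_{D_0} \hookrightarrow \integers_D \hookrightarrow \rationals$; once that is in hand, Nakayama and the structure theorem combine to finish the proof.
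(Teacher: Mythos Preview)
Your proposal is correct and follows essentially the same approach as the paper: identify $\rationals \tilde{N}_D$ with $\nq/\jrad\nq$, pass from $\rationals R$-generators to $A$-generators via the surjection $\rationals R \twoheadrightarrow A$, and invoke Nakayama's lemma (using that $A$ is Artinian with Jacobson radical $\jrad$) to get $d_A(\nq) = d_A(\nq/\jrad\nq)$. The only difference is that you are more explicit in justifying $d_{\rationals R}(\rationals \tilde{N}_D) = d_0$, routing through the surjection $\rationals[x] \twoheadrightarrow A/\jrad$ of Lemma~\ref{lem:a surjection from rationals[x] to A/Jac(A)} before appealing to the structure theorem, whereas the paper asserts this step directly.
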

\begin{proof}
	For ease of notation let $N' = \tilde{N}_D$. We have that $\rationals N' = \bigoplus_{i = 1}^{d_0} \rationals[x]/(a_i)$
	for some $a_1 \mid a_2 \mid \cdots \mid a_{d_0}$ ($a_1$ not a unit).
	Therefore $d_{\rationals R}(\rationals N) = d_0$.
	
	Reviewing the proof of Lemma \ref{lem:several variables to one, with a nice direct sum too} we have that
	$N' = \integers_D N/\jrad'\integers_D N$ for some finite set of primes $D$. Recall $A$ and $\jrad$ from the paragraph
	before Lemma~\ref{lem:a surjection from rationals[x] to A/Jac(A)}. We have that 
	$\rationals N' = \rationals N/\jrad' \rationals N = \rationals N/\jrad \rationals N.$ ($\rationals N$ is an $A$-module. So 
	$\rationals N'$ is too.) We have $d_{\rationals R}(\rationals N) = d_A(\rationals N)$. Also, 
	$d_{\rationals R}(\rationals N') = d_{\rationals R}(\rationals N/\jrad \rationals N ) = d_A(\rationals N/\jrad \rationals N )$.
	So all we need is to show that $d_A(\rationals N) = d_A(\rationals N/ \jrad \rationals N)$, but this follows from
	Nakayama's Lemma.
\end{proof}

\subsection{Isolating the `trivial' part}
In order to state a simple formula for the maximal subgroup growth of groups of the form $\integers^k \rtimes \integers^\ell$ 
(and more general semidirect products), 
we introduce some notation.\footnote{As it turns out, we will also use this notation for modules arising from virtually abelian groups.} 

\begin{definition}
	\label{def:mtriv and mnontriv}
	Let $G$ be a group (or commutative monoid) and $N$ a f.g.\ $G$ module.
	\begin{enumerate}[(a)]
		\item $\mtriv(N)$ denotes the number of index $n$ maximal submodules $M$ of $N$ such that the action of 
		$G$ on $N/M$ is trivial.\footnote{By this, we mean that $g \cdot (n + M) = n + M$ for all $n \in N$ and all $g \in G$.}
		\item $\mnontriv(N)$ denotes the number of index $n$ maximal submodules $M$ of $N$ such that the action of 
		$G$ on $N/M$ is non-trivial.\footnote{By this, we mean that there exists $g \in G$ and $n \in N$ such that  $g \cdot (n+M) \neq n + M$.}
	\end{enumerate}
\end{definition}

Note that of course, $\mtriv(N) + \mnontriv(N) = \maxsubmod(N)$. 

Before continuing, it may be good to point out what $G$ usually is. If $N$ arises as a normal subgroup of a metabelian group, such as the
$\integers^k$ in
$\integers^k \rtimes \integers^\ell$, then $G = \integers^\ell$. Similarly, if $N$ is a $\integers[x]$ module, then the monoid $G$ is 
$\langle x \rangle = \{ x, x^2, x^3, \ldots \}$.
Also, though not used in this paper, if $N$ is an abelian normal subgroup of finite index in a virtually abelian group, then
the $G$ in Definition~\ref{def:mtriv and mnontriv} would be the finite quotient. With this in mind, the reader is encouraged to at least read the statements of
Lemmas~\ref{lem:abelian by free abelian - num of maximal subgroups} and \ref{lem:abelian by f.g. abelian - num of maximal subgroups}
before continuing this section.

\begin{lemma}
	\label{lem:mtriv(N) = maxsubmod(N/IN) = maxsubgr(N/IN)}
	Let $N$ be a $\integers[x_1, x_2, \ldots, x_\ell]$-module which is f.g.\ as a $\integers$-module. Let
	$I = (x_1 - 1, x_2 - 1, \ldots, x_\ell - 1)$. Then
	\[
	\mtriv(N) = \maxsubmod(N/IN) = \maxsubgr(N/IN).
	\]
\end{lemma}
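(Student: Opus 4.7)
The plan is to prove the two equalities separately, using the observation that ``$G$ acts trivially on a quotient $N/M$'' translates into a containment condition on $M$ involving the ideal $I$.

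First I would handle the equality $\mtriv(N) = \maxsubmod(N/IN)$. The key observation is that an $R$-submodule $M \leq N$ (where $R = \integers[x_1,\ldots,x_\ell]$) has the property that $\langle x_1,\ldots,x_\ell \rangle$ acts trivially on $N/M$ if and only if $(x_i - 1) \cdot n \in M$ for every $n \in N$ and every $i$, which is equivalent to $IN \subseteq M$. Under the correspondence theorem for modules, the maximal submodules $M$ of $N$ of index $n$ containing $IN$ are in bijection with the maximal submodules of $N/IN$ of index $n$. Since the submodules counted by $\mtriv(N)$ are exactly those maximal submodules of $N$ of index $n$ containing $IN$, the first equality follows.

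Next I would establish $\maxsubmod(N/IN) = \maxsubgr(N/IN)$. By construction, $N/IN$ is an $R$-module on which every $x_i$ acts as the identity (since $x_i - 1 \in I$ kills $N/IN$). Consequently, the $R$-module structure on $N/IN$ reduces to its underlying abelian group structure: every subgroup of $N/IN$ is automatically an $R$-submodule, and conversely every $R$-submodule is in particular a subgroup. Hence maximal $R$-submodules of $N/IN$ of index $n$ coincide with maximal subgroups of $N/IN$ of index $n$, giving the second equality.

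No step here is a serious obstacle; both equalities follow from an unwinding of definitions. The only thing worth a brief check is the claim ``$G$ acts trivially on $N/M$ iff $IN \subseteq M$,'' which is immediate from the definition of $I$ together with the fact that the $x_i$'s generate $G$ as a monoid (and acting trivially is a multiplicatively closed condition, so it suffices to verify triviality on generators). Thus the lemma reduces to two one-line correspondences and requires no additional machinery.
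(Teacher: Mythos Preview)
Your proof is correct and follows essentially the same approach as the paper: both argue that the action is trivial on $N/M$ iff $IN \subseteq M$ (giving the first equality via the correspondence theorem), and then observe that since each $x_i$ acts as the identity on $N/IN$, submodules and subgroups coincide (giving the second equality). Your write-up is slightly more explicit about why it suffices to check triviality on the generators $x_i$, but the argument is the same.
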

\begin{proof}
	Let $M$ be a maximal submodule of $N$ such that $x_i(n + M) = n + M$ for all $i$ and all $n \in N$. This means that
	$(x_i - 1) n \in M$ for all $i$ and $n$. In other words $IN \subseteq M$. Thus $M$ is counted in the term $\mtriv(N)$ iff
	$IN \subseteq M$. Therefore, $\mtriv(N) = \maxsubmod(N/IN)$.
	
	The equality $\maxsubmod(N/IN) = \maxsubgr(N/IN)$ follows because the trivial action by all the $x_i$'s implies that
	maximal submodules of $N/IN$ are the same thing as maximal subgroups of $N/IN$.
\end{proof}

\begin{corollary}
	\label{cor:mtriv(N) is (n^t-1)/(n-1) for n prime}
	Let $N$ and $I$ be as in Lemma~\ref{lem:mtriv(N) = maxsubmod(N/IN) = maxsubgr(N/IN)}. Let $t$ be the torsion-free rank of
	(the abelian group) $N/IN$. Then for all large $n$,
	\[
	\mtriv(N) = \maxsubmod(N/IN) =
	\begin{cases}
	\frac{n^t - 1}{n - 1} &\text{ if $n$ is prime}\\
	0 							 &\text{ otherwise.}
	\end{cases}
	\]
\end{corollary}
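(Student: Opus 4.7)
My plan is to reduce the computation of $\maxsubmod(N/IN)$ to the classical count of codimension-one subspaces of $\mathbb{F}_p^t$, exploiting the fact that $N/IN$ is a finitely generated abelian group on which the $x_i$ act trivially, so its submodules coincide with its subgroups.

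First, by Lemma~\ref{lem:mtriv(N) = maxsubmod(N/IN) = maxsubgr(N/IN)} it suffices to compute $\maxsubmod(N/IN)$, and since the action is trivial this equals $\maxsubgr(N/IN)$. Because $N$ is finitely generated as a $\integers$-module, so is the quotient $N/IN$. Let $F$ denote the torsion subgroup of $N/IN$; then $F$ is finite, and $(N/IN)/F \cong \integers^t$ by the definition of the torsion-free rank $t$. Apply Lemma~\ref{lem:mod out by finite subgroup -- maximal subgroup growth is unchanged} (in its group-theoretic form) to conclude that for all $n > |F|$,
\[
\maxsubgr(N/IN) = \maxsubgr(\integers^t).
\]

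Next, recall that in a finitely generated abelian group every maximal subgroup of finite index has prime index: if $M \leq_{\max} A$ with $[A:M] < \infty$, then $A/M$ is a finite simple abelian group, hence $A/M \cong \integers/p\integers$ for some prime $p$. This immediately gives $\maxsubgr(\integers^t) = 0$ whenever $n$ is not a prime, which handles the second case of the piecewise formula.

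For $n = p$ prime, maximal subgroups of $\integers^t$ of index $p$ correspond bijectively to maximal subgroups of $\integers^t / p\integers^t \cong \mathbb{F}_p^t$, i.e.\ to codimension-one $\mathbb{F}_p$-subspaces. The number of such hyperplanes equals the number of $1$-dimensional subspaces, which is $(p^t - 1)/(p-1) = (n^t-1)/(n-1)$. (This can also be extracted from Lemma~\ref{lem:maxsubmod iso to R/I and dimension of tensor product} applied with $R = \integers$, $I = (p)$, and $N = \integers^t$.) The main point to keep straight is simply that the bound ``for all large $n$'' is what lets us discard the finite torsion piece $F$; there is no real obstacle beyond this cleanup, since once torsion is removed the count is a standard linear-algebra calculation over $\mathbb{F}_p$.
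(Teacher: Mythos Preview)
Your proof is correct and follows essentially the same approach as the paper: reduce via Lemma~\ref{lem:mtriv(N) = maxsubmod(N/IN) = maxsubgr(N/IN)}, mod out the finite torsion part to pass to $\integers^t$, and then invoke the standard count of index-$p$ subgroups of $\integers^t$. The only cosmetic difference is that the paper cites the submodule version (Lemma~\ref{lem:mod out by finite submodule -- maximal submodule growth is unchanged}) rather than the group version for discarding torsion, and simply states the formula for $\maxsubgr(\integers^t)$ without spelling out the hyperplane count.
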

\begin{proof}
	This follows from Lemma~\ref{lem:mtriv(N) = maxsubmod(N/IN) = maxsubgr(N/IN)} together with two more facts.
	First, by Lemma~\ref{lem:mod out by finite submodule -- maximal submodule growth is unchanged}, we may mod out
	by the $\integers$-torsion part of $N/IN$ to get
	\[
	\maxsubgr(N/IN) = \maxsubgr(\integers^t) \text{\quad for all large } n.
	\]
	Second, 
	\[
	\maxsubgr(\integers^t) =
	\begin{cases}
	\frac{n^t - 1}{n - 1} &\text{ if $n$ is prime}\\
	0 							 &\text{ otherwise.}
	\end{cases}
	\]
\end{proof}


\section{Certain metabelian groups}
\label{sec:certain metabelian groups}

\subsection{Semidirect products}

Except for part of Theorem~\ref{thm:(f.g. abelian) by (f.g. abelian)} and two lemmas, the groups that appear in this section are
semidirect products.

In the following lemma, $N$ is a module over the group-ring $\integers[\integers] = $ Laurent polynomials $\integers[x,x^{-1}]$, 
where multiplication by $x$ (in the module) is conjugation (in $G$) by a chosen generator $x$ of $\integers$.
Recall that $\maxsubmod(N)$ denotes the number of maximal submodules of a module $N$ of index $n$. 
\begin{lemma}
	\label{lem:abelian by infinite cyclic - exact counting}
	Let $G = N \rtimes \integers$ be a f.g.\ group with $N$ abelian. Then 
	\[
	\maxsubgr(G) = \maxsubgr(\integers) + n \cdot \maxsubmod(N). 
	\]
\end{lemma}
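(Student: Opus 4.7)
The plan is to combine the equality case of Lemma~\ref{lem:abelian by something - counting maximal subgroups by derivations} with Shalev's derivation count (Lemma~\ref{lem:Shalev's counting derivations out of cyclic groups}). Since $G = N \rtimes \integers$, the hypothesis for the equality in (*) of that lemma is satisfied, so
\[
\maxsubgr(G) \;=\; \maxsubgr(G/N) \;+\; \sum_{N_0} \lvert \Der(G/N, N/N_0) \rvert,
\]
where the sum ranges over $N_0 \normal G$ with $N_0 \leq N$ and with $N/N_0$ a simple $\integers[G/N]$-module of order $n$. Since $G/N \cong \integers$, the first term is $\maxsubgr(\integers)$.

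Next I would identify the index set of the sum with the set counted by $\maxsubmod(N)$. Because $N$ is abelian and normal in $G$, the $\integers[G/N] = \integers[\integers]$-submodules of $N$ are precisely the subgroups of $N$ that are normal in $G$. A submodule $N_0$ of $N$ is a \emph{maximal} $\integers[\integers]$-submodule with $[N:N_0] = n$ if and only if $N/N_0$ is a simple $\integers[\integers]$-module of order $n$. Hence the number of terms in the sum is exactly $\maxsubmod(N)$.

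Finally, I would evaluate each summand. For every such $N_0$, the quotient $N/N_0$ is a finite abelian group on which the cyclic group $G/N \cong \langle x \rangle$ acts, so Lemma~\ref{lem:Shalev's counting derivations out of cyclic groups} case (i) applies and gives $\lvert \Der(\integers, N/N_0) \rvert = |N/N_0| = n$. Summing yields
\[
\sum_{N_0} \lvert \Der(\integers, N/N_0) \rvert \;=\; n \cdot \maxsubmod(N),
\]
which combined with the first display gives the desired formula. There is no real obstacle here: every step is a direct application of an earlier lemma, and the only thing to be careful about is matching up the index set of the sum in Lemma~\ref{lem:abelian by something - counting maximal subgroups by derivations} with what $\maxsubmod(N)$ counts, which is immediate once one remembers that $N$ being abelian forces every $\integers[\integers]$-submodule to be normal in $G$.
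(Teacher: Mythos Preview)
Your proof is correct and follows exactly the same route as the paper, which simply says the result follows immediately by combining Lemma~\ref{lem:abelian by something - counting maximal subgroups by derivations} with Lemma~\ref{lem:Shalev's counting derivations out of cyclic groups}. You have merely spelled out the details of that combination, including the identification of the index set of the sum with the set counted by $\maxsubmod(N)$ and the application of case~(i) of Shalev's lemma.
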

\begin{proof}
	This follows immediately upon combining Lemma \ref{lem:abelian by something - counting maximal subgroups by derivations}
	and Lemma \ref{lem:Shalev's counting derivations out of cyclic groups}.
\end{proof}

Notes: (1) For any group $G$, if there is a group $N \normal G$ such that $G/N \cong \integers$, 
then the extension splits, as 
in the hypothesis of the Lemma \ref{lem:abelian by infinite cyclic - exact counting}. (2) The function 
$\maxsubgr(\integers)$ is the characteristic function of the prime numbers and hence is always either 1 or 0. As a result,
$\maxsubgr(\integers)$ does not effect the growth rate of $\maxsubgr(N \rtimes \integers)$.

\begin{theorem}
	\label{thm:maximal subgroup growth of (f.g. abelian) by Z}
	Let $G = N \rtimes \integers$, with $N$ f.g.\ as an abelian group. Let 
	\[
	\rationals \tensor_\integers N =  \bigoplus_{j=1}^d \rationals[x]/(a_j),
	\]
	where $a_1 | a_2 | \cdots | a_d$ as provided by the structure theorem (so with $a_1$ not a unit). (So $d = d_{\rationals[x]}(\rationals N)$.)
	Also, let $\rho_1$ be the number of 
	(distinct) roots of $a_1$ in $\C$. Then
	\[\begin{aligned}
	\maxsubgr(G) & \leq \rho_1 n^{d} + O(n^{d-1}) & \text{for all large $n$, and}\\
	\maxsubgr(G) & \geq \rho_1n^{d} & \text{for infinitely many $n$.} 
	\end{aligned} 
	\]
\end{theorem}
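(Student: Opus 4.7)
The plan is to combine two results already in hand: Lemma~\ref{lem:abelian by infinite cyclic - exact counting}, which reduces counting maximal subgroups of $G = N \rtimes \integers$ to counting maximal submodules of $N$, and Theorem~\ref{thm:exact growth -with coefficient- of Z x module which is f.g. abelian}, which provides both an upper and lower bound for $\maxsubmod(N)$ with the correct leading coefficient $\rho_1$.

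First, I would verify the hypothesis of Theorem~\ref{thm:exact growth -with coefficient- of Z x module which is f.g. abelian} is met: since $N$ is f.g.\ as an abelian group, it is a f.g.\ $\integers[x]$-module that is f.g.\ as a $\integers_D$-module for $D = \emptyset$, so the theorem applies and gives
\[
\rho_1 n^{d-1} \leq \maxsubmod(N) \leq \rho_1 n^{d-1} + O(n^{d-2}),
\]
with the upper bound for all large $n$ and the lower bound for infinitely many $n$, where $d$ and $\rho_1$ are exactly as in the theorem statement (the invariants agree because the structure-theorem decompositions in the two statements are the same).

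Next, Lemma~\ref{lem:abelian by infinite cyclic - exact counting} gives
\[
\maxsubgr(G) = \maxsubgr(\integers) + n \cdot \maxsubmod(N).
\]
Since $\maxsubgr(\integers) \in \{0,1\}$ (it is the characteristic function of the primes), this term contributes only $O(1)$ and is absorbed into the error term. Multiplying the two-sided estimate on $\maxsubmod(N)$ by $n$ yields
\[
\rho_1 n^{d} \leq n \cdot \maxsubmod(N) \leq \rho_1 n^{d} + O(n^{d-1}),
\]
and adding $\maxsubgr(\integers)$ preserves both bounds (the upper since the $O(n^{d-1})$ absorbs it; the lower since adding a nonnegative term can only help).

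There is essentially no obstacle: all of the hard work was done in Theorem~\ref{thm:exact growth -with coefficient- of Z x module which is f.g. abelian} (which required the global-to-local passage via Lemma~\ref{lem:global to local - from QN to N/pN - improved} and the bounds from Lemma~\ref{lem:roots of non-constant polynomials in F_p and in C} and Lemma~\ref{lem:num_roots in overline(F_p) of polynomial bounded above by num_roots in C}) and in Lemma~\ref{lem:abelian by infinite cyclic - exact counting} (which used the derivation count from Lemma~\ref{lem:Shalev's counting derivations out of cyclic groups}). The only subtlety worth a sentence in the write-up is confirming that the $n$ infinitely often achieving the lower bound for $\maxsubmod(N)$ also achieve the lower bound for $\maxsubgr(G)$, which is immediate since the identity in Lemma~\ref{lem:abelian by infinite cyclic - exact counting} holds for every $n$.
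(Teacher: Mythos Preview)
Your proposal is correct and takes essentially the same approach as the paper: the paper's own proof is a single sentence citing Theorem~\ref{thm:exact growth -with coefficient- of Z x module which is f.g. abelian} together with Lemma~\ref{lem:abelian by infinite cyclic - exact counting}, which is exactly what you do. Your write-up simply spells out the arithmetic of combining the bounds and absorbing the $\maxsubgr(\integers)$ term, which the paper leaves implicit.
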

\begin{proof}
	This follows from Theorem~\ref{thm:exact growth -with coefficient- of Z x module which is f.g. abelian} together with
	Lemma~\ref{lem:abelian by infinite cyclic - exact counting}.
\end{proof}

\begin{corollary}
	\label{cor:mdeg(N rtimes A) = d(QN)}
	Suppose that $N$ is f.g.\ as an abelian group. Then 
	\[
	\mdeg(N \rtimes \integers) = d_{\rationals[x]}(\rationals N).
	\]
\end{corollary}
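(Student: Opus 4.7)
The plan is to deduce this directly from Theorem~\ref{thm:maximal subgroup growth of (f.g. abelian) by Z}, which already supplies matching upper and lower bounds up to a constant factor. The only preliminary observation needed is that $\rho_1 \geq 1$ whenever $d \geq 1$: in that case $a_1 \in \rationals[x]$ is a nonzero non-unit, hence a non-constant polynomial, so it has at least one complex root.

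Assuming $d \geq 1$, the upper bound from the theorem reads $\maxsubgr(G) \leq \rho_1 n^d + O(n^{d-1}) = O(n^d)$, so for every $\varepsilon > 0$ we have $\maxsubgr(G) \leq n^{d+\varepsilon}$ for all large $n$; by the definition of $\mdeg$ this forces $\mdeg(G) \leq d$. The lower bound gives $\maxsubgr(G) \geq \rho_1 n^d \geq n^d$ for infinitely many $n$, and since $\mdeg(G) = \limsup (\log \maxsubgr(G))/\log n$, this yields $\mdeg(G) \geq d$. Combining gives the desired equality in the non-degenerate case.

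It remains to handle the edge case $d = 0$, which means $\rationals \tensor_\integers N = 0$ and thus $N$ is finite (as $N$ is f.g.\ as an abelian group). Applying Lemma~\ref{lem:mod out by finite subgroup -- maximal subgroup growth is unchanged} to $N \normal G$ gives $\maxsubgr(G) = \maxsubgr(G/N) = \maxsubgr(\integers)$ for all $n > |N|$; since $\maxsubgr(\integers) \leq 1$, we conclude $\mdeg(G) = 0 = d$. I do not anticipate any real obstacle here — all the substantive work was already done in proving Theorem~\ref{thm:maximal subgroup growth of (f.g. abelian) by Z}, and the only conceptual point is the trivial remark that a non-constant rational polynomial always has a complex root, so the factor $\rho_1$ does not collapse to zero.
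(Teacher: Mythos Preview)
Your proof is correct and follows the same approach as the paper, which simply states that the corollary follows from Theorem~\ref{thm:maximal subgroup growth of (f.g. abelian) by Z}. You have merely spelled out the details the paper leaves implicit --- namely that $\rho_1 \geq 1$ so the bounds are nondegenerate, and that the edge case $d=0$ (i.e.\ $N$ finite) is handled by Lemma~\ref{lem:mod out by finite subgroup -- maximal subgroup growth is unchanged}.
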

\begin{proof}
	This follows from Theorem~\ref{thm:maximal subgroup growth of (f.g. abelian) by Z}.
\end{proof}

\begin{corollary}
	\label{cor:max subgr growth of Z^k semidir_A Z}
  Let  $G = \integers^k \rtimes_A \integers$, where $A \in GL(k,\integers)$. Let
  $b$ = the number of 
  	blocks in the rational canonical form of $A$, and let
  $\rho_1$ = the number of distinct roots (in $\C$) of the characteristic polynomial of the smallest block. Then
  \[\begin{aligned}
  \maxsubgr(G) & \leq \rho_1 n^{d} + O(n^{d-1}) & \text{for all large $n$, and}\\
  \maxsubgr(G) & \geq \rho_1 n^{d} & \text{for infinitely many $n$.} 
  \end{aligned} 
  \]
\end{corollary}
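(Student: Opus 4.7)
The plan is to deduce this corollary directly from Theorem~\ref{thm:maximal subgroup growth of (f.g. abelian) by Z} by identifying its invariants ($d$ and $\rho_1$) with the quantities described in terms of the rational canonical form of $A$. Set $N = \integers^k$ with $\integers$ acting via $A$, so that $G = N \rtimes_A \integers$ satisfies the hypotheses of that theorem, and $\rationals N = \rationals^k$ becomes a $\rationals[x]$-module where $x$ acts as $A$.

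The first step is to recall the standard fact that the rational canonical form of $A$ precisely encodes the invariant factor decomposition of $\rationals^k$ as a $\rationals[x]$-module. That is, if $A$ has rational canonical form with blocks whose characteristic polynomials are $a_1 \mid a_2 \mid \cdots \mid a_d$ (ordered so that $\deg(a_1) \leq \deg(a_2) \leq \cdots$), then
\[
\rationals \tensor_\integers N = \rationals^k \cong_{\rationals[x]} \bigoplus_{j=1}^d \rationals[x]/(a_j),
\]
with $a_1$ not a unit (since $A \in GL(k,\integers)$ is nontrivial as a module in any interesting case; if $k = 0$ the result is vacuous). In particular, the number $d$ of blocks equals $d_{\rationals[x]}(\rationals N)$, matching the $d$ appearing in Theorem~\ref{thm:maximal subgroup growth of (f.g. abelian) by Z}. (Note the mild notational slip in the statement: the symbol $b$ introduced for the number of blocks is the same quantity as the $d$ appearing in the displayed inequalities.)

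The second step is to identify the ``smallest block.'' Since $a_1 \mid a_j$ for all $j$, the block with characteristic polynomial $a_1$ has the smallest degree, and this is the smallest block referenced in the corollary. Hence $\rho_1$, the number of distinct roots of the characteristic polynomial of the smallest block in $\C$, is precisely the number of distinct roots of $a_1$ in $\C$ from Theorem~\ref{thm:maximal subgroup growth of (f.g. abelian) by Z}.

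With these two identifications in hand, the conclusion
\[
\maxsubgr(G) \leq \rho_1 n^d + O(n^{d-1}) \quad\text{and}\quad \maxsubgr(G) \geq \rho_1 n^d \ \text{for infinitely many } n
\]
is the direct output of Theorem~\ref{thm:maximal subgroup growth of (f.g. abelian) by Z} applied to our $G$. There is essentially no obstacle here: the only substantive content is the translation between the linear-algebra language (rational canonical form, characteristic polynomial of the smallest block) and the module-theoretic language (invariant factor decomposition of $\rationals N$), which is completely standard. The hardest part, if any, is merely remembering the convention that orders the invariant factors so that the smallest block corresponds to $a_1$, which is the convention forced on us by the divisibility $a_1 \mid a_2 \mid \cdots \mid a_d$.
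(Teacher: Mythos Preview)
Your proposal is correct and follows exactly the paper's approach: the paper's proof is simply the one-line remark that the corollary follows from Theorem~\ref{thm:maximal subgroup growth of (f.g. abelian) by Z}. You have merely spelled out the standard dictionary between the rational canonical form of $A$ and the invariant factor decomposition of $\rationals N$ as a $\rationals[x]$-module (and correctly flagged the $b$ versus $d$ notational slip).
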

\begin{proof}
	This follows from Theorem~\ref{thm:maximal subgroup growth of (f.g. abelian) by Z}.
\end{proof}

We next give a few examples of Corollary~\ref{cor:mdeg(N rtimes A) = d(QN)} in the form of another
corollary.

\begin{corollary}
	\label{cor:Z rtimes_sigma Z - for arbitrary permutation - mdeg = number of cylces}
	Let $\sigma \in \Sym(k)$, and suppose $\sigma$ has $c$ cycles. Then
	\[
	\mdeg(\integers^k \rtimes_\sigma \integers) = c.
	\]
\end{corollary}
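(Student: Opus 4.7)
The plan is to invoke Corollary~\ref{cor:mdeg(N rtimes A) = d(QN)}, which reduces the problem to computing $d_{\rationals[x]}(\rationals N)$ where $N = \integers^k$ and $x$ acts on $\rationals^k$ via the permutation matrix $P_\sigma$. So I just need to show that this minimal number of $\rationals[x]$-generators equals the number of cycles $c$ of $\sigma$.

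First, I would use the orbit decomposition of $\sigma$ to break up $N$. If the cycles of $\sigma$ have lengths $\ell_1, \ldots, \ell_c$ (so $\ell_1 + \cdots + \ell_c = k$), then $\integers^k$ decomposes $P_\sigma$-invariantly as $\bigoplus_{i=1}^c \integers^{\ell_i}$, where on each summand $P_\sigma$ acts by an $\ell_i$-cycle. On such a summand, choosing any basis vector as a generator identifies it with $\integers[x]/(x^{\ell_i} - 1)$ as a $\integers[x]$-module. Tensoring with $\rationals$ yields
\[
\rationals N \;\cong_{\rationals[x]}\; \bigoplus_{i=1}^c \rationals[x]/(x^{\ell_i}-1).
\]
This immediately gives the upper bound $d_{\rationals[x]}(\rationals N) \leq c$.

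For the lower bound, the key observation is that $x-1$ divides $x^{\ell_i}-1$ for every $i$. Hence, applying the functor $\rationals[x]/(x-1) \otimes_{\rationals[x]} -$ to the above decomposition, each summand contributes exactly one copy of $\rationals$, so
\[
\rationals N \big/ (x-1)\rationals N \;\cong\; \rationals^c.
\]
Since the dimension of $M/\mathfrak{m}M$ over $\rationals[x]/\mathfrak{m}$ is a lower bound for $d_{\rationals[x]}(M)$ for any maximal ideal $\mathfrak{m}$ (by Nakayama, or directly because any generating set of $M$ must surject onto a basis of this quotient), this gives $d_{\rationals[x]}(\rationals N) \geq c$. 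Combining the two bounds yields $d_{\rationals[x]}(\rationals N) = c$, and Corollary~\ref{cor:mdeg(N rtimes A) = d(QN)} then finishes the proof.

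There is no real obstacle here: the whole argument rests on the trivial fact that $x-1$ is a common factor of every $x^{\ell_i}-1$, which prevents the cyclic summands from collapsing under the invariant factor decomposition. In particular, the $c$ cycles of $\sigma$ correspond bijectively to the blocks of the rational canonical form of $P_\sigma$, so this corollary can alternatively be seen as the specialization of Corollary~\ref{cor:max subgr growth of Z^k semidir_A Z} to the case $A = P_\sigma$.
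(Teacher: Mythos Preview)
Your proposal is correct and follows essentially the same approach as the paper: both invoke Corollary~\ref{cor:mdeg(N rtimes A) = d(QN)}, obtain the decomposition $\rationals N \cong \bigoplus_{i=1}^c \rationals[x]/(x^{\ell_i}-1)$, and use the common factor $x-1$ for the lower bound. The only cosmetic difference is that the paper phrases the lower bound via the trivial \emph{submodule} $\rationals^c$ (the fixed points), whereas you pass to the \emph{quotient} $\rationals N/(x-1)\rationals N$ and invoke Nakayama; your version is arguably the cleaner of the two, but the underlying idea is identical.
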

\begin{proof}
	By Corollary~\ref{cor:mdeg(N rtimes A) = d(QN)}, all we need to show is that $d_{\rationals[x]}(\rationals N) = c$. In the proof, as usual, 
	we will denote the abelian normal subgroup $\integers^k$ by $N$.
	
	We first show that $d_{\rationals[x]}(\rationals N) \geq c$. Indeed, let the $c$ cycles of $\sigma$ be of lengths $n_1$,\ldots,$n_c$. Then
	\[\tag{*}
	\rationals N \cong \bigoplus_{i=1}^{c} \rationals[x]/(x^{n_i}-1).
	\]
	Because $x-1$ divides $x^{n_i} - 1$, we notice that there exists a submodule of $\rationals^k$  of the form $\rationals^c$, where
	the action of $x$ on $\rationals^c$  is trivial. Hence $d_{\rationals[x]}(\rationals N) \geq \dim_\rationals(\rationals^c) = c$.
	
	To see that $d_{\rationals[x]}(\rationals N) \leq c$, all we need to note is that (*) says $\rationals N$ is a direct sum of 
	$c$ cyclic modules. 
\end{proof}

\begin{corollary}
	For any $k \geq 1$ there exists a f.g.\ group $G$ and a finite index subgroup $H$ such that $\mdeg(G) = 1$ while $\mdeg(H) = k$.
\end{corollary}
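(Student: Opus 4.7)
The plan is to invoke Corollary~\ref{cor:Z rtimes_sigma Z - for arbitrary permutation - mdeg = number of cylces} with a single $k$-cycle and then pass to an obvious finite-index subgroup. Let $\sigma \in \Sym(k)$ be a $k$-cycle, and set $G = \integers^k \rtimes_\sigma \integers$, where a chosen generator $t$ of the right-hand $\integers$ acts on $\integers^k$ via the permutation matrix of $\sigma$. Because $\sigma$ has exactly one cycle, the cited corollary immediately gives $\mdeg(G) = 1$.

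Next, I pass to the finite-index subgroup $H := \integers^k \rtimes \langle t^k \rangle$, that is, the preimage of $k\integers \leq \integers$ under the projection $G \twoheadrightarrow G/\integers^k \cong \integers$. Clearly $[G:H] = k$. Because the action of $t^k$ on $\integers^k$ is $\sigma^k = \id$, i.e., trivial, the extension defining $H$ is actually a direct product: $H \cong \integers^k \times \langle t^k \rangle \cong \integers^{k+1}$.

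It remains to verify that $\mdeg(\integers^{k+1}) = k$. Since $\integers^{k+1}$ is abelian (hence solvable), every maximal subgroup of finite index has prime index. For each prime $p$, the index-$p$ maximal subgroups of $\integers^{k+1}$ correspond bijectively to codimension-one $\finitefield$-subspaces of $\integers^{k+1}/p\integers^{k+1} \cong \finitefield^{k+1}$, of which there are $(p^{k+1} - 1)/(p - 1) = 1 + p + \cdots + p^k$. Thus $\maxsubgr[n](H)$ equals $1 + n + \cdots + n^k$ when $n$ is prime and $0$ otherwise, giving $\mdeg(H) = k$.

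The whole argument is routine once the preceding corollary is in hand, so there is no real obstacle; the only mildly interesting aspect is that passing to a finite-index (even normal) subgroup can \emph{increase} the degree of maximal subgroup growth by an arbitrarily large amount.
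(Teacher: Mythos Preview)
Your proof is correct and follows essentially the same approach as the paper: take $G = \integers^k \rtimes_\sigma \integers$ with $\sigma$ a $k$-cycle and $H = \integers^k \rtimes k\integers \cong \integers^{k+1}$. The paper asserts $\mdeg(\integers^{k+1}) = k$ without justification, whereas you supply the explicit count of index-$p$ subgroups, but otherwise the arguments are identical.
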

\begin{proof}
	Let $G = \integers^k \rtimes_\sigma \integers$, where $\sigma \in \Sym(k)$ is a $k$-cycle. Let 
	$H = \integers^k \rtimes k\integers$, which equals $\integers^k \times \integers$. Then $\mdeg(\integers^{k+1}) = k$, but
	by Corollary~\ref{cor:Z rtimes_sigma Z - for arbitrary permutation - mdeg = number of cylces}, $\mdeg(G) = 1$.
\end{proof}
Note that this is in stark contrast to what happens when working with \emph{all} subgroups of a group. Theorem 1.1 from
Shalev's \cite{Shalev_On_the_degree} says that if $G$ is a f.g.\ group with $H \leq_f G$, then $\deg(G) \leq \deg(H) +1$.

We would like to give a perhaps more group theoretic interpretation of 
Corollary~\ref{cor:mdeg(N rtimes A) = d(QN)}. With this in mind, we make an observation
on Corollary \ref{cor:Z rtimes_sigma Z - for arbitrary permutation - mdeg = number of cylces}.
When considering the group $\integers^k \rtimes_\sigma \integers$, it is easy to find a set of $c$ elements which normally generate
$\integers^k$ (equivalently, which generate $\integers^k$ as a $\integers[x,x^{-1}]$-module). Indeed, $\sigma$ partitions $[n]$ into $c$ cycles. Let $i_1,\ldots,i_c$ be 
a complete set of representatives of the cycles. We already have a basis $e_1,\ldots,e_k$ of $\rationals^k = \rationals \tensor_\integers \integers^k$ fixed
(which in fact generates $\integers^k$ as a $\integers$-module). 
We conclude
that the elements $e_{i_1},\ldots,e_{i_c}$ normally generate $\integers^k$.

\begin{corollary}
	Let $G = N \rtimes \integers$, with $N$ f.g.\ as an abelian group.
	Let $n$ be the minimal number of elements of $N$ whose \textbf{n}ormal closure in $G$ has finite index in $N$. Then
	\[
	\mdeg(G) = n.
	\]
\end{corollary}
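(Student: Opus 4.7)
The plan is to reduce this corollary to Corollary~\ref{cor:mdeg(N rtimes A) = d(QN)}, which tells us $\mdeg(G) = d_{\rationals[x]}(\rationals N)$. So I would aim to show that the quantity $n$ in the statement equals $d_{\rationals[x]}(\rationals N)$. The key translation is that for a subset $S \subseteq N$, its normal closure in $G = N \rtimes \langle x \rangle$ is precisely the $\integers[x, x^{-1}]$-submodule of $N$ generated by $S$, since $N$ is abelian and $x$ acts on $N$ by conjugation (giving $N$ its module structure).

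Before doing the two inequalities, I would record a small but essential observation: because $N$ is f.g.\ as an abelian group, $x$ acts on the finite-dimensional space $\rationals N$ as an invertible linear transformation, so its minimal polynomial has nonzero constant term, and hence $x^{-1}$ may be expressed as a polynomial in $x$ on $\rationals N$. This forces $d_{\rationals[x, x^{-1}]}(\rationals N) = d_{\rationals[x]}(\rationals N)$.

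For $n \geq d_{\rationals[x]}(\rationals N)$: given $v_1, \ldots, v_k \in N$ whose normal closure $M$ has finite index in $N$, I would note $M$ is the $\integers[x, x^{-1}]$-submodule they generate, and $[N : M] < \infty$ implies $\rationals M = \rationals N$ inside $\rationals N$. Thus $v_1, \ldots, v_k$ generate $\rationals N$ as a $\rationals[x, x^{-1}]$-module, hence as a $\rationals[x]$-module by the observation above; so $k \geq d_{\rationals[x]}(\rationals N)$. For $n \leq d_{\rationals[x]}(\rationals N)$, I would take $d$ generators of $\rationals N$ as a $\rationals[x]$-module, clear denominators to land in $N$, and let $M$ be the $\integers[x, x^{-1}]$-submodule of $N$ they generate. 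Then $\rationals M$ contains their $\rationals[x]$-span, namely $\rationals N$, so $N/M$ is $\rationals$-torsion; being finitely generated abelian and torsion, $N/M$ is finite, so $M$ has finite index in $N$.

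The only mildly delicate points are the equality $d_{\rationals[x, x^{-1}]}(\rationals N) = d_{\rationals[x]}(\rationals N)$ and the careful identification of the normal closure with the $\integers[x, x^{-1}]$-submodule (rather than the $\integers[x]$-submodule); everything else is standard linear algebra and the structure of finitely generated abelian groups. I do not anticipate any real obstacle.
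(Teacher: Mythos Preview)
Your proposal is correct and follows essentially the same route as the paper: reduce to Corollary~\ref{cor:mdeg(N rtimes A) = d(QN)}, identify normal closures with module spans, and prove the two inequalities via the equivalence of finite index with full rational span. If anything you are more careful than the paper, which asserts without comment that the normal closure equals the $\integers[x]$-span (rather than the $\integers[x,x^{-1}]$-span) and which neglects to scale its chosen $\rationals[x]$-generators of $\rationals N$ back into $N$ before taking their integral span.
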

\begin{proof} 
	Let $d = d_{\rationals[x]}(\rationals N)$. 
	Let $B = \{a_1, \ldots, a_k \} \subseteq N$. The normal closure of $B$ (in $G$), denoted 
	$\overline{\langle B \rangle}_G$, is the \emph{same set} as the $\integers[x]$-submodule of $N$ that $B$ generates. 
	Let $N_0 = \overline{\langle B \rangle}_G$. If $N_0 \leq_f N$, then $\rationals N_0 = \rationals N$. Therefore, 
	$n \geq d$.
	
	To show that $n \leq d$, we will just point out how every set of 
	$\rationals[x]$-generators of $\rationals N$ normally generates a finite index subgroup of $N$. 
	Indeed, suppose that $a_1,\ldots,a_d \in \rationals N$ is a
	$\rationals[x]$-generating set for $\rationals N$. 
	Let $N_0$ the $\integers[x]$-span of $a_1,\ldots,a_d$. Then $\rationals N_0 = \rationals N'$, and so by
	Corollary~\ref{cor:two full lattices in Q^d - one contained in the other => the index is finite} we conclude that
	$N_0 \leq_f N$.
\end{proof}


We next consider groups of the form $N \rtimes \integers$, where we do \emph{not} assume that $N$ is f.g.\ as an abelian group.
\begin{proposition} 
	\label{prop:growth type of (arbitrary abelian) by Z}
	Suppose we have a f.g.\ group $G = N \rtimes \integers$ with $N$ abelian. Using the notation of 
	Proposition~\ref{prop:growth type of arbitry f.g. Z[x]-module}, we have
	\[
	\maxsubgr(G) \text{ has growth type }
	\begin{cases}
	n^{d} & \text{if } d > r_{\text{max}} \\
	n^{d+1} & \text{if } d = r_{\text{max}} = r(0) \\
	n^{r_{\text{max}} + 1}/\log(n) & \text{otherwise.}
	\end{cases}
	\]
\end{proposition}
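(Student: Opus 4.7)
The plan is to derive this proposition as a direct corollary of Lemma \ref{lem:abelian by infinite cyclic - exact counting} and Proposition \ref{prop:growth type of arbitry f.g. Z[x]-module}. Because $G$ is finitely generated with quotient $G/N \cong \integers$, the abelian group $N$ becomes a finitely generated module over the group ring $\integers[\integers] = \integers[x, x^{-1}]$, where $x$ acts via conjugation by a chosen generator. Lemma \ref{lem:abelian by infinite cyclic - exact counting} then gives
\[
\maxsubgr(G) = \maxsubgr(\integers) + n \cdot \maxsubmod(N).
\]
Since $\maxsubgr(\integers) \in \{0, 1\}$ is bounded, it contributes nothing to the growth type, so I need only determine the growth type of $n \cdot \maxsubmod(N)$, which directly from Definition \ref{def:growth type} is $n$ times the growth type of $\maxsubmod(N)$.

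I will then invoke Proposition \ref{prop:growth type of arbitry f.g. Z[x]-module} applied to $N$, reading off the growth type of $\maxsubmod(N)$ in each of the three cases. The invariants $d, s(0), r(0), r(p), r_{\max}$ are to be understood as defined in that proposition, via the structure theorem decompositions of $\rationals N$ over $\rationals[x, x^{-1}]$ and of $N/pN$ over $\finitefield[x, x^{-1}]$; both rings are PIDs, so the decompositions exist and the invariants are well-defined. The three possible growth types for $\maxsubmod(N)$ are $n^{d-1}$, $n^d$, or $n^{r_{\max}}/\log(n)$ respectively, and multiplying by $n$ yields the claimed growth types $n^d$, $n^{d+1}$, and $n^{r_{\max}+1}/\log(n)$.

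The technical point that needs verification is that Proposition \ref{prop:growth type of arbitry f.g. Z[x]-module}, which is stated for f.g.\ $\integers[x]$-modules, genuinely applies to our $N$ (which is given to us as a f.g.\ $\integers[x, x^{-1}]$-module). The count $\maxsubmod(N)$ is the same under either interpretation: a maximal $\integers[x]$-submodule $M$ must also be a $\integers[x, x^{-1}]$-submodule, because on a simple $\integers[x]$-quotient $x$ acts either invertibly or as zero, but the latter would force $xN \subseteq M$ while $xN = N$ (using that $N$ is a $\integers[x, x^{-1}]$-module), contradicting the properness of $M$; a symmetric argument shows the reverse inclusion of the two notions of maximal submodule. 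The proofs of Section \ref{sec:Finitely generated modules over PIDs} then go through verbatim when $\integers[x]$, $\rationals[x]$, and $\finitefield[x]$ are replaced by $\integers[x, x^{-1}]$, $\rationals[x, x^{-1}]$, and $\finitefield[x, x^{-1}]$ respectively; the only change is that the maximal ideal $(x)$ vanishes as $x$ becomes a unit, which is an $O(1)$ correction by Lemma \ref{lem:laurant polynomial ring - maximal ideals like polynomial ring} and hence irrelevant for growth types. I expect this compatibility check to be the main obstacle, though the work is routine once one records the parallel between the two rings.
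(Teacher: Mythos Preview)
Your approach is correct and is exactly the paper's: the paper's proof is the single sentence ``This follows from Proposition~\ref{prop:growth type of arbitry f.g. Z[x]-module}, together with Lemma~\ref{lem:abelian by infinite cyclic - exact counting},'' which is precisely the combination you carry out. Your additional paragraph justifying the passage between $\integers[x]$-modules and $\integers[x,x^{-1}]$-modules is more careful than the paper itself, which tacitly identifies the two settings; your observation that $x$ acts invertibly on $N$ (so no simple $\integers[x]$-quotient can have $x$ acting as zero, and the invariants $d,\,r(0),\,r(p)$ agree under either interpretation) is the right way to close that gap.
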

\begin{proof}
	This follows from Proposition~\ref{prop:growth type of arbitry f.g. Z[x]-module}, together with
	Lemma~\ref{lem:abelian by infinite cyclic - exact counting}.
\end{proof}

Recall $\mtriv(N)$ and $\mnontriv(N)$ from Definition~\ref{def:mtriv and mnontriv}.

\begin{lemma}
	\label{lem:abelian by free abelian - num of maximal subgroups}
	Let $G$ be a f.g.\ group with abelian $N \normal G$ such that $G/N \cong \integers^\ell$. Then
	\[
	\maxsubgr(G) \leq
	\begin{cases}
	\maxsubgr[p](\integers^\ell) +  p^\ell \cdot \mtriv[p](N) + p \cdot \mnontriv[p](N) & \text{if $n = p$ is prime} \\
	n \cdot \mnontriv(N) & \text{if $n$ is not prime},
	\end{cases}
	\]
	with equality if $G = N \rtimes \integers^\ell$.
\end{lemma}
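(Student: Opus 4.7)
The plan is to apply Lemma~\ref{lem:abelian by something - counting maximal subgroups by derivations} directly and then count derivations case by case using the results from Section~\ref{sec:counting derivations}. Since $G/N \cong \integers^\ell$ is free abelian, that lemma gives
\[
\maxsubgr(G) \leq \maxsubgr(\integers^\ell) + \sum_{N_0} \lvert \Der(\integers^\ell, N/N_0) \rvert,
\]
with equality when $G = N \rtimes \integers^\ell$, where the sum runs over $N_0 \normal G$ with $N_0 \leq N$ and $N/N_0$ a simple $\integers[\integers^\ell]$-module of order $n$.

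First I would split the sum according to whether $\integers^\ell$ acts trivially or nontrivially on the simple quotient $S = N/N_0$. By Lemma~\ref{lem:counting derivations from free abelian groups to simple modules} (equivalently Lemma~\ref{lem:counting derivations from f.g. abelian groups to simple modules}),
\[
\lvert \Der(\integers^\ell, S) \rvert =
\begin{cases}
|S|^\ell & \text{if the action is trivial,}\\
|S|      & \text{otherwise.}
\end{cases}
\]
Next I would invoke Lemma~\ref{lem:simple module - one fixed point implies trivial and has order a prime}: a simple $\integers^\ell$-module on which the action is trivial has prime order. Consequently, if $n$ is not prime then every simple quotient of $N$ of order $n$ has nontrivial action, and each contributes exactly $n$ to the sum, giving the $n \cdot \mnontriv(N)$ term; meanwhile $\maxsubgr(\integers^\ell) = 0$ since $\integers^\ell$ is solvable and its maximal subgroups have prime index.

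For the prime case $n = p$, the simple quotients with trivial action are counted by $\mtriv[p](N)$ and each contributes $p^\ell$ derivations, while those with nontrivial action are counted by $\mnontriv[p](N)$ and each contributes $p$ derivations; adding the term $\maxsubgr[p](\integers^\ell)$ for maximal subgroups containing $N$ yields the stated formula. Equality throughout when $G = N \rtimes \integers^\ell$ is the equality clause of Lemma~\ref{lem:abelian by something - counting maximal subgroups by derivations}. There is no serious obstacle here; the only subtle point is the trivial/nontrivial dichotomy together with the observation that trivially-acting simple quotients force prime index, which cleanly separates the two cases in the piecewise formula.
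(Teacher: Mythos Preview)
Your proposal is correct and follows essentially the same approach as the paper: apply Lemma~\ref{lem:abelian by something - counting maximal subgroups by derivations}, split the sum over $N_0$ according to whether the action on $N/N_0$ is trivial or not, count derivations via Lemma~\ref{lem:counting derivations from free abelian groups to simple modules}, and use Lemma~\ref{lem:simple module - one fixed point implies trivial and has order a prime} together with $\maxsubgr(\integers^\ell)=0$ for non-prime $n$ to obtain the piecewise form. There is nothing to add.
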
 
\begin{proof}
	Let $R = \integers[x_1,x_2,\ldots,x_\ell]$. Lemma \ref{lem:simple module - one fixed point implies trivial and has order a prime} 
	tells us that any trivial, simple quotient of the $R$-module $N$ has prime order. In other words, $\mtriv(N)$ = 0 if $n$ is not
	prime. Also, we know that $\maxsubgr(\integers^\ell) = 0$ if $n$ is not prime. So what we want to show is that for all $n$,
	\[
	\maxsubgr(G) \leq \maxsubgr(\integers^\ell) +  n^\ell \cdot \mtriv(N) + n \cdot \mnontriv(N)
	\]
	with equality if $G = N \rtimes \integers^\ell$. Both the inequality and equality follow from
	 Lemma \ref{lem:abelian by something - counting maximal subgroups by derivations}, by splitting the summation
	and applying Lemma~\ref{lem:counting derivations from free abelian groups to simple modules} to count the derivations.
\end{proof}

\begin{lemma}
	\label{lem:abelian by f.g. abelian - num of maximal subgroups}
	Let $G$ be a f.g.\ group with abelian $N \normal G$ such that $G/N \cong A$, for some abelian $A$ of torsion-free rank $\ell$. Then
	\[\tag{*}
	\maxsubgr(G) \leq  
	\maxsubgr(A) +  \lvert \Hom(A, \integers/n\integers)\rvert \cdot \mtriv(N) + n \cdot \mnontriv(N)
	\]
	which for large $n$ equals
	\[
	\maxsubgr(\integers^\ell) +  n^\ell \cdot \mtriv(N) + n \cdot \mnontriv(N).
	\]
	And if $G = N \rtimes A$, then (*) is an equality.
\end{lemma}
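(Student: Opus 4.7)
The plan is to apply Lemma~\ref{lem:abelian by something - counting maximal subgroups by derivations} directly and then split the resulting sum according to whether the action of $A = G/N$ on the simple quotient is trivial or nontrivial. By that lemma,
\[
\maxsubgr(G) \leq \maxsubgr(A) + \sum_{N_0} \lvert \Der(A, N/N_0) \rvert,
\]
with equality when $G = N \rtimes A$, where the sum is over all $N_0 \normal G$, $N_0 \leq N$, with $N/N_0$ a simple $\integers[A]$-module of order $n$. I would partition this sum as $\Sigma_{\mathrm{triv}} + \Sigma_{\mathrm{nontriv}}$ according to whether $A$ acts trivially or nontrivially on $N/N_0$; by definition the number of terms in these two subsums is $\mtriv(N)$ and $\mnontriv(N)$, respectively.

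For the trivial subsum, Lemma~\ref{lem:simple module - one fixed point implies trivial and has order a prime} forces $N/N_0$ to be a simple abelian group, hence cyclic of prime order $n$, so $N/N_0 \cong \integers/n\integers$ canonically. Since the action is trivial, $\Der(A, N/N_0) = \Hom(A, \integers/n\integers)$, contributing exactly $\lvert \Hom(A, \integers/n\integers)\rvert \cdot \mtriv(N)$. For the nontrivial subsum, Lemma~\ref{lem:counting derivations from f.g. abelian groups to simple modules} gives $\lvert \Der(A, N/N_0)\rvert = |N/N_0| = n$ for each such $N_0$, producing $n \cdot \mnontriv(N)$. Combining these gives exactly the inequality (*) and equality when $G = N \rtimes A$.

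To obtain the ``large $n$'' simplification, I would use two standard facts. First, $\mtriv(N) = 0$ unless $n$ is prime (again from Lemma~\ref{lem:simple module - one fixed point implies trivial and has order a prime}), so the $\Hom$ factor only matters for prime $n$. Write $A \cong \integers^\ell \oplus T$ with $T$ the torsion subgroup (finite since $A$ is f.g.\ abelian of torsion-free rank $\ell$); then $\lvert \Hom(A, \integers/n\integers)\rvert = n^\ell \cdot \lvert \Hom(T, \integers/n\integers)\rvert$, and for any prime $n$ larger than $|T|$ the second factor equals $1$. Second, by Lemma~\ref{lem:mod out by finite subgroup -- maximal subgroup growth is unchanged} applied to the finite normal subgroup $T$ of $A$, we have $\maxsubgr(A) = \maxsubgr(\integers^\ell)$ for all large $n$. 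Substituting these two identities into (*) yields the claimed formula for large $n$.

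I do not anticipate any serious obstacle: the entire argument is an assembly of results already established in Section~\ref{sec:preliminary results}. The only point that deserves a moment of care is verifying that ``trivial action'' in the sense of Definition~\ref{def:mtriv and mnontriv} matches the hypothesis of Lemma~\ref{lem:counting derivations from f.g. abelian groups to simple modules}, and that the canonical identification $N/N_0 \cong \integers/n\integers$ really makes $\Hom(A, N/N_0)$ and $\Hom(A, \integers/n\integers)$ equinumerous (which is immediate, as cardinality is all that is counted).
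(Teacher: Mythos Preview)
Your proposal is correct and follows exactly the approach the paper intends: apply Lemma~\ref{lem:abelian by something - counting maximal subgroups by derivations}, split the sum into trivial and nontrivial parts, and evaluate each via Lemma~\ref{lem:counting derivations from f.g. abelian groups to simple modules}, just as in the proof of Lemma~\ref{lem:abelian by free abelian - num of maximal subgroups}. Your treatment of the ``large $n$'' simplification (decomposing $A \cong \integers^\ell \oplus T$ and invoking Lemma~\ref{lem:mod out by finite subgroup -- maximal subgroup growth is unchanged}) simply spells out what the paper's sketch leaves implicit.
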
 
\begin{proof}[Sketch of proof]
	This is extremely similar to Lemma \ref{lem:abelian by free abelian - num of maximal subgroups}. Of course, we use here
	Lemma~\ref{lem:counting derivations from f.g. abelian groups to simple modules} instead of 
	Lemma~\ref{lem:counting derivations from free abelian groups to simple modules} for the $n \cdot \mnontriv(N)$ term.
\end{proof}

\begin{theorem}
	\label{thm:(f.g. abelian) by (f.g. abelian)}
	Let $G$ be a group with f.g.\ abelian normal subgroup $N$. Suppose $G/N$ is an abelian, $\ell_0$-generated group
	of torsion-free rank $\ell$. So $N$ is a $\integers[x_1,\ldots,x_{\ell_0}]$-module.
	Let $I = (x_1 - 1, x_2 - 1, \ldots, x_{\ell_0} - 1)_{\integers[x_1,\ldots,x_{\ell_0}]}$. Let $t$ be
	the torsion-free rank of (the abelian group) $N/IN$, and let $d = d_{\rationals R}(\nq)$.
	Then
	\[
	\mdeg(G) \leq \max \{\ell + t - 1, d \},
	\]
	with equality if both $G \cong N \rtimes G/N$ and $\ell \geq 1$.
	
\end{theorem}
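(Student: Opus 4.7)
The plan is to apply Lemma~\ref{lem:abelian by f.g. abelian - num of maximal subgroups}: for all sufficiently large $n$,
\[
\maxsubgr(G) \;\leq\; \maxsubgr(\integers^\ell) \;+\; n^\ell \cdot \mtriv(N) \;+\; n \cdot \mnontriv(N),
\]
with equality when $G = N \rtimes G/N$. I will bound the degree of each summand separately and combine them via Lemma~\ref{lem:degree of a sum is the max of the degrees}. The first summand has degree $\ell - 1$. By Corollary~\ref{cor:mtriv(N) is (n^t-1)/(n-1) for n prime}, $\mtriv(N)$ has degree $t - 1$, so the middle summand has degree $\ell + t - 1$. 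For the last summand, $\mnontriv(N) \leq \maxsubmod(N)$; the 1-variable reduction of Lemma~\ref{lem:several variables to one, with a nice direct sum too} preserves $\maxsubmod$ at large $n$ and, by Lemma~\ref{lem:size of QR generating set for QN is d_0}, preserves the invariant $d$, so Proposition~\ref{prop:mdeg(N) -actually tilde{m}deg(N)} gives $\deg(\maxsubmod(N)) = d - 1$. Since $\ell + t - 1 \geq \ell - 1$, the upper bound $\mdeg(G) \leq \max\{\ell + t - 1,\, d\}$ follows.

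For the equality statement, assume $G = N \rtimes G/N$ and $\ell \geq 1$, so the displayed formula becomes an equality. The bound $\mdeg(G) \geq \ell + t - 1$ is immediate: at each large prime $p$, the middle summand equals $p^\ell (p^t - 1)/(p - 1) \geq c\, p^{\ell + t - 1}$. To obtain $\mdeg(G) \geq d$, fix the 1-variable reduction $\tilde{N}_D = \bigoplus_{i=1}^d \integers_D[x]/(a_i)$ with $a_1 \mid \cdots \mid a_d$ non-units, and split on whether $(x-1) \mid a_1$.

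If $(x - 1) \nmid a_1$, then $a_1(1) \neq 0$, so by Lemma~\ref{lem:roots of non-constant polynomials in F_p and in C} there are infinitely many primes $p$ for which $\overline{a_1}$ has $\rho_1 \geq 1$ distinct roots in $\mathbb{F}_p$, and for large such $p$ none of these roots equals $1$. Each such root $\alpha$ is automatically a root of every $\overline{a_i}$ (since $\overline{a_1} \mid \overline{a_i}$), so Corollary~\ref{cor:submodisoto R/I for direct sum of cyclic modules} yields $1 + p + \cdots + p^{d-1}$ maximal submodules of $\tilde{N}_D$ with non-trivial simple quotient $\mathbb{F}_p$ on which $x$ acts by $\alpha$. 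Hence $\mnontriv(N) \geq \rho_1\, p^{d-1}$ for infinitely many $p$, and therefore $\mdeg(G) \geq d$. Otherwise $(x - 1) \mid a_1$, whence $(x - 1) \mid a_i$ for every $i$, forcing $t = d$ in Corollary~\ref{cor:mtriv(N) is (n^t-1)/(n-1) for n prime}; together with $\ell \geq 1$ this gives $\ell + t - 1 \geq d$, so the first lower bound already supplies $\mdeg(G) \geq d$. The main obstacle is precisely this last case: when every invariant factor is a power of $(x - 1)$, the module $\tilde{N}_D$ has no non-trivial simple quotients at all, so there is no direct way to produce the needed $n^{d-1}$ growth of $\mnontriv(N)$; the saving observation is that in this regime $t$ is forced up to its maximum value $d$, so the trivial contribution alone dominates.
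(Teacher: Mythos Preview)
Your upper-bound argument is correct and essentially identical to the paper's. The equality argument, however, has a genuine gap in how you pass between ``trivial'' for the multi-variable module $N$ and ``trivial'' for the one-variable reduction $\tilde{N}_D$.

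The bijection of Lemma~\ref{lem:several variables to one, with a nice direct sum too} (built from Lemma~\ref{lem:Z_DN/JZ_DN - many variables to one}) matches maximal submodules of $N$ with those of $\tilde{N}_D$ and preserves index, but it need \emph{not} send quotients on which every $x_i$ acts as $1$ to quotients on which $x$ acts as $1$. The $\integers_D[x]$-structure on $\tilde{N}_D$ comes from a surjection $\pi_D\colon \integers_D[x]\twoheadrightarrow B/\jrad'$ with $\pi_D(x)=\bar f$ for some $f\in\intDvars$ (Lemma~\ref{lem:D exists for a commutative diagram}); under the bijection, a trivial-for-$N$ simple quotient becomes a $\tilde{N}_D$-quotient on which $x$ acts by the scalar $f(1,\ldots,1)\bmod p$, which has no reason to equal $1$. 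So in your first case, ``$x$ acts by $\alpha\neq 1$'' does not imply the corresponding $N$-quotient is non-trivial, and the conclusion $\mnontriv(N)\geq \rho_1 p^{\,d-1}$ is unjustified (the danger is exactly the sub-case $\rho_1=1$, where your single root could happen to be the image of the trivial character). In your second case, ``$(x-1)\mid a_i$ for all $i$'' counts trivial-in-$x$ quotients of $\tilde{N}_D$, not trivial-in-$(x_1,\ldots,x_{\ell_0})$ quotients of $N$, so ``forcing $t=d$ in Corollary~\ref{cor:mtriv(N) is (n^t-1)/(n-1) for n prime}'' is a non sequitur.

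The paper sidesteps this entirely by never descending back into $\tilde{N}_D$ for the lower bound. From $\maxsubmod(N)=\mtriv(N)+\mnontriv(N)$ and Lemma~\ref{lem:degree of a sum is the max of the degrees} one gets $d-1=\deg(\maxsubmod(N))=\max\{\deg(\mtriv(N)),\deg(\mnontriv(N))\}$. Since $\deg(\mtriv(N))=t-1$ by Corollary~\ref{cor:mtriv(N) is (n^t-1)/(n-1) for n prime} (in the $t\neq 0$ branch; $t=0$ is handled separately), either $t=d$, whence $\ell+t-1\geq d$ using $\ell\geq 1$, or $\deg(\mnontriv(N))=d-1$, whence $\deg(n\cdot\mnontriv(N))=d$. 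Either way $\mdeg(G)\geq\max\{\ell+t-1,\,d\}$. This is precisely the dichotomy you were aiming for, but carried out at the level of growth degrees of $\mtriv$ and $\mnontriv$ of $N$ itself, where ``trivial'' is unambiguous.
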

\begin{proof}
	By Lemma \ref{lem:abelian by f.g. abelian - num of maximal subgroups}, for large $n$,
	\[\tag{*1}
	 \maxsubgr(G) \leq \maxsubgr(\integers^\ell) +  n^\ell \cdot \mtriv(N) + n \cdot \mnontriv(N)
	\]
	with equality if $G \cong N \rtimes G/N$. To prove this theorem, we will just show that
	\[\tag{*2}
	  \deg(\maxsubgr(\integers^\ell) +  n^\ell \cdot \mtriv(N) + n \cdot \mnontriv(N)) \leq \max \{\ell + t - 1, d \},
	\]
	with equality if $\ell \geq 1$.
	
	First, note that $\ell +t - 1$
	equals $\deg(\maxsubgr(\integers^\ell))$ if $t = 0$ and $\deg(n^\ell \cdot \mtriv(N))$  if $t \neq 0$. 
	If we could show that 
	$d$ equals $\deg(n \cdot \mnontriv(N))$ we would be practically done, but this is not quite the case.
	
	Next, note that $\deg(\maxsubgr(\integers^\ell)) = \mdeg(\integers^\ell) = \ell -1$. 
	
	Next, note that 
	$\deg(\maxsubmod(N)) = \mmoddeg(N)$, which we would like to show is $d - 1$. (This, together with (*1), is the heart of what separates the present theorem
	from Corollary~\ref{cor:mdeg(N rtimes A) = d(QN)}.)
	Let $\tilde{N}_D$ and $d_0$ be as in
	Lemma~\ref{lem:several variables to one, with a nice direct sum too}; by this lemma, $\maxsubmod(N) = \maxsubmod(\tilde{N}_{D})$ for all large $n$.
	So $\deg(\maxsubmod(N)) = \deg(\maxsubmod(\tilde{N}_D))$. By Proposition~\ref{prop:mdeg(N) -actually tilde{m}deg(N)}, applied to the module
	$\tilde{N}_D$, we get that	$\deg(\maxsubmod(\tilde{N}_D)) = d_0 - 1$. And by Lemma~\ref{lem:size of QR generating set for QN is d_0},
	$d_0 = d_{\rationals R}(\nq)$  (which is $d$). Therefore, we have shown that $\mmoddeg(N) = \deg(\maxsubmod(N)) = d-1$.
	
	Suppose $t = 0$. Then $\mtriv(N) = 0$ for all large $n$. Hence $\mnontriv(N) = \maxsubmod(N)$ for large $n$. The previous two 
	sentences (together with (*1)) imply that 
	for large $n$,
	\[
	\maxsubgr(\integers^\ell) +  n^\ell \cdot \mtriv(N) + n \cdot \mnontriv(N) = \maxsubgr(\integers^\ell) + n \cdot \maxsubmod(N).
	\]
	Recall $t = 0$. We are done by Lemma~\ref{lem:degree of a sum is the max of the degrees} 
	since we already noted $\deg(\maxsubgr(\integers^\ell)) = \ell -1$ and since $\deg(n \cdot \maxsubmod(N))  = 1 + (d - 1) = d$.
	
	For the rest of the proof, suppose $t \neq 0$. Note that Corollary~\ref{cor:mtriv(N) is (n^t-1)/(n-1) for n prime} implies that
	$\deg(\mtriv(N)) = t-1$. Hence $\deg(n^\ell \cdot \mtriv(n)) = \ell + t - 1 $. Therefore, by 
	Lemma~\ref{lem:degree of a sum is the max of the degrees},
	\[
	 \deg(\maxsubgr(\integers^\ell) + n^\ell \cdot \mtriv(N)) = \deg(n^\ell \cdot \mtriv(N)) = \ell + t - 1.
	\]
	Therefore by Lemma~\ref{lem:degree of a sum is the max of the degrees} again,
	$\deg(\maxsubgr(\integers^\ell) +  n^\ell \cdot \mtriv(N) + n \cdot \mnontriv(N)) = $
	\[ \tag{*3}
	\begin{aligned}
								   &   \deg(n^\ell \cdot \mtriv(N) + n \cdot \mnontriv(N)) \\ 
								   &= \max\{\deg(n^\ell \cdot \mtriv(N)), \deg(n \cdot \mnontriv(N))  \}\\
								   &= \max\{\ell + t - 1, \deg(n \cdot \mnontriv(N))  \},
	\end{aligned}
	\]
	which is bounded above by $ \max\{\ell + t - 1, d  \}$ because $\mnontriv(N) \leq \maxsubmod(N)$ implies that
	$\deg(n \cdot \mnontriv(N)) \leq \deg(n \cdot \maxsubmod(N)) = 1 +(d-1) = d$. This proves (*2). So to get an equality in (*2), assume 
	$\ell \geq 1$.
	
	Because $\maxsubmod(N) = \mtriv(N) + \mnontriv(N)$, we know (by Lemma~\ref{lem:degree of a sum is the max of the degrees}) that
	\[
	\deg(\maxsubmod(N)) = \deg(\mtriv(N)) \text{\quad or}
	\]
	\[
	\deg(\maxsubmod(N)) = \deg(\mnontriv(N)).
	\]
	
	\noindent \emph{Case 1.} Assume $\deg(\maxsubmod(N)) = \deg(\mtriv(N))$. 
	Then 
	\[
	\deg(\mnontriv(N)) \leq \deg(\mtriv(N)).
	\] 
	Hence since we are now assuming $\ell \geq 1$,
	\[
	\deg(n^\ell \cdot \mtriv(N) + n \cdot \mnontriv(N)) = \deg(n^\ell \cdot \mtriv(N)),
	\] which equals $\ell + t - 1 $, which is at least $d$ since $\ell \geq 1$ and $d -1 = \deg(\maxsubmod(N)) = \deg(\mtriv(N)) = t-1$. We are done with this case
	by (*3).\\

	\noindent \emph{Case 2.} Assume $\deg(\maxsubmod(N)) = \deg(\mnontriv(N))$. 
	Then $\deg(n\cdot \mnontriv(N)) = \deg(n\cdot \maxsubmod(N))	=  1 + (d -1) = d$.
	We are done by (*3).
\end{proof}
Note: Let $m = \max \{\ell + t - 1, d \}$. A few changes to Case 1 in the proof of 
Theorem~\ref{thm:(f.g. abelian) by (f.g. abelian)} actually shows that if $G/N$ is finite abelian,
(and $G = N \rtimes G/N$) then $\mdeg(G) = m$ or $m-1$. Also, to get this, it actually turns out that (if $G/N$ is finite abelian) we do
not even need to assume $G = N \rtimes G/N$, but this latter observation requires additional work not given here.

\subsection{Nilpotent groups}
\label{sec:nilpotent groups}
This section gives a formula for calculating $\mdeg(G)$ for  all f.g.\ nilpotent groups $G$.
There are two reasons for doing this. First, at a mathematics conference at Texas~A\&M, Alex Lubotzky kindly suggested this
to the author as ``an easy exercise \footnote{More specifically, he suggested to give a formula for 
	the maximal subgroup growth of f.g. nilpotent groups.} 
	that definitely should appear in your thesis.''  Second, we would like to know how
accurate (or not) Lemma \ref{lem:abelian by something - counting maximal subgroups by derivations} is when $G$
is not a semidirect product. In Section \ref{sec:some f.g. metabelian nilpotent groups}, we apply the results of the present section to a class of examples
(certain metabelian nilpotent groups), and these groups show how inaccurate  Lemma \ref{lem:abelian by something - counting maximal subgroups by derivations}
can be when applied to groups that are not semidirect products.

Let $G$ be f.g.\ nilpotent. It is well known that a maximal subgroup of $G$ must be normal and hence have prime index.
See for example 5.2.4 in \cite{Robinson} and the comments following.\footnote{Yes, the result itself
	has the hypothesis that the group be finite, but notice that finiteness is not used in his ``(i) $\to$ (ii)'' nor in ``(ii) $\to$ (iii)''.}

\begin{definition}
	Similar to the Frattini subgroup, we define
	\[
	\Phi_p(G) = \bigcap_{M \leq_p G} M.
	\]
\end{definition}
Recall a familiar argument that shows $G/\Phi_p(G)$ to be an elementary abelian $p$-group: Let $M \leq_p G$. Then
$G/M$ is abelian. Therefore $G' \subseteq M$. Hence $G/\Phi_p(G)$ is abelian. Of course, $G/\Phi_p(G)$ has ``exponent'' $p$,
and it is finitely generated. Thus $G/\Phi_p(G)$ is in fact a finite dimensional $\finitefield$-vector space.

\begin{definition}
	We denote\footnote{See also the same notation in \cite{Lubotzky2003} (page xxii).} by $\ur_p(G)$ the dimension of  $G/\Phi_p(G)$ as an $\finitefield$-vector space.
\end{definition}

\begin{lemma}
	\label{lem:nilpotent group maximal subgroup growth}
	Let $r = \limsup_{p \to \infty} \ur_p(G)$. Then $\mdeg(G) = r - 1$, and in fact,
	\[
	\maxsubgr[p](G) \leq \frac{p^r - 1}{p - 1} \text{\hspace{.2in}  for all large } p,
	\]
	with equality for infinitely many $p$.
\end{lemma}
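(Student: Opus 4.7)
The plan is to reduce to counting codimension-one subspaces of a finite-dimensional $\finitefield$-vector space, using the fact recalled just before the statement that every maximal subgroup of a f.g.\ nilpotent group has prime index.

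First I would fix a prime $p$ and show that
\[
\maxsubgr[p](G) \;=\; \maxsubgr[p](G/\Phi_p(G)).
\]
Indeed, if $M \leq_p G$ then $M$ contains $\Phi_p(G)$ by the very definition of $\Phi_p(G)$, so index-$p$ maximal subgroups of $G$ correspond bijectively to index-$p$ (equivalently, maximal) subgroups of the quotient. By the paragraph preceding the statement, $G/\Phi_p(G)$ is a finite-dimensional $\finitefield$-vector space of dimension $\ur_p(G)$, whose maximal subgroups are exactly its codimension-one subspaces. Counting these (a standard Gaussian-binomial computation) gives the exact formula
\[
\maxsubgr[p](G) \;=\; \frac{p^{\ur_p(G)} - 1}{p - 1}.
\]

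From here the two assertions of the lemma drop out. By definition of $\limsup$, $\ur_p(G) \leq r$ for all large $p$, which yields $\maxsubgr[p](G) \leq (p^r - 1)/(p-1)$ for all large $p$; and $\ur_p(G) = r$ for infinitely many $p$, giving equality there. (We also note in passing that $\ur_p(G)$ is uniformly bounded: since $G/\Phi_p(G)$ is a quotient of $G^{\mathrm{ab}}/pG^{\mathrm{ab}}$ and $G^{\mathrm{ab}}$ is f.g.\ abelian, in fact $\ur_p(G)$ stabilizes to the torsion-free rank of $G^{\mathrm{ab}}$ for large $p$, so the $\limsup$ is attained.) For the degree statement, since every maximal subgroup has prime index, $\maxsubgr(G) = 0$ whenever $n$ is not prime, so
\[
\mdeg(G) \;=\; \limsup_{p \to \infty} \frac{\log \maxsubgr[p](G)}{\log p} \;=\; \limsup_{p \to \infty} \frac{\log\bigl((p^{\ur_p(G)} - 1)/(p-1)\bigr)}{\log p} \;=\; r - 1,
\]
using the exact formula above together with the boundedness and infinitely-often-attained value of $\ur_p(G)$.

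There is really no serious obstacle: once one recalls that maximal subgroups of f.g.\ nilpotent groups have prime index and contain $\Phi_p(G)$, the problem collapses to linear algebra over $\finitefield$. The only mild subtlety is verifying that the $\limsup$ in the definition of $r$ is finite and attained, which is immediate from the structure of $G^{\mathrm{ab}}$.
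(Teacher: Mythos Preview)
Your proof is correct and follows the same approach as the paper: reduce to counting codimension-one subspaces of $G/\Phi_p(G)\cong\finitefield^{\ur_p(G)}$, obtain the exact formula $\maxsubgr[p](G)=(p^{\ur_p(G)}-1)/(p-1)$, and then read off the conclusions from the definition of $r$. Your version is somewhat more detailed; in particular, your observation that $\ur_p(G)$ eventually stabilizes to the torsion-free rank of $G^{\mathrm{ab}}$ (so the $\limsup$ is finite and attained) is a worthwhile clarification that the paper leaves implicit.
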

\begin{proof}
	Because $G/\Phi_p(G)$ is an $\finitefield$-vector space of dimension $\ur_p(G)$, we know that it (and hence $G$)
	has $\frac{p^{\ur_p(G)} - 1}{p - 1}$ subgroups of index $p$.  
\end{proof}

\subsection{Some f.g.\ metabelian nilpotent groups}
\label{sec:some f.g. metabelian nilpotent groups}
We will next form a class of examples of f.g.\ metabelian nilpotent groups $G_f$ each of which has a 
normal subgroup $N$ such that both $N$ and $G_f/N$ are free abelian.

Fix $\ell \geq 2$, and let $k = \binom{\ell}{2}$. Write $\integers^k$ multiplicatively having generating 
set $\{y_1, \ldots, y_k\}$. Choose a function $f: \{(i,j) | 1 \leq i < j \leq \ell \} \longrightarrow \integers^k$.
Let $[k] = \{1, 2, \ldots, k\}$ and similarly $[\ell] = \{1, 2, \ldots, \ell\}$.
Form the group $G_f$, a presentation of which has generating set
$\{x_1, \ldots x_\ell, y_1, \ldots y_k \}$ and relations $[x_i, x_j] = f(i,j)$ for $1 \leq i < j \leq \ell$, 
$ [y_i, y_j] = 1$  for all $i, j \in [k]$, $[x_i, y_j] = 1$  for all $i \in [\ell], j \in [k]$.


So $G_f$ has the subgroup $A  = \langle y_1, \ldots, y_k \rangle = \integers^k$ with $A \subseteq Z(G_f)$, 
and also $G_f/A \cong \integers^\ell$. Thus $G_f$ is nilpotent and metabelian.

Form the (central) subgroup
\[
N = \langle f(i,j) \rangle_{1 \leq i < j \leq \ell}.
\]
Of course, $N \leq A$. Since we are using multiplicative notation, for a given prime $p$, modding out by $p$ gives
$N/N^p$, an $\finitefield$-vector space. 

\begin{lemma}
	\label{lem:ur_p(G_f) = ell + k - dim(N/N^p)}
	Fix a prime $p$. Then
	\[
	\ur_p(G_f) = \ell + k - \dim_{\mathbb{F}_p}(N/N^p).
	\]
\end{lemma}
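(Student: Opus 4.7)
The plan is to compute $G_f/\Phi_p(G_f)$ directly from the presentation, using that $\ur_p(G_f)$ equals its $\finitefield$-dimension. First I would invoke the well-known fact that $\Phi_p(G) = G^p[G,G]$ for any finitely generated $G$: every subgroup of index $p$ is normal with quotient the simple group $\integers/p\integers$, hence contains both $G^p$ and $[G,G]$, while conversely $G/G^p[G,G]$ is an elementary abelian $p$-group whose Frattini subgroup is trivial (the intersection of the hyperplanes through the origin in a vector space is $0$). Consequently $\ur_p(G_f) = \dim_{\finitefield}(G_f^{\mathrm{ab}} \otimes_{\integers} \finitefield)$.

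Next I would compute $G_f^{\mathrm{ab}}$ from the presentation. Since $G_f/A \cong \integers^\ell$ is abelian, the commutator subgroup $[G_f, G_f]$ is contained in $A$; and since $A \leq Z(G_f)$ and the only nontrivial generator-commutators are $[x_i, x_j] = f(i,j)$, in fact $[G_f, G_f] = N$. Reading off the presentation of $G_f/N$ (abelian, on generators $x_i, y_j$ modulo $f(i,j) = 1$) then gives
\[
G_f^{\mathrm{ab}} \cong \integers^\ell \oplus (A/N).
\]
Tensoring with $\finitefield$ yields $\finitefield^\ell \oplus (A/N) \otimes \finitefield$, whose dimension is $\ell + k - \dim_{\finitefield}(\bar{N})$, where $\bar{N}$ denotes the image of $N$ in $A/A^p$; this follows from right-exactness of $- \otimes \finitefield$ applied to $0 \to N \to A \to A/N \to 0$.

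The main obstacle is identifying $\dim_{\finitefield}(\bar{N})$ with $\dim_{\finitefield}(N/N^p)$. Concretely, the natural surjection $N/N^p \twoheadrightarrow \bar{N}$ must also be injective, which is equivalent to $N$ being a $p$-pure subgroup of $A$ (that is, $N \cap A^p = N^p$). To handle this I would pick a $\integers$-basis $v_1, \ldots, v_r$ of the free abelian group $N$ and argue that their reductions remain $\finitefield$-linearly independent in $A/A^p$, equivalently that $A/N$ has no $p$-torsion for the $p$ at hand. This is the delicate point in the argument, and it is where the specific way the $f(i,j)$ sit inside $A$ enters; once it is settled, assembling the dimension count completes the proof.
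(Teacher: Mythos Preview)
Your approach coincides with the paper's sketch: both observe that $N=[G_f,G_f]$ and $G_f^p$ lie in $\Phi_p(G_f)$, so $\Phi_p(G_f)=G_f^pN$ and $\ur_p(G_f)=\dim_{\finitefield}\bigl((G_f/N)\otimes\finitefield\bigr)$. Your computation then correctly gives $\ur_p(G_f)=\ell+k-\dim_{\finitefield}(\bar N)$, with $\bar N$ the image of $N$ in $A/A^p$, and you rightly isolate the identification $\dim_{\finitefield}(\bar N)=\dim_{\finitefield}(N/N^p)$ as the crux.

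That step, however, cannot be carried out in the stated generality: since $f$ is arbitrary, nothing forces $N$ to be $p$-pure in $A$. Take $\ell=2$, $k=1$, and $f(1,2)=y_1^{\,p}$; then $N=\langle y_1^{\,p}\rangle$ sits as $p\integers$ inside $A=\integers$, so $\dim_{\finitefield}(N/N^p)=1$ while $\bar N=0$, and indeed $G_f/N\cong\integers^2\oplus\integers/p\integers$ gives $\ur_p(G_f)=3\neq 2$. What your argument actually proves is $\ur_p(G_f)=\ell+k-\dim_{\finitefield}(\bar N)$; this agrees with the stated formula exactly when $A/N$ has no $p$-torsion, hence for all but finitely many primes. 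That suffices for the paper's application (only large primes matter for $\mdeg(G_f)$), but for a fixed prime the statement needs either $\bar N$ in place of $N/N^p$ or the $p$-purity hypothesis added---so do not attempt to prove the step as written.
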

\begin{proof}[Sketch of proof]
	Forming $G_f/\Phi_p(G_f)$ is straightforward because $N \subseteq \Phi_p(G_f)$ and also $G_f^p \subseteq \Phi_p(G_f)$.
\end{proof}

Since $N$ is a subgroup of a free abelian group of rank $k$ (namely $A$), we may view $N$ as a subset of
$\rationals^k = \rationals \tensor_\integers A$. The subspace of $\rationals^k$ spanned by $N$ is 
$\rationals \tensor N$. The following is clear:

\begin{lemma}
	\label{lem:dim(N/N^p) over F_p = dim(Q tensor N) for almost all primes}
	For almost all primes $p$
	\[
	\dim_{\mathbb{F}_p}(N/N^p) = \dim_\rationals(\rationals \tensor N).
	\]
\end{lemma}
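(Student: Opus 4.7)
The plan is to reduce everything to the rank of $N$ as a free abelian group. The key observation is that $N$ is a finitely generated subgroup of the torsion-free abelian group $A = \integers^k$; by the structure theorem for finitely generated abelian groups, $N$ is therefore itself free abelian of some rank $r \leq k$.

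With this in hand, both quantities in the statement equal $r$. First, $N \cong \integers^r$ gives $\rationals \tensor_\integers N \cong \rationals^r$ and hence $\dim_\rationals(\rationals \tensor N) = r$. Second, since $A$ (and hence $N$) is written multiplicatively, $N^p$ denotes $\{n^p : n \in N\}$, which under the isomorphism $N \cong \integers^r$ corresponds to $p\integers^r$; thus $N/N^p \cong (\integers/p\integers)^r$ and $\dim_{\mathbb{F}_p}(N/N^p) = r$ for \emph{every} prime $p$. So in fact the equality holds for all primes, not merely almost all.

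If one prefers a more concrete route that explains the ``almost all'' hedge, form the integer matrix $M$ whose columns are the generators $f(i,j)$ expressed in the basis $\{y_1,\ldots,y_k\}$ of $A$. The rank of $M$ over $\rationals$ equals $\dim_\rationals(\rationals \tensor N) = r$. Passing to Smith normal form and excluding the finitely many primes that divide a nonzero elementary divisor of $M$, the rank of $M$ modulo $p$ also equals $r$, matching $\dim_{\mathbb{F}_p}(N/N^p)$. There is no real obstacle; the lemma amounts to noticing that $N$ is free abelian, after which it is pure bookkeeping between additive and multiplicative notation.
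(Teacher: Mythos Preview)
Your proof is correct; indeed the paper offers no argument at all, simply declaring the lemma ``clear.'' Your first paragraph is the natural proof: since $N$ is a subgroup of the free abelian group $A=\integers^k$, it is itself free abelian of some rank $r$, and then both sides of the asserted equality are visibly $r$ for \emph{every} prime $p$, as you note.

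One small comment on your second paragraph: the rank of the matrix $M$ modulo $p$ is the $\finitefield$-dimension of the image of $N$ in $A/pA$, i.e.\ $\dim_{\finitefield}\bigl(N/(N\cap pA)\bigr)$, which in general is only $\leq \dim_{\finitefield}(N/pN)$. Your exclusion of primes dividing the nonzero elementary divisors is exactly what forces $N\cap pA = pN$, so the conclusion stands, but the phrase ``matching $\dim_{\mathbb{F}_p}(N/N^p)$'' glosses over this identification. Since your first paragraph already shows $\dim_{\finitefield}(N/N^p)=r$ for all $p$, the Smith-normal-form detour is unnecessary; it computes a different (and for bad primes, smaller) quantity that merely happens to coincide with $r$ once the bad primes are removed.
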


Recall that $k = \binom{\ell}{2}$. Note that by choosing $f$ appropriately, we may pick
$\dim_\rationals(\rationals \tensor N)$ to be any number in $\{\binom{\ell}{2}, \binom{\ell}{2}  - 1, \ldots, 1, 0 \}$.
So by using
Lemmas \ref{lem:nilpotent group maximal subgroup growth}, \ref{lem:ur_p(G_f) = ell + k - dim(N/N^p)}, and
\ref{lem:dim(N/N^p) over F_p = dim(Q tensor N) for almost all primes}, we can make
$\mdeg(G_f)$ any number in $\{\ell - 1, \ell, \ell + 1, \ldots, \ell +\binom{\ell}{2} -1 \}$.  
And this tells us how inaccurate Lemma \ref{lem:abelian by something - counting maximal subgroups by derivations} is in general 
because that lemma in this situation ends up saying $\maxsubgr(G_f) \leq \maxsubgr(\integers^{\ell + k})$,
but $\mdeg(\integers^{\ell + k}) = \ell + k - 1 = \ell +  \binom{\ell}{2} -1$; specifically, we have groups $G_f$ such that
$\mdeg(G_f) = \ell - 1$ but for which Lemma~\ref{lem:abelian by something - counting maximal subgroups by derivations} only implies that
$\mdeg(G_f) \leq \ell +  \binom{\ell}{2} -1$.

\subsection{A concrete example: $ \integers^3 \rtimes \integers/3\integers$}
In this section, we calculate $\maxsubgr(G)$ exactly for $G = \integers \wr \integers/3\integers = \integers^3 \rtimes \integers/3\integers$. 

Let $R = \integers[x]/(x^3 - 1)$. The $R$-module structure of $\integers^3 \normal G$ is $R$. 
So, we need to calculate $\maxsubmod(R)$ for all $n$. Of course, 
$x^3 - 1 = (x-1)(x^2 + x + 1)$, and so our goal is to factor $$f(x)  = x^2 + x + 1$$ mod $p$ for all primes $p$. If $p = 2$,
then we easily see that $f$ is irreducible mod 2 because it has no roots mod 2. For $p = 3$, we see that
$x^2 + x  + 1 \equiv (x-1)^2$. 

So far, we've shown that $\maxsubmod[2](R)  = 1$, $\maxsubmod[2^2](R)  = 1$, $\maxsubmod[3](R)  = 1$, and
that $R$ has no other ideal of index a power of 2 or 3. 

\begin{lemma}
	\label{lem:f is reducible implies - f is irreducible implies}
	Let $p \neq 3$.  If $f$ is irreducible mod $p$, then $\maxsubmod[p^2](R) = 1$ and $\maxsubmod[p](R) = 1$.
	If $f$ is reducible mod $p$, then $\maxsubmod[p](R) = 3$ and $\maxsubmod[p^k](R) = 0$ for $k \geq 2$.
\end{lemma}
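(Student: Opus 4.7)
The plan is to reduce to studying $R/pR = \finitefield[x]/(x^3 - 1)$, since any maximal ideal of $R$ of index a power of $p$ must contain $p$ (the quotient being a field of characteristic $p$). Since the lemma assumes $p \neq 3$, we have $f(1) = 3 \not\equiv 0 \pmod{p}$, so $\gcd(x-1, f(x)) = 1$ in $\finitefield[x]$. By the Chinese Remainder Theorem,
\[
R/pR \cong \finitefield[x]/(x-1) \times \finitefield[x]/(f(x)) \cong \finitefield \times \finitefield[x]/(f).
\]
From here I would split into two cases based on the factorization of $f$ mod $p$, and in each case simply enumerate the maximal ideals of the product ring.

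If $f$ is irreducible mod $p$, then $\finitefield[x]/(f) \cong \mathbb{F}_{p^2}$, so $R/pR$ is a product of two fields. Its only maximal ideals are the kernels of the two projections, with quotients $\finitefield$ and $\mathbb{F}_{p^2}$, giving indices $p$ and $p^2$ respectively. Hence $\maxsubmod[p](R) = \maxsubmod[p^2](R) = 1$, and there is no maximal ideal of index $p^k$ for $k \geq 3$ (every residue field occurring is of size $p$ or $p^2$).

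If $f$ is reducible mod $p$, I would first show it splits into two \emph{distinct} linear factors. Its discriminant is $1 - 4 = -3$, which is nonzero mod $p$ provided $p \neq 3$; and for $p = 2$ one checks directly that $f$ has no roots mod $2$, so $f$ is irreducible and we are in the previous case. So whenever $f$ is reducible (under the hypothesis $p \neq 3$), we have $f = (x-a)(x-b)$ with $a \neq b \in \finitefield$. Another application of CRT yields $R/pR \cong \finitefield \times \finitefield \times \finitefield$, whose three maximal ideals each have index $p$, giving $\maxsubmod[p](R) = 3$ and $\maxsubmod[p^k](R) = 0$ for $k \geq 2$.

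No step should be a serious obstacle; the argument is essentially two applications of the Chinese Remainder Theorem plus the observation that $\gcd(x-1,f) = 1$ exactly when $p \neq 3$. The only point requiring any care is ensuring that when $f$ is reducible it has distinct roots, which is handled by the discriminant computation (with $p = 2$ checked by hand to verify it falls under the irreducible case anyway).
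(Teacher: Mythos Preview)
Your proposal is correct and follows essentially the same approach as the paper's sketch: reduce to $R/pR$, factor $x^3-1$ over $\finitefield$, and count the resulting field factors. Your discriminant argument for distinct roots is equivalent to the paper's observation (deferred to the paragraph after Lemma~\ref{lem:quadratic reciprocity used to factor x^2 + x + 1}) that $-3 \not\equiv 0 \pmod p$ makes the quadratic formula give two distinct roots, and your check that $f(1)=3\not\equiv 0$ is a cleaner way of separating the factor $x-1$ from $f$ than the paper's remark that $(1\pm\sqrt{-3})/2 \not\equiv 1$.
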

\begin{proof}[Sketch of proof:]
	We have already shown this for $p = 2$. 
	Of course, the factor $x - 1$ in $x^3 - 1$ is why $\maxsubmod[p](R) \geq 1$ for all primes $p$.
	The only other thing about this lemma that may need comment/proof is why $f$ factors into distinct factors mod $p$ if it 
	is reducible; see the paragraph after Lemma \ref{lem:quadratic reciprocity used to factor x^2 + x + 1}.
\end{proof}

It is well known how to factor $f(x) = x^2 + x + 1$ mod $p$ for all primes $p > 3$, but
we show the computation in detail. We will use the notation $\Legendre{a}{p}$ for the Legendre symbol.

\begin{lemma}
	\label{lem:quadratic reciprocity used to factor x^2 + x + 1}
	Let $p \neq 3$. Then the above $f$ is reducible mod $p$ if and only if $p \equiv 1$  $\mod 3$.
\end{lemma}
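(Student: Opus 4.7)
The plan is to reduce the question of reducibility to a Legendre symbol computation via the discriminant and then evaluate that symbol with quadratic reciprocity. Since $p \neq 2, 3$ throughout (the case $p = 2$ was handled in the paragraph preceding Lemma~\ref{lem:f is reducible implies - f is irreducible implies}), the quadratic formula over $\finitefield$ applies and $f(x) = x^2 + x + 1$ is reducible mod $p$ if and only if its discriminant $1 - 4 = -3$ is a square mod $p$, i.e., iff $\Legendre{-3}{p} = 1$.

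Next I would compute $\Legendre{-3}{p}$ by the standard multiplicative splitting
\[
\Legendre{-3}{p} = \Legendre{-1}{p}\Legendre{3}{p}.
\]
I would use the supplementary law $\Legendre{-1}{p} = (-1)^{(p-1)/2}$, and then apply quadratic reciprocity to $\Legendre{3}{p}$: since $3 \equiv 3 \pmod 4$,
\[
\Legendre{3}{p}\Legendre{p}{3} = (-1)^{\frac{p-1}{2}\cdot\frac{3-1}{2}} = (-1)^{(p-1)/2}.
\]
Combining these identities, the factors of $(-1)^{(p-1)/2}$ cancel and I would obtain $\Legendre{-3}{p} = \Legendre{p}{3}$.

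Finally, since the nonzero squares mod $3$ are exactly $\{1\}$, one has $\Legendre{p}{3} = 1$ iff $p \equiv 1 \pmod 3$ and $\Legendre{p}{3} = -1$ iff $p \equiv 2 \pmod 3$. This yields exactly the stated equivalence.

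There is no real obstacle here; the argument is entirely a standard application of quadratic reciprocity once the discriminant is identified. The only small bookkeeping item is ensuring the two sign factors $(-1)^{(p-1)/2}$ genuinely cancel (which they do), and noting that the ``reducible $\Leftrightarrow$ has a root'' step is valid because $f$ is quadratic. For the companion claim used in Lemma~\ref{lem:f is reducible implies - f is irreducible implies}, that a reducible $f$ factors into \emph{distinct} linear factors mod $p$ (so that $\maxsubmod[p](R) = 3$ rather than $2$), I would observe that a repeated root would force $-3 \equiv 0 \pmod p$, which for $p \neq 3$ is impossible.
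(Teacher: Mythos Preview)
Your argument is correct and follows essentially the same route as the paper: reduce reducibility of the quadratic to whether the discriminant $-3$ is a square, split $\Legendre{-3}{p}$ as $\Legendre{-1}{p}\Legendre{3}{p}$, apply the supplementary law and quadratic reciprocity so that the $(-1)^{(p-1)/2}$ factors cancel to give $\Legendre{-3}{p} = \Legendre{p}{3}$, and finish by noting $1$ is the only nonzero square in $\mathbb{F}_3$. The paper also records the $p=2$ case inside the proof and defers the ``distinct roots'' observation to the paragraph after the lemma, just as you do.
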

\begin{proof}
	Recall/notice that the quadratic formula works in $\mathbb{F}[x]$ for any field $\mathbb{F}$ because completing
	the square works. So $f$ is reducible in $\finitefield[x]$ if and only if $-3$ is the square of some number in $\finitefield$.
	
	The lemma is true for $p = 2$. Suppose $p > 3$. Since $\Legendre{\cdot}{p}$ is a homomorphism from $\finitefield^{\neq 0}$ to 
	$\{1, -1\}$, we get $\Legendre{-3}{p} = \Legendre{-1}{p}\Legendre{3}{p}$.
	
	We finish by using the law of quadratic reciprocity:
	$\Legendre{-1}{p} = (-1)^{(p-1)/2}$ and $\Legendre{3}{p} = (-1)^{(3-1)(p-1)/4} \Legendre{p}{3}$. Therefore,
	$\Legendre{-3}{p} = ((-1)^{(p-1)/2})^2 \Legendre{p}{3} = \Legendre{p}{3}$. So we see that $-3$ is a square mod $p$
	if and only if $p$ is a square mod 3. But 1 is the only (nonzero) square in $\mathbb{F}_3$. So $p$ is a square mod 3 if and only if
	$p \equiv 1 \mod 3$. Just recall the second sentence of the first paragraph.
\end{proof}
Recalling Lemma \ref{lem:f is reducible implies - f is irreducible implies},
we see now why if $f$ is reducible mod $p$, for $p \neq 3$, then $f$ factors into a product of two distinct terms; this 
is because of the quadratic formula and that $-3 \not\equiv 0 \mod p$ and because $(-1)^2 \not\equiv -3$ implies
$(1 \pm \sqrt{-3})/2 \not\equiv 1 \mod p$.

We can now combine Lemmas \ref{lem:f is reducible implies - f is irreducible implies} and 
\ref{lem:quadratic reciprocity used to factor x^2 + x + 1} to get  that for $p \neq 3$, 
\[\begin{aligned}
\maxsubmod[p](R)     &= 
\begin{cases}
3 & \text{ if } p \equiv 1 \mod 3 \\
1 & \text{ if } p \not\equiv 1 \mod 3
\end{cases}  \\
\maxsubmod[p^2](R) &= 
\begin{cases}
0 & \text{ if } p \equiv 1 \mod 3 \\
1 & \text{ if } p \not\equiv 1 \mod 3
\end{cases}  \\
\end{aligned}
\]
We have already stated that $\maxsubmod[3](R) = 1$. For all other $n > 1$ not listed, we have $\maxsubmod(R) = 0$.

Recall that $G = \integers \wr \integers/3\integers$. We use the first part of 
Lemma \ref{lem:abelian by f.g. abelian - num of maximal subgroups} to calculate $\maxsubgr[3](G)$: in the 
lemma's notation, $A = \integers/3\integers$, and the $R$-module $N$ is itself $R$. So 
\[\begin{aligned}
\maxsubgr[3](G) &= \maxsubgr[3](\integers/3\integers) 
+  \lvert\Hom(\integers/3\integers, \integers/3\integers)\rvert \cdot \mtriv[3](R) + 3 \cdot \mnontriv[3](R)\\
&= 1 + 3(1) + 3(0) = 4. 
\end{aligned}
\]
Also, $\maxsubgr[3^k](G)  = 0$ for $k \geq 2$ because (as stated right before 
Lemma \ref{lem:f is reducible implies - f is irreducible implies}) $R$ has no maximal 
ideals of index $3^k$ (for such $k$). 

We next apply Lemma \ref{lem:abelian by f.g. abelian - num of maximal subgroups} again: Let $n$ be a power
of a prime $p \neq 3$. Then $\maxsubgr(\integers/3\integers) =0$ and $\lvert \Hom(\integers/3\integers, \integers/n\integers)\rvert = 1.$
Thus Lemma \ref{lem:abelian by f.g. abelian - num of maximal subgroups} simplifies to the following:
\[
  \maxsubgr(G) = \mtriv(R) + n \cdot \mnontriv(R).
\]

For all $n$, if $n$ is not prime, then $\mnontriv(R) = \maxsubmod(R)$ and $\mtriv(R) = 0$. Also, for all primes $p$, $\mtriv[p](R) = 1$, and
\[
\mnontriv[p](R)    = 
\begin{cases}
2 & \text{ if } p \equiv 1 \mod 3 \\
0 & \text{ if } p \not\equiv 1 \mod 3.
\end{cases} 
\]

Combining our work so far gives the following:
\begin{proposition}
  Let $G = \integers \wr \integers/3\integers$. Then $\maxsubgr[3](G) = 4$. Let $p \neq 3$ be prime. Then 
  \[\begin{aligned}
  \maxsubgr[p](G)     &= 
  \begin{cases}
  1 + 2p & \text{ if } p \equiv 1 \mod 3 \\
  1 & \text{ if } p \not\equiv 1 \mod 3
  \end{cases}  \\
  \maxsubgr[p^2](G) &= 
  \begin{cases}
  0 & \text{ if } p \equiv 1 \mod 3 \\
  p^2 & \text{ if } p \not\equiv 1 \mod 3.
  \end{cases}  \\
  \end{aligned}
  \] 
  For all other $n$, we get $\maxsubgr(G) = 0$. So $\maxsubgr(G) \leq 2n + 1$ 
  for all $n$ with equality for infinitely many $n$. In particular, $\mdeg(G) = 1$.
\end{proposition}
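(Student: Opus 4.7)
The plan is to combine the explicit submodule counts for $R = \integers[x]/(x^3-1)$ (already assembled in the paragraphs preceding the proposition) with Lemma \ref{lem:abelian by f.g. abelian - num of maximal subgroups} applied to $A = \integers/3\integers$ acting on the $R$-module $N = R$. First I would simply record the values of $\maxsubmod(R)$ established above: $\maxsubmod[3](R) = 1$, and for $p \neq 3$,
\[
\maxsubmod[p](R) = \begin{cases} 3 & p \equiv 1 \bmod 3 \\ 1 & p \not\equiv 1 \bmod 3,\end{cases} \quad \maxsubmod[p^2](R) = \begin{cases} 0 & p \equiv 1 \bmod 3 \\ 1 & p \not\equiv 1 \bmod 3,\end{cases}
\]
with $\maxsubmod(R) = 0$ otherwise. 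These follow from Lemmas \ref{lem:f is reducible implies - f is irreducible implies} and \ref{lem:quadratic reciprocity used to factor x^2 + x + 1}.

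Next I would compute $\mtriv$ and $\mnontriv$. A maximal submodule $M$ of $R$ has trivial $A$-action on $R/M$ precisely when $(x-1)R \subseteq M$, i.e.\ when $M$ corresponds to the factor $x-1$ of $x^3-1$ modulo $p$. Hence $\mtriv[p](R) = 1$ for every prime $p$, and $\mtriv(R) = 0$ for non-prime $n$. Then $\mnontriv(R) = \maxsubmod(R) - \mtriv(R)$ gives $\mnontriv[p](R) = 2$ when $p \equiv 1 \bmod 3$ and $0$ when $p \not\equiv 1 \bmod 3$ (for $p \neq 3$), $\mnontriv[3](R) = 0$, and $\mnontriv[p^2](R) = \maxsubmod[p^2](R)$.

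Now I would invoke Lemma \ref{lem:abelian by f.g. abelian - num of maximal subgroups} in the equality form (the extension splits since $G = N \rtimes A$). For $n = 3$ we get
\[
\maxsubgr[3](G) = \maxsubgr[3](\integers/3\integers) + |\Hom(\integers/3\integers, \integers/3\integers)| \cdot \mtriv[3](R) + 3 \cdot \mnontriv[3](R) = 1 + 3 + 0 = 4.
\]
For $n = p^k$ with $p \neq 3$, we have $\maxsubgr(\integers/3\integers) = 0$ and $|\Hom(\integers/3\integers, \integers/n\integers)| = 1$ since $\gcd(3,n)=1$, so Lemma \ref{lem:abelian by f.g. abelian - num of maximal subgroups} collapses to $\maxsubgr(G) = \mtriv(R) + n\cdot \mnontriv(R)$, and plugging in the values above yields the stated formulas for $\maxsubgr[p](G)$ and $\maxsubgr[p^2](G)$; for all remaining $n$ the formula gives $0$ since $\maxsubmod(R)=0$.

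Finally, to obtain the uniform bound $\maxsubgr(G) \leq 2n+1$, I would simply observe that the maximum of the explicit values is $\max\{4, 1+2p, 1, p^2, 0\}$ at $n=p$ or $n=p^2$, and each of these is at most $2n+1$ (the bound $p^2 \leq 2p^2 + 1$ is trivial, and $1+2p = 2n+1$ achieves equality). Equality holds for every prime $p \equiv 1 \bmod 3$, an infinite set by Dirichlet. Thus $\mdeg(G) = \limsup (\log \maxsubgr(G))/\log n = 1$. There is no genuine obstacle here: all the hard work (the quadratic reciprocity computation, the structure of $\mtriv$ vs.\ $\mnontriv$, and the derivation-counting lemma) has been done; the only care needed is keeping track of which branch of Lemma \ref{lem:abelian by f.g. abelian - num of maximal subgroups} applies at $p = 3$ versus $p \neq 3$, since at $p = 3$ the Hom term contributes a genuine factor of $3$ whereas at other primes it is $1$.
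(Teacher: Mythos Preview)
Your proposal is correct and follows essentially the same route as the paper: you cite the factorization data for $R=\integers[x]/(x^3-1)$ from Lemmas~\ref{lem:f is reducible implies - f is irreducible implies} and~\ref{lem:quadratic reciprocity used to factor x^2 + x + 1}, separate $\maxsubmod(R)$ into $\mtriv$ and $\mnontriv$ via the factor $x-1$, and then apply Lemma~\ref{lem:abelian by f.g. abelian - num of maximal subgroups} in its equality form (treating $p=3$ and $p\neq 3$ separately because of the $\Hom$ term). This is exactly what the paper does in the paragraphs leading up to the proposition.
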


\section*{Acknowledgments}
I would like to thank Marcin Mazur, who was my advisor while at Binghamton University. I also want to thank
Alex Lubotzky for a couple helpful conversations.

\bibliography{my_bibliography}{}
\bibliographystyle{plain}

\textsc{Department of Mathematics and Computer Science, Colorado College,
	Colorado Springs, Colorado 80903}\par\nopagebreak
\textit{email address}: \texttt{akelley@coloradocollege.edu}\footnote{The author is a Visiting Assistant Professor of Mathematics at Colorado College.
	A possibly more permanent email address is \texttt{akelley2500@gmail.com} }

\end{document}